\documentclass{amsart}
\usepackage[margin=1.2in]{geometry}
\usepackage{amsmath,amsthm,amssymb,amsfonts}
\usepackage{xcolor}
\usepackage{mathrsfs}
\usepackage{verbatim}
\usepackage{float}
\usepackage{graphicx}
\usepackage{tikz-cd}
\usepackage{marvosym}
\usepackage{hyperref}
\hypersetup{
    colorlinks=true,
    }
\usepackage[capitalize]{cleveref}
\usepackage[shortlabels]{enumitem}

\newcommand{\Z}{\mathbb{Z}}
\newcommand{\R}{\mathbb{R}}
\newcommand{\N}{\mathbb{N}}
\newcommand{\C}{\mathbb{C}}
\newcommand{\CP}{\mathbb{CP}}
\DeclareMathOperator{\Mod}{Mod}
\DeclareMathOperator{\SMod}{SMod}
\DeclareMathOperator{\Diff}{Diff}
\DeclareMathOperator{\Homeo}{Homeo}
\DeclareMathOperator{\Id}{Id}

\DeclareMathOperator{\Aut}{Aut}
\DeclareMathOperator{\Symp}{Symp}
\DeclareMathOperator{\Sp}{Sp}

\DeclareMathOperator{\PD}{PD}

\theoremstyle{theorem}
\newtheorem{thm}{Theorem}[section]
\newtheorem{cor}[thm]{Corollary}
\newtheorem{prop}[thm]{Proposition}
\newtheorem{lem}[thm]{Lemma}

\newtheorem{question}[thm]{Question}

\crefname{thm}{theorem}{theorems}

\theoremstyle{definition}
\newtheorem{defn}[thm]{Definition}

\theoremstyle{remark}
\newtheorem{rmk}[thm]{Remark}

\begin{document}

\title{Infinitely many Lefschetz pencils on ruled surfaces}
\author{Seraphina Eun Bi Lee}
\address{Department of Mathematics, Harvard University}
\email{slee@math.harvard.edu}
\author{Carlos A. Serv\'an}
\address{Department of Mathematics, University of Chicago}
\email{cmarceloservan@uchicago.edu}
\maketitle
\vspace{-10mm}
\begin{abstract}
   We show that any ruled surface $X$ with $\chi(X) < 0$ admits infinitely many inequivalent Lefschetz pencils of
    fixed genus and number of base points. Our proof proceeds by building
  infinitely many inequivalent Lefschetz fibrations on a blow-up $X \# 4 \overline{\CP^2}$ of $X$ with constant
  fiber class, via a
  mechanism known as partial conjugation. Furthermore, there exists a symplectic form on $X$ compatible with all such
  pencils, and similarly for the fibrations in $X\#4\overline{\CP^2}$. This provides
  the first example of this phenomenon and makes progress on Problem 4.98 of the K3 list of problems in low-dimensional topology in the case of ruled surfaces.
\end{abstract}

\tableofcontents

\section{Introduction}

Let $X$ be a closed, symplectic $4$-manifold. The work of Donaldson~\cite{donaldson}
and Gompf~\cite{gompf-symp} builds a correspondence between topological properties of $X$ and properties
of the \emph{topological Lefschetz pencils} $(X,f)$: Any
symplectic $4$-manifold admits a pencil, and any $4$-manifold admiting a pencil is symplectic.
\begin{center}
  \begin{tikzcd}
    \{ \mbox{closed symplectic $4$-manifolds}\} \arrow[bend right=30,swap]{r}{\textrm{Donaldson}} &
    \arrow[bend right=30, swap]{l}{\textrm{Gompf}} \{ \mbox{Lefschetz pencils}\}
    \end{tikzcd}
\end{center}
\mbox{}\\
This correspondence has proved remarkably fruitful in the study
of symplectic manifolds (e.g.~\cite{donaldson-smith,stipsicz-chern-numbers,usher,fintushel-stern,baykur-hayano2,
baykur-korkmaz-exotic,baykur-hamada,ABKP,BHM}). Nevertheless,
there is not an a priori canonical Lefschetz pencil
on a given symplectic manifold $X$. A fundamental open question is then the following.

\begin{question}[{K3 \cite[Problem 4.98]{K3}; cf. Baykur--Hayano \cite[Question 6.4]{baykur-hayano2}}]\label{question:basic}
  Does every closed symplectic $4$-manifold admit inequivalent Lefschetz pencils with the same fiber genus $g$, for sufficiently large g? How
  about infinitely many?
\end{question}
In this work we make progress on this problem in the case of \emph{ruled surfaces}, or $S^2$-bundles over $\Sigma_g$ for some $g \geq 0$. In particular, we provide the first examples of infinitely many inequivalent pencils of fixed genus on the same symplectic $4$-manifold.

\begin{thm}\label{thm:infty-pencils-ruled}
  Let $X$ be a ruled surface with $\chi(X) = 4-4g < 0$. There exist
  infinitely many pairwise inequivalent Lefschetz pencils $\pi_n: X - B_n \to S^2$, $n \in \Z_{\geq 0}$, of genus $2g$ with fixed number of base points $\lvert B_n\rvert = 4$ and fixed homology class of regular fiber. There exists a symplectic form $\omega$ on $X$ such that the smooth loci of all fibers of $\pi_n$ for all $n \in \Z_{\geq 0}$ are symplectic submanifolds of $(X, \omega)$.
\end{thm}

Blowing down disjoint $(-1)$-sections of inequivalent Lefschetz fibrations produce inequivalent Lefschetz pencils (see e.g. \cite[Corollary 3.10]{baykur-hayano}). The following theorem forms an intermediate step in the proof of Theorem \ref{thm:infty-pencils-ruled}.

\begin{thm}\label{thm:infty-fibrations}
  Let $X$ be a ruled surface with $\chi(X) = 4-4g < 0$. There exist infinitely many pairwise inequivalent Lefschetz fibrations $\pi_n: X \# 4 \overline{\CP^2} \to S^2$, $n \in \Z_{\geq 0}$ of genus $2g$ and fixed homology class of regular fiber. There exists a symplectic form $\omega$ on $X \# 4 \overline{\CP^2}$ such that the smooth loci of all fibers of $\pi_n$ for all $n \in \Z_{\geq 0}$ are symplectic submanifolds of $(X \# 4 \overline{\CP^2}, \omega)$.
\end{thm}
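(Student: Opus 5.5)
The plan is to build the fibrations from one explicit monodromy factorization and then twist it by \emph{partial conjugation}, as the abstract indicates. First I will write down a genus-$2g$ Lefschetz fibration on $X \# 4\overline{\CP^2}$, starting from a double branched cover. Take the ruled surface $X \to \Sigma_g$ and realize $X$ as a double cover of $\Sigma_h \times S^2$, or more directly construct the fibration via a positive factorization in $\Mod(\Sigma_{2g})$. The factorization is to be of the form
\[ W_1 \cdot W_2 = t_{\delta_1} t_{\delta_2} t_{\delta_3} t_{\delta_4}, \]
where the $\delta_i$ are boundary parallel curves on $\Sigma_{2g}^4$, so that we obtain four disjoint $(-1)$-sections. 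The natural source is a hyperelliptic-involution-equivariant relation: a lift of a braid relation, or a Matsumoto/Hamada-type relation, adapted so that the total space has $\chi = 4-4g+4$ and $\sigma = -4$ and is a ruled surface blown up four times. Identifying the total space as $X\#4\overline{\CP^2}$ will use Euler characteristic, signature, and $\pi_1$ computations from the vanishing cycles. I will then appeal to the classification of rational and ruled symplectic $4$-manifolds (McDuff, Liu--Li): a minimal model with $b^+=1$ and $\chi<0$ is ruled.

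Next comes the infinite family. Suppose the factorization splits as $W_1 W_2$, where $W_1$ commutes with some mapping class $\phi$, for instance a Dehn twist about a curve $c$ disjoint from all vanishing cycles in $W_1$ and from the $\delta_i$. Then $\pi_n$ is defined by $W_1 \cdot \phi^n W_2 \phi^{-n}$. This is still a positive factorization of the same boundary multitwist, so it again gives a fibration with four $(-1)$-sections. Partial conjugation corresponds to cutting the total space along $c \times$(an arc) and regluing, i.e.\ a fiber-sum/twisting along a torus or Lagrangian sphere. I will show that the diffeomorphism type does not change, for example because $\phi^n$ extends over one side, or because the twisting is along a Lagrangian $-2$ sphere whose Dehn twist is a diffeomorphism. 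The homology class of the fiber is unchanged by construction, since the regluing is supported away from a fixed fiber and the sections.

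For the symplectic form, Gompf--Thurston gives a form $\omega_n$ on each total space making the fibers symplectic, with $[\omega_n]$ close to a multiple of $\PD[F]$ plus a small adjustment. Since the fiber class and the section classes are fixed under the identifications, I will choose $[\omega_n]$ independent of $n$. Uniqueness of symplectic forms in a cohomology class up to isotopy on rational/ruled surfaces (Lalonde--McDuff, Li--Liu) then lets me pull everything back to a single $\omega$ for which all fibers of all $\pi_n$ are symplectic.

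The main obstacle is inequivalence. I will distinguish the $\pi_n$ by an invariant of the factorization up to Hurwitz moves and global conjugation, and the first thing to try is the number of vanishing cycles or the intersection data. Because the cycle count is fixed, that invariant cannot work here. I will instead use the fundamental group of the complement of singular fibers, or an algebraic invariant: the subgroup of $H_1(\Sigma_{2g};\Z)$ spanned by vanishing cycles, combined with the mod-$n$ behaviour of $\phi^n$ acting on a chosen homology class. More concretely, I will show that any equivalence between $\pi_n$ and $\pi_m$ induces an isometry of the lattice $H_2$ preserving the fiber class and the set of $(-1)$-sections. The action of $\phi$ on a specific class, such as a matching sphere or a torus $\{$vanishing-cycle $\times$ arc$\}$, grows linearly in $n$ in an invariant quantity such as the minimal intersection with the fiber complement, which separates the $\pi_n$. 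If that fails, I will use a \emph{divisibility/Nielsen-type} invariant: the orbit of the pair of subgroups generated by the vanishing cycles of $W_1$ and of $W_2$ under $\Mod$.
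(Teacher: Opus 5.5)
Your twisting step fails as stated. With your suggested $\phi=T_c$, where $c$ is disjoint from the vanishing cycles of $W_1$, every factor of $W_1$ commutes with $\phi$. Hence $W_1\cdot\phi^nW_2\phi^{-n}=\phi^n(W_1W_2)\phi^{-n}$ factor by factor. This is a global conjugation, so all of your $\pi_n$ are equivalent, even together with the sections. A useful $\phi$ must commute with the \emph{product} $W_1$ without commuting factorwise with either block. It should also act trivially on $H_1(\Sigma_{2g})$; otherwise the span of the vanishing cycles, and hence $b_1$ of the total space, can change.

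The paper gets such an element from the special structure of the MCK factorization $h\cdot h=1$, where $h=T_{B_0}\cdots T_{B_{2g}}T_C$ is the involution $[\eta]$. The map $f=T_xT_y^{-1}T_{h(x)}T_{h(y)}^{-1}$ is a product of two bounding pair maps interchanged by $h$. So $f\in\mathcal I_{2g}$ commutes with $h$, and its lift $\tilde f$ commutes with Hamada's $\tilde h_i$ in $\Mod(\Sigma_{2g}^4)$, which keeps the four $(-1)$-sections.

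Your route to the diffeomorphism type is also self-defeating: if $\phi^n$ extends fiber-preservingly over one side, the new fibration is equivalent to the old one. The paper instead shows that $\chi$, $\sigma$ (by Novikov additivity) and $b_1$ (because $f$ is Torelli) do not depend on $n$. It then applies Liu's theorem: a minimal symplectic manifold with $c_1^2<0$ is irrational ruled. Similarly, the fiber class is not fixed ``by construction'', because $X_n\cong X\#4\overline{\CP^2}$ is an abstract identification. The paper pins down the classes of $F^n$ and of the reducible-fiber components using Li--Li's minimal genus computations (Lemma~\ref{lem:nu-choice}).

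The central gap is inequivalence: none of your candidate invariants can work. For a Torelli twist, all homological data is constant in $n$. The image of the monodromy group in $\Sp(4g,\Z)$ and the span of the vanishing cycles in $H_1$ are the same for every $n$. Lemma~\ref{lem:nu-choice} shows that $H_2(X_n)$, together with the fiber class and indeed the classes of all reducible-fiber components, is identified for all $n$. So no lattice quantity of the kind you describe can grow with $n$, and an equivalence of fibrations need not respect your chosen sections anyway. The subgroups generated separately by the factors of $W_1$ and of $W_2$ are not Hurwitz invariants, since Hurwitz moves mix the two blocks.

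What is missing is an invariant one level below homology. The paper compares the monodromy groups $G_n$ up to conjugacy (Corollary~\ref{cor:noniso-criterion}), intersects with the normal subgroup $\mathcal I_{2g}$, and applies the equivariant Johnson homomorphism $\tau$. The argument runs as follows:
\begin{enumerate}
\item $G_0$ is generated by twists about curves invariant under a hyperelliptic involution, so $\tau(G_0\cap\mathcal I_{2g})=0$ (Corollary~\ref{cor:hyperelliptic-kernel}).
\item $G_n\cap\mathcal I_{2g}$ is generated by $G_0\cap\mathcal I_{2g}$ and $G_0$-conjugates of the commutators $[T_c^{-1},f^n]$, so $\tau(G_n\cap\mathcal I_{2g})\subseteq n\,(\wedge^3H)/H$.
\item This image nevertheless contains $nv$ for the primitive class $v=\tau([T_{B_0}^{-1},f])$.
\end{enumerate}
This divisibility is conjugation-invariant and separates all $n$.

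Finally, on the symplectic side, running Gompf--Thurston separately on each $X_n$ does not let you fix $[\omega_n]$. The admissible $\varepsilon_n$ in $\varepsilon_n\xi_n+\pi_n^*\omega_{S^2}$ may tend to $0$ with $n$. The paper avoids this by building a single $\omega_0$ that is standard over the twisting annulus and near the sections. It then realizes $\tilde f$ by a symplectomorphism commuting with an order-two representative $\hat\eta$ of the monodromy (Proposition~\ref{prop:isotoping-hateta}). As a result, every $\omega_n$ is glued from $\omega_0$ and has the same periods.
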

Theorem \ref{thm:infty-fibrations} may be of independent interest, as it gives the first construction of infinitely many pairwise inequivalent Lefschetz fibrations of the same genus on a fixed smooth $4$-manifold. See Section \ref{sec:previous-work} for a summary of previously known constructions of finitely many such Lefschetz fibrations.

\begin{rmk}[On the fiber genus]
An essential point of Question \ref{question:basic} is the existence of inequivalent Lefschetz pencils of the
\emph{same} fiber genus. Otherwise, infinitely many Lefschetz pencils can arise from increasingly large powers of a given line bundle.

The smallest genus of the regular fibers appearing in the fibrations of \Cref{thm:infty-fibrations} is $4$. Currently, the smallest known genus of inequivalent but diffeomorphic Lefschetz fibrations is $3$, by work of Baykur--Hayano \cite[Theorem 6.2]{baykur-hayano2}. On the other hand, there are restrictions on the monodromy of potential genus-$2$ examples; see~\cite[Remark 6.3]{baykur-hayano2}. \Cref{thm:infty-fibrations} answers Question 6.4 of Baykur--Hayano~\cite{baykur-hayano2} positively for the case of (four-fold blowups of) ruled surfaces.
\end{rmk}

\begin{rmk}[The case of surface bundles over surfaces and mapping tori] Our result highlights a difference between Lefschetz fibrations and surface bundles. F.E.A. Johnson~\cite{johnson-surface} showed that if $\pi: M \to \Sigma_h$ is a $\Sigma_g$-bundle over $\Sigma_h$ with $g, h \geq 2$ then there are only finitely many ways to realize $M$ as the total space of a $\Sigma_{g'}$-bundle $M \to \Sigma_{h'}$ with $g', h' \geq 2$, and Salter~\cite{Salter} constructed $4$-manifolds admitting arbitrarily many fiberings. In dimension $3$, the set of surface bundles $Y \to S^1$ for a fixed $3$-manifold $Y$ is governed by the \emph{Thurston norm} \cite{thurston}. Thurston showed, for example, that any irreducible and atoroidal $3$-manifold fibers over $S^1$ with fixed genus in at most finitely many ways. Moreover, the isomorphism class of the fibering is determined by the homology class of the fiber~\cite[Lemma 1]{Tischler}.
\end{rmk}

\subsection{A road map to the proof of \Cref{thm:infty-pencils-ruled,thm:infty-fibrations}}
In this subsection we give a quick overview of each of the steps appearing in the proofs
of \Cref{thm:infty-pencils-ruled,thm:infty-fibrations}. We emphasize that
the proofs of \Cref{thm:infty-pencils-ruled,thm:infty-fibrations} in the
smooth category are essentially finished in \Cref{sec:diffeo}; see Corollaries \ref{cor:smooth-infty-fib} and \ref{cor:smooth-infty-pencil}.

Let $g \geq 2$ and $X = \Sigma_g \times S^2\#4\overline{\CP^2}$. The starting point
in our construction of infinitely many inequivalent Lefschetz fibrations on $X$
is a quite peculiar Lefschetz fibration on $\pi:X \to S^2$ called the (even) Matsumoto--Cadavid--Korkmaz (MCK) Lefschetz fibration~\cite{hamada}.

\medskip \noindent\textbf{The Matsumoto--Cadavid--Korkmaz Lefschetz fibration.}
The MCK fibration is induced by a factorization of an involution $\eta$ on $\Sigma_{2g}$ with two fixed points. Its vanishing cycles and the involution $\eta$ are shown in \Cref{fig:mck-vanishing-cycles-intro}. This Lefschetz fibration was originally
described by Matsumoto~\cite[Proposition 4.2]{matsumoto} for $g=1$ and later
generalized to higher genus by both Cadavid~\cite{cadavid} and Korkmaz~\cite{korkmaz}. The MCK fibration has been the starting point
of many interesting examples of Lefschetz fibrations (e.g. ~\cite{korkmaz,ozbaci-stipsicz}). It is a
hyperelliptic Lefschetz fibration, with the smallest number of singular fibers of a fibration of genus at least $6$ for manifolds with $b_2^+ =1$~\cite{stipsciz-singular,baykur-min}.

\begin{figure}
\centering
\includegraphics[width=0.4\textwidth]{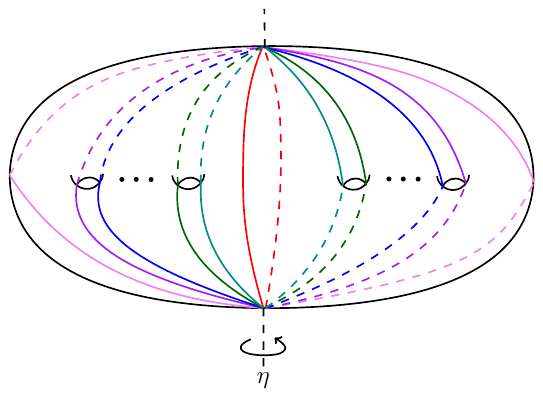}
\caption{Vanishing cycles of the MCK Lefschetz fibration of genus $2g$ and the involution $\eta$.}\label{fig:mck-vanishing-cycles-intro}
\end{figure}

For our purposes, the most important properties of the MCK fibration are
the following:
\begin{enumerate}
\item The total space is diffeomorphic to $\Sigma_g \times S^2\#4\overline{\CP^2}$.
  Furthermore, the total space is invariant under \emph{partial conjugations}
    by elements of the \emph{Torelli group} $\mathcal I_{2g} \trianglelefteq \Mod(\Sigma_{2g})$. (This relies heavily on results of Liu~\cite{liu}; see \Cref{sec:diffeo} for details).
  \item It is a hyperelliptic Lefschetz fibration.
\end{enumerate}
Our construction then proceeds as follows: Starting from the MCK fibration
$\pi:X \to S^2$, we find a suitable mapping class $f \in \mathcal I_{2g}$
and perform partial conjugations by the powers $f^n$ to obtain a
family of fibrations $\{\pi_n:X_n \to S^2\}$. We now briefly describe the partial conjugation construction.

\medskip\noindent \textbf{Navigating the set of Lefschetz fibrations.}
Let $(X,\pi)$ be a Lefschetz fibration of genus $h$.
A big part of the success in the study of Lefschetz fibrations is that
they admit simple combinatorial descriptions via factorizations of the identity in $\Mod(\Sigma_h)$ by right-handed Dehn twists~\cite{matsumoto,kas}. From this perspective the following operation,
called \emph{partial conjugation}~\cite{baykur-inequiv-1,auroux-pc}, is quite natural.
Let $W_1,W_2$ be products of positive Dehn twists such that
\[ W_1\cdot W_2 = 1 \in \Mod(\Sigma_h) .\]
Let $f \in \Mod(\Sigma_h)$ be a mapping class that commutes with $W_2$. Then because $f T_c f^{-1} = T_{f(c)}$ for any right-handed Dehn twist $T_c \in \Mod(\Sigma_h)$, we obtain a new factorization
\[
  W_1\cdot W_2^f = 1 \in \Mod(\Sigma_h)
\]
of the identity in $\Mod(\Sigma_h)$ by right-handed Dehn twists, where $W_2^f$ denotes the conjugation of the factors of $W_2$ by $f$. This new factorization defines a Lefschetz fibration $(X_f,\pi_f)$ on a manifold $X_f$ which has the same signature and Euler characteristic as the original manifold $X$.

\begin{rmk}
Partial conjugation can also be understood from a topological point of view
as a cutting and regluing operation; see \Cref{sec:background} for details. In this way, partial conjugation is a generalization of the fiber-sum operation. However, the Lefschetz fibrations we construct in the proof of
  \Cref{thm:infty-fibrations} are \emph{not minimal}.
  In particular, the partial conjugations that we employ are \emph{not} fiber-sums~\cite{usher}.
\end{rmk}
\begin{rmk}[Effectiveness of partial conjugations]
  Let $G_{W_2}$ be the group generated by the Dehn twists present in the word $W_2$.
  If $f \in G_{W_2}$, then $(X_f,\pi_f)$ is equivalent to $(X,\pi)$~\cite[Lemma 6]{auroux}. Thus, partial conjugation
  is more effective in changing the equivalence class of a fibration when $G_{W_2}$ is ``small.''
\end{rmk}

In order to distinguish the equivalence classes of the fibrations $\pi_n:X_n \to S^2$, we distinguish the monodromy groups of $\pi_n$ up to conjugacy.

\medskip \noindent \textbf{Monodromy group and the Johnson homomorphism.} An invariant
of a Lefschetz fibration is given by (the conjugacy class of) its \emph{monodromy group}; see \Cref{sec:background} for details. Let $G$ be the monodromy group of $(X,\pi)$ and $G_n$ the monodromy
group of $(X_n,\pi_n)$. Let $\mathcal I_{2g} \trianglelefteq \Mod(\Sigma_{2g})$
be the \emph{Torelli group}. If $G$ and $G_n$ are conjugate in $\Mod(\Sigma_{2g})$ then $G \cap \mathcal I_{2g}$ and $G_n\cap \mathcal I_{2g}$ are also conjugate in $\Mod(\Sigma_{2g})$ because $\mathcal I_{2g}$ is normal in $\Mod(\Sigma_{2g})$. Thus, it is enough to distinguish subgroups of $\mathcal I_{2g}$ up to conjugation by $\Mod(\Sigma_{2g})$.
This observation allows us to \emph{abelianize} the problem:
Let $H:=H_1(\Sigma_{2g};\Z)$ and
\[ \tau: \mathcal I_{2g} \to (\wedge^3 H)/H \]
be the \emph{Johnson homomorphism}~\cite[Section 6.6]{farb-margalit}. The Johnson homomorphism $\tau$ is $\Mod(\Sigma_{2g})$-equivariant, with respect to the conjugation action on $\mathcal I_{2g}$ and the induced action $(\wedge^3 H)/H$ via the symplectic representation of $\Mod(\Sigma_{2g})$ acting on $H$. Thus,
it is enough to distinguish $\Mod(\Sigma_{2g})$-orbits in $(\wedge^3 H)/H$, and the hyperellipticity of $(X,\pi)$ vastly simplifies this computation.

\begin{rmk}
  We employed the same idea in our previous work~\cite[Lemma 7.4]{LS-sections}.
  The role of $\mathcal I_{2g}$
  was played by the point-pushing subgroup $\pi_1(\Sigma_g) \trianglelefteq \Mod(\Sigma_{g,1})$, and the role of $\tau$ was played by the abelianization map $\pi_1(\Sigma_g) \to H_1(\Sigma;\Z)$.
\end{rmk}

\medskip
\noindent\textbf{From fibrations to pencils.} Hamada showed that the MCK fibration posseses four disjoint $(-1)$-sections by finding appropiate lifts of the corresponding factorization to $\Mod(\Sigma_{2g}^4)$~\cite{hamada}.
In order to relate \Cref{thm:infty-fibrations} to \Cref{thm:infty-pencils-ruled},
we find an appropriate lift of $f \in \mathcal I_{2g}$ and perform partial conjugations in $\Mod(\Sigma_{2g}^4)$ in order to preserve this number of disjoint $(-1)$-sections. Furthermore, by using two distinct lifts given by Hamada~\cite[Figure 21]{hamada}
the total space of the pencil can be either $\Sigma_g \times S^2$
or $\Sigma_g\tilde{\times} S^2$. The existence of four disjoint $(-1)$-sections
  also improves our understanding of the homology class of the fibers. In
  particular, we can show that all fibers of our pencils are representatives of a fixed
  homology class (\Cref{lem:nu-choice}).

\begin{rmk}
  In our previous work~\cite{LS-sections} we constructed examples, via partial conjugations,
  of infinitely many inequivalent factorizations in $\Mod(\Sigma_{g,1})$
  covering the \emph{same} factorization of the identity in $\Mod(\Sigma_g)$. Yet, these factorizations were \emph{not} associated to pencils: the starting point for the construction was a Lefschetz fibration with a section of self-intersection at most $-2$.
  In fact, our method in ~\cite{LS-sections} fails for sections of self-intersection $-1$ (see \Cref{rmk:prev-work}).
\end{rmk}

\noindent\textbf{Symplectic structures on ruled surfaces.}
To finish the proof of \Cref{thm:infty-pencils-ruled} and \Cref{thm:infty-fibrations},
we leverage the classification of symplectic structures on ruled surfaces of Lalonde--McDuff~\cite{lalonde-mcduff}. In
particular, it is enough to construct cohomologous symplectic forms compatible with the pencils. We achieve this via the Gompf--Thurston construction and by carefully choosing the
gluing maps associated to the respective partial conjugation; see \Cref{sec:symplectic} for details.

\begin{rmk}
  Lin--Wu--Xie--Zhang \cite[Theorem 1.5]{lin-wu-xie-zhang} recently showed the existence of infinitely many isotopy classes of diffeomorphic symplectic forms on ruled surfaces with negative Euler characteristic. While we show that all the symplectic forms we construct on a minimal ruled surface are diffeomorphic, we are unable to tell if they are isotopic.
\end{rmk}

\subsection{Lefschetz fibrations with homeomorphic total spaces}
In \Cref{sec:homeo} we show the broader applicability of our methods. Starting
with a simply-connected hyperelliptic Lefschetz fibration, we easily construct infinitely
many inequivalent but homeomorphic Lefschetz fibrations. We are unable to tell if these families of homeomorphic $4$-manifolds are diffeomorphic. In particular, our construction gives a broader source of potential examples of the
same phenomenon described by \Cref{thm:infty-fibrations}. 

\subsection{Previous work} \label{sec:previous-work}
Despite the great deal of interest in Lefschetz pencils and fibrations, Question \ref{question:basic} remains wide open in its full generality. The first (published) examples of manifolds admitting at least two inequivalent Lefschetz fibrations were given by Park--Yun~\cite{park-yun-1}, constructed using knot surgery operations of Fintushel--Stern~\cite{fintushel-stern} and distinguished by their monodromy groups. (Park--Yun also mentioned that Ivan Smith had an earlier example in his thesis of two inequivalent fibrations on $T^2 \times \Sigma_2\#9\overline{\CP^2}$.) See also further work of Park--Yun~\cite{park-yun-2} generalizing this construction to find arbitrarily large numbers of inequivalent but diffeomorphic fibrations. Later, Baykur--Hayano used monodromy substitutions to construct pairs of
inequivalent Lefschetz pencils on blow-ups of Calabi-Yau K3 surfaces~\cite[Theorem 6.2]{baykur-hayano2} whose fibers are ambiently homeomorphic. Using completely different methods, Baykur showed that any symplectic manifold $X$ admits at least two distinct pencils (up to equivalence and fibered Luttinger surgery) on a blowup of $X$, and arbitrarily many such pencils if $X$ is not rational or ruled~\cite{baykur-inequiv-1, baykur-inequiv-2}. Despite this progress, the methods used to distinguish the fibrations were inherently finite: Park--Yun ~\cite{park-yun-2} relied on a finite graph to distinguish the monodromy groups, and Baykur bounds the number
of inequivalent pencils by a number of a priori chosen points to blow-up~\cite{baykur-inequiv-1}. Further examples of pairs of inequivalent pencils were also given by Hamada~\cite[Theorem 7]{hamada} and Baykur--Hayano--Monden~\cite[Theorem 6.4]{BHM}.

\subsection{Questions} We end this introduction with a set of questions
prompted by our results.

In~\cite[Remark 3.5]{baykur-inequiv-1} Baykur wonders if the MCK fibration of genus $2g$ on
$\Sigma_{g} \times S^2\# 4\overline{\CP^2}$ is unique up to equivalence and fibered Luttinger surgery.
Although we are unable to tell if the partial conjugations we employ can be obtained by a sequence of fibered
Luttinger surgeries, our results show that Luttinger surgery is necessary in a potential uniqueness statement.
More generally, we ask the following refinement to \Cref{question:basic}:
\begin{question}
  Does there exist a $4$-manifold $X$ admitting infinitely many Lefschetz fibrations that are not related via sequences of partial conjugations?
\end{question}

A plausible starting point to finding more families of infinitely many inequivalent but diffeomorphic Lefschetz fibrations may be in determining the diffeomorphism types of the example described in \Cref{sec:homeo}.
\begin{question}
  Are the $4$-manifolds $Z_n$ and $Z_m$ diffeomorphic for all $n, m \in \Z_{\geq 0}$, where $\pi_n: Z_n \to S^2$ is the Lefschetz fibration constructed in Theorem \ref{thm:homeo-not-iso}?
\end{question}

Finally, we point out that we construct infinite families of \emph{non-hyperelliptic} Lefschetz fibrations of \emph{even} genus (starting with genus $4$) in this paper, leaving open the following questions.
\begin{question}
Do there exist infinitely many inequivalent Lefschetz fibrations of odd genus with diffeomorphic total spaces? Of genus $2$?
\end{question}

\begin{question}
Do there exist infinitely many inequivalent hyperelliptic Lefschetz fibrations of fixed genus and diffeomorphic total spaces?
\end{question}

\subsection{Organization of the paper}
In \Cref{sec:background}, we review the necessary background material about Lefschetz fibrations and mapping class groups of surfaces. In \Cref{sec:twisted-mck}, we describe in detail the Matsumoto--Cadavid--Korkmaz (MCK)
Lefschetz fibration $\pi:X \to S^2$ and define its twisted versions $\{\pi_n:X_n\to S^2\}$. In \Cref{sec:johnson} we show how to use
the Johnson homomorphism as a tool to detect inequivalent Lefschetz fibrations. In particular,
we show that the family of fibrations $\{\pi_n:X_n \to S^2\}$ is pairwise inequivalent.
\Cref{sec:diffeo} contains the proof of \Cref{thm:infty-pencils-ruled,thm:infty-fibrations}
in the smooth category. \Cref{sec:symplectic} describes the construction of the
symplectic forms $\omega_n$ compatible with $\pi_n:X_n \to S^2$ and finishes the proof of \Cref{thm:infty-pencils-ruled,thm:infty-fibrations}. In \Cref{sec:homeo}, we describe
a new family of examples of pairwise inequivalent Lefschetz fibrations with
homeomorphic total space. Appendix \ref{appendix-a} contains some routine computations in $\Mod(\Sigma_{2g}^4)$ and
associated mapping class groups. In particular, it contains the proof of \Cref{thm:hamada-sections}.
Appendix \ref{appendix-b} contains the proof of \Cref{prop:isotoping-hateta}, which allows us
to find nice representatives for the gluing maps employed in our partial conjugation construction.

\subsection{Acknowledgments}

We are grateful to our advisor Benson Farb for his constant support and invaluable guidance throughout this project, and
for his extensive comments on an earlier draft of this paper. We would like to thank \.{I}nanç Baykur for many
insightful discussions across wide-ranging topics around Lefschetz fibrations, including pointing us to Auroux's work
\cite{auroux} which spurred the initial key ideas of our current work and for his suggestions to push our preliminary
results on Lefschetz fibrations to constructing infinitely many symplectic Lefschetz pencils. We also thank Baykur for
sharing the statement of \cite[Problem 4.98]{K3} with us. We thank Faye Jackson and Nick Salter for comments on an
earlier draft that improved the exposition. The first-named author would like to thank Francesco Lin for a suggestion to
consider partial conjugations by elements in the Torelli group, Tye Lidman and Lisa Piccirillo for conversations about
diffeomorphism-types of Lefschetz fibrations arising from partial conjugation, and Denis Auroux for a conversation about
Lefschetz fibrations and symplectic surfaces. The second-named author would like to thank Mohammed Abouzaid and Danny Calegari for conversations about symplectic blowing down. 


\section{Lefschetz fibrations and mapping class groups}\label{sec:background}

In this section we recall the relationship between Lefschetz fibrations and mapping class groups and some tools for studying them.

\subsection{Lefschetz pencils, fibrations, and monodromy}

Let $M^4$ be a closed, oriented, smooth $4$-manifold. A \emph{Lefschetz pencil} is a smooth map $\pi: M - B \to S^2$ for some nonempty, finite set $B \subseteq M$ with finitely many critical points $p_1, \dots, p_r \in M - B$ such that
\begin{enumerate}[(a)]
\item for each \emph{base point} $p \in B$, there are a smooth, orientation-compatible chart $U \cong \C^2$ around $p \in M$ and a diffeomorphism $S^2 \cong \CP^1$ with respect to which $\pi$ takes the form
\[
	\pi(z, w) = [z:w] \in \CP^1,
\]
and
\item \label{defn:nodal} for each critical point $p_i$, there are smooth, orientation-compatible charts $U_i \cong \C^2$ around $p_i \in M$ and $V_i \cong \C$ around $\pi(p_i) \in S^2$ with respect to which $\pi$ takes the form
\[
	\pi(z, w) = z^2 + w^2.
\]
\end{enumerate}
A \emph{Lefschetz fibration} is a smooth, surjective map $\pi: M \to S^2$ with finitely many critical points $p_1, \dots, p_r \in M$, each satisfying \ref{defn:nodal} above. We assume that $\pi$ is injective on the set $\{p_1, \dots, p_r\}$ of its critical points and that Lefschetz fibrations are \emph{relatively minimal}, i.e. no fiber of $\pi$ contains an embedded $(-1)$-sphere. The \emph{genus} of a Lefschetz fibration $\pi: M \to S^2$ is the genus of any regular fiber $\pi^{-1}(b)$, $b \in S^2$. Two Lefschetz fibrations $\pi_1: M_1 \to S^2$ and $\pi_2: M_2 \to S^2$ are \emph{equivalent} if there exist diffeomorphisms $\Phi: M_1 \to M_2$ and $\varphi: S^2 \to S^2$ so that the following diagram commutes:
\begin{center}
\begin{tikzcd}
M_1 \arrow[r, "\Phi"] \arrow[d, "\pi_1"] & M_2 \arrow[d, "\pi_2"] \\
S^2 \arrow[r, "\varphi"]                 & S^2
\end{tikzcd}
\end{center}

A choice of a regular value $b \in S^2$ and a choice of a diffeomorphism $\Phi_b: \pi^{-1}(b) \xrightarrow{\sim} \Sigma_g$ determine the \emph{monodromy representation} of a Lefschetz fibration $\pi: M \to S^2$, which is an antihomomorphism of groups
\[
	\rho: \pi_1(S^2 - \{q_1, \dots, q_r\}, b) \to \Mod(\Sigma_g),
\]
where $q_1, \dots, q_r \in S^2$ denote the singular values of $\pi$ (cf. \cite[p. 291]{gompf-stipsicz}). The monodromy representation $\rho$ is characterized by the property that for any loop $\gamma \in \pi_1(S^2 - \{q_1, \dots, q_r\}, b)$ and any $\varphi \in \Diff^+(\Sigma_g)$ that represents the mapping class $\rho(\gamma)$, there is an isomorphism of $\Sigma_g$-bundles over $S^1$
\[
	\Phi: \gamma^* M \to M_\varphi := \Sigma_g \times [0, 1] / ((\varphi(x), 0) \sim (x, 1))
\]
between the pullback of $\pi$ along $\gamma$ and the \emph{mapping torus} $M_\varphi$. Viewing $\gamma$ as a map $[0, 1] \to S^2 - \{q_1, \dots, q_r\}$ with $\gamma(0) = \gamma(1) = b$, the restriction of $\Phi$ to $(\gamma^*\pi)^{-1}(0) = \pi^{-1}(b) \to \Sigma_{g} \times \{0\}$ agrees with the identification $\Phi_b: \pi^{-1}(b) \to \Sigma_g$ chosen in the definition of the monodromy representation.

If $\gamma_i \in \pi_1(S^2 - \{q_1, \dots, q_r\}, b)$ is a loop obtained from a small, counterclockwise loop around $q_i$ connected to $b$ by a path in $S^2 - \{q_1, \dots, q_r\}$, the monodromy $\rho(\gamma_i)$ is a right-handed Dehn twist $T_{\ell_i} \in \Mod(\Sigma_g)$ about a \emph{vanishing cycle} $\ell_i$, an isotopy class of some essential simple closed curve in $\Sigma_g$. Given an additional choice of generators $\gamma_1, \dots, \gamma_r \in \pi_1(S^2 - \{q_1, \dots, q_r\}, b)$ such that $\gamma_1 \gamma_2 \dots \gamma_r = 1$, the monodromy representation determines a \emph{monodromy factorization}, a relation in $\Mod(\Sigma_g)$ of the form
\[
	T_{\ell_r} \dots T_{\ell_1} = 1 \in \Mod(\Sigma_g).
\]

More generally, one can consider a \emph{positive factorization} of any mapping class $h \in \Mod(\Sigma_{g,n}^m)$, i.e. a factorization of $h$ consisting only of right-handed Dehn twists $T_\ell \in \Mod(\Sigma_{g,n}^m)$. Two positive factorizations of $h$ in $\Mod(\Sigma_{g,n}^m)$ are said to be \emph{Hurwitz equivalent} if they are related by a sequence of the following two types of moves:
\begin{enumerate}[(a)]
\item \emph{(Elementary transformation)} For any $1 \leq i \leq r-1$,
\[
	T_{\ell_r} \dots T_{\ell_{i+1}} T_{\ell_i} \dots T_{\ell_1} \leftrightarrow T_{\ell_r} \dots (T_{\ell_{i+1}} T_{\ell_i} T_{\ell_{i+1}}^{-1}) T_{\ell_{i+1}} \dots T_{\ell_1}.
\]
\item \emph{(Global conjugation)} For any $f \in \Mod(\Sigma_{g,n}^m)$ that commutes with $h \in \Mod(\Sigma_{g,n}^m)$
\[
	T_{\ell_r} \dots T_{\ell_1} \leftrightarrow (f T_{\ell_r} f^{-1}) \dots (f T_{\ell_1} f^{-1}).
\]
\end{enumerate}

The following theorem characterizes equivalence classes of Lefschetz fibrations by the Hurwitz equivalence classes of their monodromy factorizations.
\begin{thm}[{Kas \cite{kas}, Matsumoto \cite{matsumoto}}]\label{thm:matsumoto-kas}
Let $g \geq 2$. There is a bijection
\[
\left\{ \parbox{14em}{\centering genus-$g$ Lefschetz fibrations, up to equivalence} \right\} \xrightarrow{\sim} \left\{ \parbox{18em}{\centering positive factorizations of the identity in $\Mod(\Sigma_g)$, up to Hurwitz equivalence}\right\}
\]
given by monodromy factorizations.
\end{thm}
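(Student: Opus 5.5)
The bijection will come from the monodromy representation. The idea is to show that the assignment is well defined, that it is surjective by building a fibration from a given factorization, and that it is injective by reconstructing an equivalence from a Hurwitz equivalence. The hypothesis $g \geq 2$ enters through the Earle--Eells theorem: $\Diff_0(\Sigma_g)$ is contractible, so $\Sigma_g$-bundles over a space are classified up to isomorphism by the homotopy classes of their monodromy into $\Mod(\Sigma_g)$.

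\emph{Well-definedness.} For a Lefschetz fibration $\pi$ with regular value $b$, we choose an identification $\Phi_b$ and a Hurwitz system $\gamma_1,\dots,\gamma_r$ with $\gamma_1\cdots\gamma_r=1$. Each $\rho(\gamma_i)$ is the right-handed Dehn twist about a vanishing cycle; this follows from the local model $z^2+w^2$ via the Picard--Lefschetz formula. This produces a positive factorization of the identity. We then vary the choices.
\begin{enumerate}[(i)]
\item Any two Hurwitz systems are related by the braid group action on $\pi_1(S^2-\{q_i\},b)$, whose generators act on the factorization by elementary transformations.
\item Changing $\Phi_b$, or moving $b$ along a path, globally conjugates the factorization.
\item An equivalence $(\Phi,\varphi)$ carries a Hurwitz system to a Hurwitz system. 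If $\varphi$ reverses orientation, we first compose with a reflection; alternatively, we restrict to orientation-preserving $\varphi$, which is the convention in \cite{gompf-stipsicz}.
\end{enumerate}
So the Hurwitz class depends only on the equivalence class of the fibration.

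\emph{Surjectivity.} Given $T_{\ell_r}\cdots T_{\ell_1}=1$, we build $M$ in three steps.
\begin{enumerate}[(i)]
\item Over a disk $D$ containing $r$ marked points, take the $\Sigma_g$-bundle over the complement with the prescribed monodromy.
\item Fill in each marked point by attaching a Lefschetz $2$-handle along $\ell_i$ with framing $-1$ relative to the fiber framing. This realizes the local model at each critical point.
\item The boundary is the mapping torus of $T_{\ell_r}\cdots T_{\ell_1}=1$, which is $\Sigma_g\times S^1$. Glue in $\Sigma_g\times D^2$.
\end{enumerate}
Different trivializations of the boundary differ by loops in $\Diff^+(\Sigma_g)$, and these are null-homotopic because $g\ge2$, so the closed-up $M$ is unique. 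Relative minimality holds because the vanishing cycles are essential: a nonseparating vanishing cycle gives an irreducible fiber, and a separating one splits the fiber into two components of positive genus, neither of which is a $(-1)$-sphere.

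\emph{Injectivity.} Suppose two fibrations have Hurwitz-equivalent factorizations. After applying the braid moves and the global conjugation, we may assume the factorizations are identical with respect to chosen Hurwitz systems. A diffeomorphism of $S^2$ isotopic to the identity then carries one set of critical values and arcs to the other. Over the complement of neighborhoods of the critical values, the two bundles have equal monodromy, so they are isomorphic over a wedge of circles, and hence over the punctured disk by Earle--Eells.

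This isomorphism must then be extended over each Lefschetz singular neighborhood and over the last disk. This is the main obstacle. Near a critical point, a fibration is determined by its vanishing cycle together with the local model, so the isomorphism on the boundary mapping torus extends; this is the uniqueness of the Lefschetz handle. The delicate point is that the boundary isomorphism is only determined up to isotopy, so I would use that the centralizer of $T_\ell$ acts transitively enough to match the local models. For the final disk, the bundle isomorphism on $\Sigma_g\times S^1$ extends over $\Sigma_g\times D^2$ because $\pi_1(\Diff^+(\Sigma_g))=0$ for $g\ge 2$. This yields the required equivalence $(\Phi,\varphi)$.
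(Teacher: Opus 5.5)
The paper does not prove this statement; it quotes it from Kas and Matsumoto. Your outline is the standard argument behind that citation: Lefschetz handles with framing $-1$, plus Earle--Eells to make bundles over graphs and the final gluing unique. The structure is right, but two steps need repair.

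\emph{Orientation-reversing base maps.} Your first option in (iii), ``compose with a reflection'', does not work.

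Let $c$ be an orientation-reversing diffeomorphism of $S^2$. Then $c\circ\pi$ is again a Lefschetz fibration on the same oriented $M$: in the orientation-compatible charts $(\bar z,\bar w)$ on $M$ and the conjugated chart on $S^2$, it is still $z^2+w^2$. So $(\mathrm{id},c)$ is an equivalence in the paper's sense, since that definition does not require $\varphi$ to preserve orientation.

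However, $c\circ\pi$ induces the opposite orientation on the fibers and turns counterclockwise loops into clockwise ones. Its monodromy factorization is therefore the mirror $T_{r(\ell_1)}\cdots T_{r(\ell_r)}=1$: the order is reversed and $r$ is an orientation-reversing diffeomorphism of $\Sigma_g$.

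Composing with a reflection thus only relates $\pi_1$ to the mirror of $\pi_2$, and in general nothing forces a factorization to be Hurwitz equivalent to its mirror. The map is well defined only if $\varphi$ is required to be orientation-preserving (your second option, the standard convention for this theorem), or if mirroring is added to Hurwitz equivalence. You should say explicitly that you read the paper's definition this way, and drop the first option.

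\emph{Extension over the singular neighborhoods.} Your remark about the centralizer points the right way, but transitivity is not the property you need.

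Any bundle isomorphism $\pi_1^{-1}(\partial D_i)\to\pi_2^{-1}(\partial D_i)$, read in $M_{T_{\ell_i}}$ via the transported identifications, restricts on the reference fiber to a class in $C(T_{\ell_i})$, the stabilizer of $\ell_i$. By Earle--Eells, two such isomorphisms are isotopic through bundle maps iff these classes agree.

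So what you need is this: every element of the stabilizer of $\ell_i$ is induced by a fibration automorphism of the Lefschetz neighborhood covering the identity of $D_i$. This is true, built from three pieces:
\begin{itemize}
\item product maps fixing a neighborhood of $\ell_i$, for classes preserving $\ell_i$ with its orientation;
\item the local map $(z,w)\mapsto(z,-w)$, for classes reversing $\ell_i$;
\item the time-$2\pi$ lift of the rotation of $D_i$ about the critical value, which realizes $T_{\ell_i}$.
\end{itemize}
With this, the boundary isomorphism agrees up to bundle isotopy with the restriction of an isomorphism of neighborhoods, and the isotopy is absorbed in a collar.

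Alternatively, normalize the isomorphism over the complement so that it is isotopic to $(\Phi^2_b)^{-1}\circ\Phi^1_b$ on the fiber over $b$. Then every boundary class is trivial, and the identification of neighborhoods coming from the handle description already matches. As written, this step is only asserted.

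Two smaller points:
\begin{itemize}
\item The ``last disk'' only appears if you trivialize over a disk containing all critical values, rather than over the full complement in $S^2$. In that setup your argument is fine, since every component of $\Diff^+(\Sigma_g)$ is contractible.
\item On $S^2$, Hurwitz systems are related by braid moves together with conjugation by $\pi_1$ and cyclic permutation. Cyclic permutation is a sequence of elementary transformations plus one global conjugation, because the product is $1$, so your conclusion in (i) still holds.
\end{itemize}
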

This bijection immediately implies the following criterion for two Lefschetz fibrations to be inequivalent.
\begin{cor}\label{cor:noniso-criterion}
Let $g \geq 2$ and let $\pi_1: M_1 \to S^2$ and $\pi_2: M_2 \to S^2$ be genus-$g$ Lefschetz fibrations with monodromy representations $\rho_1$ and $\rho_2$ respectively. If the images $\mathrm{im}(\rho_1)$ and $\mathrm{im}(\rho_2)$ are not conjugate as subgroups of $\Mod(\Sigma_g)$ then the Lefschetz fibrations $\pi_1$ and $\pi_2$ are inequivalent.
\end{cor}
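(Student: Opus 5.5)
We argue by contrapositive: assuming $\pi_1$ and $\pi_2$ are equivalent, we show that $\mathrm{im}(\rho_1)$ and $\mathrm{im}(\rho_2)$ are conjugate in $\Mod(\Sigma_g)$. The first step is to observe that the image of the monodromy representation is the subgroup generated by the Dehn twists in any monodromy factorization. Indeed, $\pi_1(S^2 - \{q_1,\dots,q_r\}, b)$ is generated by the loops $\gamma_1,\dots,\gamma_r$, and $\rho(\gamma_i) = T_{\ell_i}$. Hence $\mathrm{im}(\rho) = \langle T_{\ell_1},\dots,T_{\ell_r}\rangle$. The fact that $\rho$ is an antihomomorphism rather than a homomorphism does not affect its image.

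Next we check how this subgroup behaves under the two Hurwitz moves.

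\textbf{Elementary transformation.} This replaces the pair $T_{\ell_{i+1}}, T_{\ell_i}$ by $T_{\ell_{i+1}}T_{\ell_i}T_{\ell_{i+1}}^{-1}, T_{\ell_{i+1}}$. Each new generator lies in the old subgroup. Conversely, $T_{\ell_i}$ is recovered by conjugating the first new element by $T_{\ell_{i+1}}^{-1}$. So the generated subgroup is unchanged.

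\textbf{Global conjugation by $f$.} This replaces the subgroup $G$ by $fGf^{-1}$.

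It follows that Hurwitz-equivalent factorizations generate conjugate subgroups. By \Cref{thm:matsumoto-kas}, equivalent fibrations have Hurwitz-equivalent monodromy factorizations, so their monodromy images are conjugate. This proves the contrapositive.

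It remains to handle the auxiliary choices in the definition of $\rho$: the base point $b$, the identification $\Phi_b$, and the generating system $\gamma_1,\dots,\gamma_r$. Changing $\Phi_b$ conjugates $\rho$ by a mapping class. Changing $b$ along a path conjugates $\rho$ by the transport along that path, composed with an identification. So the image of $\rho$ is well defined up to conjugacy, and the choice of generators does not matter since the image is the whole $\rho(\pi_1)$.

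One could bypass \Cref{thm:matsumoto-kas} and argue directly: an equivalence $(\Phi,\varphi)$ pulls back $\rho_2$ to a representation conjugate to $\rho_1$, with $\varphi$ inducing an isomorphism of fundamental groups. There is no real obstacle here; the only care needed is this bookkeeping of the choices.
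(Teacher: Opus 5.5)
Your proposal is correct and follows the paper's route. The paper deduces the corollary directly from the Kas--Matsumoto bijection (Theorem \ref{thm:matsumoto-kas}). The details you supply are exactly what the paper leaves implicit in ``immediately implies'': the image of $\rho$ is the subgroup generated by the vanishing-cycle twists, elementary transformations preserve this subgroup, global conjugations conjugate it, and changing the auxiliary choices only changes it up to conjugacy.
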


Adding the data of disjoint sections of Lefschetz fibrations yields more refined information on the monodromy side. Fix $n$-many disjoint sections $\sigma_1, \dots, \sigma_n: S^2 \to M$ of a Lefschetz fibration $\pi: M \to S^2$. By considering the monodromy factorization of $\pi$ with respect to the sections $\sigma_1, \dots, \sigma_n$ in $\Mod(\Sigma_g^n)$ instead (where the $i$th boundary component $\delta_i$ corresponds to the boundary of a neighborhood of the $i$th section $\sigma_i(b)$ in $\pi^{-1}(b)$), we obtain a lift
\[
	T_{\tilde{\ell}_r}\dots T_{\tilde{\ell}_1} = T_{\delta_1}^{a_1} \dots T_{\delta_n}^{a_n} \in \Mod(\Sigma_g^n)
\]
of the monodromy factorization $T_{\ell_r} \dots T_{\ell_1} = 1 \in \Mod(\Sigma_g)$ of $\pi$ under the cappping and forgetful homomorphism $\Mod(\Sigma_g^n) \to \Mod(\Sigma_g)$. For all $1 \leq i \leq r$, the essential simple closed curve $\tilde \ell_i \subseteq \Sigma_g^n$ is isotopic to $\ell_i$ in $\Sigma_g$ (after capping off the boundary components) and $a_i \in \N$ satisfies $-a_i = [\sigma_i(S^2)]^2$ where $[\sigma_i(S^2)]^2$ denotes the self-intersection of $\sigma_i(S^2)$ in $M$ \cite[Lemma 2.3]{smith}. By considering only the sections $\sigma_1, \dots, \sigma_n$ themselves and not their normal neighborhoods in $M$, we obtain a similar lift
\[
	T_{\hat \ell_r} \dots T_{\hat \ell_1} = 1 \in \Mod(\Sigma_{g,n})
\]
of the monodromy factorization $T_{\ell_r} \dots T_{\ell_1} = 1 \in \Mod(\Sigma_g)$ under the forgetful map $\Mod(\Sigma_{g,n}) \to \Mod(\Sigma_g)$. Conversely, any lifts of the monodromy factorization of $\pi$ in $\Mod(\Sigma_g)$ to $\Mod(\Sigma_{g}^n)$ or $\Mod(\Sigma_{g,n})$ as above give rise to $n$-many disjoint sections $\sigma_1, \dots, \sigma_n: S^2 \to M$ of $\pi: M \to S^2$ of self-intersections $-a_1, \dots, -a_n$ respectively.

By blowing up the set $B$ (with $n = \lvert B\rvert$) of base points of any Lefschetz pencil $\pi: M - B \to S^2$, we obtain an associated Lefschetz fibration $\pi: M \# n \overline{\CP^2} \to S^2$ with $n$-many disjoint sections $\sigma_1, \dots, \sigma_n$ of self-intersection $-1$. Conversely, taking a Lefschetz fibration $\pi: M \to S^2$ and blowing down any set of $n$-many $(-1)$-sections $\sigma_1, \dots, \sigma_n$ yields a Lefschetz pencil structure on $N$ (where $M \to N$ is this blow down map so that $M \cong N \# n \overline{\CP^2}$). Two Lefschetz pencils $\pi_1: M_1 - B_1 \to S^2$ and $\pi_2: M_2 - B_2 \to S^2$ are \emph{equivalent} if there exist diffeomorphisms $\Phi: M_1 \to M_2$ and $\varphi: S^2 \to S^2$ so that $\Phi(B_1) = B_2$ and the following diagram commutes:
\begin{center}
\begin{tikzcd}
M_1-B_1 \arrow[r, "\Phi"] \arrow[d, "\pi_1"] & M_2-B_2 \arrow[d, "\pi_2"] \\
S^2 \arrow[r, "\varphi"]                 & S^2
\end{tikzcd}
\end{center}
One can detect when two Lefschetz pencils are inequivalent by considering their associated Lefschetz fibrations:
\begin{prop}[{cf. Baykur--Hayano \cite[Corollary 3.10]{baykur-hayano}}]\label{prop:noniso-pencils}
Let $\pi_1: M_1 - B_1 \to S^2$ and $\pi_2: M_2 - B_2 \to S^2$ be Lefschetz pencils with $n = \lvert B_1 \rvert = \lvert B_2 \rvert$. If the Lefschetz fibrations $M_1 \# n \overline{\CP^2} \to S^2$ and $M_2 \# n \overline{\CP^2} \to S^2$ associated to the Lefschetz pencils $\pi_1$ and $\pi_2$ respectively are inequivalent then the Lefschetz pencils $\pi_1$ and $\pi_2$ are inequivalent.
\end{prop}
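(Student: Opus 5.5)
We argue by contrapositive: assume the pencils $\pi_1$ and $\pi_2$ are equivalent, and show that the associated Lefschetz fibrations are equivalent. Fix diffeomorphisms $\Phi: M_1 \to M_2$ and $\varphi: S^2 \to S^2$ with $\Phi(B_1) = B_2$ and $\pi_2 \circ \Phi = \varphi \circ \pi_1$ on $M_1 - B_1$. The task is to lift $\Phi$ to a diffeomorphism of the blow-ups $\widetilde{M}_i = M_i \# n\overline{\CP^2}$ that intertwines the associated fibrations $\widetilde{\pi}_i: \widetilde{M}_i \to S^2$.

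First we make $\Phi$ standard near each base point. For $p \in B_1$, take the charts of condition (a), so that $\pi_1(z,w) = [z:w]$ near $p$ and $\pi_2(z,w) = [z:w]$ near $\Phi(p)$. In these coordinates $\Phi$ fixes the origin and maps complex lines through $0$ to complex lines through $0$, with the induced map on the space of lines $\CP^1$ equal to $\varphi$ (after the identifications $S^2 \cong \CP^1$). The aim is to isotope $\Phi$, through diffeomorphisms preserving the fibration structure, to a map that is complex linear near $p$, or more generally of the form $v \mapsto A(v)$ with $A$ sending lines to lines.

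The natural candidate is the Alexander trick, via the derivative $d\Phi_p$. Since $\Phi$ maps every complex line $L$ through $0$ to a complex line, $d\Phi_p$ maps $L$ to the tangent line of $\Phi(L)$, which is again a complex line. A real-linear automorphism of $\R^4 = \C^2$ carrying complex lines to complex lines and preserving orientation is complex linear up to composition with a map that is conjugate linear on each line. Orientation compatibility of the charts together with orientation preservation should rule out the conjugate case, so $d\Phi_p \in GL_2(\C)$. The rescalings $\Phi_t(v) = \Phi(tv)/t$ for $t \in (0,1]$ each preserve the family of lines, since lines through $0$ are invariant under real scaling. Interpolating with a cutoff between $\Phi_t$ near $p$ and $\Phi$ outside a ball then gives a diffeomorphism that is fiber-preserving and equal to $d\Phi_p$ on a small ball. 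The map on $S^2$ near the corresponding fibers stays $\varphi$ because each fiber of a pencil near a base point is a union of line-germs, and the cutoff radially rescales within the same lines.

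Once $\Phi$ is complex linear near each base point, it lifts canonically to the blow-ups: a map in $GL_2(\C)$ induces a diffeomorphism of $\mathrm{Bl}_0\C^2$ covering it. This yields $\widetilde\Phi: \widetilde{M}_1 \to \widetilde{M}_2$. The fibration $\widetilde{\pi}_i$ is $\pi_i$ extended over each exceptional sphere by the projection $\mathrm{Bl}_0\C^2 \to \CP^1$. On the exceptional sphere the lift acts by the projectivization of $d\Phi_p$, which agrees with $\varphi$. Therefore $\widetilde\pi_2 \circ \widetilde\Phi = \varphi \circ \widetilde\pi_1$, so the fibrations are equivalent, which proves the contrapositive.

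The main obstacle is the local linearization step. Specifically, the interpolation between $\Phi$ and its derivative has to remain a diffeomorphism that still respects the pencil, and we need to verify that $d\Phi_p$ is genuinely complex linear rather than merely line-preserving. If the Alexander trick becomes awkward, a fallback is to blow up each base point in the real-oriented sense, i.e.\ replace it by the sphere of lines. Since $\Phi$ preserves lines, it extends continuously, and in fact smoothly, to this blow-up, and we would then smooth it. Alternatively, we can cite Baykur--Hayano \cite[Corollary 3.10]{baykur-hayano}, which is exactly this statement.
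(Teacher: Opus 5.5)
Your argument is correct, but it does not follow the paper, which gives no proof of its own and simply cites Baykur--Hayano. There the comparison is made on the monodromy side. A pencil with $n$ base points is encoded by its blow-up together with the $n$ exceptional $(-1)$-sections, i.e.\ by a positive factorization of $T_{\delta_1}\cdots T_{\delta_n}$ in $\Mod(\Sigma_g^n)$ up to Hurwitz equivalence. Capping the boundary components carries this to the factorization in $\Mod(\Sigma_g)$ that classifies the associated fibration.

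You instead lift the equivalence $(\Phi,\varphi)$ geometrically to the blow-ups. Your version is self-contained, works in every genus, needs no Kas--Matsumoto-type correspondence, and visibly carries exceptional sections to exceptional sections. The citation has the advantage of staying in the factorization language the rest of the paper uses (e.g.\ the lifts to $\Mod(\Sigma_{2g}^4)$).

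The step you flag as the main obstacle, the local linearization, can be dropped entirely. Write $\psi$ for $\varphi$ read through the two base-point charts and $\Phi_1$ for the first coordinate of $\Phi$. Then $\Phi(z,mz)$ lies on the line $[1:\psi(m)]$. So in the standard charts $(z,m)\mapsto(z,mz)$ of $\mathrm{Bl}_0\C^2$, the lift is simply $(z,m)\mapsto(\Phi_1(z,mz),\psi(m))$, using the other chart where $\psi(m)=\infty$. This map is smooth, and its smooth inverse is built the same way from $\Phi^{-1}$ and $\psi^{-1}$.

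This is exactly your fallback, and it is the cleanest proof. Note that it is the ordinary blow-up, which inserts the $\CP^1$ of complex lines. The real oriented blow-up would instead insert an $S^3$.

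Relatedly, orientation preservation on $\R^4$ does not exclude an antilinear $d\Phi_p$. The map $(z,w)\mapsto(\bar z,\bar w)$ preserves the orientation of $\C^2$ and carries complex lines to complex lines. What excludes it is that $\Phi$ preserves fiber orientations (i.e.\ $\varphi$ is orientation-preserving, under the usual convention). However, antilinear line-preserving maps $A$ also lift to $\mathrm{Bl}_0\C^2$ via $(v,\ell)\mapsto(Av,A\ell)$, so you never need to rule that case out.
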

\subsection{Partial conjugations}\label{sec:partial-conj}

One way to obtain new positive factorizations of the identity in $\Mod(\Sigma_{g,n})$ from old ones is via \emph{partial conjugation}. Given some positive factorization
\begin{equation}\label{eqn:original-sections}
	T_{\tilde \ell_r} \dots T_{\tilde \ell_{i+1}} T_{\tilde \ell_i} \dots T_{\tilde\ell_1} = 1 \in \Mod(\Sigma_{g,n})
\end{equation}
and any $\tilde f \in \Mod(\Sigma_{g,n})$ that commutes with the mapping class $T_{\tilde \ell_i} \dots T_{\tilde\ell_1}$ in $\Mod(\Sigma_{g,n})$, we form a new positive factorization
\begin{equation}\label{eqn:twisted-sections}
	T_{\tilde \ell_r} \dots T_{\tilde\ell_{i+1}} (\tilde f T_{\tilde \ell_i} \tilde f^{-1}) \dots (\tilde f T_{\tilde \ell_1} \tilde f^{-1}) = 1 \in \Mod(\Sigma_{g,n}).
\end{equation}

We now describe this operation in terms of Lefschetz fibrations and sections. Let $\pi: M \to S^2$ denote the Lefschetz fibration with sections $\sigma_1, \dots, \sigma_n: S^2 \to M$ corresponding to monodromy factorization (\ref{eqn:original-sections}). Let $b \in S^2$ denote the chosen regular value of $\pi$ and let $q_1, \dots, q_r \in S^2$ denote the singular values of $\pi$ so that $q_i$ corresponds to the vanishing cycle $\tilde \ell_i$ for all $1 \leq i \leq r$. We may then write the base $S^2$ as a union of two disks $S^2 = D_1 \cup_{\partial} D_2$ with $D_i \cong D^2$, where
\begin{enumerate}[(a)]
\item the base point $b \in S^2$ is contained in $\partial D_1 = \partial D_2$,
\item the chosen generators $\gamma_1, \dots, \gamma_i$ and singular values $q_1, \dots, q_i$ are contained in $D_1$ and the chosen generators $\gamma_{i+1}, \dots, \gamma_r$ and singular values $q_{i+1}, \dots, q_r$ are contained in $D_2$, and
\item \label{setup-monodromy} the based loop $(\partial D_1, b)$ can be oriented so that as an element of $\pi_1(S^2 - \{q_1, \dots, q_r\}, b)$,
\[
	[\partial D_1] = \gamma_1 \dots \gamma_i.
\]
\end{enumerate}
Let
\[
	\rho: \pi_1(S^2 - \{q_1, \dots, q_r\}, b) \to \Mod(\Sigma_{g, n})
\]
denote the monodromy representation corresponding to $\pi$ and $\sigma_1, \dots, \sigma_n$. Then \ref{setup-monodromy} implies that
\[
	\rho([\partial D_1]) = T_{\tilde\ell_i} \dots T_{\tilde\ell_1} \in \Mod(\Sigma_{g,n}).
\]
In other words, if $\tilde \eta \in \Diff^+(\Sigma_{g,n})$ is any representative of $T_{\tilde\ell_i} \dots T_{\tilde\ell_1} \in \Mod(\Sigma_{g,n})$ then there is an isomorphism of $\Sigma_g$-bundles over $S^1$
\[
	\pi^{-1}(\partial D_1) \to M_{\tilde\eta} = (\Sigma_g \times [0, 1])/((\tilde\eta(x), 0) \sim (x,1))
\]
that sends the section $\sigma_i$ to the section $s_i: S^1 \to M_{\tilde\eta}$ defined by the marked point $p_i$ of $\Sigma_{g,n}$ fixed by $\tilde\eta$.

Let $\varphi \in \Diff^+(\Sigma_{g,n})$ be any representative of $\tilde f \in \Mod(\Sigma_{g,n})$. Because $\tilde f$ and $T_{\tilde\ell_i} \dots T_{\tilde\ell_1}$ commute in $\Mod(\Sigma_{g,n})$, there is an isotopy $\varphi_t: \Sigma_{g,n} \times [0, 1] \to \Sigma_{g,n}$ with $\varphi_0 = \varphi$ and $\varphi_1 = \tilde\eta^{-1} \circ \varphi \circ \tilde\eta$. This isotopy induces an isomorphism $\mathcal F: M_{\tilde\eta} \to M_{\tilde\eta}$ of $\Sigma_g$-bundles
\[
	\mathcal F: (x, t) \mapsto (\varphi_t(x), t)
\]
that fixes each section $s_i: S^1 \to M_{\tilde\eta}$ for all $1 \leq i \leq n$. Let $\mathcal F$ also denote the corresponding map $\pi^{-1}(\partial D_1) \to \pi^{-1}(\partial D_2)$ under the identifications $\pi^{-1}(\partial D_i) \to M_{\tilde\eta}$. Then $\pi$ induces a new Lefschetz fibration
\begin{equation}
\pi^{-1}(D_1) \cup_{\mathcal F: \pi^{-1}(\partial D_1) \to \pi^{-1}(\partial D_2)} \pi^{-1}(D_2) \to S^2
\end{equation}
with $n$-many sections induced by the sections $\sigma_1, \dots, \sigma_n$ of $\pi$. This resulting Lefschetz fibration and sections have monodromy factorization (\ref{eqn:twisted-sections}) with respect to the original identification
\[
	\Phi_b: (\pi^{-1}(b), \sigma_1(b), \dots, \sigma_n(b)) \to \Sigma_{g,n},
\]
viewing $b$ as a point in $\partial D_2$.

\subsection{The Torelli group and the Johnson homomorphism}\label{sec:torelli-johnson-background}

Let $g \geq 2$. The \emph{Torelli group} $\mathcal I_g \leq \Mod(\Sigma_g)$ is the kernel of the symplectic representation
\[
	\Mod(\Sigma_g) \to \Aut(H_1(\Sigma_g; \Z)) \cong \Sp(2g, \Z).
\]
We will study the Torelli group via a surjective group homomorphism
\[
	\tau: \mathcal I_g \to \left(\wedge^3 H_1(\Sigma_g)\right)/H_1(\Sigma_g)
\]
called the \emph{Johnson homomorphism} \cite{johnson} (also see \cite[Section 6.6]{farb-margalit}). Here, the inclusion $H_1(\Sigma_g) \hookrightarrow \wedge^3 H_1(\Sigma_g)$ is given by
\[
	c \mapsto \left(\sum_{i=1}^g a_i \wedge b_i\right) \wedge c
\]
where $a_1, b_1, \dots, a_g, b_g \in H_1(\Sigma_g)$ is any standard symplectic basis.

One useful property of $\tau$ is its \emph{naturality property}: for any $h \in \Mod(\Sigma_g)$ and any $f \in \mathcal I_g$,
\begin{equation}\label{eqn:naturality}
	\tau(hfh^{-1}) = h_*(\tau(f))
\end{equation}
where on the right hand side, $\Mod(\Sigma_g)$ acts on $ \left(\wedge^3 H_1(\Sigma_g)\right)/H_1(\Sigma_g)$ via the symplectic representation \cite[Equation (6.1)]{farb-margalit}. We record one easy but important consequence of naturality below:

\begin{cor}\label{cor:hyperelliptic-kernel}
Fix a hyperelliptic involution $\iota \in \Mod(\Sigma_g)$ and let $\SMod(\Sigma_g)$ denote the hyperelliptic mapping class group of $\Sigma_g$, i.e. the centralizer of $\iota$ in $\Mod(\Sigma_g)$. There is an inclusion
\[
	\SMod(\Sigma_g) \cap \mathcal I_g \leq \ker(\tau).
\]
\end{cor}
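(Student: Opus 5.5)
The plan is to apply the naturality property (\ref{eqn:naturality}) with $h = \iota$, using that the hyperelliptic involution acts on $H_1(\Sigma_g)$ as $-\Id$.

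Let $f \in \SMod(\Sigma_g) \cap \mathcal I_g$. Since $f$ commutes with $\iota$, we have $\iota f \iota^{-1} = f$, so naturality gives
\[
	\tau(f) = \tau(\iota f \iota^{-1}) = \iota_*(\tau(f)).
\]
Next we compute the action of $\iota_*$ on $\left(\wedge^3 H_1(\Sigma_g)\right)/H_1(\Sigma_g)$. The hyperelliptic involution acts on $H_1(\Sigma_g;\Z)$ by $-\Id$. This is the standard fact that $\iota$ reverses every curve in the chain $c_1,\dots,c_{2g+1}$ whose classes generate $H_1$, or equivalently that $\Sigma_g/\iota \cong S^2$ has trivial rational homology. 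Consequently $\iota_*$ acts on $\wedge^3 H_1$ by $(-1)^3 = -1$. It also acts by $-1$ on the image of $H_1$, which is consistent with $\iota_*$ preserving $\omega = \sum a_i\wedge b_i$. So $\iota_*$ acts on the quotient by $-\Id$.

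Combining the two observations gives $\tau(f) = -\tau(f)$, that is, $2\tau(f) = 0$. To finish we need the quotient $\left(\wedge^3 H_1\right)/H_1$ to be torsion-free. The key point is that the image of $H_1$ under $c \mapsto \omega\wedge c$ is a primitive (saturated) sublattice of $\wedge^3 H_1$. To check this, take $c = \sum (x_j a_j + y_j b_j)$ and fix an index $j$. Because $g \geq 2$, we can choose $i \neq j$, and then the coefficient of $a_i\wedge b_i\wedge a_j$ in $\omega\wedge c$ is exactly $x_j$; similarly for $y_j$. Hence if $\omega\wedge c$ is divisible by $m$, then so is $c$. The quotient is therefore a free abelian group, so $\tau(f) = 0$.

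The only mildly delicate step is this torsion-freeness of the target. The rest is immediate from naturality and from $\iota_* = -\Id$ on homology.
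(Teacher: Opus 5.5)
Your proof is correct and is essentially the paper's argument: naturality with $h = \iota$ gives $\tau(f) = \iota_*\tau(f) = -\tau(f)$, and torsion-freeness of $(\wedge^3 H_1(\Sigma_g))/H_1(\Sigma_g)$ then forces $\tau(f) = 0$. The only difference is that you prove torsion-freeness directly with a saturation argument, whereas the paper asserts it at this point and proves it later in Lemma \ref{lem:nonzero-wedge} by an explicit change of basis.
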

This corollary follows from naturality and the fact that $\iota$ induces the negation map on a torsion-free abelian group $\wedge^3 H_1(\Sigma_g) / H_1(\Sigma_g)$.

\section{Partial conjugations of the Matsumoto--Cadavid--Korkmaz Lefschetz fibration}\label{sec:twisted-mck}

In this section we construct a genus-$2g$ Lefschetz fibration $\pi_n: X_n \to S^2$ for each $n \in \Z_{\geq 0}$ and for all $g \geq 2$ by considering partial conjugations of the Matsumoto--Cadavid--Korkmaz (MCK) Lefschetz fibration. We will prove later that the fibrations $\pi_n$ are pairwise inequivalent (Section \ref{sec:johnson}), that the $4$-manifolds $X_n$ are pairwise diffeomorphic (Section \ref{sec:diffeo}), and pairwise symplectomorphic (Section \ref{sec:symplectic}).

To define the MCK Lefschetz fibration of genus $2g$, first consider the isotopy classes of curves shown in Figure \ref{fig:mck-vanishing-cycles}.
Dehn twists about these curves form a factorization of an important involution $[\eta] \in \Mod(\Sigma_{2g})$, where $\eta \in \Diff^+(\Sigma_{2g})$ denotes the order-$2$ diffeomorphism depicted in Figure \ref{fig:mck-vanishing-cycles}.

\begin{figure}
\centering
\includegraphics[width=0.4\textwidth]{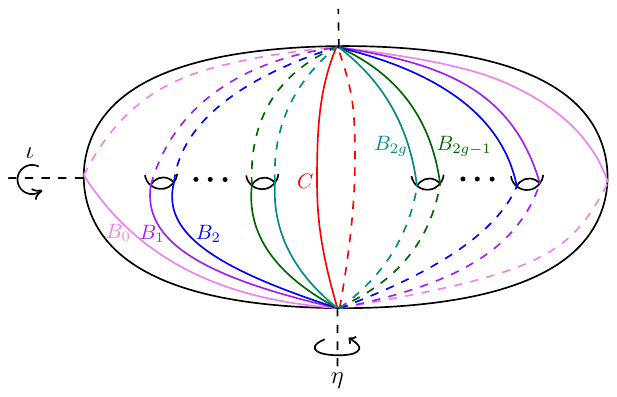}
\caption{Vanishing cycles of the MCK Lefschetz fibration of genus $2g$, the involution $\eta$, and a hyperelliptic involution $\iota$.}\label{fig:mck-vanishing-cycles}
\end{figure}

\begin{thm}[{Cadavid \cite[Theorem 5.1.1]{cadavid}, Korkmaz \cite[Theorem 3.4]{korkmaz}}]\label{thm:eta-factorization}
Let
\[
	h := T_{B_0} T_{B_1} \dots T_{B_{2g}} T_C \in \Mod(\Sigma_{2g}).
\]
Then $h = [\eta]$. In particular, $h^2 = 1 \in \Mod(\Sigma_{2g})$.
\end{thm}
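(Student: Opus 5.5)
This statement is cited from Cadavid and Korkmaz. I would prove it with a standard chain-relation argument in $\Mod(\Sigma_{2g})$, showing that the product of twists agrees with $[\eta]$.

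\textbf{Setup.} Arrange the surface $\Sigma_{2g}$ so that $\eta$ is the rotation by $\pi$ about an axis meeting $\Sigma_{2g}$ in exactly two points. The curves $B_0, B_1, \dots, B_{2g}$ are each preserved by $\eta$, and $C$ is the separating-type or axis curve shown in Figure \ref{fig:mck-vanishing-cycles}. The key tool is the chain relation. For a chain $c_1, \dots, c_k$ of curves with $k$ even, the product $(T_{c_1}\cdots T_{c_k})^{2k+2}$ equals the twist about the single boundary curve of a neighborhood of the chain. For $k$ odd, $(T_{c_1}\cdots T_{c_k})^{k+1}$ equals the product of the twists about the two boundary curves.

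\textbf{Route via the hyperelliptic involution.} Write $\iota$ for the hyperelliptic involution on the standard chain $c_1,\dots,c_{4g+1}$. Recall that
\[
\iota = T_{c_1}T_{c_2}\cdots T_{c_{4g+1}}T_{c_{4g+1}}\cdots T_{c_1}.
\]
Now $\eta\iota$ is an involution, so I would express $\eta$ as $\iota$ composed with a rotation. Then I would use braid and commutation relations to rewrite the palindromic chain product. The curves $B_j$ should be exactly the images of the $c_i$ under partial products. For example, $B_j$ should be obtained from the even-indexed chain curves via conjugations $T_{c_{i}}T_{c_{i+1}}(c_{i+2})$-type. This lets the $4g+2$ palindromic factors collapse into the $2g+2$ twists $T_{B_0}\cdots T_{B_{2g}}T_C$, times an element already identified as $\iota\eta$-compatible.

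\textbf{Direct route (the one I would actually carry out).} This is Korkmaz's argument.
\begin{enumerate}
\item Check that each curve in $\{B_0,\dots,B_{2g},C\}$ is sent by $h$ to the corresponding curve under $\eta$. In particular, compute $h(B_i)$ and $h(C)$ using the intersection pattern: the $B_i$ pairwise intersect twice or are nested, and $C$ meets each $B_i$ appropriately.
\item Extend this to a collection of curves that fill $\Sigma_{2g}$ and have no nontrivial symmetries. Examples are the standard chain curves $c_1,\dots,c_{4g+1}$ together with one extra curve. Verify $h(c)=\eta(c)$ for each of them, one Dehn twist at a time, using $T_a(b)=b$ when $i(a,b)=0$ and the standard picture when $i(a,b)=1$ or $2$.
\item By the Alexander method (Farb--Margalit), $h\eta^{-1}$ fixes each of these curves with orientations. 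Hence $h\eta^{-1}$ is trivial, because the filling collection's stabilizer is trivial for $2g\ge 2$. This gives $h=[\eta]$, and then $h^2=[\eta^2]=1$.
\end{enumerate}

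\textbf{Main obstacle.} The main obstacle is step (2). Tracking the images of curves through $2g+2$ successive twists when the $B_i$ intersect each other is bookkeeping-heavy. I would organize it by induction on $g$. The $B_i$ are nested and symmetric, so cutting along an $\eta$-invariant separating curve reduces the computation to the genus-$2$ (Matsumoto) case plus a uniform inductive step. Orientation of the curves also needs care so that $h\eta^{-1}$ is not a hidden nontrivial finite-order symmetry. I would rule this out by including a curve that is not $\iota$-symmetric, which excludes $h\eta^{-1}=\iota$.
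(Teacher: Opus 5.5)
Your direct route is essentially the paper's proof. The paper applies the Alexander method to the filling family $c_1,\dots,c_{2g+1},d_1,\dots,d_{2g}$ (these play the role of your $4g+1$ chain curves), checks $h(c_j)=c_{2g+2-j}$ and $h(d_j)=\bar d_{2g+1-j}$ as \emph{oriented} curves, and uses that orientation bookkeeping, rather than an extra non-$\iota$-symmetric curve, to conclude $[\eta]h=1$. No induction on $g$ is needed: the curves are placed so that each symmetric pair $c_j,c_{2g+2-j}$ ($j\le g$) is touched only by $T_{B_{2j-2}}T_{B_{2j-1}}$, each pair $d_j,d_{2g+1-j}$ only by $T_{B_{2j-1}}T_{B_{2j}}$, and the middle curve $c_{g+1}$ is handled by the lantern relation in the four-holed sphere containing $B_{2g}$, $C$, $c_{g+1}$, which gives $T_{B_{2g}}T_C(c_{g+1})=c_{g+1}$.
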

With Lemma \ref{thm:eta-factorization} in hand, we are ready to define the MCK Lefschetz fibration of genus $2g$.
\begin{defn}\label{defn:mck}
The \emph{Matsumoto--Cadavid--Korkmaz Lefschetz fibration} (or \emph{MCK Lefschetz fibration}) of genus $2g$ is the Lefschetz fibration $\pi_0: X_0 \to S^2$ with monodromy factorization
\[
	T_{B_0} T_{B_1} \dots T_{B_{2g}} T_C T_{B_0} T_{B_1} \dots T_{B_{2g}} T_C = 1 \in \Mod(\Sigma_{2g}).
\]
\end{defn}

\begin{figure}
\includegraphics[width=0.9\textwidth]{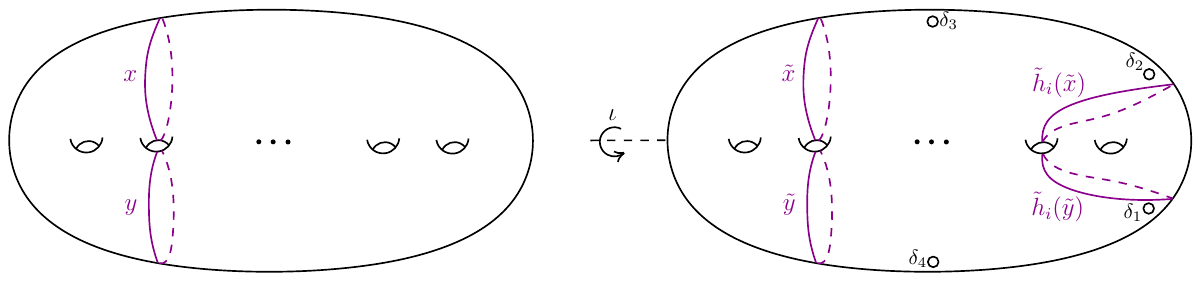}
\caption{Left: Curves used to define $f \in \mathcal I_{2g}$; Right: Lifts $\tilde x, \tilde y$ of $x, y$ to $\Sigma_{2g}^4$ and their images under $\tilde h_i \in \Mod(\Sigma_{2g}^4)$ defined in Lemma \ref{lem:bounding-pair-sections} for both $i = 1, 2$ (cf. Lemma \ref{lem:tilde-f-lemma}). A hyperelliptic involution $\iota$ acting on $\Sigma_{2g}^4$ permuting the boundary components.}\label{fig:bounding-pair}
\end{figure}

Hamada \cite{hamada} constructed multiple sets of four disjoint $(-1)$-sections of the MCK Lefschetz fibration. In this paper, we will consider two sets of sections given in \cite{hamada}.
\begin{figure}
\includegraphics[width=0.9\textwidth]{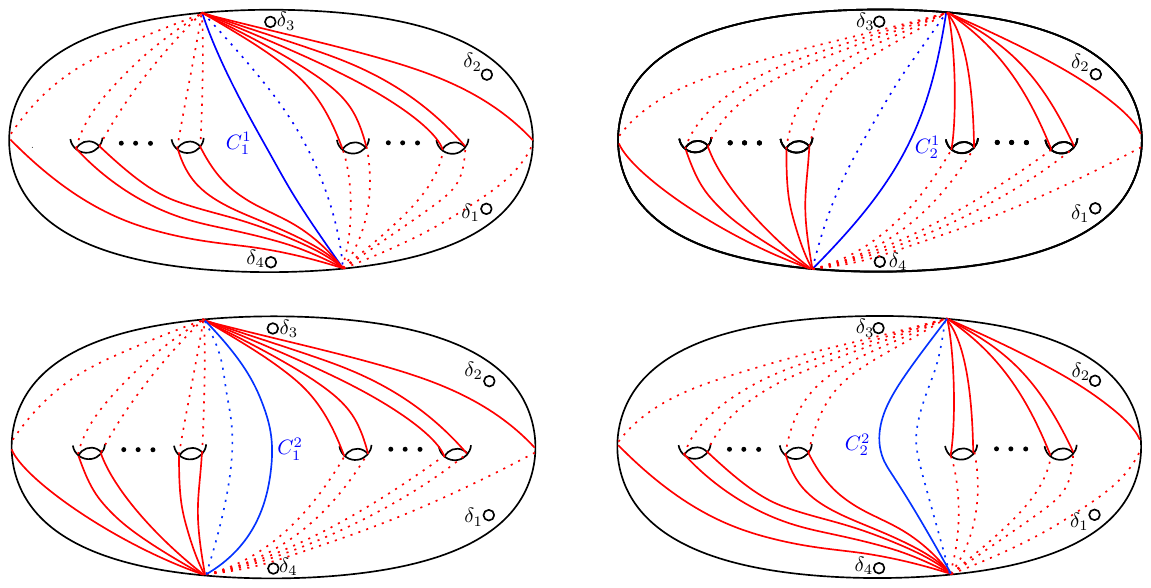}
\caption{Lifts of the vanishing cycles of the MCK Lefschetz fibration of genus $2g$ to $\Sigma_{2g}^4$ found by Hamada \cite{hamada}. The four boundary components of $\Sigma_{2g}^4$ are denoted by $\delta_1, \delta_2, \delta_3, \delta_4$. Top left: $B_{0, 1}^1, \dots, B_{2g,1}^1, C_1^1$; Top right: $B_{0, 2}^1, \dots, B_{2g,2}^1, C_2^1$; Bottom left: $B_{0,1}^2, \dots, B_{2g,1}^2, C_1^2$; Bottom right: $B_{0,2}^2, \dots, B_{2g,2}^2, C_2^2$}\label{fig:MCK-lifts}
\end{figure}
\begin{thm}[{Hamada \cite[Sections 3.4.1 -- 3.4.2]{hamada}}]\label{thm:hamada-sections}
For $i = 1, 2$, let $B_{0,1}^i, B_{1,1}^i, \dots, B_{2g, 1}^i, C_1^i$ and $B_{0,2}^i, B_{1,2}^i, \dots, B_{2g,2}^i, C_2^i$ be the isotopy classes of curves in $\Sigma_{2g}^4$ as shown in Figure \ref{fig:MCK-lifts}. Then
\[
	T_{B_{0,1}^i} T_{B_{1,1}^i} \dots T_{B_{2g,1}^i} T_{C_1^i}  T_{B_{0,2}^i} T_{B_{1,2}^i} \dots T_{B_{2g,2}^i} T_{C_2^i} = T_{\delta_1} T_{\delta_2} T_{\delta_3} T_{\delta_4} \in \Mod(\Sigma_{2g}^4).
\]
\end{thm}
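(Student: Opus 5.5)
We plan to verify Hamada's relation by reducing it to standard relations in $\Mod(\Sigma_{2g}^4)$: the chain relation and the lantern relation. No figure-by-figure isotopy computation should be needed. The structure of the lifted curves in Figure \ref{fig:MCK-lifts} suggests exactly this.

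\textbf{First half of the product.} Consider $W_1^i := T_{B_{0,1}^i} T_{B_{1,1}^i} \cdots T_{B_{2g,1}^i} T_{C_1^i}$. In the picture the curves $B_{0,1}^i, \dots, B_{2g,1}^i$ form a chain, or a chain after a small modification near the boundary components. A chain of odd length $2g+1$ has a regular neighborhood whose boundary consists of two curves $d, d'$. The chain relation
\[
(T_{B_0} \cdots T_{B_{2g}})^{2g+2} = T_d T_{d'}
\]
identifies the product of chain twists, up to boundary twists, with a ``half-twist'' type element. This element acts on the chain neighborhood as the restriction of $\eta$.

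We will then show that $W_1^i$ is a lift $\tilde h_1^i$ of $[\eta]$ to $\Mod(\Sigma_{2g}^4)$. The involution $\eta$ has two fixed points and swaps the other structure, so the lift interchanges boundary components in pairs up to boundary twists. Concretely, compose with the action on $C_1^i$ and track the images of a collection of arcs joining the boundary components $\delta_1, \dots, \delta_4$ to a cut system. By the Alexander method, it suffices to compute the images of finitely many arcs and curves, each of which is an explicit iterated Dehn twist calculation.

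\textbf{Second half and the product.} We do the same for $W_2^i := T_{B_{0,2}^i} \cdots T_{C_2^i}$. The second set of curves is the image of the first under a half-rotation or under $\eta$-conjugation, so $W_2^i$ is again a lift of $\eta$.

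The product $W_1^i W_2^i$ then lies in the kernel of the capping map $\Mod(\Sigma_{2g}^4) \to \Mod(\Sigma_{2g})$. That kernel is generated by boundary twists and point-pushing along $\pi_1$ of the configuration space. To show the product equals exactly $T_{\delta_1}T_{\delta_2}T_{\delta_3}T_{\delta_4}$, we will verify two things:
\begin{enumerate}[(a)]
\item it acts trivially on a filling family of arcs between boundary components, so the point-pushing part vanishes;
\item the boundary-twist exponents are each $1$.
\end{enumerate}
The exponents can be read off from self-intersections: blow-up count and the Euler characteristic give four $(-1)$-sections, consistent with exponents $a_j = 1$ by \cite[Lemma 2.3]{smith}. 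We will confirm this by computing the rotation of an arc near each $\delta_j$.

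\textbf{Main obstacle.} The hardest step is the explicit arc-tracking that pins down $W_1^i$ as a specific lift of $\eta$. That step must control both the point-pushing ambiguity and the fractional boundary twisting, and it is sensitive to the placement of the $\delta_j$ in each of the two configurations $i = 1, 2$.

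I would first try to realize $W_1^i$ via a lantern relation around pairs of boundary components, since lanterns naturally produce boundary twists. This would convert the $\Sigma_{2g}^4$ relation into the known $\Sigma_{2g}^2$ or closed relation of Theorem \ref{thm:eta-factorization}, plus local lantern substitutions near the boundary. Failing that, I would fall back to a direct Alexander-method check, organized by the hyperelliptic symmetry $\iota$ that permutes the boundary components. Using that symmetry halves the number of arcs to track.
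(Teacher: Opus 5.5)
Your proposal has a genuine gap. The steps you make concrete do not work, and the step that would work is exactly the computation you leave as ``the main obstacle.''

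\emph{The chain relation.} It cannot identify $W_1^i$ with a lift of $\eta$. The curves $B_{0,1}^i,\dots,B_{2g,1}^i$ are not a chain: they are $\eta$-invariant and consecutive ones meet twice. Likewise $B_{2g,k}^i$, $C_k^i$ and the middle chain curve $c_{g+1}$ are the interior curves of a lantern. More fundamentally, even for an honest chain $a_0,\dots,a_{2g}$, the chain relation only says that the $(2g+2)$-nd power of $T_{a_0}\cdots T_{a_{2g}}$ is a boundary multitwist. It says nothing about a single product, followed by $T_C$, acting as an involution. In any case $W_1^i$ and $W_2^i$ cap to $h=[\eta]$ automatically, since every $B_{j,k}^i$ caps to $B_j$ and every $C_k^i$ to $C$. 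The real problem is to decide \emph{which} element of the kernel of $\Mod(\Sigma_{2g}^4)\to\Mod(\Sigma_{2g})$ the product $W_1^iW_2^i$ is. That kernel (boundary twists together with the pure surface braid group on four strands) is large, so using $h^2=1$ as input saves nothing. Also, no element of $\Mod(\Sigma_{2g}^4)$ can interchange boundary components.

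\emph{Step (b) is circular.} That the sections cut out by this particular lift are $(-1)$-sections is precisely what the theorem asserts. Neither $\chi(X_0)$ nor the diffeomorphism type of $X_0$ determines the boundary exponents of a given lift.

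\emph{Step (a) is misstated.} $T_{\delta_1}\cdots T_{\delta_4}$ moves every arc ending on the boundary, so the product should \emph{not} fix such arcs. Moreover, a few arcs joining the $\delta_j$ cannot detect a boundary component being pushed around a handle, so closed curves must be part of the test family.

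What proves the theorem, and what the paper does, is your fallback carried out in full. Set $k_i:=T_{\delta_1}^{-1}\cdots T_{\delta_4}^{-1}W_1^iW_2^i$. Show via the Alexander method that $k_i$ fixes the filling system of Figure~\ref{fig:filling-curves}: the chain $c_1,\dots,c_{2g+1},d_1,\dots,d_{2g}$ together with three arcs $\alpha_1,\alpha_2,\alpha_3$ joining the boundary components.
\begin{itemize}
\item On the chain, $W_2^i$ reverses it ($c_j\mapsto c_{2g+2-j}$, $d_j\mapsto \bar d_{2g+1-j}$) and $W_1^i$ reverses it back. Each step involves only the two $B$-curves meeting the given chain curve (e.g.\ $T_{B_{2j-2}}T_{B_{2j-1}}$ on $c_j$).
\item The exception is $c_{g+1}$, which is fixed by $T_{B_{2g}}T_C$ because of the lantern relation $T_{B_{2g}}T_CT_{c_{g+1}}=T_{x_1}T_{x_2}T_{x_3}T_{x_4}$.
\item The arc $\alpha_1$ is tracked twist by twist through $W_2^i$ to $T_{\delta_1}\cdots T_{\delta_4}(\alpha_1)$, which misses all curves of $W_1^i$.
\item The arc $\alpha_3$ misses most of $W_2^i$ and is tracked through the rest to $T_{\delta_1}T_{\delta_2}(\alpha_3)$.
\item The arc $\alpha_2=\iota(\alpha_1)$ follows from the hyperelliptic involution $\iota$. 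It swaps the two halves of the factorization when $i=1$ and preserves each curve when $i=2$.
\end{itemize}
Comparing with $T_{\delta_1}\cdots T_{\delta_4}$ on the arcs kills the point-pushing ambiguity and pins every boundary exponent to $1$ at once. Theorem~\ref{thm:eta-factorization} is not needed as input; the paper deduces it from the same curve computations. Your proposal names this route but does not execute it, and those explicit computations are the entire proof.
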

For both completeness and convenience of the reader, we prove Theorem \ref{thm:hamada-sections} in Appendix \ref{appendix-a} and deduce Theorem \ref{thm:eta-factorization} from it. The following lemma constructs a positive factorization of $T_{\delta_1} T_{\delta_2} T_{\delta_3} T_{\delta_4} \in \Mod(\Sigma_{2g}^4)$ for each $n \in \Z$ via partial conjugation and Theorem \ref{thm:hamada-sections}.

\begin{lem}\label{lem:bounding-pair-sections}
Let $B_{0,1}^i, B_{1,1}^i, \dots, B_{2g, 1}^i, C_1^i$ and $B_{0,2}^i, B_{1,2}^i, \dots, B_{2g,2}^i, C_2^i$ for $i = 1, 2$ be the isotopy classes of curves in $\Sigma_{2g}^4$ as shown in Figure \ref{fig:MCK-lifts} and let $\tilde x, \tilde y$ be the isotopy classes of curves in $\Sigma_{2g}^4$ as shown in the right side of Figure \ref{fig:bounding-pair}. Let
\[
	\tilde h_i := T_{B_{0,2}^i} T_{B_{1, 2}^i} \dots T_{B_{2g,2}^i} T_{C_2^i} \in \Mod(\Sigma_{2g}^4).
\]
Then
\[
	\tilde f := T_{\tilde x} T_{\tilde y}^{-1} T_{\tilde h_1(\tilde x)} T_{\tilde h_1(\tilde y)}^{-1} = T_{\tilde x} T_{\tilde y}^{-1} T_{\tilde h_2(\tilde x)} T_{\tilde h_2(\tilde y)}^{-1} \in \Mod(\Sigma_{2g}^4)
\]
and for any $n \in \Z$ and for $i = 1, 2$,
\[
	T_{B_{0,1}^i} T_{B_{1,1}^i} \dots T_{B_{2g, 1}^i} T_{C_1^i} T_{\tilde f^n(B_{0,2}^i)} T_{\tilde f^n(B_{1, 2}^i)} \dots T_{\tilde f^n(B_{2g,2}^i)} T_{\tilde f^n(C_2^i)}  = T_{\delta_1} T_{\delta_2} T_{\delta_3} T_{\delta_4} \in \Mod(\Sigma_{2g}^4).
\]
\end{lem}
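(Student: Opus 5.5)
The plan is to show that $\tilde f$ commutes with $\tilde h_i$ in $\Mod(\Sigma_{2g}^4)$, and then apply the partial conjugation of Section \ref{sec:partial-conj} to the factorization of Theorem \ref{thm:hamada-sections}. Conjugating the factor $\tilde h_i = T_{B_{0,2}^i}\cdots T_{C_2^i}$ by $\tilde f^n$ gives $\tilde f^n \tilde h_i \tilde f^{-n}$. Since $\tilde f T_c \tilde f^{-1} = T_{\tilde f(c)}$, this is exactly the product $T_{\tilde f^n(B_{0,2}^i)}\cdots T_{\tilde f^n(C_2^i)}$. So once commutation holds, the displayed relation follows for every $n\in\Z$, negative $n$ included.

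For commutation, set $g_i := T_{\tilde x}T_{\tilde y}^{-1}$ and write $\tilde f = g\,(\tilde h_i g \tilde h_i^{-1})$ with $g = g_1$. The key input is that $\tilde h_i^2$ is central, or at least commutes with $g$. By Theorem \ref{thm:hamada-sections}, $\tilde h_i = \tilde h_i'^{-1} T_{\delta_1}\cdots T_{\delta_4}$, where $\tilde h_i' := T_{B_{0,1}^i}\cdots T_{C_1^i}$. I expect (and would verify from Figure \ref{fig:MCK-lifts}, as in Appendix \ref{appendix-a}) that $\tilde h_i'$ and $\tilde h_i$ are lifts of $[\eta]$ related so that $\tilde h_i^2$ is a product of boundary twists $T_{\delta_j}$, or at least a mapping class supported away from $\tilde x\cup\tilde y$. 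Granting $\tilde h_i^2 g = g\tilde h_i^2$, we compute
\[
\tilde h_i \tilde f \tilde h_i^{-1} = (\tilde h_i g \tilde h_i^{-1})(\tilde h_i^2 g \tilde h_i^{-2}) = (\tilde h_i g\tilde h_i^{-1})\, g.
\]
This equals $\tilde f$ provided $g$ commutes with $\tilde h_i g\tilde h_i^{-1}$. That holds if the curves $\tilde x,\tilde y$ are disjoint from $\tilde h_i(\tilde x),\tilde h_i(\tilde y)$, which is what the right side of Figure \ref{fig:bounding-pair} depicts. So $\tilde f$ commutes with $\tilde h_i$.

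Next comes the equality of the two expressions for $\tilde f$, i.e. $\tilde h_1 g\tilde h_1^{-1} = \tilde h_2 g\tilde h_2^{-1}$. I would prove $\tilde h_1(\tilde x)=\tilde h_2(\tilde x)$ and $\tilde h_1(\tilde y)=\tilde h_2(\tilde y)$ as isotopy classes in $\Sigma_{2g}^4$, by tracking each curve through the sequence of Dehn twists. It is convenient to note that $\tilde x,\tilde y$ meet only a few of the $B_{j,2}^i$ and $C_2^i$ (the others act trivially on them), and that the lifts for $i=1,2$ differ only in how they pass around the boundary components. Figure \ref{fig:bounding-pair} records the resulting images.

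The main obstacle is the curve bookkeeping: identifying $\tilde h_i^2$ (or its action near $\tilde x,\tilde y$), and confirming the disjointness of $\{\tilde x,\tilde y\}$ from their $\tilde h_i$-images in the surface with four boundary components. Downstairs this is easy, since $\tilde h_i$ maps to the involution $\eta$. Upstairs, boundary twists could interfere, but they are central, so any discrepancy of that form does not matter. I would first try to use the hyperelliptic-type symmetry $\iota$ from Figure \ref{fig:bounding-pair} to reduce the computation to half the curves.
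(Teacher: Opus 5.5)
Your skeleton matches the paper's proof. You write $\tilde f$ as $T_{\tilde x}T_{\tilde y}^{-1}$ times its $\tilde h_i$-conjugate and deduce $\tilde f\tilde h_i=\tilde h_i\tilde f$ from two facts: (i) $\tilde x,\tilde y,\tilde h_i(\tilde x),\tilde h_i(\tilde y)$ are pairwise disjoint, and (ii) $\tilde h_i^2$ commutes with $T_{\tilde x}T_{\tilde y}^{-1}$. You then conjugate the second half of Theorem~\ref{thm:hamada-sections} by $\tilde f^n$. The algebra is correct.

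\textbf{The gap is in (ii), which you only ``grant''.}

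\emph{Your main expectation is false.} You expect $\tilde h_i^2$ to be a product of boundary twists. If it were, the image of $\tilde h_i$ in $\Mod(\Sigma_{2g,4})$ would have order $2$. By Nielsen realization it would then be represented by an involution of $\Sigma_{2g}$ fixing $p_1,\dots,p_4$, i.e.\ an involution representing $[\eta]$ with at least four fixed points. But any involution representing $[\eta]$ has exactly $2-\operatorname{tr}(\eta_*)=2$ fixed points; the trace is $0$ by the eigenspace computation in Lemma~\ref{lem:transfer}. This is why the paper gets an order-$2$ representative only after forgetting $p_3,p_4$ (Corollary~\ref{cor:puncture-involution}), and why it needs the isotopy $\kappa_t$ of Proposition~\ref{prop:isotoping-hateta}.

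\emph{Your fallback heuristic fails for the same reason.} You argue that downstairs this is easy and boundary twists are central. But the kernel of $\Mod(\Sigma_{2g}^4)\to\Mod(\Sigma_{2g})$ contains non-central point-pushing maps. Moreover, $\tilde h_i^2$ is a nontrivial element of this kernel modulo boundary twists: it dies only once $p_3,p_4$ are forgotten. So $\eta^2=1$ says nothing about whether $\tilde h_i^2$ fixes $\tilde x$ and $\tilde y$.

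\textbf{What you need} is the upstairs statement $\tilde h_i^2(\tilde x)=\tilde x$ and $\tilde h_i^2(\tilde y)=\tilde y$. This is Lemma~\ref{lem:tilde-f-lemma}(b), which the paper proves by hand.

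For $\tilde x$: only $B_{0,2}^i,\dots,B_{3,2}^i$ meet $\tilde x$ and $\tilde z:=\tilde h_i(\tilde x)$. A direct computation shows $T_{B_{0,2}^i}T_{B_{1,2}^i}T_{B_{2,2}^i}T_{B_{3,2}^i}$ interchanges them.

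For $\tilde y=\iota(\tilde x)$ with $i=1$, the symmetry does not simply halve the work. Conjugation by $\iota$ carries $\tilde h_1$ to the \emph{other} half, $T_{B_{0,1}^1}\cdots T_{C_1^1}$. Hamada's relation rewrites this as $T_{\delta_1}\cdots T_{\delta_4}\tilde h_1^{-1}$. Hence $\tilde h_1(\iota(c))=\iota(\tilde h_1^{-1}(c))$ for curves $c$, which transports the $\tilde x$ computation to $\tilde y$. For $i=2$, $\iota$ simply commutes with $\tilde h_2$.

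With that lemma replacing the boundary-twist heuristic, your argument becomes the paper's.
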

\begin{proof}
First, note that $T_{\tilde h_1(\tilde x)} = T_{\tilde h_2(\tilde x)}$ and $T_{\tilde h_1(\tilde y)} = T_{\tilde h_2(\tilde y)}$ in $\Mod(\Sigma_{2g}^4)$ by Lemma \ref{lem:tilde-f-lemma}\ref{lem:tilde-f-lemma-disjoint}, showing the first desired equality. Furthermore,
\[
	\tilde f = T_{\tilde x} T_{\tilde y}^{-1} T_{\tilde h_i(\tilde x)} T_{\tilde h_i(\tilde y)}^{-1}  = T_{\tilde h_i(\tilde x)} T_{\tilde h_i(\tilde y)}^{-1}T_{\tilde x} T_{\tilde y}^{-1}
\]
where the second equality follows by Lemma \ref{lem:tilde-f-lemma}\ref{lem:tilde-f-lemma-disjoint}. Now compute that
\[
	\tilde f \tilde h_i = T_{\tilde h_i(\tilde x)} T_{\tilde h_i(\tilde y)}^{-1}T_{\tilde x} T_{\tilde y}^{-1} \tilde h_i  = \tilde h_i T_{\tilde x} T_{\tilde y}^{-1} \tilde h_i^{-1}T_{\tilde x} T_{\tilde y}^{-1} \tilde h_i = \tilde h_i T_{\tilde x} T_{\tilde y}^{-1} T_{\tilde h_i^{-1}(\tilde x)} T_{\tilde h_i^{-1}(\tilde y)}^{-1} = \tilde h_i \tilde f,
\]
where the last equality follows from Lemma \ref{lem:tilde-f-lemma}\ref{lem:tilde-f-lemma-double}. Finally, the lemma follows from Theorem \ref{thm:hamada-sections} and the fact that $\tilde h_j$ and $\tilde f$ commute.
\end{proof}

By applying the capping and forgetful homomorphisms $\Mod(\Sigma_{2g}^4) \to \Mod(\Sigma_{2g})$, we now rephrase Lemma \ref{lem:bounding-pair-sections} in terms of $\Mod(\Sigma_{2g})$. Consider the curves $x, y \subseteq \Sigma_{2g}$ shown in Figure \ref{fig:bounding-pair} and define
\[
	f :=  T_x T_y^{-1} T_{h(x)} T_{h(y)}^{-1} \in \mathcal I_{2g},
\]
where we note that $f$ is contained in the Torelli group $\mathcal I_{2g}$ because $f$ is a composition of two bounding pair maps $T_x T_y^{-1}$ and $T_{h(x)} T_{h(y)}^{-1}$. The following corollary is then an immediate consequence of Lemma \ref{lem:bounding-pair-sections}.
\begin{cor}\label{cor:partial-MCKs}
Let $g \geq 2$. For any $n \in \Z$,
\[
	T_{B_0} T_{B_1} \dots T_{B_{2g}} T_C T_{f^n(B_0)} T_{f^n(B_1)} \dots T_{f^n(B_{2g})} T_{f^n(C)} = 1 \in \Mod(\Sigma_{2g}).
\]
\end{cor}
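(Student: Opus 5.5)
We will deduce Corollary \ref{cor:partial-MCKs} from Lemma \ref{lem:bounding-pair-sections} (with $i=1$, say) by pushing the relation forward along the capping and forgetful homomorphism
\[
	\Psi: \Mod(\Sigma_{2g}^4) \to \Mod(\Sigma_{2g}),
\]
obtained by gluing a disk to each boundary component $\delta_1,\dots,\delta_4$ and forgetting the resulting marked points. We will use three properties of $\Psi$. First, it kills every boundary twist: $\Psi(T_{\delta_j}) = 1$. Second, it sends each Dehn twist to the twist about the image curve: $\Psi(T_{\tilde c}) = T_{c}$, where $c$ is the image of $\tilde c$ in $\Sigma_{2g}$ (and $T_c = 1$ if $c$ becomes inessential). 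Third, since $\Psi$ is a homomorphism, $\Psi(\tilde f^n(\tilde c)) = \Psi(\tilde f)^n(c)$, because $T_{\tilde f^n(\tilde c)} = \tilde f^n T_{\tilde c}\tilde f^{-n}$.

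The plan has three steps.

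\textbf{Step 1: the vanishing cycles.} From Figure \ref{fig:MCK-lifts}, the lifts $B_{k,1}^1, C_1^1$ and $B_{k,2}^1, C_2^1$ all become isotopic to $B_k$ and $C$ in $\Sigma_{2g}$ after capping. This is exactly the statement that Hamada's factorization (Theorem \ref{thm:hamada-sections}) lifts the MCK factorization of Definition \ref{defn:mck}. Hence
\[
	\Psi(\tilde h_1) = T_{B_0}\cdots T_{B_{2g}}T_C = h.
\]

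\textbf{Step 2: the curves $\tilde x, \tilde y$.} From Figure \ref{fig:bounding-pair}, $\tilde x$ and $\tilde y$ map to $x$ and $y$. Therefore $\tilde h_1(\tilde x)$ and $\tilde h_1(\tilde y)$ map to $h(x)$ and $h(y)$, and so
\[
	\Psi(\tilde f) = T_xT_y^{-1}T_{h(x)}T_{h(y)}^{-1} = f.
\]

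\textbf{Step 3: apply $\Psi$.} Applying $\Psi$ to the identity of Lemma \ref{lem:bounding-pair-sections}, the left side becomes
\[
	T_{B_0}\cdots T_{B_{2g}}T_C\, T_{f^n(B_0)}\cdots T_{f^n(B_{2g})}T_{f^n(C)},
\]
and the right side becomes $1$. This is the claimed relation.

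There is an alternative route that bypasses the lifts. By Theorem \ref{thm:eta-factorization}, $h^2 = 1$. If $f$ commutes with $h$, then
\[
	h\cdot f^n h f^{-n} = h^2 = 1,
\]
and expanding $f^n h f^{-n}$ as a product of conjugated twists gives the same factorization. The commutation $fh = hf$ would follow as in the proof of Lemma \ref{lem:bounding-pair-sections}, using $h^{-1} = h$ together with the disjointness of $x,y$ from $h(x),h(y)$.

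The only point needing care, which I expect to be the main obstacle, is confirming the curve identifications in Steps 1 and 2 from the figures. In particular, one must check that no lifted curve becomes inessential or changes isotopy class after capping. This is a direct inspection, and it is implicit in calling Hamada's factorization a lift of the MCK monodromy.
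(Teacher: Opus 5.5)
Your proposal is correct and matches the paper's argument: the paper obtains the corollary as an immediate consequence of Lemma \ref{lem:bounding-pair-sections} by applying the capping and forgetful homomorphism $\Mod(\Sigma_{2g}^4)\to\Mod(\Sigma_{2g})$. Under this map $\tilde h_i \mapsto h$ and $\tilde f \mapsto f$, and the boundary twists are killed. Your alternative route via $h^2=1$ and $fh=hf$ is also valid; it is the downstairs version of the commutation argument in that lemma's proof, with $h^{-1}=h$ replacing Lemma \ref{lem:tilde-f-lemma}.
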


Finally, we define the Lefschetz fibrations of interest using Corollary \ref{cor:partial-MCKs}.
\begin{defn}
Let $g \geq 2$. For any $n \in \Z_{\geq 0}$, let $\pi_n: X_n \to S^2$ denote the Lefschetz fibration of genus $2g$ with monodromy factorization
\[
	T_{B_0} T_{B_1} \dots T_{B_{2g}} T_C T_{f^n(B_0)} T_{f^n(B_1)} \dots T_{f^n(B_{2g})} T_{f^n(C)} = 1 \in \Mod(\Sigma_{2g}).
\]
\end{defn}


\section{Distinguishing Lefschetz fibrations via the Johnson homomorphism}\label{sec:johnson}

The goal of this section is to prove the following theorem.
\begin{thm}\label{thm:non-isomorphic}
For any $n \neq m \in \Z_{\geq 0}$, the Lefschetz fibrations $\pi_n: X_n \to S^2$ and $\pi_m: X_m \to S^2$ are inequivalent.
\end{thm}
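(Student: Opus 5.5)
We will use \Cref{cor:noniso-criterion}: it suffices to show that the monodromy groups $G_n$ and $G_m$ are not conjugate in $\Mod(\Sigma_{2g})$. Since $\mathcal I_{2g}$ is normal, any conjugation carrying $G_n$ to $G_m$ carries $G_n \cap \mathcal I_{2g}$ to $G_m\cap\mathcal I_{2g}$. By the naturality property \eqref{eqn:naturality}, it then also carries $\tau(G_n\cap\mathcal I_{2g})$ to $\tau(G_m\cap \mathcal I_{2g})$ via the symplectic action. The plan is to compute $\Lambda_n := \tau(G_n \cap \mathcal I_{2g}) \subseteq (\wedge^3 H)/H$ and exhibit an $\Sp(4g,\Z)$-invariant of these subgroups that separates different $n$. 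The natural candidate is the index of $\Lambda_n$ in the $\Z$-span it generates, or simply the divisibility (gcd of coefficients) of a generator when $\Lambda_n$ is cyclic.

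First, describe $G_n$. It is generated by $T_{B_0},\dots,T_{B_{2g}},T_C$, which generate a group $G_0$, together with their $f^n$-conjugates. All these curves are invariant under the hyperelliptic involution $\iota$ of \Cref{fig:mck-vanishing-cycles}, so $G_0 \le \SMod(\Sigma_{2g})$. Hence $G_n$ is generated by $G_0$ and $f^nG_0f^{-n}$. Every element of $G_n$ is a word alternating between elements $a\in G_0$ and elements $f^n b f^{-n}$ with $b \in G_0$. Since $f\in\mathcal I_{2g}$, we can write $f^n b f^{-n} = [f^n, b]\, b$, where the commutator lies in $\mathcal I_{2g}$. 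Consequently $G_n\cap \mathcal I_{2g}$ is generated, modulo $G_0\cap\mathcal I_{2g}$ (which is killed by $\tau$ via \Cref{cor:hyperelliptic-kernel}), by conjugates $a[f^n,b]a^{-1}$ with $a,b\in G_0$. Since $\tau$ is a homomorphism on $\mathcal I_{2g}$,
\[
\tau([f^n,b]) = \tau(f^n) - b_*\tau(f^n) = n\,(1-b_*)\tau(f).
\]
Thus $\Lambda_n = n\cdot M$, where $M$ is the $\Z[\Gamma]$-submodule generated by the elements $(1-b_*)\tau(f)$, and $\Gamma$ is the image of $G_0$ in $\Sp(4g,\Z)$. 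One must check that $\tau$ is defined on all these pieces; this holds because, writing elements of $G_n\cap\mathcal I_{2g}$ as products of such commutator conjugates times an element of $G_0\cap\mathcal I_{2g}$, every factor lies in Torelli.

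Second, compute $\tau(f)$. Since $f = T_xT_y^{-1}\cdot h(T_xT_y^{-1})h^{-1}$,
\[
\tau(f)=(1+\eta_*)\tau(T_xT_y^{-1}).
\]
The term $\tau(T_xT_y^{-1})$ is given by Johnson's formula $\omega_S\wedge [x]$, where $\omega_S$ is the symplectic form of a subsurface cut off by $x\cup y$. Because $\iota_*=-1$ acts on $\wedge^3H/H$ and $\iota$ commutes with $\eta$ and with $G_0$, the module $M$ is nonzero only if $\tau(f)\neq 0$. We need to verify that $M$ is a nonzero subgroup; a single explicit element, e.g. $(1-T_{B_j*})\tau(f)\ne 0$ for some $j$, suffices. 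This uses the explicit homology classes of the curves in \Cref{fig:mck-vanishing-cycles} and \Cref{fig:bounding-pair}.

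Finally, distinguish $nM$ from $mM$ up to $\Sp$. A symplectic automorphism $\phi$ with $\phi(nM)=mM$ preserves the lattice $\wedge^3H/H$, which is torsion-free. Define $d(L)$ as the largest $k$ such that $L\subseteq k(\wedge^3H/H)$; this is $\Sp$-invariant and $d(nM)=n\,d(M)$. Hence $n\ne m$ with $n,m\ge1$ forces non-conjugacy. For $n=0$ versus $m>0$, $\Lambda_0=0\neq \Lambda_m$, which again gives non-conjugacy. The main obstacle is the explicit homological computation showing $M\neq 0$, i.e. that $f$ genuinely escapes the hyperelliptic-type cancellation. I expect this to require care with the action of the $B_j$ twists (transvections) on Johnson's formula for bounding pairs.
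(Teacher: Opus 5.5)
Your argument is correct and follows the paper's route: Corollary \ref{cor:noniso-criterion}, passage to $G_n^{\mathcal I}$, vanishing of $\tau$ on the hyperelliptic part $G_0^{\mathcal I}$, and a divisibility invariant in $(\wedge^3 H)/H$. The one variation is that you compute $\tau(G_n^{\mathcal I}) = nM$ exactly and compare contents via $d(nM) = n\,d(M)$, so you only need $M \neq 0$, whereas the paper combines the bound $\tau(G_n^{\mathcal I}) \le n(\wedge^3 H)/H$ with a \emph{primitive} $v$ satisfying $nv \in \tau(G_n^{\mathcal I})$. The nonvanishing you defer is supplied by Lemma \ref{lem:primitive-img}: $v = \tau([T_{B_0}^{-1}, f]) = (a_1\wedge c_1 + a_{2g}\wedge c_{2g-1})\wedge B_0$ is nonzero (indeed primitive) by Lemma \ref{lem:nonzero-wedge}, and $-v = (1-b_*)\tau(f)$ with $b = T_{B_0}^{-1} \in G_0$, so $M \neq 0$.
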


The main idea of the proof is to apply Corollary \ref{cor:noniso-criterion} by studying the images of the monodromy representations of $\pi_n$, intersected with the Torelli group $\mathcal I_{2g}$. As such, consider the following groups for any $n \in \Z_{\geq 0}$.
\begin{align*}
G_n &:= \langle T_{B_0}, T_{B_1}, \dots, T_{B_{2g}}, T_C, T_{f^n(B_0)}, T_{f^n(B_1)}, \dots, T_{f^n(B_{2g})}, T_{f^n(C)} \rangle, \\
G_n^{\mathcal I} &:= G_n\cap \mathcal I_{2g}, \\
A_n &:= \langle [T_{B_i}^{-1}, f^n], [T_C^{-1}, f^n] : 0 \leq i \leq 2g \rangle.
\end{align*}
By construction, $G_n$ is the image of the monodromy representation of $\pi_n: X_n \to S^2$. We also point out that $A_n$ is a subgroup of $G_n^{\mathcal I}$. To see this, write for any curve $c = C$ or $B_i$ for some $0 \leq i \leq 2g$ that
\[
	T_c^{-1} T_{f^n(c)}= [T_c^{-1}, f^n] = (T_c^{-1} f^n T_c) f^{-n} .
\]
The first equality shows that each generator $[T_{c}^{-1}, f^n]$ of $A_n$ is an element of $G_n$. The second equality shows that each $[T_{c}^{-1}, f^n]$ is an element of $\mathcal I_{2g}$ because both $f^{-n}$ and $T_c^{-1} f^n T_c$ are elements of $\mathcal I_{2g}$.

In what follows, we study the image $\tau(G_n^{\mathcal I})$ of $G_n^{\mathcal I}$ under the Johnson homomorphism (cf. Section \ref{sec:torelli-johnson-background})
\[
	\tau: \mathcal I_{2g} \to \left(\wedge^3 H\right)/H, \qquad H := H_1(\Sigma_{2g}).
\]
To do so, we first describe $G_n^{\mathcal I}$ in terms of $G_0^{\mathcal I}$ and $A_n$.
\begin{lem}\label{lem:GnI-generators}
For any $n \in \Z_{\geq 0}$, there is an equality of subgroups of $\mathcal I_{2g}$
\[
	G_n^{\mathcal I} = \langle G_0^{\mathcal I}, \, k A_n k^{-1} : k \in G_0 \rangle.
\]
\end{lem}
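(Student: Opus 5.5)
The plan is to prove the two inclusions separately. The key structural observation is that every generator of $G_n$ is either a generator of $G_0$ or has the form $T_{f^n(c)} = T_c\,[T_c^{-1},f^n]$, where $c \in \{B_0,\dots,B_{2g},C\}$. Combined with the equality $\langle T_{B_0},\dots,T_{B_{2g}},T_C\rangle = G_0$, this gives
\[
	G_n = \langle G_0, A_n\rangle .
\]
Indeed, $G_0$ is generated by $T_{B_i}, T_C$ together with their conjugates by $h\cdot h$. Since $h^2=1$, the listed generators of $G_0$ reduce to $T_{B_i},T_C$ themselves, i.e. $G_0=\langle T_{B_0},\dots,T_{B_{2g}},T_C\rangle$.

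For the inclusion $\supseteq$: we already know $A_n \le G_n^{\mathcal I}$, and $G_0 \le G_n$ because $G_0$ is generated by $T_{B_i}, T_C$. Hence $kA_nk^{-1} \le G_n$. It also lies in $\mathcal I_{2g}$, since $\mathcal I_{2g}$ is normal. Clearly $G_0^{\mathcal I} = G_0\cap\mathcal I_{2g} \le G_n\cap \mathcal I_{2g}$.

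For the inclusion $\subseteq$: let $N := \langle k A_n k^{-1} : k \in G_0\rangle$, the normal closure of $A_n$ in $\langle G_0, A_n\rangle = G_n$ taken only over $G_0$. First I check that $N$ is normal in $G_n$. It is normalized by $G_0$ by construction, and it is normalized by $A_n$ because $A_n \le N$. Since $G_n$ is generated by $G_0$ and $A_n$, $N$ is normal in $G_n$, and so $G_n = G_0 N$. Now take $x \in G_n^{\mathcal I}$ and write $x = k\nu$ with $k\in G_0$ and $\nu \in N$. Since $N \le \mathcal I_{2g}$, we get $k = x\nu^{-1} \in \mathcal I_{2g}$, so $k \in G_0^{\mathcal I}$. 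Therefore $x \in \langle G_0^{\mathcal I}, N\rangle$, which is the right-hand side.

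The only step needing care is the identity $G_n=\langle G_0,A_n\rangle$, together with the observation that the $G_0$-conjugates of $A_n$ already form a subgroup normal in $G_n$. Both follow directly from the formula $T_{f^n(c)} = T_c[T_c^{-1},f^n]$ and the definitions, so I expect no real obstacle; the rest is a standard argument about the product $G_0N$ with $N$ normal and contained in the Torelli group.
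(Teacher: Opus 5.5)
Your proof is correct and is essentially the paper's argument. The paper writes an element of $G_n^{\mathcal I}$ as a word in $G_0$ and $A_n$ and moves the $G_0$-letters to the right to get $k = \bigl(\prod_i r_i m_i r_i^{-1}\bigr) r$ with $r \in G_0$; this is exactly your decomposition $G_n = G_0 N$ via normality of $N$, followed by the same observation that the $G_0$-part must lie in $\mathcal I_{2g}$. One small correction: $G_0 = \langle T_{B_0}, \dots, T_{B_{2g}}, T_C \rangle$ holds simply because $f^0 = 1$ in the definition of $G_n$, so your remark about conjugation by $h \cdot h$ and $h^2 = 1$ is beside the point.
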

\begin{proof}
Recall that $A_n \leq G_n^{\mathcal I}$ and that $G_0 \leq G_n$. Therefore, it suffices to show the inclusion
\[
	G_n^{\mathcal I} \leq \langle G_0^{\mathcal I}, \, k A_n k^{-1} : k \in G_0 \rangle.
\]

By construction, there is an equality of subgroups of $\Mod(\Sigma_{2g})$
\[
	G_n = \langle G_0, \,A_n \rangle.
\]
Take any $k \in G_n^{\mathcal I} \leq G_n$ and write
\[
	k = \ell_1 m_1 \ell_2 m_2 \dots \ell_s m_s
\]
for some $\ell_i \in G_0, m_i \in A_n$ for $1 \leq i \leq s$. Applying the identity $ab = (aba^{-1})a$ repeatedly to the given factorization of $k$, write
\[
	k = \left(\prod_{i=1}^s r_i m_i r_i^{-1}\right)r
\]
for some $r \in G_0$ and $r_1, \dots, r_s \in G_0$. Because each $m_i \in A_n$ is contained in $\mathcal I_{2g}$ and because $k$ is contained in $\mathcal I_g$, the mapping class $r$ is contained in $\mathcal I_{2g}$, and hence in $G_0^{\mathcal I}$.
\end{proof}

The following lemma gives a first description of the image $\tau(G_n^{\mathcal I})$ using Lemma \ref{lem:GnI-generators}. Another key tool is the naturality (\ref{eqn:naturality}) of the Johnson homomorphism in conjunction with fact that $G_0^{\mathcal I}$ is hyperelliptic.
\begin{lem}\label{lem:GnI-divisibility}
For any $n \in \Z_{\geq 0}$, there is an inclusion of subgroups
\[
	\tau(G_n^{\mathcal I}) \leq n\left(\wedge^3 H\right)/H.
\]
\end{lem}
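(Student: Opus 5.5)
By Lemma \ref{lem:GnI-generators} and since $\tau$ is a homomorphism to an abelian group, $\tau(G_n^{\mathcal I})$ is generated by $\tau(G_0^{\mathcal I})$ together with the elements $\tau(k a k^{-1})$ for $k \in G_0$ and $a$ ranging over the generators $[T_c^{-1}, f^n]$ of $A_n$. I will handle these two types of generators separately.

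\textbf{Step 1: $\tau(G_0^{\mathcal I}) = 0$.} Every generator $T_{B_i}, T_C$ of $G_0$ commutes with the hyperelliptic involution $\iota$ of Figure \ref{fig:mck-vanishing-cycles}, since each curve $B_i$ and $C$ is $\iota$-invariant up to isotopy (so $\iota T_c \iota^{-1} = T_{\iota(c)} = T_c$). Hence $G_0 \leq \SMod(\Sigma_{2g})$, and Corollary \ref{cor:hyperelliptic-kernel} gives $G_0^{\mathcal I} \leq \ker \tau$. The main thing to verify here is the invariance of the curves under $\iota$, which I expect to read off from the figure.

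\textbf{Step 2: the conjugated commutators.} For $k \in G_0$ and $c \in \{B_0, \dots, B_{2g}, C\}$, write
\[
	[T_c^{-1}, f^n] = (T_c^{-1} f^n T_c) f^{-n},
\]
a product of two elements of $\mathcal I_{2g}$. By naturality (\ref{eqn:naturality}),
\[
	\tau(k[T_c^{-1}, f^n]k^{-1}) = k_*\bigl((T_c^{-1})_* \tau(f^n) - \tau(f^n)\bigr) = n\, k_*\bigl((T_c^{-1})_*\tau(f) - \tau(f)\bigr),
\]
using $\tau(f^n) = n\tau(f)$. Since $k_*$ and $(T_c^{-1})_*$ act as automorphisms of the lattice $(\wedge^3 H)/H$, this element lies in $n\,(\wedge^3H)/H$. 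Combining with Step 1, every generator of $\tau(G_n^{\mathcal I})$ lies in the subgroup $n\,(\wedge^3 H)/H$, proving the inclusion.

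The argument is short; the only genuinely non-formal point is Step 1, namely that $G_0$ is hyperelliptic. That should be immediate from the MCK configuration being symmetric under $\iota$, as the introduction indicates the MCK fibration is hyperelliptic. Everything else is naturality and linearity of $\tau$.
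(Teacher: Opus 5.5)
Your proposal is correct and follows essentially the same route as the paper: you show $\tau(G_0^{\mathcal I})=0$ by hyperellipticity of $G_0$ via Corollary~\ref{cor:hyperelliptic-kernel}, and you use naturality on the conjugated commutators $k[T_c^{-1},f^n]k^{-1}$ to extract the factor $n$. The only cosmetic difference is that you argue on generators of $\tau(G_n^{\mathcal I})$ rather than expanding an arbitrary element of $G_n^{\mathcal I}$ as a product, which is equivalent because the target is abelian.
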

\begin{proof}
For any $\ell \in G_n^{\mathcal I}$, write using Lemma \ref{lem:GnI-generators}
\[
	\ell = \ell_1 (k_1 h_1 k_1^{-1}) \ell_2 (k_2 h_2 k_2^{-1}) \dots \ell_s (k_s h_s k_s^{-1})
\]
for some $\ell_i \in G_0^{\mathcal I}$, $k_i \in G_0$, and $h_i \in A_n$ for $1 \leq i \leq s$. Applying $\tau$ to both sides (and applying naturality (\ref{eqn:naturality})) yields
\[
	\tau(\ell) = \left(\sum_{i=1}^s \tau(\ell_i) \right) + \left(\sum_{i=1}^s k_i \cdot \tau(h_i) \right).
\]
On the other hand, recall that each generator of $G_0$ is hyperelliptic (see \Cref{fig:mck-vanishing-cycles}), and hence $G_0^{\mathcal I}$ is contained in the hyperelliptic mapping class group $\SMod(\Sigma_{2g})$. By Corollary \ref{cor:hyperelliptic-kernel}, $\tau(\ell_i) = 0$ for all $1 \leq i \leq s$.

Because $h_i$ is an element of $A_n$ for all $1 \leq i \leq s$ and because each $k_i$ preserves the subgroup $n \left(\wedge^3 H\right)/H$, it now suffices to show that $\tau(A_n)$ is contained in $n \left(\wedge^3 H\right)/H$. To see this, note that for any $k \in \Mod(\Sigma_{2g})$
\[
	\tau([k^{-1}, f^n]) = \tau(k^{-1} f^n k) - \tau(f^n) = n\left(\tau(k^{-1} f k) - \tau(f)\right) \in n \left(\wedge^3 H\right)/H. \qedhere
\]
\end{proof}

\begin{figure}
\includegraphics[width=0.4\textwidth]{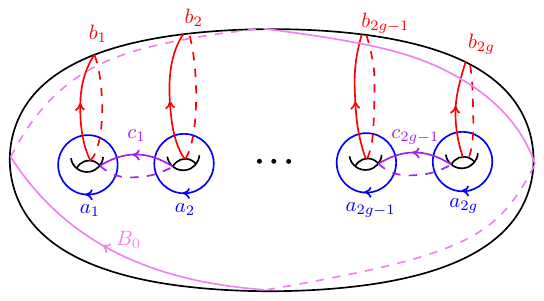}
\caption{Some homology classes in $H$}\label{fig:homology-curves}
\end{figure}

In order to further analyze the image $\tau(G_n^{\mathcal I})$, we need to verify that certain elements of $(\wedge^3 H)/H$ are nonzero and primitive. We record the following algebraic lemma for this purpose.
\begin{lem}\label{lem:nonzero-wedge}
Let $k \geq 2$ and let $\alpha_1, \beta_1, \dots, \alpha_k, \beta_k$ be a symplectic $\Z$-basis of $H_1(\Sigma_k)$. Let $\gamma_{2i-1} := \alpha_i$ and $\gamma_{2i} := \beta_i$ for all $1 \leq i \leq k$. The set
\[
	\{ \gamma_i \wedge \gamma_j \wedge \gamma_\ell : 1 \leq i < j < \ell \leq 2k, \, (i,j, \ell) \neq (i, 2k-1, 2k), \, (1, 2, 2k-1), \, (1, 2, 2k) \}
\]
forms a $\Z$-basis of the free abelian group $\left(\wedge^3 H_1(\Sigma_k)\right)/H_1(\Sigma_k)$.
\end{lem}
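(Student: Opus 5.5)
The plan is to treat this as pure linear algebra over $\Z$. The standard basis $\{\gamma_i\wedge\gamma_j\wedge\gamma_\ell : i<j<\ell\}$ is a $\Z$-basis of $\wedge^3 H$, where $H := H_1(\Sigma_k)$. The image of $H$ under $c\mapsto \omega\wedge c$, with $\omega=\sum_{m=1}^k \alpha_m\wedge\beta_m = \sum_m \gamma_{2m-1}\wedge\gamma_{2m}$, is spanned by the $2k$ elements $\omega\wedge\gamma_p$. It suffices to show that the excluded basis vectors
\[
E := \{\gamma_i\wedge\gamma_{2k-1}\wedge\gamma_{2k} : 1\le i\le 2k-2\}\cup\{\gamma_1\wedge\gamma_2\wedge\gamma_{2k-1},\ \gamma_1\wedge\gamma_2\wedge\gamma_{2k}\}
\]
(there are exactly $2k$ of them) can be replaced by the $2k$ vectors $\omega\wedge\gamma_p$. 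In other words, we want the remaining standard basis vectors together with $\{\omega\wedge\gamma_p\}_{p=1}^{2k}$ to form a $\Z$-basis of $\wedge^3H$. This gives a splitting $\wedge^3 H = \langle\text{listed set}\rangle \oplus \omega\wedge H$, so the listed set projects to a basis of the quotient. It also shows along the way that $\omega\wedge -$ is injective and that the image of $H$ is a direct summand.

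To prove this, I would compute each $\omega\wedge\gamma_p$ modulo the span $S$ of the listed (non-excluded) basis vectors. Two cases arise.

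For $p\le 2k-2$, say $\gamma_p$ lies in the $m$-th handle with $m<k$. Then
\[
\omega\wedge\gamma_p=\sum_{m'\ne m}\gamma_{2m'-1}\wedge\gamma_{2m'}\wedge\gamma_p .
\]
The $m'=k$ term is $\pm\gamma_p\wedge\gamma_{2k-1}\wedge\gamma_{2k}$, which lies in $E$. The other terms do not involve both $2k-1$ and $2k$, and they avoid $(1,2,2k-1)$ and $(1,2,2k)$ because $p\le 2k-2$. Hence those terms lie in $S$, and $\omega\wedge\gamma_p\equiv \pm\,\gamma_p\wedge\gamma_{2k-1}\wedge\gamma_{2k} \pmod S$.

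For $p\in\{2k-1,2k\}$, the terms are $\gamma_{2m'-1}\wedge\gamma_{2m'}\wedge\gamma_p$ for $m'<k$. The $m'=1$ term is exactly the excluded $\gamma_1\wedge\gamma_2\wedge\gamma_p$, and the others lie in $S$. This is where $k\ge 2$ is used: it guarantees that handle $1$ differs from handle $k$.

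It follows that, modulo $S$, the map sending $\omega\wedge\gamma_p$ to the matching element of $E$ is a bijection up to sign. So the change-of-basis matrix from (listed set $\cup\ E$) to (listed set $\cup\ \{\omega\wedge\gamma_p\}$) is block unitriangular with $\pm1$ on the diagonal, hence invertible over $\Z$.

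The only real obstacle is bookkeeping: checking that no term of $\omega\wedge\gamma_p$, for $p\le 2k-2$, accidentally lands on $(1,2,2k-1)$ or $(1,2,2k)$, and matching the count of $E$ to $2k$. Both are immediate from the index constraints above. Signs from reordering wedge factors are irrelevant, since only units $\pm1$ appear.
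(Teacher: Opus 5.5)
Your proposal is correct and is essentially the paper's proof. Both replace the $2k$ excluded standard basis vectors by the elements $\omega\wedge\gamma_p$, use the expansion $\omega\wedge\gamma_p=\sum_{m}\gamma_{2m-1}\wedge\gamma_{2m}\wedge\gamma_p$ to show this gives a new $\Z$-basis of $\wedge^3 H_1(\Sigma_k)$, and then read off the quotient. The paper writes the old vectors in terms of the new set and counts, whereas you write the new vectors in terms of the old and observe a unitriangular matrix. One fact you use silently, which the paper states explicitly, is that $\omega=\sum_m\alpha_m\wedge\beta_m$ is independent of the choice of symplectic basis; this is what lets you compute the inclusion $H_1(\Sigma_k)\hookrightarrow\wedge^3 H_1(\Sigma_k)$ in your chosen basis.
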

\begin{proof}
Let
\[
	\omega := \sum_{i=1}^k \alpha_i \wedge \beta_i = \sum_{i=1}^k \gamma_{2i-1} \wedge \gamma_{2i} \in \wedge^2 H_1(\Sigma_k).
\]
The wedge $\omega$ is independent of the choice of symplectic basis and so the inclusion $H_1(\Sigma_k) \hookrightarrow \wedge^3 H_1(\Sigma_k)$ is given by
\[
	c \mapsto \omega \wedge c.
\]
The group $\wedge^3 H_1(\Sigma_k)$ is torsion-free and has a $\Z$-basis
\[
	\{ \gamma_i \wedge \gamma_j \wedge \gamma_\ell : 1 \leq i < j < \ell \leq 2k \}.
\]
The set
\[
	\{ \gamma_i \wedge \gamma_j \wedge \gamma_\ell : 1 \leq i < j < \ell \leq 2k, \, (i,j, \ell) \neq (i, 2k-1, 2k), \, (1, 2, 2k-1), \, (1, 2, 2k) \} \cup \{\omega \wedge \gamma_i : 1 \leq i \leq 2k\}
\]
is another $\Z$-basis of $\wedge^3 H_1(\Sigma_k)$. To see this, note that the size of the new basis agrees with that of the old and that each element of the old basis can be expressed in terms of this new basis because
\begin{align*}
\gamma_i \wedge \gamma_{2k-1} \wedge \gamma_{2k} &= \omega \wedge \gamma_i - \sum_{j = 1}^{k-1} \gamma_{2j-1} \wedge \gamma_{2j} \wedge \gamma_i \qquad \text{ for }1 \leq i \leq 2k-2, \\
\gamma_1 \wedge \gamma_2 \wedge \gamma_{2k-1} &= \omega \wedge \gamma_{2k-1} - \sum_{j = 2}^{k-1} \gamma_{2j-1} \wedge \gamma_{2j} \wedge \gamma_{2k-1}, \\
\gamma_1 \wedge \gamma_2 \wedge \gamma_{2k} &= \omega \wedge \gamma_{2k} - \sum_{j = 2}^{k-1} \gamma_{2j-1} \wedge \gamma_{2j} \wedge \gamma_{2k}.
\end{align*}
Because $\{\omega \wedge \gamma_i : 1 \leq i \leq 2k\}$ is a $\Z$-basis of $H_1(\Sigma_k) \leq \wedge^3 H_1(\Sigma_k)$, the quotient $\left(\wedge^3 H_1(\Sigma_k)\right) / H_1(\Sigma_k)$ is torsion-free. Furthermore, the $\Z$-span of
\[
	\{ \gamma_i \wedge \gamma_j \wedge \gamma_\ell : 1 \leq i < j < \ell \leq 2k, \, (i,j, \ell) \neq (i, 2k-1, 2k), \, (1, 2, 2k-1), \, (1, 2, 2k) \}
\]
maps isomorphically onto the quotient $\wedge^3 H_1(\Sigma_k) \to \left(\wedge^3 H_1(\Sigma_k)\right) / H_1(\Sigma_k)$ and hence forms a $\Z$-basis of the quotient.
\end{proof}

The following lemma refines our description of $\tau(G_n^{\mathcal I})$ through a more detailed calculation of the Johnson homomorphism. In the proof, we use the notation of Figure \ref{fig:homology-curves} to compute in $\left(\wedge^3 H\right)/H$.
\begin{lem}\label{lem:primitive-img}
There exists a primitive $v \in \left(\wedge^3 H\right)/H$ so that $nv$ is contained in $\tau(G_n^{\mathcal I})$ for all $n \in \Z_{\geq 0}$.
\end{lem}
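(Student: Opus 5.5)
The plan is to exhibit an explicit element of $A_n \leq G_n^{\mathcal I}$ whose Johnson image is $n$ times a fixed vector $v$, and then to check that $v$ is primitive using the basis of Lemma \ref{lem:nonzero-wedge}. By the computation at the end of the proof of Lemma \ref{lem:GnI-divisibility}, for any vanishing cycle $c \in \{B_0, \dots, B_{2g}, C\}$ we have
\[
	\tau([T_c^{-1}, f^n]) = n\left(\tau(T_c^{-1} f T_c) - \tau(f)\right) = n\left((T_c^{-1})_*\tau(f) - \tau(f)\right).
\]
So it suffices to find one curve $c$ such that $v := (T_c^{-1})_*\tau(f) - \tau(f)$ is primitive in $(\wedge^3 H)/H$. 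Then $nv = \tau([T_c^{-1},f^n])$ lies in $\tau(A_n) \subseteq \tau(G_n^{\mathcal I})$ for every $n \geq 0$, which is the statement.

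The first step is to compute $\tau(f)$. The map $f = T_xT_y^{-1}T_{h(x)}T_{h(y)}^{-1}$ is a product of two bounding pair maps. For a bounding pair $T_xT_y^{-1}$ cutting off a subsurface whose first homology, modulo $[x]$, has symplectic basis $a_1',b_1',\dots,a_k',b_k'$, Johnson's formula gives
\[
	\tau(T_xT_y^{-1}) = \Big(\sum_{j} a_j'\wedge b_j'\Big)\wedge [x]
\]
(cf. \cite[Section 6.6]{farb-margalit}). Reading this off from Figure \ref{fig:bounding-pair} and Figure \ref{fig:homology-curves}, I would write $\tau(f) = \omega_1\wedge[x] + h_*(\omega_1\wedge[x])$, with $h_* = \eta_*$ computed from the action of the involution $\eta$ on the homology classes of Figure \ref{fig:homology-curves}. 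Because $\eta$ is not hyperelliptic, this sum should not cancel.

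The second step is to choose $c$, most likely one of the $B_i$ or $C$ meeting $x$ (and/or $h(x)$) once. Then $(T_c^{-1})_*$ is the transvection $u \mapsto u - \langle c,u\rangle [c]$. It acts on each wedge factor, so the difference $v$ is a sum of terms each containing $[c]$ as a wedge factor. The typical term is $\pm\, \omega_1 \wedge [c]$ plus corrections from the $h(x)$ part.

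The third step, which I expect to be the main obstacle, is the primitivity check. I would expand $v$ in the basis $\gamma_i\wedge\gamma_j\wedge\gamma_\ell$ of Lemma \ref{lem:nonzero-wedge}, after rewriting the classes from Figure \ref{fig:homology-curves} in a symplectic basis adapted so that the excluded triples $(i,2k-1,2k)$, $(1,2,2k-1)$, $(1,2,2k)$ avoid the relevant terms. It then suffices to find a single basis element whose coefficient in $v$ is $\pm1$; this forces $v \neq 0$ and makes $v$ primitive. The difficulty is bookkeeping: to show that some coefficient is exactly $\pm1$ and is not cancelled between the $x$-part and the $h(x)$-part, one must track signs and intersection numbers carefully. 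If the first choice of $c$ gives a non-primitive or zero vector, I would try another vanishing cycle, or a product of two commutators $[T_c^{-1},f^n]$ (still in $A_n$), whose $\tau$-image is again $n$ times a fixed vector.
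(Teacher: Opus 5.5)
Your plan is the paper's proof: it sets $v := \tau([T_{B_0}^{-1}, f])$, so that $nv = \tau([T_{B_0}^{-1}, f^n]) \in \tau(A_n) \leq \tau(G_n^{\mathcal I})$, which is your construction with $c = B_0$. The paper also carries out the step you left open. It computes $\tau(f) = a_1\wedge b_1\wedge c_1 + a_{2g}\wedge b_{2g}\wedge c_{2g-1}$. Among the classes appearing, $T_{B_0}^{-1}$ moves only $b_1$ and $b_{2g}$ (each by $-B_0$), so $v = (a_1\wedge c_1 + a_{2g}\wedge c_{2g-1})\wedge B_0$. Taking a symplectic basis with $\alpha_1 = -a_1$, $\beta_1 = c_1$, $\alpha_2 = -a_{2g}$, $\beta_2 = c_{2g-1}$, $\alpha_3 = B_0$, both summands are, up to sign, members of the basis in Lemma \ref{lem:nonzero-wedge}, so $v$ is primitive by exactly your coefficient-$\pm 1$ criterion.
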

\begin{proof}
Let
\[
	v := \tau([T_{B_0}^{-1}, f]).
\]
To see that $nv$ is contained in $\tau(G_n^{\mathcal I})$ for all $n \in \Z_{\geq 0}$, recall that $A_n \leq G_n^{\mathcal I}$ and compute that
\[
	\tau([T_{B_0}^{-1}, f^n]) = \tau(T_{B_0}^{-1} f^n T_{B_0}) - \tau(f^n) = n(\tau(T_{B_0}^{-1} f T_{B_0}) - \tau(f)) = n v.
\]

It remains to show that $v$ is primitive. First, note that
\[
	\tau(T_xT_y^{-1}) = a_1 \wedge b_1 \wedge b_2 = a_1 \wedge b_1 \wedge c_1,
\]
where the first equality follows from \cite[p. 195-196]{farb-margalit} and the second equality follows because $b_2 = b_1 + c_1$ as elements of $H$. Therefore,
\begin{align*}
\tau(f) &= \tau(T_x T_y^{-1}) + \tau(T_{h(x)} T_{h(y)}^{-1}) \\
&= \tau(T_x T_y^{-1}) + h \cdot \tau(T_x T_y^{-1}) \qquad \text{by naturality (\ref{eqn:naturality}) of }\tau \\
&= a_1 \wedge b_1 \wedge c_1 + (-a_{2g}) \wedge (-b_{2g}) \wedge c_{2g-1} .
\end{align*}
Now compute in $\left(\wedge^3 H\right)/H$
\begin{align*}
v &= \tau(T_{B_0}^{-1} f T_{B_0}) - \tau(f) \\
&=T_{B_0}^{-1}\left(a_1 \wedge b_1 \wedge c_1 + a_{2g} \wedge b_{2g} \wedge c_{2g-1}\right) - \left(a_1 \wedge b_1 \wedge c_1 + a_{2g} \wedge b_{2g} \wedge c_{2g-1}\right) \quad \text{by naturality (\ref{eqn:naturality}) of }\tau \\
&= \left(a_1 \wedge (b_1-B_0) \wedge c_1 + a_{2g} \wedge (b_{2g}-B_0) \wedge c_{2g-1}\right) - \left(a_1 \wedge b_1 \wedge c_1 + a_{2g} \wedge b_{2g} \wedge c_{2g-1}\right) \\
&= (a_1 \wedge c_1 + a_{2g} \wedge c_{2g-1})\wedge B_0.
\end{align*}
Here and in the rest of this proof, we also denote by $B_0$ the homology class of $B_0$, oriented as in Figure \ref{fig:homology-curves}.

There exists a symplectic basis $\alpha_1, \beta_1, \dots, \alpha_{2g}, \beta_{2g}$ of $H_1(\Sigma_{2g})$ with
\[
	\alpha_1 = -a_1, \, \beta_1 = c_1, \, \alpha_2 = -a_{2g}, \, \beta_2 = c_{2g-1}, \, \alpha_3 = B_0.
\]
By Lemma \ref{lem:nonzero-wedge}, the set $\{ a_1 \wedge c_1 \wedge B_0, \, a_{2g}\wedge c_{2g-1} \wedge B_0 \}$ can be completed to a $\Z$-basis of $\left(\wedge^3 H_1(\Sigma_{2g})\right) / H_1(\Sigma_{2g})$. Therefore, the sum $a_1 \wedge c_1 \wedge B_0 + a_{2g} \wedge c_{2g-1} \wedge B_0$ is primitive.
\end{proof}

The following proposition forms the main computational tool in the proof of Theorem \ref{thm:non-isomorphic}.
\begin{prop}\label{prop:non-conj}
If $n \neq m \in \Z_{\geq 0}$ then $G_n$ and $G_m$ are not conjugate as subgroups of $\Mod(\Sigma_{2g})$.
\end{prop}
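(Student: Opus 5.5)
We argue by contradiction. Suppose $G_m = k G_n k^{-1}$ for some $k \in \Mod(\Sigma_{2g})$ and some $n \neq m$. Because $\mathcal I_{2g}$ is normal in $\Mod(\Sigma_{2g})$, intersecting with the Torelli group gives $G_m^{\mathcal I} = k G_n^{\mathcal I} k^{-1}$. Naturality (\ref{eqn:naturality}) of the Johnson homomorphism then gives
\[
	\tau(G_m^{\mathcal I}) = k_*\bigl(\tau(G_n^{\mathcal I})\bigr).
\]
The group $k_*$ acts on $(\wedge^3 H)/H$ by a group automorphism, since it is induced from an element of $\Sp(4g,\Z)$. So the subgroups $\tau(G_n^{\mathcal I})$ and $\tau(G_m^{\mathcal I})$ of the free abelian group $(\wedge^3 H)/H$ differ by an automorphism. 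In particular they share every automorphism-invariant numerical property.

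The invariant we use is the following. For a subgroup $S$ of a free abelian group $L$, let $d(S)$ be the largest $d \geq 0$ such that $S \leq dL$, with the convention $d(S)=0$ if $S = 0$. Equivalently, $d(S)$ is the gcd of the coefficients of all elements of $S$ in some basis. Any automorphism of $L$ preserves each subgroup $dL$, so $d$ is invariant under automorphisms. We claim that $d(\tau(G_n^{\mathcal I})) = n$ for every $n \in \Z_{\geq 0}$.

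\begin{itemize}
\item \emph{Upper divisibility.} Lemma \ref{lem:GnI-divisibility} gives $\tau(G_n^{\mathcal I}) \leq n(\wedge^3 H)/H$.
\item \emph{Case $n \geq 1$.} Lemma \ref{lem:primitive-img} shows that $nv \in \tau(G_n^{\mathcal I})$ with $v$ primitive. If $\tau(G_n^{\mathcal I}) \leq d(\wedge^3 H)/H$, then $nv = dw$ for some $w$. Since $(\wedge^3 H)/H$ is torsion-free (Lemma \ref{lem:nonzero-wedge}) and $v$ is primitive, this forces $d \mid n$. Combined with the upper divisibility, $d(\tau(G_n^{\mathcal I})) = n$.
\item \emph{Case $n = 0$.} We need $\tau(G_0^{\mathcal I}) = 0$. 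This follows from Corollary \ref{cor:hyperelliptic-kernel}, since $G_0 \leq \SMod(\Sigma_{2g})$; it is also the degenerate case $n=0$ of Lemma \ref{lem:GnI-divisibility}. So $d = 0$ by convention.
\end{itemize}

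Hence $n = d(\tau(G_n^{\mathcal I})) = d(\tau(G_m^{\mathcal I})) = m$, which contradicts $n \neq m$.

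The only delicate point is checking that $d$ is well-defined and invariant, and handling the $n = 0$ case separately. Primitivity of $v$ in the torsion-free group $(\wedge^3 H)/H$ is exactly what makes $n$ the precise divisibility index rather than just a lower bound. With the earlier lemmas in hand, no step is a serious obstacle.
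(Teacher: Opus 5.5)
Your proof is correct and follows essentially the same route as the paper: you pass to $G^{\mathcal I}$ using normality of $\mathcal I_{2g}$, apply naturality of $\tau$, and combine Lemma \ref{lem:GnI-divisibility} with the primitive class $v$ of Lemma \ref{lem:primitive-img}. Your automorphism-invariant divisibility index $d(S)$ packages the paper's ``$n \mid m$ and, by symmetry, $m \mid n$'' step into the single computation $d(\tau(G_n^{\mathcal I})) = n$, and your separate treatment of $n=0$ spells out an edge case that the paper handles only implicitly.
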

\begin{proof}
Suppose that there exists $k \in \Mod(\Sigma_{2g})$ so that
\[
	k G_n k^{-1} = G_m \leq \Mod(\Sigma_{2g}).
\]
Because $\mathcal I_{2g}$ is normal in $\Mod(\Sigma_{2g})$, this implies that
\[
	k G_n^{\mathcal I} k^{-1} = G_m^{\mathcal I} \leq \mathcal I_{2g}.
\]
By naturality (\ref{eqn:naturality}) of $\tau$ and Lemma \ref{lem:GnI-divisibility},
\[
	\tau(G_m^{\mathcal I}) = k \cdot \tau(G_n^{\mathcal I}) \leq n \left(\wedge^3 H\right)/H.
\]
By Lemma \ref{lem:primitive-img}, there exists a primitive class $v \in \left(\wedge^3 H\right)/H$ so that $mv$ is contained in $n \left(\wedge^3 H\right)/H$; in other words, $n$ divides $m$. By symmetry, $m$ also divides $n$, i.e. $n = m$.
\end{proof}

The main theorem of this section now follows as an immediate consequence.
\begin{proof}[Proof of Theorem \ref{thm:non-isomorphic}]
If $n \neq m \in \Z_{\geq 0}$ then the $G_n$ and $G_m$ are not conjugate as subgroups of $\Mod(\Sigma_{2g})$. By construction, $G_n$ and $G_m$ are the images $\mathrm{im}(\rho_n)$ and $\mathrm{im}(\rho_m)$ of the monodromy representations $\rho_n$ and $\rho_m$ of $\pi_n$ and $\pi_m$ respectively. By Corollary \ref{cor:noniso-criterion}, $\pi_n$ and $\pi_m$ are inequivalent Lefschetz fibrations.
\end{proof}


\section{Ruled surfaces and their blowups}\label{sec:diffeo}

The goal of this section is to determine the diffeomorphism type of the Lefschetz fibrations $\pi_n: X_n \to S^2$ of Section \ref{sec:twisted-mck} (Proposition \ref{prop:Xn-diffeo}). Along the way, we will also determine the diffeomorphism type of the blowdown of $X_n$ of the four $(-1)$-sections found in Section \ref{sec:twisted-mck} (Proposition \ref{prop:diffeo-ruled}).

\subsection{Diffeomorphism type of $X_n$}

We first compute the algebraic topology invariants of $X_n$.
\begin{lem}\label{lem:Xn-alg-top}
For any $n \in \Z_{\geq 0}$,
\[
	\sigma(X_n) = -4, \quad \chi(X_n) = 8 - 4g, \quad b_1(X_n) = 2g, \quad b_2^+(X_n) = 1, \quad b_2(X_n) = 6
\]
\end{lem}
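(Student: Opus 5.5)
The computation for $\chi$ and $b_1$ comes straight from the monodromy factorization; for $\sigma$ and $b_2^+$ I would avoid computing signatures directly and instead compare with $X_0$ through partial conjugation.

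\textbf{Euler characteristic.} The factorization defining $\pi_n$ has $2(2g+2) = 4g+4$ Dehn twists. The curves $f^n(c)$ are nonseparating exactly when $c$ is, and a glance at Figure~\ref{fig:mck-vanishing-cycles} shows the vanishing cycles $B_0,\dots,B_{2g},C$ are all nonseparating. The standard formula for a Lefschetz fibration of fiber genus $2g$ with $r$ singular fibers over $S^2$ then gives
\[
\chi(X_n) = \chi(S^2)\chi(\Sigma_{2g}) + r = 2(2-4g) + 4g+4 = 8-4g.
\]

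\textbf{First Betti number.} The group $\pi_1(X_n)$ is the quotient of $\pi_1(\Sigma_{2g})$ by the normal closure of the vanishing cycles; I will ignore sections, since any section kills nothing further because $\pi_1(S^2)=1$. So $H_1(X_n) = H/\langle [B_i],[C],[f^n(B_i)],[f^n(C)]\rangle$. Since $f\in\mathcal I_{2g}$, we have $[f^n(c)]=[c]$ in $H$, so $H_1(X_n)\cong H_1(X_0)$. It then suffices to compute the span of $[B_0],\dots,[B_{2g}],[C]$ from the figure, or to quote that $X_0\cong \Sigma_g\times S^2\#4\overline{\CP^2}$ (Cadavid, Korkmaz, Hamada), which gives $b_1=2g$.

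\textbf{Signature.} Partial conjugation cuts $X_n$ along $\pi^{-1}(\partial D_1)$ and reglues by a fiber-preserving map. The two pieces $\pi^{-1}(D_i)$ are unchanged. The gluing region is a mapping torus $M_\eta$ with $\eta$ of finite order, so $H_1(M_\eta;\mathbb Q)$ is just $\mathbb Q\oplus H^\eta$. I would argue via Novikov additivity together with Wall's non-additivity correction. That correction depends on the Lagrangians $\ker(H_1(M_\eta)\to H_1(\pi^{-1}(D_i)))$, and the regluing by $\mathcal F$ changes them only through the action of $f^n$ on homology, which is trivial because $f$ is Torelli. Hence $\sigma(X_n)=\sigma(X_0)=\sigma(\Sigma_g\times S^2\#4\overline{\CP^2})=-4$. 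As a fallback, use the local signature formula for hyperelliptic-type invariants (Endo/Meyer cocycle). The paper's introduction also asserts that partial conjugation preserves $\sigma$ and $\chi$ (citing \cite{auroux-pc}), so I can simply invoke that.

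\textbf{Remaining Betti numbers.} By Poincaré duality $b_3=b_1=2g$, so
\[
b_2 = \chi - 2 + 2b_1 = 8-4g-2+4g = 6,
\]
and $b_2^+ = (b_2+\sigma)/2 = 1$.

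The main obstacle is the signature invariance. I would lean on the known fact that partial conjugation by a mapping class acting trivially on homology preserves all of these invariants, justified by the Wall non-additivity argument sketched above.
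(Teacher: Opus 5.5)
Your outline is the paper's proof step for step: count vanishing cycles for $\chi$; present $H_1(X_n)$ as $H$ modulo vanishing cycles and use $f\in\mathcal I_{2g}$ to reduce $b_1$ to $n=0$; use Novikov additivity for $\sigma$; take the $n=0$ values from the literature; then do arithmetic. All the final values are right, but several supporting claims are wrong and should be fixed.

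\emph{Signature.} No Wall correction appears, and the Torelli property plays no role here. $X_n$ is $\pi_0^{-1}(D_1)$ and $\pi_0^{-1}(D_2)$ glued along their \emph{entire} common boundary $\pi_0^{-1}(E)$, which is a closed $3$-manifold. Novikov additivity therefore gives $\sigma(X_n)=\sigma(\pi_0^{-1}(D_1))+\sigma(\pi_0^{-1}(D_2))=\sigma(X_0)$ for any regluing map. Wall's correction is a Maslov index of Lagrangians in $H_1$ of the boundary \emph{surface} of the gluing region, and that surface is empty here. The subspaces $\ker(H_1(M_\eta)\to H_1(\pi^{-1}(D_i)))$ you describe do not enter. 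So what you call the main obstacle is a one-line step, exactly as in the paper; Torelli is needed only for $b_1$.

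\emph{The curve $C$ is separating.} It splits $\Sigma_{2g}$ into two genus-$g$ halves, which is where the reducible fibers $F_1^n\cup F_2^n$ and $F_3^n\cup F_4^n$ come from. This is harmless for $\chi$, since every Lefschetz critical point contributes $+1$ either way. It does matter if you fall back on Endo's formula for $\sigma(X_0)$; note that Endo's formula does not apply to $X_n$ for $n\ge 1$, which are not hyperelliptic. With $4g+2$ nonseparating cycles and two separating cycles of type $g$,
\[
-\frac{(2g+1)(4g+2)}{4g+1}+2\left(\frac{4g^2}{4g+1}-1\right)=-4,
\]
whereas treating all $4g+4$ as nonseparating gives a non-integer. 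This computation is also a safer source for $\sigma(X_0)$ than quoting the diffeomorphism type of $X_0$. The paper credits that identification to Korkmaz only for $g\ge 3$, and for $g=2$ derives it in Proposition~\ref{prop:Xn-diffeo} from this very lemma.

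\emph{Sections.} Your reason for ignoring sections is backwards. The fact $\pi_1(S^2)=1$ only makes $\pi_1(\Sigma_{2g})\to\pi_1(X_n)$ surjective. The final copy of $\Sigma_{2g}\times D^2$ contributes one further $2$-handle, attached along a lift of the boundary circle. This handle can impose a relation beyond the vanishing cycles, and it is a section that makes this relation trivial. That is why the paper first notes that $\pi_n$ has sections, by Lemma~\ref{lem:bounding-pair-sections}. With that citation in place, your $b_1$ step is exactly the paper's.
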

\begin{proof}
By Korkmaz's computations \cite[Section 5]{korkmaz}, the lemma holds in the case of $n = 0$, i.e. the MCK Lefschetz fibration $\pi_0: X_0 \to S^2$. (In fact, Korkmaz determines the diffeomorphism type of $X_0$ for $g \geq 3$; also see Proposition \ref{prop:Xn-diffeo}.)

To compute $\sigma(X_n)$ for $n \geq 1$, consider the Lefschetz fibration $Z \to D^2$ with monodromy factorization
\[
	T_{B_0} \dots T_{B_{2g}} T_C \in \Mod(\Sigma_{2g}).
\]
Then $X_n$ is formed by gluing two copies of $Z$ together along their boundaries by some diffeomorphism $\partial Z \to \partial Z$ (which varies with $n$) for any $n \geq 0$ (cf. Section \ref{sec:partial-conj}). By Novikov additivity,
\[
	\sigma(X_n) = 2\sigma(Z) = \sigma(X_0) = -4
\]
for all $n \geq 0$.

To compute $\chi(X_n)$ for all $n \in \Z_{\geq 0}$, note that $\pi_n: X_n \to S^2$ has $(4g+4)$-many vanishing cycles and that
\[
	\chi(X_n) = 4 - 8g + (4g+4) = 8 - 4g.
\]

To compute $b_1(X_n)$ for $n \geq 1$, recall first that the positive factorizations of $T_{\delta_1} T_{\delta_2} T_{\delta_3} T_{\delta_4}\in \Mod(\Sigma_{2g}^4)$ given in Lemma \ref{lem:bounding-pair-sections} are lifts of the positive factorization of the identity in $\Mod(\Sigma_{2g})$ given in Corollary \ref{cor:partial-MCKs} that defines the Lefschetz fibration $\pi_n$, and hence $\pi_n: X_n \to S^2$ admits sections. Therefore,
\[
	H_1(X_n) \cong H_1(\Sigma_{2g})/ \Z\{[B_0],\, \dots, \,[B_{2g}],\, [C], \, [f^n(B_0)],\, \dots, \,[f^n(B_{2g})], \, [f^n(C)] \}.
\]
Because $f^n$ acts trivially on $H_1(\Sigma_{2g})$, this implies that $b_1(X_n) = b_1(X_0) = 2g$ for all $n \geq 1$.

Finally, compute $b_2^+(X_n)$ and $b_2(X_n)$ by solving the system of equations
\begin{align*}
8 - 4g &= \chi(X_n) = 1 - 2g + (b_2^+(X_n) + b_2^-(X_n)) - 2g + 1, \\
-4 &= \sigma(X_n) = b_2^+(X_n) - b_2^-(X_n). \qedhere
\end{align*}
\end{proof}

The algebraic topology invariants of $X_n$ determine its diffeomorphism type by a theorem of Liu \cite{liu}.
\begin{prop}\label{prop:Xn-diffeo}
For any $n \in \Z_{\geq 0}$, the $4$-manifold $X_n$ is diffeomorphic to $\left(\Sigma_g \times S^2\right) \# 4\overline{\CP^2}$ and to $(\Sigma_g \tilde\times S^2) \# 4 \overline{\CP^2}$.
\end{prop}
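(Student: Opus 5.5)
We plan to invoke Liu's theorem \cite{liu}: a minimal symplectic $4$-manifold with $b_2^+ = 1$ and $K^2 < 0$ is a ruled surface (an $S^2$-bundle over a surface of positive genus). Equivalently, a symplectic $4$-manifold with $b_2^+=1$ whose minimal model has $K^2<0$ is a blowup of an irrational ruled surface. Since $X_n$ is the total space of a Lefschetz fibration of genus $2g \geq 4$, Gompf's theorem gives a symplectic form on $X_n$. Let $M_n$ be a minimal model, with $X_n \cong M_n \# k\overline{\CP^2}$. Lemma \ref{lem:Xn-alg-top} gives $c_1^2(X_n) = 2\chi + 3\sigma = 16 - 8g - 12 = 4 - 8g$. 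Hence $K_{M_n}^2 = 4 - 8g + k$.

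The argument then proceeds in three steps. First, $X_n$ has four disjoint $(-1)$-sections, by Lemma \ref{lem:bounding-pair-sections} together with the discussion in Section \ref{sec:background}. These are disjoint symplectic (or at least smoothly embedded) exceptional spheres, so we may blow down at least these four, which gives $k \geq 4$. Second, we bound $k$ from above. Since $b_2(X_n) = 6$ and $b_2^+ = 1$, blowing down reduces $b_2^-$ from $5$, so $k \leq 5$. If $k=5$, then $M_n$ has $b_2 = 1$, $b_1 = 2g$, $\sigma = 1$ and $\chi = 4-4g+... $; explicitly $\chi(M_n) = 8-4g-5 = 3-4g$. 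Then $K^2 = 2\chi + 3\sigma = 9 - 8g < 0$, so by Liu $M_n$ would be ruled, which forces $b_2(M_n) = 2$. This is a contradiction, so $k = 4$. Third, $M_n$ then has $b_2 = 2$, $b_1 = 2g$, $\sigma = 0$ and $K^2 = 8-8g < 0$ for $g\ge 2$. Liu's theorem (or McDuff's classification of minimal symplectic manifolds with $b_2^+=1$ containing spheres) shows $M_n$ is an $S^2$-bundle over a surface, and $b_1 = 2g$ makes the base $\Sigma_g$. So $M_n$ is $\Sigma_g \times S^2$ or $\Sigma_g\tilde\times S^2$.

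Finally, both bundles become diffeomorphic after one blowup, since $(\Sigma_g \times S^2)\#\overline{\CP^2} \cong (\Sigma_g\tilde\times S^2)\#\overline{\CP^2}$. Therefore $X_n \cong M_n \# 4\overline{\CP^2}$ is diffeomorphic to both $(\Sigma_g\times S^2)\#4\overline{\CP^2}$ and $(\Sigma_g\tilde\times S^2)\#4\overline{\CP^2}$, which proves the proposition.

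The main obstacle is applying Liu's theorem cleanly. Its hypothesis concerns the minimal model, so we must confirm that the minimal-model $K^2$ is negative, which is exactly the role of the $k \le 5$ case analysis. We also need the blow-down procedure to be compatible with symplectic exceptional classes. If this step causes trouble, we will instead argue directly that any symplectic $4$-manifold with $b_2^+=1$ and $c_1^2 < 0$ that is not rational is a blown-up irrational ruled surface, using $b_1 = 2g \neq 0$ to exclude the rational case.
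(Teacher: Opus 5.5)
Your proposal is correct and follows essentially the paper's argument: a Gompf--Thurston symplectic form, a minimal model $M_n$ with $0 \le k \le 5$, Liu's theorem via $c_1^2(M_n) = 4 - 8g + k < 0$, $b_1 = 2g$ to identify the base as $\Sigma_g$, and the one-blowup diffeomorphism between the two $S^2$-bundles. The only difference is how you pin down $k=4$: you get $k \ge 4$ from the four $(-1)$-sections and rule out $k=5$ separately, whereas the paper notes $c_1^2(M_n) < 0$ for every $k \le 5$ and reads off $k=4$ from the Euler characteristic once $M_n$ is an $S^2$-bundle over $\Sigma_g$. Your route is therefore an unnecessary detour, and it needs the sections to be symplectic (which Gompf--Thurston provides), not merely smoothly embedded.
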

\begin{proof}
By the Gompf--Thurston construction, the $4$-manifold $X_n$ admits a symplectic form. Let $M_n$ be a minimal, symplectic $4$-manifold so that $M_n \# k\overline{\CP^2}$ is diffeomorphic to $X_n$. Because $b_2(X_n) = 6$ and $b_2^+(X_n) = 1$ by Lemma \ref{lem:Xn-alg-top}, there is a bound $0 \leq k \leq 5$. By Lemma \ref{lem:Xn-alg-top},
\[
	\sigma(M_n) = -4 + k, \quad \chi(M_n) = 8 - 4g - k, \quad b_1(M_n) = 2g, \quad b_2^+(M_n) = 1.
\]
Then because $g \geq 2$,
\[
	c_1^2(M_n) = 2\chi(M_n) + 3\sigma(M_n) = 4 - 8g + k \leq 9 - 8g < 0.
\]
Liu's theorem \cite[Theorem A]{liu} shows that $M_n$ is an irrational ruled surface. Hence $M_n$ is an $S^2$-bundle over $\Sigma_g$ because $b_1(M_n) = 2g$, i.e. $M_n$ is diffeomorphic to $\Sigma_g \times S^2$ or to $\Sigma_g \widetilde{\times} S^2$, and $k = 4$ by Euler characteristic considerations. Since $\left(\Sigma_g \times S^2\right) \# \overline{\CP^2}$ and $\left(\Sigma_g \widetilde\times S^2\right) \# \overline{\CP^2}$ are diffeomorphic, the proposition follows.
\end{proof}

An immediate corollary is the smooth portion of Theorem \ref{thm:infty-fibrations}.
\begin{cor}\label{cor:smooth-infty-fib}
Let $X$ be a ruled surface with $\chi(X) = 4 - 4g < 0$. There exist infinitely many inequivalent Lefschetz fibrations $X \# 4 \overline{\CP^2} \to S^2$ of genus $2g$.
\end{cor}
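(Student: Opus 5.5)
This follows by combining three results already in hand: the family $\pi_n\colon X_n \to S^2$, $n \in \Z_{\geq 0}$, constructed in Section \ref{sec:twisted-mck}; the inequivalence result Theorem \ref{thm:non-isomorphic}; and the identification of the total spaces in Proposition \ref{prop:Xn-diffeo}. First, let $X$ be a ruled surface with $\chi(X) = 4-4g < 0$. Then $X$ is an $S^2$-bundle over $\Sigma_g$ with $g \geq 2$, so $X$ is diffeomorphic to either $\Sigma_g \times S^2$ or $\Sigma_g \tilde\times S^2$. (Here I read ``ruled surface'' as the minimal $S^2$-bundle, as in the introduction.) Fixing this $g$, each $\pi_n$ is a genus-$2g$ Lefschetz fibration. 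By Proposition \ref{prop:Xn-diffeo}, $X_n$ is diffeomorphic to both $(\Sigma_g\times S^2)\#4\overline{\CP^2}$ and $(\Sigma_g\tilde\times S^2)\#4\overline{\CP^2}$. Hence in either case we may choose a diffeomorphism $\Psi_n\colon X\#4\overline{\CP^2} \to X_n$.

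Next, I would transport the fibrations to the fixed manifold by setting $\pi_n' := \pi_n \circ \Psi_n\colon X\#4\overline{\CP^2}\to S^2$. This is again a genus-$2g$ Lefschetz fibration, since the local models and relative minimality are preserved under diffeomorphism. Moreover, $\pi_n'$ is equivalent to $\pi_n$ via the pair $(\Psi_n, \Id_{S^2})$. Since equivalence of Lefschetz fibrations is an equivalence relation, if $\pi_n'$ and $\pi_m'$ were equivalent for some $n\neq m$, then $\pi_n$ and $\pi_m$ would be equivalent. This contradicts Theorem \ref{thm:non-isomorphic}. So the $\pi_n'$ are pairwise inequivalent, and there are infinitely many of them, indexed by $n\in\Z_{\geq 0}$.

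No step is a genuine obstacle; the substantive work was done in Theorem \ref{thm:non-isomorphic} and Proposition \ref{prop:Xn-diffeo}. The only points needing care are the following. One is the convention that inequivalence is checked within a single fixed total space, which the composition with $\Psi_n$ handles. The other is that Proposition \ref{prop:Xn-diffeo} covers both ruled surfaces of base genus $g$. That holds because $(\Sigma_g\times S^2)\#\overline{\CP^2}$ and $(\Sigma_g\tilde\times S^2)\#\overline{\CP^2}$ are diffeomorphic, so the result does not depend on which $S^2$-bundle $X$ is.
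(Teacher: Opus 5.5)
Your proposal is correct and matches the paper's proof: it combines Proposition \ref{prop:Xn-diffeo} (each $X_n$ is diffeomorphic to $X\#4\overline{\CP^2}$ for either $S^2$-bundle $X$ over $\Sigma_g$ with $g\geq 2$) with Theorem \ref{thm:non-isomorphic}. Your explicit transport $\pi_n\circ\Psi_n$ only spells out what the paper leaves implicit when it regards the $\pi_n$ as fibrations on $X\#4\overline{\CP^2}$.
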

\begin{proof}
By assumption, $X = \Sigma_g \times S^2$ or $\Sigma_g \tilde\times S^2$ with $g \geq 2$. By Proposition \ref{prop:Xn-diffeo}, there are diffeomorphisms $X_n \cong X \# 4 \overline{\CP^2}$ for all $n \in \Z_{\geq 0}$. By Theorem \ref{thm:non-isomorphic}, the Lefschetz fibrations $\pi_n: X \# 4 \overline{\CP^2} \to S^2$ of genus $2g$ are pairwise inequivalent for all $n \in \Z_{\geq 0}$.
\end{proof}

\subsection{Lefschetz pencils on ruled surfaces}\label{sec:pencils-ruled}

In this section we prove Theorem \ref{thm:infty-pencils-ruled} by studying the topology of $X_n$ after blowing down certain sets of disjoint $(-1)$-sections.

\begin{defn}\label{defn:sections-monodromy}
Fix any $n \in \Z_{\geq 0}$. For $i = 1$ or $2$, let $\sigma_{1, i}^n$, $\sigma_{2, i}^n$, $\sigma_{3, i}^n$, $\sigma_{4, i}^n$ denote the four disjoint $(-1)$-sections of $\pi_n: X_n \to S^2$ defined by the positive factorization (Lemma \ref{lem:bounding-pair-sections})
\begin{equation}\label{eqn:sections-monodromy}
	T_{B_{0,1}^i} T_{B_{1,1}^i} \dots T_{B_{2g,1}^i} T_{C_1^i} T_{\tilde f^n(B_{0, 2}^i)} T_{\tilde f^n(B_{1,2}^i)} \dots T_{\tilde f^n(B_{2g,2}^i)} T_{\tilde f^n(C_2^i)} = T_{\delta_1} T_{\delta_2} T_{\delta_3} T_{\delta_4} \in \Mod(\Sigma_{2g}^4)
\end{equation}
where the curves $B_{0, 1}^i, \dots, B_{2g, 1}^i, C_1^i, B_{1,2}^i, \dots, B_{2g,2}^i, C_2^i$ in $\Sigma_{2g}^4$ are as shown in Figure \ref{fig:MCK-lifts}.
\end{defn}
Similarly as in the proof of Proposition \ref{prop:Xn-diffeo}, let $M_n^i$ denote the $4$-manifold obtained by blowing down the $(-1)$-sections $\sigma_{1,i}^n$, $\sigma_{2,i}^n$, $\sigma_{3, i}^n$, and $\sigma_{4, i}^n$ in $X_n$.
\begin{lem}\label{lem:parity-ruled}
Let
\[
	Q_{M_n^i}: H_2(M_n^i; \Z) \times H_2(M_n^i; \Z) \to \Z
\]
denote the intersection form of $M_n^i$. The lattice $(H_2(M_n^i), Q_{M_n^i})$ is even if $i = 1$ and is odd if $i = 2$.
\end{lem}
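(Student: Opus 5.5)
We need to determine the parity of the intersection form of $M_n^i$. The approach is to find a class whose square can be computed, namely the regular fiber class of the pencil, and to use the fact that $M_n^i$ is already known to be an $S^2$-bundle over $\Sigma_g$.

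By the argument of Proposition~\ref{prop:Xn-diffeo}, $M_n^i$ is minimal with $b_2 = 2$. Indeed, $X_n \cong M_n^i \# 4\overline{\CP^2}$, and the blowdown $M_n^i$ is symplectic: the Gompf--Thurston form can be chosen so that the $(-1)$-sections are symplectic, or one can simply use Liu's theorem after blowing down further. Hence $M_n^i$ is diffeomorphic to $\Sigma_g\times S^2$, which has even intersection form, or to $\Sigma_g\tilde\times S^2$, which has odd form. So $H_2(M_n^i)\cong\Z^2$ with a unimodular form of signature $0$, and parity is the only distinction. It therefore suffices to exhibit a single class of odd square when $i=2$, and to rule out odd squares when $i=1$.

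\textbf{Step 1: the square of the fiber class.} Let $F$ be the regular fiber of $\pi_n$ in $X_n$, and let $E_1,\dots,E_4$ be the exceptional classes, which are the four $(-1)$-sections. In $X_n$ we have $F\cdot F = 0$ and $F\cdot E_j = 1$. The proper transform relation $F = \bar F - \sum_j E_j$, where $\bar F$ is the image of $F$ in $M_n^i$, gives
\[
\bar F^2 = F^2 + 4 = 4 .
\]
This holds for both $i$, so the fiber class alone does not decide parity. We need a class $\Gamma\in H_2(M_n^i)$ with $\Gamma\cdot\bar F$ odd, or some other class whose square we can read off.

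\textbf{Step 2: use $w_2$ via the Wu formula.} The lattice is even if and only if $w_2(M_n^i) = 0$, which holds if and only if $c_1 \equiv 0 \pmod 2$. I would compute $K\cdot\bar F$ by adjunction. The fiber $\bar F$ has genus $2g$ and is symplectic, so
\[
K\cdot \bar F = 2(2g)-2-\bar F^2 = 4g-6,
\]
which is even. This is consistent with both cases and again does not distinguish them. So the information must come from the monodromy, and the difference between $i=1$ and $i=2$ lies only in the lifts of Figure~\ref{fig:MCK-lifts}.

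\textbf{Step 3: compute a second class from the lifts.} The plan is to compute $H_2(X_n)$ through the handle decomposition determined by the factorization \eqref{eqn:sections-monodromy}. Classes of $H_2$ correspond to relations $\sum_k m_k[\ell_k]=0$ in $H_1(\Sigma_{2g})$ among the vanishing cycles, giving a surface built from the Lefschetz thimbles and a $2$-chain in the fiber. Their squares are computed by the Ozbagci--Stipsicz (Gurtas) formula: a sum of thimble contributions $-\sum m_k^2$ plus an intersection-number term in the fiber.

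For instance, take the relation coming from the hyperelliptic symmetry, $[B_{j,1}^i]=[B_{j,2}^i]$ up to sign after capping. The corresponding class $S$ in $X_n$ has intersections with the sections $S\cdot E_j$ determined by which boundary components $\delta_j$ separate the lifted curves $B_{j,1}^i$ and $B_{j,2}^i$ in $\Sigma_{2g}^4$. After blowing down, the image $\bar S = S + \sum_j (S\cdot E_j)E_j$ has
\[
\bar S^2 = S^2 + \sum_j (S\cdot E_j)^2 .
\]
The partial conjugation by $\tilde f^n$ does not affect homology classes in the fiber, since $f\in\mathcal I_{2g}$. Nor does it affect the intersection with the boundary components, since $\tilde f$ is a product of twists about curves that are null-homologous relative to the boundary pattern and fix each $\delta_j$. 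Hence the parity of $\bar S^2$ is independent of $n$, and it suffices to treat $n=0$. For $n=0$ the answer is known from Hamada~\cite[Sections 3.4.1--3.4.2]{hamada}: the blowdown is $\Sigma_g\times S^2$ for one set of lifts and $\Sigma_g\tilde\times S^2$ for the other.

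\textbf{Main obstacle.} The hardest step is making the $n$-independence rigorous, that is, showing that the parity of the intersection form is determined by homological data unchanged by $\tilde f^n$. An alternative I would try first is this: $M_n^i$ decomposes along $\partial$ of the preimage of $D_1$ as $Z^i\cup_{\mathcal F} Z'^i$, with the pieces independent of $n$ and the gluing map $\mathcal F$ acting trivially on the homology of the boundary $3$-manifold, because $\tilde f$ acts trivially on $H_1(\Sigma_{2g}^4)$ (Torelli, lifted). A Mayer--Vietoris argument then identifies $(H_2(M_n^i),Q)$ with $(H_2(M_0^i),Q)$, and the $n=0$ case follows from Hamada's identification of the blowdowns.
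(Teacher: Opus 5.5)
There is a genuine gap. Steps 1 and 2 are correct but, as you note yourself, inconclusive. Your proof therefore rests entirely on two things you do not prove: the case $n=0$, quoted from Hamada, and the claim that the lattice $(H_2(M_n^i),Q_{M_n^i})$ does not depend on $n$.

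Your justification of the $n$-independence fails. Knowing that the gluing map acts trivially on the homology of the separating $3$-manifold lets Mayer--Vietoris identify $H_2$ of the two glued manifolds as groups, but not their intersection forms. A class crossing the cut has the form $S_1\cup S_2$, with $\partial S_2$ the image of $\partial S_1$. Its square depends on the framing with which the gluing map carries $\partial S_1$ across, and the action on homology does not see that framing. For instance, the Gluck twist $(x,\theta)\mapsto(\mathrm{rot}_\theta(x),\theta)$ of $S^2\times S^1$ acts trivially on homology. Yet regluing $S^2\times D^2\cup S^2\times D^2$ by it turns $S^2\times S^2$ (even) into $\CP^2\#\overline{\CP^2}$ (odd). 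In the same way, $\Sigma_g\times S^2$ and $\Sigma_g\tilde\times S^2$ differ by a homologically trivial regluing along $S^1\times S^2$. So parity is exactly the invariant your mechanism cannot control.

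Step 3 has the same defect. The class $S$, its square, and the numbers $S\cdot E_j$ are never computed, and the $n$-independence of $\bar S^2 \bmod 2$ is asserted on purely homological grounds. Moreover, for $i=1$, a single class of even square would not show that the lattice is even.

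The missing idea is to use classes supported entirely over $D_1$, where the partial conjugation changes nothing. The natural choice is the reducible singular fiber $F_1\cup F_2$ with vanishing cycle $C_1^i$. Its components satisfy $F_1^2=F_2^2=-1$ and $F_1\cdot F_2=1$. Which sections meet which component is read off from which boundary components $\delta_j$ lie on which side of $C_1^i$ in Figure~\ref{fig:MCK-lifts}, and none of this depends on $n$.

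For $i=2$, the component $F_1$ is disjoint from all four sections. So $[F_1]\in H_2(M_n^2)$ has square $-1$, and the form is odd.

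For $i=1$, $F_1$ meets only $\sigma_{4,1}^n$, and $F_2$ meets $\sigma_{1,1}^n,\sigma_{2,1}^n,\sigma_{3,1}^n$. Hence $\alpha_1=[F_1]+[\sigma_{4,1}^n]$ and $\alpha_2=[F_2]+[\sigma_{1,1}^n]+[\sigma_{2,1}^n]+[\sigma_{3,1}^n]$ lie in $H_2(M_n^1)$, with Gram matrix $\left(\begin{smallmatrix}0&1\\1&2\end{smallmatrix}\right)$. A unimodular sublattice of full rank in $H_2(M_n^1)\cong\Z^2$ must be all of it, so the form is even.

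This is the paper's argument. It needs neither Hamada's identification at $n=0$ nor any comparison between different values of $n$.
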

\begin{proof}
First, we determine the intersection form of $M_n^i$. The blowdown $X_n \to M_n^i$ decomposes the lattice $(H_2(X_n), Q_{X_n})$ as an orthogonal direct sum
\[
	(H_2(X_n), Q_{X_n}) \cong (H_2(M_n^i), Q_{M_n^i}) \oplus (\Z\{\sigma_{1, i}^n, \sigma_{2, i}^n, \sigma_{3, i}^n, \sigma_{4, i}^n\}).
\]

Consider the reducible singular fiber $F \subseteq X_n$ of $\pi_n$ corresponding to the vanishing cycle $C_1^i$ in the monodromy factorization of $\pi_n$ as in (\ref{eqn:sections-monodromy}). Then $F$ is a union of two genus-$g$ surfaces $F_1$ and $F_2$ intersecting transversely once, and
\[
	Q_{X_n}([F_1], [F_1]) = Q_{X_n}([F_2], [F_2]) = -1, \qquad Q_{X_n}([F_1], [F_2]) = 1.
\]

Below, we determine the parity of the lattice $(H_2(M_n^i), Q_{M_n^i})$ depending on the index $i$. Note that $H_2(M_n^i)$ is torsion-free for both $i = 1, 2$ because $H_2(X_n)$ is, by Proposition \ref{prop:Xn-diffeo}.
\begin{enumerate}
\item If $i = 1$ then for some $\alpha_1, \alpha_2 \in H_2(M_n^1)$
\[
	[F_1] = \alpha_1 - [\sigma_{4,1}^n], \qquad [F_2] = \alpha_2 - [\sigma_{1,1}^n] - [\sigma_{2,1}^n] - [\sigma_{3,1}^n]
\]
as homology classes in $H_2(X_n)$ with respect to the orthogonal sum decomposition given above, up to possibly permuting the indices of $F_1, F_2$. Then because the sections $\sigma_{j, 1}^n$ have self-intersection $-1$,
\[
	Q_{M_n^1}(\alpha_1, \alpha_1) = 0, \qquad Q_{M_n^1}(\alpha_2, \alpha_2) = 2, \qquad Q_{M_n^1}(\alpha_1, \alpha_2) = 1.
\]
The restriction of $Q_{M_n^1}$ to the $\Z$-span $\Z\{\alpha_1, \alpha_2\}$ is unimodular. Moreover, $H_2(M_n^1) \cong \Z^2$ as a group by Lemma \ref{lem:Xn-alg-top}, and so $\Z\{\alpha_1, \alpha_2\}$ has finite index in $H_2(M_n^1)$. These two facts together imply that $\Z\{\alpha_1, \alpha_2\} = H_2(M_n^1)$. Therefore, the lattice $(H_2(M_n^1), Q_{M_n^1})$ is even because $Q_{M_n^1}(\alpha_j, \alpha_j)$ is even for both $j = 1, 2$.

\item If $i = 2$ then $[F_1] \in H_2(X_n)$ is orthogonal to the sections $\sigma_{1, 2}^n, \sigma_{2,2}^n, \sigma_{3,2}^n, \sigma_{4,2}^n$ and so $[F_1]$ is contained in $H_2(M_n^2)$ under the orthogonal sum decomposition given above, and
\[
	Q_{M_n^2}([F_1], [F_1]) = -1.
\]
Because there exists an element of $H_2(M_n^2)$ with odd self-intersection, the lattice $(H_2(M_n^2), Q_{M_n^2})$ is odd. \qedhere
\end{enumerate}
\end{proof}

The following proposition uses Liu's theorem \cite[Theorem A]{liu} and the parity of the intersection form of $M_n^i$ to determine its diffeomorphism type.
\begin{prop}\label{prop:diffeo-ruled}
There are diffeomorphisms
\[
	M_n^i \cong \begin{cases}
	\Sigma_g \times S^2 & \text{ if } i = 1, \\
	\Sigma_g \tilde\times S^2& \text{ if } i = 2.
	\end{cases}
\]
\end{prop}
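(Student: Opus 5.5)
We first identify $M_n^i$ up to diffeomorphism as an $S^2$-bundle over $\Sigma_g$, arguing as in Proposition \ref{prop:Xn-diffeo}, and then use Lemma \ref{lem:parity-ruled} to decide which bundle it is.

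\textbf{Step 1: $M_n^i$ is symplectic.} The four $(-1)$-sections $\sigma^n_{j,i}$ come from the lift (\ref{eqn:sections-monodromy}) of the monodromy factorization to $\Mod(\Sigma_{2g}^4)$. Blowing them down therefore turns $\pi_n$ into a Lefschetz pencil on $M_n^i$ with four base points. By Gompf's theorem on pencils, $M_n^i$ carries a symplectic form.

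\textbf{Step 2: invariants of $M_n^i$.} Blowing down four spheres does not change $b_1$ or $b_2^+$, lowers $\chi$ by $4$ and raises $\sigma$ by $4$. From Lemma \ref{lem:Xn-alg-top} we get
\[
	\chi(M_n^i)=4-4g,\quad \sigma(M_n^i)=0,\quad b_1(M_n^i)=2g,\quad b_2^+(M_n^i)=1,\quad b_2(M_n^i)=2.
\]
In particular $c_1^2(M_n^i)=2\chi+3\sigma=8-8g<0$ for $g\ge 2$.

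\textbf{Step 3: Liu's theorem.} Let $N$ be a minimal symplectic model with $M_n^i\cong N\#k\overline{\CP^2}$. Then $c_1^2(N)=8-8g+k$. Since $b_2(M_n^i)=2$ and $b_2^+(N)\geq 1$, we have $k\le 1$, so $c_1^2(N)\le 9-8g<0$. By Liu's theorem \cite[Theorem A]{liu}, $N$ is an irrational ruled surface. Because $b_1(N)=2g$, $N$ is an $S^2$-bundle over $\Sigma_g$, and then $b_2(N)=2$ forces $k=0$. Hence $M_n^i$ is diffeomorphic to $\Sigma_g\times S^2$ or to $\Sigma_g\tilde\times S^2$.

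\textbf{Step 4: deciding by parity.} The two bundles are told apart by the intersection form. For $\Sigma_g\times S^2$ the form is the hyperbolic form $H$, which is even. For $\Sigma_g\tilde\times S^2$ the form is $\langle 1\rangle\oplus\langle -1\rangle$, which is odd; equivalently, $w_2\ne 0$ there. Lemma \ref{lem:parity-ruled} says the form is even for $i=1$ and odd for $i=2$. This gives $M_n^1\cong\Sigma_g\times S^2$ and $M_n^2\cong\Sigma_g\tilde\times S^2$.

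\textbf{Main obstacle.} The step needing most care is Step 1. One has to check that blowing down the sections produces a genuine symplectic manifold, so that the minimal-model argument and Liu's theorem apply. The route is through the induced Lefschetz pencil and Gompf's theorem. Alternatively, one can note that the Gompf--Thurston form on $X_n$ can be chosen so that the sections are symplectic $(-1)$-spheres, and then symplectically blow them down.
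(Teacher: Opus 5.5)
Your proposal is correct and follows the paper's route. You obtain a symplectic form on $M_n^i$ (the paper uses a Gompf--Thurston form on $X_n$ that makes the sections symplectic and then blows down, which is your stated alternative), compute the invariants, apply Liu's theorem to a minimal model to get an $S^2$-bundle over $\Sigma_g$, and then distinguish the two bundles by parity via Lemma~\ref{lem:parity-ruled}. The only cosmetic difference is in excluding $k=1$: the paper observes that the resulting minimal $N$ would have $\sigma(N)\neq 0$, whereas you apply Liu's theorem to $N$ directly and let $b_2(N)=2$ force $k=0$.
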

\begin{proof}
For both $i = 1, 2$, the $(-1)$-sections $\sigma_{1,i}^n, \sigma_{2,i}^n, \sigma_{3,i}^n, \sigma_{4,i}^n$ are disjoint. By the Gompf--Thurston construction \cite[Theorem 10.2.18]{gompf-stipsicz}, there exists a symplectic form on $X_n$ turning the sections $\sigma_{1,i}^n, \sigma_{2,i}^n, \sigma_{3,i}^n, \sigma_{4,i}^n$ into symplectic submanifolds. Blowing down these sections then yields a symplectic form on $M_n^i$. Using Lemma \ref{lem:Xn-alg-top}, compute that
\[
	\sigma(M_n^i) = 0, \quad \chi(M_n^i) = 4-4g, \quad b_2^+(M_n^i) = 1, \quad b_1(M_n^i) = 2g, \quad b_2(M_n^i) = 2, \quad c_1^2(M_n^i) = 8-8g.
\]
Therefore, $c_1^2(M_n^i) < 0$ because $g \geq 2$.

Let $N_n^i$ be a minimal symplectic manifold so that $M_n^i$ is a symplectic blowup of $N_n^i$. If $M_n^i$ is not minimal then because $b_2^-(M_n^i) = 1$, we can compute that
\[
	b_2^+(N_n^i) = b_2 = 1, \qquad c_1^2(N_n^i) = 9 - 8g < 0.
\]
By Liu's theorem \cite[Theorem A]{liu}, $N_n^i$ is an irrational ruled surface, which is a contradiction because $\sigma(N_n^i) \neq 0$. Therefore, $M_n^i$ is a minimal symplectic $4$-manifold to which Liu's theorem \cite[Theorem A]{liu} applies, i.e. $M_n^i$ is an irrational ruled surface with $b_1(M_n^i)  = 2g$. This means that $M_n^i$ is diffeomorphic to either $\Sigma_g \times S^2$ or $\Sigma_g \tilde\times S^2$. Because $\Sigma_g \times S^2$ has even intersection form and $\Sigma_g \tilde\times S^2$ has odd intersection form, Lemma \ref{lem:parity-ruled} gives the desired conclusion.
\end{proof}

With Proposition \ref{prop:diffeo-ruled}, we are able to define the Lefschetz pencils of interest on ruled surfaces.
\begin{defn}\label{defn:infty-pencils}
For any $n \in \Z_{\geq 0}$ and $i = 1, 2$, let $B_n^i \subseteq M_n^i$ denote the image of the four sections $\sigma_{1,i}^n$, $\sigma_{2,i}^n$, $\sigma_{3,i}^n$, and $\sigma_{4, i}^n$ under the blowdown $X_n \to M_n^i$. Let
\[
	\pi_{n,i}: M_n^i - B_n^i \to S^2
\]
denote the the Lefschetz pencil induced by the Lefschetz fibration $\pi_n: X_n \to S^2$.
\end{defn}
The following corollary proves the smooth portion of Theorem \ref{thm:infty-pencils-ruled}.
\begin{cor}\label{cor:smooth-infty-pencil}
Let $X$ be a ruled surface with $\chi(X) = 4 - 4g < 0$. There are infinitely many pairwise inequivalent Lefschetz pencils $X - B \to S^2$ of genus $2g$ and $\# B = 4$.
\end{cor}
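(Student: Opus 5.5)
The plan is to combine Proposition \ref{prop:diffeo-ruled}, Theorem \ref{thm:non-isomorphic}, and Proposition \ref{prop:noniso-pencils}. By assumption $X$ is $\Sigma_g \times S^2$ or $\Sigma_g \tilde\times S^2$ with $g \geq 2$. I take $i = 1$ in the first case and $i = 2$ in the second. For each $n \in \Z_{\geq 0}$, Proposition \ref{prop:diffeo-ruled} gives a diffeomorphism $\Psi_n : M_n^i \to X$. I then transport the pencil $\pi_{n,i} : M_n^i - B_n^i \to S^2$ of Definition \ref{defn:infty-pencils} along $\Psi_n$. This produces a Lefschetz pencil
\[
  \pi_{n,i}\circ \Psi_n^{-1}: X - \Psi_n(B_n^i) \to S^2 .
\]
It has genus $2g$, since its fibers are those of $\pi_n$ with the four base points filled in, which does not change the genus. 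It has exactly $\#\Psi_n(B_n^i) = 4$ base points, because $B_n^i$ is the image of the four disjoint $(-1)$-sections.

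Next I show that these pencils are pairwise inequivalent. First I check that the Lefschetz fibration associated to $\pi_{n,i}$, obtained by blowing up its four base points, is $\pi_n : X_n \to S^2$ itself up to equivalence. Blowing up the images of the blown-down sections recovers $X_n$ together with the exceptional spheres as sections. The fibration so obtained is the original $\pi_n$, because blowup and blowdown are inverse local operations compatible with the local models in the definition of a pencil. Transporting by $\Psi_n$ does not change the equivalence class of either the pencil or the associated fibration. Hence the associated fibrations of the pencils for $n \neq m$ are $\pi_n$ and $\pi_m$, and these are inequivalent by Theorem \ref{thm:non-isomorphic}. Proposition \ref{prop:noniso-pencils}, applied with $|B_1| = |B_2| = 4$, then shows that the pencils are inequivalent.

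The only step needing care is the identification of the associated fibration of $\pi_{n,i}$ with $\pi_n$. Here I would use the correspondence between pencils and fibrations with $(-1)$-sections described in Section \ref{sec:background}, which makes this immediate. Since there are infinitely many $n \in \Z_{\geq 0}$, the proof is then complete.
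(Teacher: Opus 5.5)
Your proof is correct and follows the paper's argument: choose $i$ via Proposition \ref{prop:diffeo-ruled}, then combine Theorem \ref{thm:non-isomorphic} with Proposition \ref{prop:noniso-pencils}. You also spell out that the fibration associated to $\pi_{n,i}$ is $\pi_n$, which the paper leaves implicit. The only cosmetic difference is that the paper normalizes all base loci to a single fixed $B$. You can arrange this as well by post-composing $\Psi_n$ with a diffeomorphism of the connected $4$-manifold $X$ that carries $\Psi_n(B_n^i)$ onto $B$.
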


\begin{proof}
By assumption, $X = \Sigma_g \times S^2$ or $\Sigma_g \tilde\times S^2$ with $g \geq 2$. By Proposition \ref{prop:diffeo-ruled}, there exists $i \in \{1, 2\}$ such that $X$ is diffeomorphic $M_n^i$ for all $n \in \Z_{\geq 0}$. We may assume that $B_n^i = B$ under these diffeomorphisms for all $n \in \Z_{\geq 0}$.

Because the Lefschetz fibrations $\pi_n: X_n \to S^2$ are pairwise inequivalent by Theorem \ref{thm:non-isomorphic}, Proposition \ref{prop:noniso-pencils} implies that the pencils $\pi_{n,i}: X - B \to S^2$ and $\pi_{m,i}: X - B \to S^2$ are inequivalent Lefschetz pencils if $n \neq m$.
\end{proof}

\begin{rmk}[{Comparison with \cite{LS-sections}}]\label{rmk:prev-work}
In \cite[Corollary 1.5, Remark 3.4]{LS-sections} we constructed a fiber-sum indecomposable Lefschetz fibration $M_g \to S^2$ for every genus $g \geq 2$ that admits infinitely many homologously distinct sections of equal self-intersection. One may hope to blow down these sections to obtain infinitely many pairwise inequivalent Lefschetz pencils. However, this requires the sections to have self-intersection $-1$. Li \cite[Corollary 3]{li-symplectic-spheres} showed that any closed, symplectic $4$-manifold admitting infinitely many homology classes represented by smoothly embedded spheres of self-intersection $-1$ is rational or ruled; on the other hand, the symplectic manifold $M_g$ constructed is neither rational nor ruled because $b_2^+(M_g) \geq 2$ \cite[Proposition 6.6]{LS-sections}.

Even aside from the specific examples $\pi: M_g \to S^2$ above, the constructions of \cite{LS-sections} always yield sections $\sigma$ of self-intersection $[\sigma]^2 \leq -2$. Below, we prove this upper bound using an idea of Smith \cite{smith} to study the action of $\Mod(\Sigma_{g,1})$ on $\partial \mathbb H^2 \cong S^1$.

Let $p \in \Sigma_{g,1}$ denote the marked point and fix some lift $\tilde p \in \mathbb H^2$ of $p \in \Sigma_{g,1}$ under the covering $\mathbb H^2 \to \Sigma_g$. There is a well-defined homomorphism \cite[Section 5.5.4]{farb-margalit}
\[
	\partial: \Mod(\Sigma_{g,1}) \hookrightarrow \Homeo^+(\partial \mathbb H^2)
\]
where $\partial h \in \Homeo^+(\partial \mathbb H^2)$ is the homeomorphism induced by the lift $\tilde \varphi \in \Diff^+(\mathbb H^2, \tilde p)$ of any representative $\varphi \in \Diff^+(\Sigma_{g,1})$ of $h$. For any Dehn twist $T_\ell \in \Mod(\Sigma_{g,1})$, the homeomorphism $\partial T_{\ell}$ fixes countably many points of $\partial \mathbb H^2$ and moves all other points of $\partial \mathbb H^2$ clockwise \cite[Proposition 2.1]{smith}.

Consider any sequence $T_{\ell_1}, \dots, T_{\ell_r} \in \Mod(\Sigma_{g,1})$ of Dehn twists such that
\[
	T_{\ell_r} \dots T_{\ell_1} = 1 \in \Mod(\Sigma_{g,1})
\]
and let $\pi: M \to S^2$ and $\sigma: S^2 \to M$ denote the Lefschetz fibration and section corresponding to this positive factorization. Because each $\partial T_{\ell_i} \in \Homeo^+(\partial \mathbb H^2)$ fixes points, the sequence $\partial T_{\ell_1}, \dots, \partial T_{\ell_r}$ of homeomorphisms admits a well-defined rotation number $c \in \N$. According to \cite[Lemma 2.3]{smith}, the rotation number $c$ is equal to $-[\sigma]^2$, where $[\sigma]^2$ denotes the self-intersection of $\sigma$.

Consider a Lefschetz fibration $\pi: M \to S^2$ and sections $\sigma_k: S^2 \to M$ for any $k \in \Z$ constructed in \cite{LS-sections}. The monodromy factorization of $\sigma_k$ is of the form \cite[Theorem 2.1(b)]{LS-sections}
\[
	T_{\ell_{r_1+r_2}} \dots T_{\ell_{r_1+1}} T_{P_\gamma^k(\ell_{r_1})} \dots T_{P_\gamma^k(\ell_{1})} =1 \in \Mod(\Sigma_{g,1})
\]
for some point-push mapping class $P_\gamma \in \pi_1(\Sigma_g, p) \trianglelefteq \Mod(\Sigma_{g,1})$ commuting with the product $T_{\ell_{r_1}} \dots T_{\ell_1} \in \Mod(\Sigma_{g,1})$ such that for some $1 \leq i \leq r_1$ and for some $r_1+1 \leq j \leq r_1+r_2$,
\[
	\hat i([\gamma], [\ell_i]) \neq 0 \qquad\text{ and } \qquad \hat i([\gamma], [\ell_j]) \neq 0
\]
where $\hat i$ denotes the (algebraic) intersection form of $\Sigma_g$. In particular, no power of $T_{\ell_i}(\gamma)$ or $T_{\ell_j}(\gamma)$ is contained in $\langle \gamma \rangle \leq \pi_1(\Sigma_g, p)$.

The homeomorphism $\partial P_\gamma$ of $\partial \mathbb H^2$ coincides with the action of a deck transformation of $\mathbb H^2 \to \Sigma_g$ \cite[p. 150-151]{farb-margalit} and hence has exactly two fixed points, one attracting point $p_1$ and one repelling point $p_2$. Because $P_\gamma$ and $T_{\ell_{r_1}} \dots T_{\ell_1}$ commute, the homeomorphisms $\partial(T_{\ell_{r_1}} \dots T_{\ell_1})$ and $\partial (T_{\ell_{r_1+r_2}} \dots T_{\ell_{r_1+1}})$ must also fix each point $p_1, p_2 \in \partial \mathbb H^2$. Because no power of $T_{\ell_i} P_\gamma T_{\ell_i}^{-1}$ is contained in $\langle P_\gamma \rangle$, the homeomorphism $\partial (T_{\ell_i} P_\gamma T_{\ell_i}^{-1})$ moves at least one point $p_1$ or $p_2$. In other words, $\partial T_{\ell_i}$ moves at least one point $p_1$ or $p_2$ clockwise, and similarly for $\partial T_{\ell_j}$. Hence each sequence $\partial T_{\ell_1}, \dots, \partial T_{\ell_{r_1}}$ and $\partial T_{\ell_{r_1}}, \dots, \partial T_{\ell_{r_1+r_2}}$ moves the point $p_1$ around $\partial \mathbb H^2$ clockwise at least once. Therefore, the rotation number $c\in \N$ of the sequence $\partial T_{\ell_1}, \dots, \partial T_{\ell_{r_1+r_2}}$ is at least $2$, and $[\sigma_k]^2 = -c \leq -2$.
\end{rmk}


\section{Symplectic forms}\label{sec:symplectic}

Throughout this section, fix $g \geq 2$ and let $M^1$ and $M^2$ denote the $4$-manifolds
\[
	M^1 := \Sigma_g \times S^2, \qquad M^2 := \Sigma_g \tilde\times S^2.
\]
The goal of this section is to prove that the Lefschetz pencils $\pi_{n,i}: M_n^i - B_n^i \to S^2$ defined in Definition \ref{defn:infty-pencils} are symplectic for a common symplectic form on $M^i$ for all $n \in \Z_{\geq 0}$ with respect to some diffeomorphism $M_n^i \cong M^i$.
\begin{thm}\label{thm:symplectic-LP}
Let $i = 1$ or $i = 2$. There exist a symplectic form $\omega$ on $M^i$ and diffeomorphisms $\Psi_n^i: M^i_n \to M^i$ for all $n \in \Z_{\geq 0}$ such that the smooth locus of the irreducible components of all fibers of $\pi_{n,i}: M^i_n - B_n^i \to S^2$ are all symplectic submanifolds of $(M^i_n, (\Psi_n^i)^*\omega)$. Moreover, the regular fibers $F_n$ of $\pi_{n,i}$ are all homologous in $H_2(M^i; \Z)$ under $\Psi_n^i$, i.e.
\[
	(\Psi_n^i)_*([F_n]) = (\Psi_m^i)_*([F_m]) \in H_2(M^i; \Z)
\]
for all $n, m \in \Z_{\geq 0}$.
\end{thm}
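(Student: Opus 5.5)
We will produce, for each $n$, a symplectic form $\omega_n$ on $X_n$ for which all fibers of $\pi_n$ and the four sections $\sigma^n_{j,i}$ are symplectic. We then blow down to forms $\bar\omega_n$ on $M_n^i$, choose diffeomorphisms $M_n^i \cong M^i$ along which all data are cohomologous, and finish with Lalonde--McDuff. The key point is that $\omega_n$ should be built by the Gompf--Thurston construction in a way that is uniform in $n$. Write $X_n = \pi_0^{-1}(D_1) \cup_{\mathcal F_n} \pi_0^{-1}(D_2)$ as in Section \ref{sec:partial-conj}, with gluing map $\mathcal F_n$ induced by an isotopy from a representative $\varphi^n$ of $\tilde f^n$ to its conjugate by $\tilde\eta$. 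The natural first move is to choose the representative of $\tilde h_i$ (equivalently of $\eta$) and of $\tilde f$ carefully. Ideally $\tilde\eta$ is an honest order-two diffeomorphism, and $\tilde f$ is represented by a diffeomorphism commuting with it on the nose, preserving a fixed area form $\omega_\Sigma$ on the fiber and fixing neighborhoods of the marked points. Then $\mathcal F_n(x,t) = (\varphi^n(x), t)$ is a fiber-preserving map of mapping tori preserving the fiberwise area form, so the closed $2$-form on $\pi^{-1}(\partial D_1)$ restricted from a Gompf--Thurston form on $\pi_0^{-1}(D_1)$ glues to the corresponding form on $\pi_0^{-1}(D_2)$. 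We expect \Cref{prop:isotoping-hateta} in Appendix \ref{appendix-b} to supply exactly such representatives.

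Given this, we run the Gompf--Thurston construction relative to the fibration. We take a closed $2$-form $\zeta_n$ on $X_n$ that restricts to $\omega_\Sigma$ on each fiber, is Kähler-type near critical points, and is standard near the four sections. We then set $\omega_n = \zeta_n + K\pi_n^*\omega_{S^2}$ for $K$ large. The pieces over $D_1$ and $D_2$ are identical for all $n$, so a single $K$ works for every $n$ simultaneously. The sections are made symplectic by the usual modification near them, as in \cite[Theorem 10.2.18]{gompf-stipsicz}. Blowing down the four $(-1)$-sections with a fixed symplectic size $\lambda$ gives $\bar\omega_n$ on $M_n^i$, for which the pencil fibers are symplectic away from $B_n^i$.

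Next we control cohomology. By Proposition \ref{prop:diffeo-ruled}, $H_2(M_n^i)$ is spanned by classes we can name uniformly in $n$: for $i = 1$ these are $\alpha_1, \alpha_2$ from the components $F_1, F_2$ of the reducible fiber (as in Lemma \ref{lem:parity-ruled}), together with the regular fiber class $[F_n] = \alpha_1 + \alpha_2$ up to the section corrections. Analogous classes exist for $i = 2$. The reducible fiber lies in $\pi_0^{-1}(D_1)$, which is untouched by the regluing. Its component areas are therefore independent of $n$, as are the regular fiber area and the pairing with the blown-down sections. Hence $[\bar\omega_n]$ evaluates identically on these bases. We then pick a diffeomorphism $\Psi_n^i: M_n^i \to M^i$ sending this basis to a fixed basis of $H_2(M^i)$. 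This is possible because the intersection lattices agree: the form is $\begin{pmatrix}0&1\\1&2\end{pmatrix}$ for $i = 1$, and its odd analogue for $i = 2$. Every lattice automorphism of $H_2$ of a ruled surface preserving the orientation/ruling data is realized by a diffeomorphism; otherwise we compose with a fiber-reflecting diffeomorphism. This map carries $[F_n]$ to a common class, which is essentially the content of \Cref{lem:nu-choice}. The pushforwards $(\Psi_n^{i})_*\bar\omega_n$ are then cohomologous symplectic forms on $M^i$. By Lalonde--McDuff \cite{lalonde-mcduff}, cohomologous symplectic forms on a ruled surface are diffeomorphic, via a diffeomorphism that is isotopic to the identity or at least acts trivially on $H_2$. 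Composing $\Psi_n^i$ with it yields a common $\omega$ with $(\Psi_n^i)^*\omega = \bar\omega_n$.

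The main obstacle is the first step. We need a representative of $\tilde f$ that commutes exactly, not merely up to isotopy, with a finite-order representative of $\tilde h_i$ and preserves the area form. Otherwise $\mathcal F_n$ only preserves $\omega_\Sigma$ up to an exact error that grows with $n$, and the uniform choice of $K$ and of cohomology classes fails. Our first attempt would be to realize $x, y, \tilde h_i(x), \tilde h_i(y)$ as geodesics that are mutually disjoint and $\eta$-symmetric, and take twists supported in $\eta$-symmetric annuli. A Moser argument then makes everything area-preserving.
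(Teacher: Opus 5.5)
Your architecture matches the paper's: uniform Gompf--Thurston forms glued by an area-preserving map that commutes exactly with the monodromy, $n$-independent areas of $F_1,F_2$ and of the sections, the diffeomorphisms of Lemma \ref{lem:nu-choice}, then Lalonde--McDuff. However, the step you call the main obstacle is built around an object that cannot exist.

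\textbf{The main gap: no order-two representative of $\tilde h_i$.} There is no order-two, or even periodic, diffeomorphism representing $\tilde h_i\in\Mod(\Sigma_{2g,4})$. A nontrivial periodic orientation-preserving diffeomorphism of $\Sigma_{2g}$ has isolated fixed points of index $+1$. So its number of fixed points equals its Lefschetz number, which depends only on its class in $\Mod(\Sigma_{2g})$. For $h=[\eta]$ the trace on $H_1$ is $0$ (the $\pm1$ eigenspaces in Lemma \ref{lem:transfer} both have dimension $2g$). Hence every periodic representative of $h$ has exactly two fixed points and cannot fix $p_1,\dots,p_4$.

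Your proposed fix, making $x,y,\tilde h_i(x),\tilde h_i(y)$ symmetric under an involution, therefore only gives commuting representatives for an involution fixing $p_1,p_2$. In its mapping torus the sections through $p_3,p_4$ are not horizontal. So $(x,t)\mapsto(\varphi^n(x),t)$ has no reason to preserve them or their standard neighborhoods, and you have not arranged this.

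This is exactly what Proposition \ref{prop:isotoping-hateta} supplies, and it is not what you expected from it. It gives:
\begin{itemize}
\item an involution $\hat\eta$ representing the image $\hat h\in\Mod(\Sigma_{2g,2})$, which does have order $2$ (Corollary \ref{cor:puncture-involution});
\item an isotopy $\kappa_t$ with $\tilde\eta=\hat\eta\circ\kappa_1$ representing $\tilde h$ and fixing neighborhoods $O$ of all four points;
\item the crucial track condition $\kappa_t(O)\cap W=\emptyset$ for all $t$, where $W$ contains the support of $\varphi$.
\end{itemize}
The fiber form $\hat\theta$ lives on $M_{\hat\eta}$ and is transported by $\hat{\mathcal G}=\mathcal K\circ\tilde{\mathcal G}$. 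The gluing map $((x,t),s)\mapsto((\kappa_t^{-1}\varphi\kappa_t(x),t),s)$ on $M_{\tilde\eta}\times(-2,2)$ is well defined because $\varphi\hat\eta=\hat\eta\varphi$. It corresponds to $(y,t)\mapsto(\varphi(y),t)$ on $M_{\hat\eta}$, hence preserves $\hat\theta$. It is the identity near all four sections precisely because of the track condition.

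\textbf{Secondary gaps.} First, a Gompf--Thurston form restricted to $\pi_0^{-1}(\partial D_1)$ is not automatically the mapping-torus form. You must patch $\hat{\mathcal G}^*\hat\theta$ in over a collar, which requires its class to equal the restriction of the global class $[\nu]$. The paper checks this: the image of $H^2(X_0;\R)\to H^2(\pi_0^{-1}(E);\R)$ is spanned by $\PD([\sigma(E)])$ (Lemma \ref{lem:tilde-eta-homology}), and $[\hat\theta]=\PD([\hat s_1(S^1)])$ (Lemma \ref{lem:theta-cohomology}). It also chooses $[\nu]$ with $\langle[\nu],[F]\rangle=1$ and $\langle[\nu],[\sigma_j^0]\rangle=0$. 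The latter makes the form exact near each section, which is what permits the McDuff--Polterovich standardization and hence an $n$-independent blow-down size.

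Second, realizing lattice automorphisms is not enough to build $\Psi_n^i$. The swap of $[\Sigma_g]$ and $[S^2]$ in $\Sigma_g\times S^2$ preserves the form but is not realized by any diffeomorphism, by minimal genus. So one must rule out $\alpha_1\mapsto\pm[S^2]$, which the paper does via the symplectic Thom conjecture and Li--Li; you should simply invoke Lemma \ref{lem:nu-choice} there.
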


The following theorem about the Lefschetz fibrations $\pi_n: X \to S^2$ will also follow from our proof of Theorem \ref{thm:symplectic-LP}.
\begin{thm}\label{thm:symplectic-LF}
There exist a symplectic form $\omega$ on $(\Sigma_g \times S^2) \# 4\overline{\CP^2}$ and diffeomorphisms $\Phi_n: X_n \to (\Sigma_g \times S^2) \# 4\overline{\CP^2}$ for each $n \in \Z_{\geq 0}$ such that the smooth locus of the irreducible components of all fibers of $\pi_n: X_n \to S^2$ are all symplectic submanifolds of $(X_n, \Phi_n^*\omega)$. Moreover, the regular fibers $F_n$ of $\pi_n$ are all homologous in $H_2((\Sigma_g \times S^2) \# 4\overline{\CP^2}; \Z)$ under $\Phi_n$, i.e.
\[
	(\Phi_n)_*([F_n]) = (\Phi_m)_*([F_m]) \in H_2((\Sigma_g \times S^2) \# 4\overline{\CP^2}; \Z)
\]
for all $n, m \in \Z_{\geq 0}$.
\end{thm}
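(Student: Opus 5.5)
We will prove Theorem \ref{thm:symplectic-LF} together with Theorem \ref{thm:symplectic-LP}. We fix $i$ and the four sections, and produce symplectic forms on the $X_n$ whose cohomology classes match under suitable diffeomorphisms. The plan has four steps: Gompf--Thurston forms on each $X_n$, diffeomorphisms $X_n \to X_0$ controlled on homology, a cohomological matching argument, and finally a uniqueness theorem to upgrade cohomologous forms to a single form.

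\textbf{Step 1: symplectic forms and homology of $X_n$.} For each $n$, the Gompf--Thurston construction \cite[Theorem 10.2.18]{gompf-stipsicz} applied to $\pi_n$ gives a symplectic form $\omega_n = \pi_n^*\omega_{S^2}\cdot K + \eta_n$. Here $\eta_n$ is a closed $2$-form restricting positively to fibers. The fiber class $[F]$ pairs nontrivially with $\eta_n$, and the four sections $\sigma^n_{j,i}$ together with the components of reducible fibers are symplectic. Since $X_n$ is obtained by gluing two copies of $Z \to D^2$ along the bundle isomorphism $\mathcal F$ of Section \ref{sec:partial-conj}, we describe $H_2(X_n)$ by Mayer--Vietoris from $H_2(Z)$ and $H_*(\partial Z)$. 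The gluing map $\mathcal F$ is built from an isotopy between $\varphi$ and $\tilde\eta^{-1}\varphi\tilde\eta$, where $\varphi$ represents $\tilde f^n$ and $f \in \mathcal I_{2g}$. Because $f$ acts trivially on $H_1(\Sigma_{2g})$, we expect $\mathcal F$ to act on $H_*(\partial Z)$ in a way compatible with the identity for $n=0$, up to a correction coming from the isotopy track. This is the content that Proposition \ref{prop:isotoping-hateta} (Appendix \ref{appendix-b}) should supply: it lets us choose representatives of the gluing maps so that the isotopy is controlled.

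\textbf{Step 2: diffeomorphisms identifying the key classes.} Using Step 1, we construct diffeomorphisms $\Phi_n : X_n \to X_0 \cong (\Sigma_g\times S^2)\#4\overline{\CP^2}$. Proposition \ref{prop:Xn-diffeo} gives abstract diffeomorphisms; we must then precompose with self-diffeomorphisms so that the following classes correspond under $\Phi_n$:
\begin{itemize}
\item the fiber class $[F_n]$,
\item the four section classes $[\sigma^n_{j,i}]$,
\item one component class of a reducible fiber.
\end{itemize}
Together these span a finite-index (in fact unimodular, by the computation in Lemma \ref{lem:parity-ruled}) sublattice of $H_2$. The rest of $H_2$ and $H_1$ is represented by tori/cylinders over $H_1(\Sigma_{2g})$, and these are preserved because $f$ is Torelli. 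The needed automorphisms of the lattice are realized by diffeomorphisms of $X_0$: reflections in $(-1)$-classes and bundle automorphisms of the ruled surface. This is where \Cref{lem:nu-choice} should enter, fixing the homology class of the fiber.

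\textbf{Step 3: matching cohomology classes.} Once the key classes correspond, we arrange $[\Phi_n^{-1\,*}\omega_n] = [\omega_0]$ in $H^2(X_0;\R)$. A class in $H^2$ is detected by its pairings with the basis from Step 2, plus the pairings with classes coming from $H_1(\Sigma_{2g})\otimes H_1(S^1)$, which live in the boundary. We have freedom in choosing $K$, in the areas of fiber components, and in the section-neighborhood areas in the Gompf--Thurston construction. We expect this freedom, together with the gluing-map control from Appendix \ref{appendix-b}, to be enough to prescribe equal pairings on all of $H_2$. The classes with boundary contributions are where $\eta_n$ must be chosen compatibly on $\pi_n^{-1}(\partial D_1)$ via $\mathcal F$; I expect this to be the main obstacle. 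My first attempt is to choose $\eta$ on the mapping torus $M_{\tilde\eta}$ to be $\mathcal F$-invariant, using the controlled representative from Proposition \ref{prop:isotoping-hateta}.

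\textbf{Step 4: from cohomologous forms to a single form.} Blowing down the symplectic $(-1)$-sections in each $X_n$ gives cohomologous symplectic forms on $M^i$. By Lalonde--McDuff \cite{lalonde-mcduff}, cohomologous symplectic forms on ruled surfaces are diffeomorphic via a map acting trivially on homology; these maps define $\Psi_n^i$, and transporting gives a common $\omega$. The same argument on the blowup $X_0$ (cohomologous forms with standard $(-1)$-classes are diffeomorphic, via the blowdown–blowup correspondence) gives the common form of Theorem \ref{thm:symplectic-LF}. The homology statement for $[F_n]$ follows from Step 2.
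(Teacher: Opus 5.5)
\textbf{The gap is in Step 3}, and the obstacle you name there is not the real one.

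$H_2(X_n;\R)$ contains no extra classes ``coming from $H_1(\Sigma_{2g})\otimes H_1(S^1)$''. By Lemma \ref{lem:Xn-alg-top}, $b_2(X_n)=6$, and by Lemma \ref{lem:nu-choice}\ref{lem:homology-xn} the classes $[F_1^n],[F_2^n],[\sigma_j^n]$ already span $H_2(X_n;\R)$. The torus classes over the equator contribute nothing new (cf. Lemma \ref{lem:tilde-eta-homology}). So $[\omega_n]$ is determined by the areas of the reducible-fiber components and of the four sections.

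The real difficulty is uniformity in $n$. Suppose you run Gompf--Thurston separately on each $X_n$. You get a symplectic form in the class $[\zeta_n]+K\,\PD[F_n]$ only for $K$ above a threshold $K_n$, which depends on the chosen $\zeta_n$ and hence on the increasingly twisted gluing map $\mathcal F^n$. Adding $t\,\pi_n^*\omega_{S^2}$ can only raise the section areas relative to the fiber areas, never lower them. Writing $\zeta_n+L\pi_n^*\omega_{S^2}$ merely shifts the threshold by $L$. A single class for all $n$ therefore needs a bound on $K_n$ uniform in $n$, and nothing in your independent constructions provides one. ``Freedom in choosing $K$ and the areas'' does not close this.

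\textbf{The missing idea} is not to construct the $\omega_n$ independently. Instead, build one form $\omega_0$ on $X_0$, and obtain $\omega_n$ by cutting $X_0$ along $\pi_0^{-1}(A)$ and regluing by $\mathcal F^n$. This requires $\mathcal F$ to be a symplectomorphism of $\omega_0$ over $A$. The paper arranges this in four steps.
\begin{enumerate}
\item It starts from the class $[\nu]$ of Lemma \ref{lem:nu-choice}\ref{lem:nu-i}: pairing $1$ with $F$, positive on components, $0$ on sections.
\item Over $A$ it replaces the Gompf--Thurston form by $\varepsilon\hat{\mathcal G}^*\hat\theta+\pi_0^*\omega_A$. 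Here $\hat\theta$ is induced by an area form $\theta$ invariant under the \emph{order-two} representative $\hat\eta$. This is cohomologically legitimate because $[\hat\theta]=\PD[\hat s_1]$ (Lemma \ref{lem:theta-cohomology}) and the restriction $H^2(X_0;\R)\to H^2(\pi_0^{-1}(E);\R)$ has rank one (Lemma \ref{lem:tilde-eta-homology}).
\item It standardizes the form near the sections, where $\mathcal F$ is the identity.
\item It takes $\varphi\in\Symp(\Sigma_{2g},\theta)$ commuting with $\hat\eta$ (Proposition \ref{prop:isotoping-hateta}). Then in $M_{\hat\eta}$-coordinates $\mathcal F$ is $(y,t)\mapsto(\varphi(y),t)$, which preserves the form.
\end{enumerate}
Each fiber component and section of $X_n$, together with its form, is then a copy of one in $X_0$. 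Hence the areas, and so $(\Phi_n^{-1})^*[\omega_n]$, are independent of $n$. The last sentence of your Step 3 points in this direction. As written, however, the proposal specifies neither the closed form over $A$, nor its cohomology class, nor the regluing that delivers uniformity.

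\textbf{Smaller points.}
\begin{itemize}
\item In Step 2, the lattice automorphisms you need need not be realizable: no diffeomorphism exchanges $[\Sigma_g]$ and $[S^2]$. What actually works is Lemma \ref{lem:nu-choice}\ref{lem:phi-n-i}. Li--Li's minimal genus bounds exclude the other solutions of the intersection equations, and a diffeomorphism acting by $-1$ on $H_2(M^i)$ fixes the remaining sign.
\item For $i=2$ your sublattice has index $2$, so it is not unimodular; finite index suffices.
\item Step 4 agrees with the paper's use of Lalonde--McDuff and McDuff's uniqueness of blowups.
\end{itemize}
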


We begin with an overview of the proofs of Theorems \ref{thm:symplectic-LP} and \ref{thm:symplectic-LF}.
\begin{enumerate}
\item The existence of the diffeomorphism $\Phi_n:X_n \to M^i\#4\overline{\CP^2}$
  sending regular fibers (and in fact the irreducible components of reducible singular fibers) to the same homology class
follows from a more careful analysis of the diffeomorphism found in \Cref{prop:diffeo-ruled}.
This is done in \Cref{lem:nu-choice}, relying on work of Liu~\cite{liu} and minimal genus computations in ruled surfaces due to Li--Li~\cite{li-li-minimal-genus}. Blowing down produces the diffeomorphisms $\Psi_n^i$.

\item Since $X_n$ is the blow-up of a ruled surface, it suffices to construct appropriate symplectic forms $\omega_n$ of $X_n$ such that $[(\Phi_n^{-1})^*(\omega_n)] \in H^2(M^i \# 4 \overline{\CP^2}; \Z)$ is independent of $n \in \Z_{\geq 0}$ by work of Lalonde--McDuff~\cite{lalonde-mcduff} and McDuff~\cite{mcduff1996symplectic}.

\item The first step is to build a suitable symplectic form $\omega_0$ in $X_0$ by the Gompf--Thurston construction. Realize $X_0$ (with four sections $\sigma_{1,i}^0$, $\sigma_{2,i}^0$, $\sigma_{3, i}^0$, $\sigma_{4,i}^0$) as the fiber sum of two fibrations over the two hemispheres on $S^2$ glued over the equator $E$, and let $A$ be an annulus neighborhood of $E$. An understanding of the (co)homology of $\pi_0^{-1}(A)$ allows us to control $\omega_0$ on $A$. Furthermore, we construct $\omega_0$ to be a standard symplectic form near the four $(-1)$-sections $\sigma_{1,i}^0$, $\sigma_{2,i}^0$, $\sigma_{3,i}^0$, $\sigma_{4,i}^0$ to ensure that the fibers remain symplectic in the blow-down $M_0^i$. This is all achieved in \Cref{lem:theta-cohomology,prop:GT-X0}.

\item Finally, to construct the forms $\omega_n$ we use the description of $X_n$ via partial conjugation: It is enough to find a representative $\varphi$ of the twisting mapping class $f \in \mathcal I_{2g}$ that induces a bundle symplectomorphism of $(\pi_0^{-1}(A),\omega_0|_A)$ acting by the identity near the sections $\sigma_{1,i}^0$, $\sigma_{2,i}^0$, $\sigma_{3,i}^0$, $\sigma_{4,i}^0$. In fact, we construct both the ``fiber part'' of $\omega_0|_A$ and $\varphi$ simultaneously in \Cref{prop:isotoping-hateta} (and discussion thereafter). The key observation is that the mapping class $\tilde{h}_i \in \Mod(\Sigma_{2g}^4)$ described in \Cref{thm:hamada-sections} and \Cref{lem:bounding-pair-sections} is \emph{reducible}.
\end{enumerate}

\subsection{(Co)homology computations}\label{sec:cohom}

The following lemma determines the homology classes of various fiber classes and section classes of $\pi_n: X_n \to S^2$ in $H_2(X_n; \Z)$. We will also specify a cohomology class $[\nu_i] \in H^2(X_0; \R)$ that will be a key input to the Gompf--Thurston construction of a symplectic form $\omega_0$ on $X_0$. Below, recall that $\sigma_{j, i}^n: S^2 \to X_n$ for $1 \leq j \leq 4$ and for $i = 1 ,2$ is the $(-1)$-section of $\pi_n$ specified in Definition \ref{defn:sections-monodromy}.
\begin{lem}\label{lem:nu-choice}
For any $n \in \Z_{\geq 0}$, let $F_1^n \cup F_2^n$ and $F_3^n \cup F_4^n$ denote the two reducible singular fibers of $\pi_n: X_n \to S^2$  corresponding to the vanishing cycles $C_1^i$ and $\tilde f^n(C_2^i)$ respectively. Let $F_1^n, F_3^n$ and $F_2^n, F_4^n$ denote the irreducible components depicted on the left and right side respectively of the corresponding vanishing cycle in Figure \ref{fig:MCK-lifts}. Let $F^n$ denote any regular fiber of $\pi_n$.
\begin{enumerate}[(a)]
\item For each $i = 1, 2$, there exists a diffeomorphism
\[
	\Phi_n^i: X_n \to M^i \# 4 \overline{\CP^2}
\]
such that
\[
	(\Phi_n^i)_*([F^n]) = (\Phi_0^i)_*([F^0]),  \qquad (\Phi_n^i)_*([F_j^n]) = (\Phi_0^i)_*([F_j^0]), \qquad (\Phi_n^i)_*([\sigma_{j,i}^n]) = E_j
\]
for all $1 \leq j \leq 4$, where $E_1, \dots, E_4 \in H_2(M^i \# 4 \overline{\CP^2}; \Z)$ denote the exceptional divisors coming from each summand $ \overline{\CP^2}$. \label{lem:phi-n-i}
\item For each $i = 1, 2$, the homology classes $[F_1^n]$, $[F_2^n]$, $[\sigma_{1,i}^n]$, $[\sigma_{2,i}^n]$, $[\sigma_{3,i}^n]$, and $[\sigma_{4,i}^n]$ span $H_2(X_n; \R)$, i.e.
\[
	H_2(X_n; \R) = \R\{[F_1^n], [F_2^n], [\sigma_{1,i}^n], [\sigma_{2,i}^n], [\sigma_{3,i}^n], [\sigma_{4,i}^n]\} \cong \R^6.\label{lem:homology-xn}
\]
\item For each $i = 1, 2$, there exists a cohomology class $[\nu_i] \in H^2(X_0; \R)$ such that
\[
	\langle [\nu_i], [F^0] \rangle = 1, \qquad \langle [\nu_i],  [F_j^0] \rangle > 0, \qquad \langle [\nu_i], [\sigma_{j,i}^0] \rangle = 0
\]
for all $1 \leq j \leq 4$. \label{lem:nu-i}
\end{enumerate}
\end{lem}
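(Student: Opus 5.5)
I would prove part (b) first, then (a), and deduce (c) from both.

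\textbf{Part (b).} By Lemma \ref{lem:Xn-alg-top}, $b_2(X_n)=6$, so it suffices to show that the six classes are linearly independent. Consider their intersection matrix. We have
\begin{align*}
[F_1^n]^2 &= [F_2^n]^2 = -1, & [F_1^n]\cdot[F_2^n] &= 1,\\
[\sigma_{j,i}^n]^2 &= -1, & [\sigma_{j,i}^n]\cdot[\sigma_{k,i}^n] &= 0 \quad (j\neq k).
\end{align*}
Each section meets the fiber $F_1^n\cup F_2^n$ exactly once, in one component. Which component can be read off Figure \ref{fig:MCK-lifts}, by seeing on which side of $C_1^i$ the boundary $\delta_j$ lies. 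For $i=1$ this is the split $\{4\}$ versus $\{1,2,3\}$ used in Lemma \ref{lem:parity-ruled}. A direct determinant computation then shows the Gram matrix is nondegenerate; the $(-1)$-section block reduces this to the computation of Lemma \ref{lem:parity-ruled}, where the $\alpha$-block has determinant $-1$ for $i=1$. Nondegeneracy gives linear independence, hence (b).

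\textbf{Part (a).} Blow down the four sections to get $M_n^i\cong M^i$ by Proposition \ref{prop:diffeo-ruled}. This identifies $H_2(X_n)\cong H_2(M^i)\oplus\Z\{E_j\}$ with $[\sigma_{j,i}^n]\mapsto E_j$. The images $\alpha_1,\alpha_2$ of $[F_1^n]+(\text{section terms})$ and $[F_2^n]+(\text{section terms})$ span $H_2(M_n^i)$, with intersection numbers independent of $n$. So it suffices to choose diffeomorphisms $M_n^i\to M^i$ carrying $(\alpha_1,\alpha_2)$ to a fixed pair of classes. Any two such pairs are related by an isometry of $H_2(M^i)\cong\Z^2$, a rank-two lattice with very few automorphisms. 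The key steps are then:
\begin{enumerate}
\item Use that $\alpha_1$ and $\alpha_2$ are represented by symplectic surfaces of genus $g$. Then apply Li--Li's minimal genus results together with the fact that symplectic surfaces minimize genus, to pin down which classes of $H_2(M^i)$ they can be. For instance, $\alpha_1$ with square $0$ and genus $g$ should be the section class of the ruling, not the fiber class.
\item Realize every remaining lattice automorphism (such as $-1$, or the swap of fiber and section classes when it is allowed) by a diffeomorphism of $M^i$, for example an orientation-reversing map on both factors, or compose with a fixed choice.
\end{enumerate}
The regular fiber class $[F^n]=[F_1^n]+[F_2^n]$ then has a constant image, and blowing up at the four points recovers $\Phi_n^i$ with $E_j$ fixed. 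I expect this homological rigidity step to be the main obstacle: one must exclude the possibility that the $\alpha$'s map to an inequivalent pair, using genus constraints together with the symplectic canonical class from Liu's theorem.

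\textbf{Part (c).} By (b), a class $[\nu_i]$ is determined by prescribing its values on $[F_1^0],[F_2^0],[\sigma_{j,i}^0]$. Set $\langle[\nu_i],[\sigma_{j,i}^0]\rangle=0$ and $\langle[\nu_i],[F_1^0]\rangle=\langle[\nu_i],[F_2^0]\rangle=1/2$. This gives $\langle[\nu_i],[F^0]\rangle=1$. It remains to check that $F_3^0$ and $F_4^0$ also pair positively. Write $[F_3^0],[F_4^0]$ in the basis of (b) by intersecting with the basis elements, noting that $F_3^0\cup F_4^0$ is another reducible fiber and each section hits exactly one of its components. Solving gives values $1/2+(\text{corrections})$, and one adjusts the split between $F_1^0$ and $F_2^0$, say to $t$ and $1-t$, so that all four pairings are positive.
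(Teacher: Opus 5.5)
Your proposal is correct. Part (a) is essentially the paper's argument:
\begin{itemize}
\item blow down via Proposition \ref{prop:diffeo-ruled};
\item enumerate the pairs $(\alpha_1,\alpha_2)$ with the forced Gram matrix;
\item exclude the wrong pair using Li--Li's minimal genus computations together with the symplectic Thom conjecture;
\item fix the sign with a diffeomorphism of $M^i$ acting by $-1$. The paper cites Li--Li, Theorem 3, and Wall. For $\Sigma_g\tilde\times S^2$, your ``reverse both factors'' has to be a bundle map covering a reflection of the base and acting antipodally on the fibers.
\end{itemize}

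A few corrections to (a):
\begin{itemize}
\item The paper applies symplectic Thom to $\alpha_1+\alpha_2$, the image of $[F^n]$. Its minimal genus is $2g$, versus $g$ for $[\Sigma_g]+2[S^2]$. Using $\alpha_1$ against $[S^2]$ works equally well. In both versions, though, the blow-down must use a form that is standard near the sections and compatible with the fibration, so that the blown-down surfaces stay symplectic. The paper relies on the same input, via its footnote pointing to Proposition \ref{prop:GT-X0}.
\item For $i=2$ only smooth genus is needed: the bad candidate $\alpha_2=-3[\Sigma_g]+[S^2]$ has minimal genus $3g-1>g$. Liu's canonical class plays no role anywhere.
\item For $i=2$ the $\alpha$-block is $\begin{pmatrix}-1&1\\1&3\end{pmatrix}$, with determinant $-4$. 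So $\alpha_1,\alpha_2$ span only an index-$2$ sublattice of $H_2(M_n^2;\Z)$. Your claims that they span $H_2(M_n^i)$, and that any two such pairs differ by an isometry of $H_2(M^i)$, are therefore not automatic. They must be replaced by the explicit enumeration, which gives $\pm([\Sigma_g]-[S^2],[\Sigma_g]+[S^2])$ and $\pm([\Sigma_g]-[S^2],-3[\Sigma_g]+[S^2])$.
\end{itemize}

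Where you genuinely differ is in (b), (c), and the $j=3,4$ part of (a), which your sketch of (a) omits. The paper reads all of these off the explicit classes found in (a). It runs the genus argument a second time on $(\alpha_3,\alpha_4)$, and it defines $[\nu_i]$ as a pullback of a class on $M^i\#4\overline{\CP^2}$.

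You instead work intrinsically in $H_2(X_n;\R)$. Nondegeneracy of the Gram matrix gives (b) with no genus input. Since $H_2(X_n;\Z)$ is torsion-free, $[F_3^n]$ and $[F_4^n]$ are determined by their $n$-independent intersection numbers with that basis. Abbreviating $F_j=F_j^n$ and $\sigma_j=\sigma_{j,i}^n$, one finds
\[
[F_3]=[F_1]+[\sigma_4]-[\sigma_3]\ (i=1),\qquad [F_3]=\tfrac12\bigl([F_1]+[F_2]\bigr)+\tfrac12\bigl([\sigma_1]+[\sigma_2]-[\sigma_3]-[\sigma_4]\bigr)\ (i=2),
\]
with $[F_4]=[F^n]-[F_3]$. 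So $t=\tfrac12$ already makes all four pairings positive in (c), for both $i$, and no adjustment is needed.

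Your route is cleaner. The second genus argument becomes unnecessary, and $(\Phi_n^i)_*[F_j^n]=(\Phi_0^i)_*[F_j^0]$ for $j=3,4$ follows automatically once it holds for $F_1$, $F_2$ and the sections. The paper's route, in exchange, identifies the actual classes in $H_2(M^i;\Z)$.
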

\begin{proof}
Fix $n\in \Z_{\geq 0}$ and $i = 1, 2$. Recall from Proposition \ref{prop:diffeo-ruled} that blowing down the sections $\sigma_{1,i}^n$, $\sigma_{2,i}^n$, $\sigma_{3,i}^n$, and $\sigma_{4,i}^n$ in $X_n$ yields a diffeomorphism
\[
	\Phi_n^i: X_n \to M^i \# 4 \overline{\CP^2}
\]
with $(\Phi_n^i)([\sigma_{j,i}^n]) = E_j$ for all $1 \leq j \leq 4$. The connected sum structure determines an orthogonal decomposition
\[
	H_2(M^i \# 4 \overline{\CP^2}; \Z) \cong H_2(M^i; \Z) \oplus \Z\{E_1, E_2, E_3, E_4\}.
\]

There exists a diffeomorphism $M^i \to M^i$ acting by negation on $H_2(M^i; \Z)$ by \cite[Theorem 3]{li-li-minimal-genus}. Therefore, there exists a diffeomorphism $\Psi^i \in \Diff^+(M^i \# 4 \overline{\CP^2})$ such that
\[
	\Psi^i_* = -\Id \oplus \Id: H_2(M^i; \Z) \oplus \Z\{E_1, E_2, E_3, E_4\} \to H_2(M^i; \Z) \oplus \Z\{E_1, E_2, E_3, E_4\}
\]
by \cite[Lemma 2]{wall}.

Irreducible components of reducible fibers of Lefschetz fibrations have self-intersection $-1$. Because $F_1^n$ and $F_2^n$ intersect once transversely (similarly, $F_3^n$ and $F_4^n$),
\begin{equation}\label{eqn:fiber-intersections}
	Q_{X_n}([F_j^n], [F_j^n]) = -1, \qquad Q_{X_n}([F_1^n], [F_2^n]) = Q_{X_n}([F_3^n], [F_4^n]) = 1,
\end{equation}
for all $1 \leq j \leq 4$, and $Q_{X_n}([F_j^n], [F_k^n]) = 0$ for all other pairs $1 \leq j, k \leq 4$. The intersection numbers $Q_{X_n}([F_j^n], [\sigma_{k,i}^n])$ for $1 \leq j, k \leq 4$ are specified by Figure \ref{fig:MCK-lifts}. By computing with these intersection numbers, we prove \ref{lem:phi-n-i}, \ref{lem:homology-xn}, \ref{lem:nu-i} separately in each case $i = 1$ and $i = 2$.

\medskip
\noindent
\emph{Case 1: $i = 1$.} Suppose $i = 1$ and consider $M^1 = \Sigma_g \times S^2$. There exist $\alpha_1, \alpha_2, \alpha_3, \alpha_4 \in H_2(M^1; \Z)$ such that
\begin{align*}
	(\Phi_n^1)_*[F^n_1] &= \alpha_{1} - E_4, \quad &(\Phi_n^1)_*[F^n_2] = \alpha_2 - E_1 - E_2 - E_3, \\
	(\Phi_n^1)_*[F^n_3] &= \alpha_{3} - E_3, \quad &(\Phi_n^1)_*[F^n_4] = \alpha_{4} - E_1 - E_2 - E_4.
\end{align*}
Then (\ref{eqn:fiber-intersections}) implies that
\[
	Q_{M^1}(\alpha_1, \alpha_1) = 0, \qquad Q_{M^1}(\alpha_2, \alpha_2)= 2, \qquad Q_{M^1}(\alpha_1, \alpha_2) = 1.
\]

Let $[\Sigma_g], [S^2] \in H_2(M^1; \Z)$ denote the classes of each factor. The only pairs of classes $\alpha_1, \alpha_2 \in H_2(M^1; \Z)$ satisfying the prescribed intersection pattern are
\begin{align*}
(\alpha_1, \alpha_2) = \pm ([\Sigma_g], [\Sigma_g] + [S^2]), \, \pm ([S^2], [\Sigma_g] + [S^2]).
\end{align*}

By blowing down the sections $\sigma_{j,1}^n$ for $1 \leq j \leq 4$, the diffeomorphism $\Phi_n^1$ induces a diffeomorphism $M_n^1 \to M^1 = \Sigma_g \times S^2$ sending $[F^n] \in H_2(M_n^1; \Z)$ to $\alpha_1 + \alpha_2 \in H_2(M^1; \Z)$, where we also denote by $F^n \subseteq M_n^1$ the image of a regular fiber $F^n \subseteq X_n$ under the blowdown $X_n \to M_n^1$. Because $F^n \subseteq M_n^1$ is a fiber of a genus-$2g$ Lefschetz pencil structure on $M_n^1$, there is a symplectic form on $M_n^1$ turning $F^n$ into a symplectic submanifold.\footnote{One can make a compatible symplectic form $\omega_n$ on $X_n$ standard near the sections, in a way that is compatible with the fibration (see \Cref{prop:GT-X0}\ref{standard-sections}). The form on $M_n^1$ is the blow-down of $\omega_n$~\cite[Section 5]{mcduff-polterovich}.} By the resolution of the symplectic Thom conjecture~\cite[Theorem 1.1]{ozsvath-szabo}, the smooth minimal genus of $[F^n] \in H_2(M_n^1; \Z)$ (and hence of $\alpha_1 + \alpha_2 \in H_2(M^1; \Z)$) is $2g$. However, the smooth minimal genus of $\pm (2[S^2] + [\Sigma_g]) \in H_2(M^1; \Z)$ is $g$ by work of Li--Li \cite[Theorem 1]{li-li-minimal-genus}. Therefore, $(\alpha_1, \alpha_2)$ is not equal to $\pm ([S^2], [\Sigma_g] + [S^2])$. By the same argument applied to $(\alpha_3, \alpha_4)$ and by noting that $[F^n] = [F^n_1] + [F^n_2] = [F^n_3] + [F^n_4]$, we conclude that
\[
	(\alpha_1, \alpha_2) = (\alpha_3, \alpha_4) = \pm ([\Sigma_g], [\Sigma_g] + [S^2]).
\]
After possibly replacing $\Phi_n^1$ with $\Psi^1 \circ \Phi_n^1$, 
\[
	(\Phi_n^1)_*([F_j^n]) = (\Phi_0^1)_*([F_j^0])
\]
for all $1 \leq j \leq 4$. This proves \ref{lem:phi-n-i} for $i = 1$.

The classes $\alpha_1$ and $\alpha_2$ form an $\R$-basis of $H_2(M^1; \R) \cong \R^2$, and so
\[
	\{(\Phi_n^1)_*([F^n_1]), (\Phi_n^1)_*([F^n_2]), E_1, E_2, E_3, E_4\}
\]
forms an $\R$-basis of $H_2(M^1 \# 4 \overline{\CP^2}; \R)$. Noting that $(\Phi_n^1)_*([\sigma_{j,1}^n]) = E_j$ for all $1 \leq j \leq 4$ concludes the proof of \ref{lem:homology-xn} for $i = 1$.

Finally, let $\beta \in H^2(M^1 \# 4 \overline{\CP^2}; \R)$ be any class such that
\[
	\langle \beta, \alpha_1\rangle = \langle \beta, \alpha_3 \rangle = \frac 12, \qquad \langle \beta, \alpha_2 \rangle = \langle \beta, \alpha_4 \rangle = \frac 12, \qquad \langle \beta, E_j \rangle = 0 \text{ for all }1 \leq j \leq 4
\]
which exists because $\alpha_1, \alpha_2, E_1, \dots, E_4$ are linearly independent in $H_2(M^1 \# 4 \overline{\CP^2}; \R)$. Letting $[\nu_1] = (\Phi_0^1)^*(\beta)$, compute for all $1 \leq j \leq 4$ that
\[
	\langle [\nu_1], [F^0_j] \rangle = \langle [\nu_1], [F^0_j] \rangle = \frac 12, \qquad \langle [\nu_1], [F^0] \rangle = \langle [\nu_1], [F^0_1] + [F^0_2] \rangle = 1, \qquad \langle [\nu_1], [\sigma_{j,1}^0] \rangle = 0,
\]
which proves \ref{lem:nu-i} for $i = 1$.

\medskip
\noindent
\emph{Case 2: $i = 2$.} Suppose $i = 2$ and consider $M^2 = \Sigma_g \tilde\times S^2$. There exist $\alpha_1, \alpha_2, \alpha_3, \alpha_4 \in H_2(M^2; \Z)$ such that
\begin{align*}
	(\Phi_n^2)_*[F^n_1] &= \alpha_{1}, \quad &(\Phi_n^2)_*[F^n_2] = \alpha_2 - E_1 - E_2 - E_3 - E_4, \\
	(\Phi_n^2)_*[F^n_3] &= \alpha_{3} - E_3 - E_4, \quad &(\Phi_n^2)_*[F^n_4] = \alpha_{4} - E_1 - E_2,
\end{align*}
Then (\ref{eqn:fiber-intersections}) implies that
\begin{align*}
	Q_{M^2}(\alpha_1, \alpha_1) &= -1, \qquad Q_{M^2}(\alpha_2, \alpha_2) = 3, \qquad Q_{M^2}(\alpha_1, \alpha_2) = 1, \\
	Q_{M^2}(\alpha_3, \alpha_3) &= 1, \qquad Q_{M^2}(\alpha_4, \alpha_4) = 1, \qquad Q_{M^2}(\alpha_3, \alpha_4) = 1.
\end{align*}
Let $[\Sigma_g] \in H_2(M^2; \Z)$ denote the class of a section of the $S^2$-bundle $M^2 \to \Sigma_g$ of self-intersection $1$ and let $[S^2] \in H_2(M^2; \Z)$ denote the class of the fiber. The only pairs of classes $\alpha_1, \alpha_2 \in H_2(M^2; \Z)$ and $\alpha_3, \alpha_4 \in H_2(M^2; \Z)$ satisfying the prescribed intersection pattern are
\begin{align*}
	(\alpha_1, \alpha_2) &= \pm ([\Sigma_g] - [S^2], -3[\Sigma_g] + [S^2]), \, \pm ([\Sigma_g] - [S^2], [\Sigma_g] + [S^2]) \\
	(\alpha_3, \alpha_4) &= \pm ([\Sigma_g], [\Sigma_g])
\end{align*}

By blowing down the sections $\sigma_{j,2}^n$ for $1 \leq j \leq 4$, the diffeomorphism $\Phi_n^2$ induces a diffeomorphism $M_n^2 \to M^2 = \Sigma_g \tilde\times S^2$ sending $[F^n_2]$ to $\alpha_2$, where we also denote by $F^n_2 \subseteq M_n^2$ the image of the genus-$g$ surface $F^n_2 \subseteq X_n$ under the blowdown $X_n \to M_n^2$. By work of Li--Li \cite[Theorem 2]{li-li-minimal-genus}, the smooth minimal genus of $\pm (-3 [\Sigma_g] +[S^2]) \in H_2(M^2; \Z)$ is $3g-1$. Therefore, $\alpha_2 \neq \pm (-3 [\Sigma_g] +[S^2])$ because $\alpha_2$ is represented by a smooth genus-$g$ surface and $g < 3g-1$ for all $g \geq 2$. Combining with the fact that $[F^n] = [F^n_1]+[F^n_2] = [F^n_3] + [F^n_4]$, we conclude that
\[
	(\alpha_1, \alpha_2, \alpha_3, \alpha_4) = \pm ([\Sigma_g] -[S^2], [\Sigma_g] + [S^2], [\Sigma_g], [\Sigma_g]).
\]
Therefore, after possibly replacing $\Phi_n^2$ with $\Psi^2 \circ \Phi^2_n$,
\[
	(\Phi_n^2)_*[F_j^n] = (\Phi_0^2)_*[F_j^0]
\]
for all $1 \leq j \leq 4$. This proves \ref{lem:phi-n-i} for $i = 2$.

The classes $\alpha_1$ and $\alpha_2$ form an $\R$-basis of $H_2(M^2; \R) \cong \R^2$, and so
\[
	\{(\Phi_n^2)_*[F^n_1], (\Phi_n^2)_*[F^n_2], E_1, E_2, E_3, E_4\}
\]
forms an $\R$-basis of $H_2(M^2 \# 4 \overline{\CP^2}; \R)$. Noting that $(\Phi_n^2)_*[\sigma_{j,2}^n] = E_j$ for all $1 \leq j \leq 4$ concludes the proof of \ref{lem:homology-xn} for $i = 2$.

Let $\beta \in H^2(M^2 \# 4 \overline{\CP^2}; \R)$ be any class such that
\[
	\langle \beta, \alpha_1 \rangle = \frac 14, \qquad \langle \beta, \alpha_3 \rangle = \langle \beta, \alpha_4 \rangle= \frac 12, \qquad \langle \beta, E_j \rangle = 0 \text{ for all }1 \leq j \leq 4
\]
which exists because $\alpha_1, \alpha_3, E_1, \dots, E_4$ are linearly independent in $H_2(M^2\#4 \overline{\CP^2};\R)$. Letting $[\nu_2] = (\Phi^2_0)^*(\beta)$, compute for all $1 \leq j \leq 4$ that
\begin{align*}
	\langle [\nu_2], [F_1^0] \rangle &= \frac 14, \qquad \langle [\nu_2], [F_2^0] \rangle = \frac 34, \qquad
	\langle [\nu_2], [F_3^0] \rangle = \langle [\nu_2], [F_4^0] \rangle = \frac 12, \\
	\langle [\nu_2], [F^0] \rangle &= \langle [\nu_2], [F_1^0] + [F_2^0] \rangle = 1,\qquad \langle [\nu_2], [\sigma_{j,2}^0] \rangle = 0,
\end{align*}
which proves \ref{lem:nu-i} for $i = 2$.
\end{proof}

\begin{rmk}[\textbf{Capping and forgetting points}]\label{capping}
  Recall the mapping classes in $\Mod(\Sigma_{2g}^4)$
\[
	\tilde h_i = T_{B_{0,2}^i} T_{B_{1,2}^i} \dots T_{B_{2g,2}^i} T_{C_2^i}, \qquad \tilde f = T_{\tilde x} T_{\tilde y}^{-1} T_{\tilde h_i(\tilde x)} T_{\tilde h_i(\tilde y)}^{-1}
\]
defined in Lemma \ref{lem:bounding-pair-sections}. We also denote by $\tilde h_i, \tilde f \in \Mod(\Sigma_{2g,4})$ the images under the homomorphism $\Mod(\Sigma_{2g}^4) \to \Mod(\Sigma_{2g, 4})$ that caps the four boundary components of $\Sigma_{2g}^4$ with disks with a marked point. Throughout the rest of this section we fix $i = 1$ or $i = 2$ and drop the index from our notation, i.e. let
\begin{align*}
	M &:= M^i, \quad [\nu] := [\nu_i]\in H^2(X_0; \R), \quad \sigma_j^n := \sigma_{j,i}^n \text{ for }1 \leq j \leq 4, \qquad \tilde h := \tilde h_i \in \Mod(\Sigma_{2g, 4}).
\end{align*}

It is sometimes more convenient to consider the image
\begin{equation}\label{eqn:hat-h}
	\hat h \in \Mod(\Sigma_{2g,2})
\end{equation}
of $\tilde h \in \Mod(\Sigma_{2g, 4})$ under the forgetful map $\Mod(\Sigma_{2g, 4}) \to \Mod(\Sigma_{2g, 2})$ that forgets the marked points $p_3, p_4$. We observe that $\hat h \in \Mod(\Sigma_{2g, 2})$ is independent of $i$ and has order $2$; see Corollary \ref{cor:puncture-involution} in Appendix \ref{appendix-a}.
\end{rmk}

Recall that the Lefschetz fibration $\pi_0$ was defined by the monodromy factorization
\[
	T_{B_0} T_{B_1} \dots T_{B_{2g}} T_C T_{B_0} T_{B_1} \dots T_{B_{2g}} T_C = 1 \in \Mod(\Sigma_{2g})
\]
in Definition \ref{defn:mck}. Let $q_{4g+4}, \dots, q_{1} \in S^2$ denote the singular values of $\pi_0: X_0 \to S^2$ corresponding to the vanishing cycles $B_0, B_1, \dots, B_{2g}, C, B_0, B_1, \dots, B_{2g}, C$ respectively. Let $\gamma_1, \dots, \gamma_{4g+4} \in \pi_1(S^2 -\{q_1, \dots, q_{4g+4}\}, b)$ be the chosen generators, each $\gamma_i$ a loop around $q_i$, that yields the above monodromy factorization for $\pi_0$.

As in Section \ref{sec:partial-conj}, write $S^2 = D_1 \cup_{\partial} D_2$ such that the basepoint $b \in S^2$ is contained in $E := \partial D_1$ and such that the loops $\gamma_1, \dots, \gamma_{2g+2}$ are contained in $D_1$, the loops $\gamma_{2g+3}, \dots, \gamma_{4g+4}$ are contained in $D_2$, and
\[
	[E] = \gamma_1 \dots \gamma_{2g+2}\in \pi_1(S^2 -\{q_1, \dots, q_{4g+4}\}, b).
\]
In other words, there is an isomorphism of $\Sigma_{2g}$-bundles over $S^1$
\[
	\pi_0^{-1}(E) \xrightarrow{\sim} M_{\eta}
\]
for any representative $\eta \in \Diff^+(\Sigma_{2g})$ of $h = T_{B_0} T_{B_1} \dots T_{B_{2g}} T_C \in \Mod(\Sigma_{2g})$ (cf. Theorem \ref{thm:eta-factorization}).

We first record an elementary computation of the homology of $\pi_0^{-1}(E)$.
\begin{lem}\label{lem:transfer}
Let $\eta \in \Diff^+(\Sigma_{2g})$ be any diffeomorphism of order $2$ representing $h \in \Mod(\Sigma_{2g})$. For any set of simple closed curves $\ell_{1}, \dots, \ell_{2g} \subseteq \Sigma_{2g}$ which together form an $\R$-basis of $H_1(\Sigma_{2g}; \R)^{\langle \eta \rangle}$, the homology classes
\[
	[\Sigma_{2g}], \quad [T_{\ell_1}], \dots, [T_{\ell_{2g}}]
\]
form an $\R$-basis of $H_2(M_{\eta}; \R)$, where $[\Sigma_{2g}]$ denotes the fiber class of $M_{\eta} \to S^1$ and $T_{\ell_i}$ is the torus
\[
	T_{\ell_i} = \ell_i \times [0, 1] \cup \eta(\ell_i) \times [0, 1]
\]
for each $i = 1, \dots, 2g$. In particular,
\[
	H_2(M_{\eta}; \R) \cong \R^{2g+1}.
\]
\end{lem}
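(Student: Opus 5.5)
The plan is to compute $H_2(M_\eta;\R)$ with the Wang sequence (equivalently Mayer--Vietoris) for the mapping torus, and then identify explicit cycles realizing a basis. For the fibration $\Sigma_{2g}\to M_\eta\to S^1$ with monodromy $\eta_*$, the Wang sequence gives a short exact sequence
\[
	0 \to \operatorname{coker}\bigl(\eta_* - \Id : H_2(\Sigma_{2g};\R)\to H_2(\Sigma_{2g};\R)\bigr) \to H_2(M_\eta;\R) \to \ker\bigl(\eta_* - \Id : H_1(\Sigma_{2g};\R)\to H_1(\Sigma_{2g};\R)\bigr) \to 0 .
\]
Since $\eta$ is orientation preserving, $\eta_* = \Id$ on $H_2(\Sigma_{2g};\R)\cong\R$, so the cokernel is $\R$, spanned by the image of the fiber class $[\Sigma_{2g}]$. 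The kernel on the right is the invariant subspace $H_1(\Sigma_{2g};\R)^{\langle\eta\rangle}$. Because $\eta$ has order $2$, averaging gives $H_1(\Sigma_{2g};\R)^{\langle\eta\rangle}\cong H_1(\Sigma_{2g}/\eta;\R)$. By Riemann--Hurwitz, $\eta$ has two fixed points, so the quotient has genus $g$, and this space has dimension $2g$. (One could also read off the dimension from the Lefschetz fixed point theorem, using $\mathrm{tr}(\eta_*|H_1)=2-2=0$, so the $\pm1$ eigenspaces are each $2g$-dimensional.) This already yields $\dim H_2(M_\eta;\R)=2g+1$.

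Next I will check that the tori $T_{\ell_i}$ are genuine cycles whose images in the right-hand term are $[\ell_i]+\eta_*[\ell_i]$. The set $T_{\ell_i}$ is the union of the cylinders $\ell_i\times[0,1]$ and $\eta(\ell_i)\times[0,1]$. At the gluing $(\eta(x),0)\sim(x,1)$, the end $\ell_i\times\{1\}$ is identified with $\eta(\ell_i)\times\{0\}$, and the end $\eta(\ell_i)\times\{1\}$ is identified with $\eta^2(\ell_i)\times\{0\}=\ell_i\times\{0\}$. Hence it is a closed surface, a torus (possibly immersed), so it defines a class. The map $H_2(M_\eta)\to H_1(\Sigma)$ in the Wang sequence is intersection with a fiber, and this sends $[T_{\ell_i}]$ to $[\ell_i]+[\eta(\ell_i)]$. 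Since $[\ell_i]$ is $\eta$-invariant, this equals $2[\ell_i]$.

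Since the $[\ell_i]$ form a basis of the invariant subspace, the classes $2[\ell_i]$ also form one over $\R$. Together with the fiber class spanning the left-hand term, exactness then shows that $[\Sigma_{2g}],[T_{\ell_1}],\dots,[T_{\ell_{2g}}]$ form a basis.

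The main obstacle is justifying precisely that the Wang boundary map evaluates on $T_{\ell_i}$ as claimed. I would handle this by cutting $M_\eta$ along a fiber $\Sigma\times\{0\}$ and using Mayer--Vietoris for $\Sigma\times[0,1]$ glued to itself: a cycle's image in $H_1(\Sigma)$ is its intersection with the cut fiber. Here the cut fiber meets $T_{\ell_i}$ in $\ell_i\cup\eta(\ell_i)$.
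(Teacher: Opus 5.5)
Your argument is correct, but it takes a different route from the paper. The paper does not use the Wang sequence. Because $\eta^2=\Id$, the map $\eta_0(x,t)=(\eta(x),t+\tfrac12)$ is a free involution of $\Sigma_{2g}\times\R/\Z$ whose quotient is $M_\eta$. By transfer, $p_*$ then identifies $H_2(\Sigma_{2g}\times\R/\Z;\R)^{\langle\eta_0\rangle}$ with $H_2(M_\eta;\R)$. By K\"unneth the invariant part has basis $[\Sigma_{2g}],[\ell_i\times\R/\Z]$, and $T_{\ell_i}$ is literally $p(\ell_i\times\R/\Z)$. The dimension of $H_1(\Sigma_{2g};\R)^{\langle\eta\rangle}$ is read off from the action of $h$ on the explicit basis $a_i\pm a_{2g+1-i}$, $b_i\pm b_{2g+1-i}$ from the figure.

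What each route buys:
\begin{itemize}
\item The transfer argument yields the basis and shows why $T_{\ell_i}$ is a closed (possibly immersed) torus in one stroke, with no connecting map to compute. However, it uses the order-$2$ hypothesis essentially.
\item Your Wang-sequence argument is the general mapping-torus computation, $\dim H_2(M_\phi;\R)=1+\dim\ker(\phi_*-\Id)$ for any monodromy $\phi$. The price is identifying the boundary map with intersection with a fiber, and your cut-along-a-fiber description handles this correctly. Note that getting $[\ell_i]+[\eta(\ell_i)]=2[\ell_i]$ rather than $0$ requires orienting both cylinders as the image of an oriented $\ell_i\times S^1$, i.e.\ giving $\eta(\ell_i)$ the pushed-forward orientation.
\end{itemize}

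One small point of logic: Riemann--Hurwitz does not tell you that $\eta$ has two fixed points; it only converts the fixed-point count into the quotient genus. For an arbitrary order-$2$ representative of $h$, the count comes from the model involution plus the fact that the number of fixed points of an orientation-preserving periodic diffeomorphism equals its Lefschetz number, which depends only on $h$. More simply, $H_1(\Sigma_{2g};\R)^{\langle\eta\rangle}$ depends only on $\eta_*=h_*$, so you may compute it using the model involution.
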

\begin{proof}
We first show that $H_1(\Sigma_{2g}; \R)^{\langle \eta \rangle}$ is $2g$-dimensional. Consider the description of $h \in \Mod(\Sigma_{2g})$ in Theorem \ref{thm:eta-factorization} as the mapping class of the involution shown in Figure \ref{fig:mck-vanishing-cycles}. This description shows for all $1 \leq i \leq g$ that
\begin{align*}
	h(a_i - a_{2g+1-i}) &= a_i - a_{2g+1-i}, \qquad h(a_i + a_{2g+1-i}) = -(a_i + a_{2g+1-i}),\\
	h(b_i - b_{2g+1-i}) &= b_i - b_{2g+1-i}, \, \, \qquad h(b_i + b_{2g+1-i}) = -(b_i + b_{2g+1-i}),
\end{align*}
where the homology classes $a_i, b_i \in H_1(\Sigma_{2g}; \Z)$ are as shown in Figure \ref{fig:homology-curves}. The set
\[
	\{a_i - a_{2g+1-i},\,  a_i + a_{2g+1-i}, \, b_i - b_{2g+1-i}, \, b_i + b_{2g+1-i} : 1 \leq i \leq g \}
\]
is an $\R$-basis of $H_1(\Sigma_{2g}; \R)$, and hence
\[
	H_1(\Sigma_{2g}; \R)^{\langle h \rangle} = \R\{a_i - a_{2g+1-i}, \,b_i - b_{2g+1-i} : 1 \leq i \leq g \} \cong \R^{2g}.
\]

Next, we compute $H_2(M_{\eta}; \R)$. Consider the double cover
\[
	p: \Sigma_{2g} \times \R/\Z \to M_{\eta}
\]
which is the quotient of the order-$2$ action on $\Sigma_{2g} \times \R/\Z$ given by
\[
	\eta_0 : (x, t) \mapsto \left(\eta(x), t+\frac 12\right) \in M_{\eta}= (\Sigma_{2g} \times [0, 1])/((\eta(x),0) \sim (x, 1)).
\]
By transfer, the restriction
\[
	p_*: H_2(\Sigma_{2g} \times \R/\Z;\R)^{\langle \eta_0 \rangle} \to H_2(M_{\eta}; \R)
\]
is an isomorphism. The fixed subspace $H_2(\Sigma_{2g}\times \R/\Z; \R)^{\langle \eta_0 \rangle}$ has an $\R$-basis
\[
	\{[\Sigma_{2g}], [\ell_i \times \R/\Z] : 1 \leq i \leq 2g\}.
\]
Compute that $p_*([\Sigma_{2g}]) = [\Sigma_{2g}] \in H_2(M_{\eta}; \R)$ and
\[
	p_*([\ell_i \times \R/\Z]) = [T_{\ell_i}] \in H_2(M_{\eta}; \R). \qedhere
\]
\end{proof}

Using Lemma \ref{lem:transfer}, the following lemma computes the map on cohomology induced by the inclusion $\pi_0^{-1}(E) \hookrightarrow X_0$.
\begin{lem}\label{lem:tilde-eta-homology}
Let $\iota: \pi_0^{-1}(E) \hookrightarrow X_0$ denote the inclusion. The image of $\iota^*: H^2(X_0; \R) \to H^2(\pi_0^{-1}(E); \R)$ is $\R\{\PD([\sigma(E)])\}$, where $\sigma: S^2 \to X_0$ is any section of $\pi_0: X_0 \to S^2$.
\end{lem}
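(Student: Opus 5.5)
We pass to homology with real coefficients and work dually. By universal coefficients over $\R$, the image of $\iota^*$ is the annihilator of $\ker(\iota_*: H_2(\pi_0^{-1}(E);\R) \to H_2(X_0;\R))$. It is therefore enough to show two things: the rank of $\iota^*$ is exactly one, and the class $\PD([\sigma(E)])$ lies in the image, nonzero. Here $\PD([\sigma(E)])$ means the Poincaré dual in the closed $3$-manifold $\pi_0^{-1}(E)$ of the loop $\sigma(E)$.

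\emph{Containment.} Since $\sigma(S^2)$ is a closed surface in $X_0$ meeting $\pi_0^{-1}(E)$ transversely in the circle $\sigma(E)$, we have
\[
	\iota^*\PD_{X_0}([\sigma(S^2)]) = \PD([\sigma(E)]).
\]
The reason is that for any $2$-cycle $Z$ in $\pi_0^{-1}(E)$, the intersection number of $Z$ with $\sigma(S^2)$ in $X_0$ equals the intersection number of $Z$ with $\sigma(E)$ in $\pi_0^{-1}(E)$. This class is nonzero because it pairs to $1$ with the fiber class $[\Sigma_{2g}]$. So the image contains $\R\{\PD([\sigma(E)])\}$.

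\emph{Rank at most one.} We use the basis from Lemma \ref{lem:transfer}, namely $[\Sigma_{2g}], [T_{\ell_1}], \dots, [T_{\ell_{2g}}]$, and show that $\iota_*[T_{\ell_k}] = 0$ in $H_2(X_0;\R)$ for every $k$. For this we choose the invariant curves conveniently: $\ell_k$ representing the classes $a_i - a_{2g+1-i}$ and $b_i - b_{2g+1-i}$ from the proof of Lemma \ref{lem:transfer}. Then $\pi_0^{-1}(D_1)$ retracts onto the fiber with $2$-handles attached along the vanishing cycles $B_0, \dots, B_{2g}, C$. Since $\ell_k$ is $\eta$-invariant up to homology, the torus $T_{\ell_k}$ should bound in $\pi_0^{-1}(D_1)$, at least rationally. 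To see this, write $\ell_k$ as a homology class in $\Sigma_{2g}$ and push the cylinder $\ell_k \times [0,1]$ into $\pi_0^{-1}(D_1)$.

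Alternatively, and more cleanly, we use Mayer--Vietoris for $X_0 = \pi_0^{-1}(D_1) \cup \pi_0^{-1}(D_2)$. The part $\pi_0^{-1}(D_j)$ is a $2$-handlebody over $\Sigma_{2g}\times D^2$, so $H_1(\pi_0^{-1}(D_j)) = H_1(\Sigma_{2g})/\langle \text{vanishing cycles}\rangle$. The vanishing cycles span all of $H_1(\Sigma_{2g};\R)$: $b_1(X_0) = 2g$ together with the existence of a section gives $H_1(X_0) \cong H_1(\Sigma_{2g})/\langle B_0,\dots,C\rangle$, which has rank $2g$, so the vanishing cycles span a $2g$-dimensional subspace. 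We then compare this subspace with the $(-1)$-eigenspace of $h$ and check that the invariant tori map to zero in $H_2(X_0;\R)$. A cleaner route is to count dimensions. In the Mayer--Vietoris sequence, the kernel of $H_2(\pi_0^{-1}(E)) \to H_2(\pi_0^{-1}(D_1)) \oplus H_2(\pi_0^{-1}(D_2))$ surjects onto the image of $\iota_*$. Both $H_2(\pi_0^{-1}(D_j);\R)$ are computable from the handle description, and the restriction maps can be read off. The expected outcome is that $\iota_*$ has rank $1$, spanned by $\iota_*[\Sigma_{2g}] = [F^0]$, which is nonzero.

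The main obstacle is verifying that each torus $T_{\ell_k}$ dies in $H_2(X_0;\R)$. Our first attempt will be an Euler characteristic and rank count. By Lemma \ref{lem:Xn-alg-top}, $b_2(X_0) = 6$, and Lemma \ref{lem:nu-choice}\ref{lem:homology-xn} spans $H_2(X_0;\R)$ by the classes $[F_1^0]$, $[F_2^0]$ and the four section classes. Each torus $T_{\ell_k}$ is disjoint from a fiber and lies in $\pi_0^{-1}(E)$, so it has zero intersection with $[F_1^0]$ and $[F_2^0]$, which are fiber components away from $E$. It also has zero intersection with every section class, after isotoping $\ell_k$ off the section points. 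A class orthogonal to this spanning set under the nondegenerate form $Q_{X_0}$ is zero, so $\iota_*[T_{\ell_k}] = 0$. The care needed is in the claim that $T_{\ell_k}$ can be made disjoint from the sections: we need $\ell_k$ to avoid the four marked points, which holds for generic curves, while the loops $\sigma_j(E)$ are fixed. Once this holds, the image of $\iota_*$ is $\R[F^0]$, and dually the image of $\iota^*$ is one-dimensional and equals $\R\{\PD([\sigma(E)])\}$.
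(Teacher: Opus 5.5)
Your containment step (restricting $\PD_{X_0}([\sigma(S^2)])$ to get $\PD([\sigma(E)])$, nonzero because it pairs to $1$ with the fiber) is exactly what the paper does. The rank bound is where the gap lies. The earlier sketches in that step (tori bounding in $\pi_0^{-1}(D_1)$, comparing with the $(-1)$-eigenspace, ``count dimensions'') are never completed, so everything rests on your last paragraph. There you test each torus $T_{\ell_k}$ against the spanning set of Lemma~\ref{lem:nu-choice}\ref{lem:homology-xn}. Orthogonality to $[F_1^0]$ and $[F_2^0]$ is fine. Orthogonality to the four section classes is not justified, and the justification you give cannot work.

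The tori $T_{\ell_k}$ only make sense in a model $\pi_0^{-1}(E)\cong M_\eta$ with $\eta$ of order $2$, and an order-$2$ representative of $h$ has exactly two fixed points. So at most two of the loops $\sigma_j(E)$ can be constant sections $\{p\}\times S^1$ through fixed points. Even that needs an argument: the identification must be chosen compatibly with those two sections, which the paper only obtains from Corollary~\ref{cor:puncture-involution} and Proposition~\ref{prop:isotoping-hateta}. The remaining section loops are paths from $c(0)$ to $\eta^{-1}(c(0))$ closed up in the mapping torus. Meanwhile $T_{\ell_k}$ meets every fiber in $\ell_k\cup\eta(\ell_k)$, so making $\ell_k$ avoid four points does not separate them.

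More fundamentally, $[T_{\ell_k}]\cdot[\sigma_j(E)]$ is a homological quantity that no perturbation can change. Two section loops differ in $H_1(\pi_0^{-1}(E))$ by a fiber class $c$, and their pairings with $T_{\ell_k}$ then differ, up to sign, by $2\hat i(\ell_k,c)$, which need not vanish a priori. That it does vanish here follows from the very lemma you are proving (all the classes $\PD([\sigma_j(E)])$ must coincide), so it cannot be assumed along the way.

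Two repairs are available. Within your framework, note that the image of $\iota_*$ is isotropic for $Q_{X_0}$. Any class supported in $\pi_0^{-1}(E)$ can be pushed to $\pi_0^{-1}(E')$ for a parallel circle $E'\subseteq N$, so any two such classes are disjoint. Since $Q_{X_0}$ is nondegenerate over $\R$ with $b_2^+(X_0)=1$ (Lemma~\ref{lem:Xn-alg-top}), an isotropic subspace has dimension at most $1$. It contains $[F^0]\neq 0$, so $\iota_*$ has rank one, and your duality argument then finishes the proof.

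Alternatively, carry out the dimension count you gestured at; this is the paper's proof. In the cohomological Mayer--Vietoris sequence for $X_0=Y_1\cup Y_2$, the facts $H^3(Y_i;\R)=0$ (only handles of index at most $2$) and $H^3(X_0;\R)\cong\R^{2g}$ force the image of $(k_1)^*-(k_2)^*$ in $H^2(\pi_0^{-1}(E);\R)\cong\R^{2g+1}$ to be one-dimensional. Then $\mathrm{im}(\iota^*)\subseteq\mathrm{im}((k_1)^*)\subseteq\mathrm{im}((k_1)^*-(k_2)^*)$. This route never needs to locate the sections relative to the tori.
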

\begin{proof}
Write $X_0 = Y_1 \cup_{\pi_0^{-1}(E)} Y_2$ with
\[
	Y_1 = \pi_0^{-1}(D_1), \quad Y_2 = \pi_0^{-1}(D_2).
\]
Because $Y_i$ is a genus-$2g$ Lefschetz fibration over $D^2$ for each $i = 1, 2$, it is a finite union of $2$-handles $D^2 \times D^2$ attached to $\Sigma_{2g} \times D^2$, each with attaching regions $D^2 \times S^1$ \cite[Section 8.2]{gompf-stipsicz}. There is a Mayer--Vietoris sequence of the form
\[
	H^2(\sqcup_{j=1}^m D^2 \times S^1) \to H^3(Y_i) \to H^3(\Sigma_{2g} \times D^2) \oplus H^3(\sqcup_{j=1}^m (D^2 \times D^2))
\]
for some $m\geq 0$, which implies that $H^3(Y_i; \R) = 0$ for each $i = 1, 2$. Moreover, $H^3(X_0; \R) \cong \R^{2g}$ because $X_0$ is diffeomorphic to $(\Sigma_g \times S^2) \# 4\overline{\CP^2}$ by Proposition \ref{prop:Xn-diffeo}, and $H^2(\pi_0^{-1}(E); \R) \cong \R^{2g+1}$ by Lemma \ref{lem:transfer}. Combining these three computations with the Mayer--Vietoris sequence for $X_0 = Y_1 \cup_{\pi_0^{-1}(E)} Y_2$ yields
\[
	H^2(X_0; \R) \xrightarrow{(j_1)^* \oplus (j_2)^*} H^2(Y_1; \R) \oplus H^2(Y_2; \R) \xrightarrow{(k_1)^* - (k_2)^*}\underbrace{H^2(\pi_0^{-1}(E); \R)}_{\cong \R^{2g+1}} \to \underbrace{H^3(X_0; \R)}_{\cong \R^{2g}} \to 0
\]
where $j_i: Y_i \hookrightarrow X_0$ and $k_i: \pi_0^{-1}(E) \hookrightarrow Y_i$ are inclusion maps for $i = 1, 2$ such that $\iota = j_i \circ k_i$ for $i = 1, 2$. Therefore, the image of $(k_1)^*$ (and hence the image of $\iota^*$) is at most $1$-dimensional.

Observe that
\[
	\iota^*(\PD([\sigma(S^2)])) = \PD([\sigma(E)]),
\]
where $\PD([\sigma(S^2)])$ and $\PD([\sigma(E)])$ denote the Poincar\'e duals of $[\sigma(S^2)]$ and $[\sigma(E)]$ in $X_0$ and $\pi_0^{-1}(E)$ respectively. The class $\PD([\sigma(E)])$ is nonzero (e.g. because $\sigma(E)$ intersects a fiber $F$ in $\pi_0^{-1}(E)$ once transversely), and hence $\PD([\sigma(E)])$ spans the image of $\iota^*$, i.e.
\[
	\mathrm{im}(\iota^*) = \R\{\PD([\sigma(E)])\} \subseteq H^2(\pi_0^{-1}(E); \R). \qedhere
\]
\end{proof}

\subsection{Gompf--Thurston construction for $(X_n, \omega_n)$}

The goal of this subsection is to prove the following proposition which builds a suitable symplectic form $\omega_n$ on $X_n$. A key point of the construction below is the good control over the cohomology classes $[\omega_n] \in H^2(X_n; \R)$ for all $n \in \Z_{\geq 0}$.
\begin{prop}\label{prop:GT-Xn}
For any $n \in \Z_{\geq 0}$, let $F_1^n \cup F_2^n$ and $F_3^n \cup F_4^n$ denote the two reducible singular fibers of $\pi_n: X_n \to S^2$ with $F_1^n, F_2^n, F_3^n, F_4^n$ denoting the irreducible components as defined in Lemma \ref{lem:nu-choice}. For each $n \in \Z_{\geq 0}$, there exists a symplectic form $\omega_n$ on $X_n$ satisfying the following properties:
\begin{enumerate}[(a)]
\item The smooth loci of the irreducible components of the fibers of $\pi_n: X_n \to S^2$ are all symplectic submanifolds of $(X_n, \omega_n)$. \label{GT-Xn-fibers}
\item For all $n \in \Z_{\geq 0}$ and for all $1 \leq j \leq 4$,
\[
	\langle [\omega_n], [F_j^n] \rangle = \langle [\omega_0], [F_j^0] \rangle.
\]
In particular, $\langle [\omega_n], [F_j^n] \rangle$ is independent of $n$. \label{GT-Xn-areas}

\item \emph{(Standard near the sections)} For each $1 \leq j \leq 4$, there exists an open tubular neighborhood $N_j^n$
  of $\sigma_j^n(S^2)$ such that $\pi_n|_{N^n_j}: N^n_j \to S^2$ is isomorphic to the tautological line bundle $\pi_L: L \to \CP^1$ via an orientation-preserving diffeomorphism $\phi^n_j: N^n_j \to L$. Furthermore, for a small enough neighborhood $\sigma^n_j(S^2)\subseteq V^n_j \subseteq N^n_j$,
\[
	\omega_n|_{V^n_j} = (\phi^n_j)^*(a_j \pi_{\C^2}^* \omega_{\C^2} + \pi^*_L \omega_{S^2})
\]
where $\pi_{\C^2}: L \to \C^2$ is the standard blow-down map, $\omega_{\C^2}$ is the standard symplectic form on $\C^2$, $\omega_{S^2}$ is the Fubini--Study form on $S^2$, and $a_j$ is some positive constant \emph{independent} of $n$.
In particular, all sections $\sigma_j^n$ are symplectic and
\[ \langle [\omega_n], [\sigma_j^n(S^2)]\rangle = \pi \]
  is independent of $n$.\label{standard-alln}
\end{enumerate}
\end{prop}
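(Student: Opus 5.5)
We first build $\omega_0$ on $X_0$ with the Gompf--Thurston construction, keeping track of its cohomology class. Then we obtain $\omega_n$ by regluing, using the partial conjugation description of $X_n$ from Section~\ref{sec:partial-conj}.

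\textbf{Step 1 (the form $\omega_0$).} Use Lemma~\ref{lem:nu-choice}\ref{lem:nu-i}: the class $[\nu]$ pairs positively with every fiber component and vanishes on the sections. Choose a closed $2$-form $\nu$ representing $[\nu]$ with two properties. First, $\nu$ is positive on each fiber component near the singular points; by the Gompf--Thurston recipe \cite[Theorem 10.2.18]{gompf-stipsicz} this can be done locally in the Lefschetz charts. Second, $\nu$ is standard near the sections: inside a tubular neighbourhood $N_j^0\cong L$, take $\nu=a_j\pi_{\C^2}^*\omega_{\C^2}$, where $a_j$ is forced by $\langle[\nu],[F^0]\rangle=1$. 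Patching these local forms requires that the local obstructions vanish, which follows from $\langle[\nu],[\sigma_j^0]\rangle=0$. Over the annulus $A$ around the equator $E$, Lemma~\ref{lem:tilde-eta-homology} shows that $[\nu]|_{\pi_0^{-1}(A)}$ is a multiple of $\PD[\sigma(E)]$. Hence we may take $\nu|_{\pi_0^{-1}(A)}$ to be a fiberwise area form on the mapping torus $M_\eta$. This form is pulled back from an $\eta$-invariant area form on $\Sigma_{2g}$ (we may take $\eta$ of order $2$), and it is standard near the four marked points. Finally set $\omega_0=\nu+K\pi_0^*\omega_{S^2}$ with $K\gg0$. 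Then properties (a) and (c) hold for $n=0$, where (c) uses $\pi_0^*\omega_{S^2}$ restricted to $N_j^0$.

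\textbf{Step 2 (the regluing map).} $X_n$ is obtained from $\pi_0^{-1}(D_1)\sqcup\pi_0^{-1}(D_2)$ by regluing along $\pi_0^{-1}(E)$ via the bundle map $\mathcal F(x,t)=(\varphi_t(x),t)$, where $\varphi_t$ is an isotopy from a representative $\varphi$ of $\tilde f^n$ to $\tilde\eta^{-1}\varphi\tilde\eta$. We want $\mathcal F$ to preserve $\omega_0|_{\pi_0^{-1}(A)}$ and to be the identity near the sections. For this, choose $\varphi$ to be an area-preserving diffeomorphism that commutes exactly with $\eta$, so we can take $\varphi_t\equiv\varphi$, and that is the identity near the four marked points. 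This is the step I expect to be the main obstacle. The plan is to use that $\tilde h$ is reducible: it preserves a multicurve containing $\tilde x,\tilde y,\tilde h(\tilde x),\tilde h(\tilde y)$, and $\tilde f$ is a product of twists about these curves by Lemma~\ref{lem:bounding-pair-sections}. Choose $\eta$-invariant annular neighbourhoods of these curves, disjoint from the disks around the sections. Realize each Dehn twist by an area-preserving twist map supported in its annulus. Then define $\varphi$ on the pair $\{U,\eta(U)\}$ as the twist on $U$ and its $\eta$-conjugate on $\eta(U)$; this makes $\varphi$ commute with $\eta$ exactly. Producing this $\eta$-symmetric representative is what Proposition~\ref{prop:isotoping-hateta} is for; we use it here, along with the order-$2$ statement for $\hat h$ in Corollary~\ref{cor:puncture-involution}.

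\textbf{Step 3 (the form $\omega_n$ and the checks).} With $\mathcal F$ symplectic on $\pi_0^{-1}(A)$ and fiber-preserving, the form $\omega_0$ on the two pieces descends to a symplectic form $\omega_n$ on $X_n$. Properties (a) and (c) hold because they hold on the pieces and $\mathcal F$ is the identity near the sections; in particular $\langle[\omega_n],[\sigma_j^n]\rangle$ is computed locally and is the same for all $n$. For (b), each $F_j^n$ lies entirely in $\pi_0^{-1}(D_1)$ or in $\pi_0^{-1}(D_2)$, and the form there is unchanged. So $\int_{F_j^n}\omega_n=\int_{F_j^0}\omega_0$.
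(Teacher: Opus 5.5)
There is a genuine gap in Step~2, and in the matching claim in Step~1 that the fibrewise form on $M_\eta$ is ``standard near the four marked points''.

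You take $\eta$ of order $2$ and ask for an area-preserving $\varphi$ that commutes with $\eta$ exactly and is the identity near $p_1,\dots,p_4$, and then set $\varphi_t\equiv\varphi$. For the regluing to carry all four sections along (Section~\ref{sec:partial-conj}), the monodromy over $E$ must be represented by a diffeomorphism $\tilde\eta$ fixing all four points, and $\varphi_t$ must run from $\varphi$ to $\tilde\eta^{-1}\varphi\tilde\eta$. Such an $\tilde\eta$ can never have order $2$. Indeed, $h$ acts on $H_1(\Sigma_{2g};\R)$ with trace $0$, so by the Lefschetz fixed point formula every involution in the class $[\eta]$ has exactly two fixed points. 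This is why Corollary~\ref{cor:puncture-involution} gives order $2$ only for $\hat h\in\Mod(\Sigma_{2g,2})$, after forgetting $p_3,p_4$.

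Consequently the order-$2$ map $\hat\eta$ of Proposition~\ref{prop:isotoping-hateta}, which is the one commuting with $\varphi$, fixes only $p_1,p_2$. The fibrewise form $\hat\theta$ lives on the mapping torus $M_{\hat\eta}$, and there the constant map $\varphi$ is symplectic. But in $M_{\hat\eta}$ the sections $\sigma_3,\sigma_4$ are not constant: they are traced out by $t\mapsto\kappa_t(p_3),\kappa_t(p_4)$, and $\kappa_1(p_3)=\hat\eta(p_3)\neq p_3$. So $\varphi$ being the identity near the four marked points does not make the gluing map fix neighbourhoods of $\sigma_3,\sigma_4$. Relative to $\tilde\eta$, the constant isotopy $\varphi_t\equiv\varphi$ is not admissible, since Proposition~\ref{prop:isotoping-hateta} gives commutation with $\hat\eta$, not with $\tilde\eta$. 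Without these points, your Step~3 claims that the reglued form is well defined and that (c) holds are unsupported.

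The missing idea is the bridge between $\hat\eta$ and $\tilde\eta=\hat\eta\circ\kappa_1$.
\begin{itemize}
\item On $M_{\tilde\eta}$, where all four sections are constant, the gluing isotopy is $\kappa_t^{-1}\circ\varphi\circ\kappa_t$. It runs from $\varphi$ to $\tilde\eta^{-1}\varphi\tilde\eta$ because $\varphi\hat\eta=\hat\eta\varphi$.
\item Under the bundle isomorphism $\mathcal K((x,t),s)=(\kappa_t(x),t)$ to $M_{\hat\eta}$, this isotopy becomes the constant map $\varphi$. Hence it preserves $\varepsilon\hat{\mathcal G}^*\hat\theta+\pi_0^*\omega_A$ with $\hat{\mathcal G}=\mathcal K\circ\tilde{\mathcal G}$. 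This is also why the form over $A$ must be pulled back from $M_{\hat\eta}$, with its class computed by Lemma~\ref{lem:theta-cohomology} using the constant section $\hat s_1$.
\item Condition (d) of Proposition~\ref{prop:isotoping-hateta}, $\kappa_t(O)\cap W=\emptyset$, makes $\kappa_t^{-1}\varphi\kappa_t$ the identity on $O$ for all $t$. So the gluing fixes neighbourhoods of all four sections, and the standard model near $\sigma_j^0$ (which overrides $\hat{\mathcal G}^*\hat\theta$ inside $N_j$) glues consistently.
\end{itemize}
Citing Proposition~\ref{prop:isotoping-hateta} as a black box does not close the gap unless its conditions (c) and (d) are used in this way.

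A smaller point concerns Step~1. The condition $\langle[\nu],[\sigma_j^0]\rangle=0$ only makes $\nu|_{N_j}$ exact. Cutting off to $a_j\pi_{\C^2}^*\omega_{\C^2}$ can destroy positivity on the fibres, which is why the paper needs the McDuff--Polterovich rescaling argument. Also, $a_j$ is not determined by the fibre area.
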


In order to explicitly build a suitable form around $\pi_0^{-1}(E) \subseteq X_0$ and around each section $\sigma^n_j$, we will need to find nice representative diffeomorphisms of the monodromy $\hat h \in \Mod(\Sigma_{2g, 2})$ and $\tilde h \in \Mod(\Sigma_{2g,4})$, as well as compatible representatives of the conjugating mapping class $\tilde f \in \Mod(\Sigma_{2g,4})$. This is achieved in the following proposition, whose proof is given in Appendix~\ref{appendix-b}.

\begin{prop}\label{prop:isotoping-hateta}
Let $\alpha$, $\beta$, $\gamma$, and $\delta$ be disjoint curves in $\Sigma_{2g, 4}$ representing the isotopy classes $\tilde x, \tilde y, \tilde h(\tilde x), \tilde h(\tilde y)$ respectively. There exist
\begin{itemize}
\item a unit-area symplectic form $\theta$ on $\Sigma_{2g}$,
\item symplectomorphisms $\varphi \in \Diff^+(\Sigma_{2g, 4})$ and $\hat\eta\in \Diff^+(\Sigma_{2g, 2})$ of $(\Sigma_{2g}, \theta)$ with
\[
	[\varphi] = \tilde f \in \Mod(\Sigma_{2g, 4}), \qquad [\hat\eta] = \hat h \in \Mod(\Sigma_{2g, 2}), \qquad\text{ and }\qquad \hat\eta^2 = \Id_{\Sigma_{2g, 2}},
\]
\item an isotopy $\kappa_t: \Sigma_{2g, 2} \times [0, 1] \to \Sigma_{2g, 2}$ with $\kappa_0 = \Id_{\Sigma_{2g, 2}}$,
\item a disjoint union $W = W_\alpha \sqcup W_\beta \sqcup W_\gamma \sqcup W_\delta$ of tubular neighborhoods of $\alpha$, $\beta$, $\gamma$, $\delta$ in $\Sigma_{2g,4}$, and
\item a disjoint union $O = O_1 \sqcup O_2 \sqcup O_3 \sqcup O_4$ of neighborhoods of $p_1,p_2,p_3$ and $p_4$
\end{itemize}
satisfying the following properties:
\begin{enumerate}[(a)]
\item The symplectomorphism $\varphi$ is supported on $W$, i.e.
\[
	\varphi|_{\Sigma_{2g, 4} - W} = \Id|_{\Sigma_{2g, 4} - W}. \label{lem:eta-construction-symplectic}
      \]
\item The symplectomorphisms $\varphi$ and $\hat \eta$ commute, i.e.
\[
	\varphi \circ \hat\eta = \hat\eta \circ \varphi.
\label{lem:eta-construction-commutes}
\]
\item The diffeomorphism $\tilde\eta := \hat\eta \circ \kappa_1$ fixes $O$ pointwise and satisfies
\[
	[\tilde\eta] = \tilde h \in \Mod(\Sigma_{2g, 4}).\label{lem:eta-construction-four-points}
\]
\item For all $t\in [0,1]$
  \[ \kappa_t(O) \cap W = \emptyset. \]
  In particular, $W \cap O = \emptyset$. \label{lem:eta-construction-nice-iso}
\end{enumerate}
\end{prop}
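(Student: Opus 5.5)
We will build the four objects around a single explicit model of the order-two mapping class $\hat h \in \Mod(\Sigma_{2g,2})$. By Corollary \ref{cor:puncture-involution}, $\hat h$ has order $2$. Nielsen realization, or equivalently the explicit involution of Figure \ref{fig:mck-vanishing-cycles} adjusted to fix the two marked points $p_1,p_2$, then gives a smooth involution $\hat\eta$ of $\Sigma_{2g}$ fixing $p_1,p_2$ with $[\hat\eta]=\hat h$.

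\textbf{Step 1: the symplectic form.} Pick any area form on $\Sigma_{2g}$, average it over $\langle\hat\eta\rangle$, and rescale to unit area. This gives a $\hat\eta$-invariant unit-area form $\theta$, so $\hat\eta$ is a symplectomorphism of $(\Sigma_{2g},\theta)$ and $\hat\eta^2=\Id$.

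\textbf{Step 2: the curves and $\varphi$.} By Lemma \ref{lem:tilde-f-lemma}, the classes $\tilde x,\tilde y,\tilde h(\tilde x),\tilde h(\tilde y)$ are pairwise disjoint. The curve representatives should be $\hat\eta$-adapted. Choose disjoint representatives $\alpha,\beta$ of $\tilde x,\tilde y$ in $\Sigma_{2g,4}$, and set $\gamma:=\hat\eta(\alpha)$ and $\delta:=\hat\eta(\beta)$. These should represent $\tilde h(\tilde x)$ and $\tilde h(\tilde y)$; since $\tilde h$ and the forgetful image $\hat h$ differ only in how $p_3,p_4$ move, this must be checked in $\Sigma_{2g,4}$, and we will handle it in Step 3 via $\kappa$.

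Next take $\hat\eta$-equivariant tubular neighborhoods, $W_\gamma=\hat\eta(W_\alpha)$ and $W_\delta=\hat\eta(W_\beta)$, all pairwise disjoint. On $W_\alpha$ and $W_\beta$ use Darboux annular coordinates for $\theta$ and define symplectic Dehn twists, right-handed on $\alpha$ and left-handed on $\beta$. Transport these by $\hat\eta$ to twists on $W_\gamma$ and $W_\delta$. The resulting $\varphi$ is supported on $W$, which gives (a).

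For (b), note that $\hat\eta$ swaps $W_\alpha\leftrightarrow W_\gamma$ and $W_\beta\leftrightarrow W_\delta$, and $\hat\eta^2=\Id$. Hence $\hat\eta\varphi\hat\eta^{-1}=\varphi$ on the nose. The class $[\varphi]=T_{\tilde x}T_{\tilde y}^{-1}T_{\tilde h(\tilde x)}T_{\tilde h(\tilde y)}^{-1}=\tilde f$ holds once the curves represent the right classes in $\Sigma_{2g,4}$.

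\textbf{Step 3: fixing $p_3,p_4$ via $\kappa$ (the main obstacle).} The involution $\hat\eta$ fixes $p_1,p_2$ but, by Figure \ref{fig:bounding-pair}, swaps or moves $p_3,p_4$. The class $\tilde h\in\Mod(\Sigma_{2g,4})$ differs from a lift of $\hat h$ by a point-pushing element in the kernel of the forgetful map, which is a braid of $p_3,p_4$ in $\Sigma_{2g,2}$. We therefore realize this point-push as an isotopy $\kappa_t$ of $\Sigma_{2g,2}$ starting at the identity. Moving the points along arcs so that $\kappa_1$ takes $p_3,p_4$ to $\hat\eta^{-1}(p_3),\hat\eta^{-1}(p_4)$ gives $\tilde\eta=\hat\eta\circ\kappa_1$ fixing all four points. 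We adjust $\kappa$ to equal the identity near $p_1,p_2$, and arrange that it is locally a translation near $p_3,p_4$ so that $\tilde\eta$ fixes small disks $O$ pointwise; near $p_1,p_2$ we choose $O_1,O_2$ to be $\hat\eta$-invariant and $\hat\eta$ locally linear, then isotope further if needed.

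The difficulty is twofold:
\begin{enumerate}
\item The point-push paths must be chosen so that $[\tilde\eta]=\tilde h$ exactly, rather than up to some other braid.
\item The traces $\kappa_t(O)$ must avoid $W$, which is (d).
\end{enumerate}
We would first read off the needed paths from Hamada's lifts in Figure \ref{fig:MCK-lifts} and Corollary \ref{cor:puncture-involution}. Then we would choose $\alpha,\beta$ (hence $W$) in the complement of those paths and their $\hat\eta$-images; the right side of Figure \ref{fig:bounding-pair}, where $\tilde x,\tilde y$ sit away from the boundary components, suggests this is possible. With $W$ disjoint from the swept region, $\kappa$ commutes with $\varphi$'s support. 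Consequently $\tilde h(\tilde x)=[\hat\eta\kappa_1(\alpha)]=[\hat\eta(\alpha)]=[\gamma]$, which closes the consistency check left open in Step 2 and yields (c) and (d).
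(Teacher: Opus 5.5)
Your Steps 1 and 2 are essentially the paper's last step. They give an $\hat\eta$-invariant unit-area $\theta$, and $\varphi_\gamma,\varphi_\delta$ obtained by transporting $\varphi_\alpha,\varphi_\beta^{-1}$ through $\hat\eta$, which yields (a) and (b) on the nose. The genuine gap is in Step 3, which is the real content of the proposition and which you only address heuristically.

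Abstractly, some isotopy $\kappa$ with $[\hat\eta\circ\kappa_1]=\tilde h$ exists. But the braid of $p_3,p_4$ it must realize is dictated by $\tilde h$, while $\alpha,\beta$ represent fixed classes in $\Sigma_{2g,4}$. So you cannot simply ``choose $\alpha,\beta$ in the complement of those paths.'' What must be proved is that the required braid can be realized inside the complementary region $S$ of $\alpha\cup\beta\cup\gamma\cup\delta$ that contains the marked points. Everything depends on this: (c), (d), $[\gamma]=\tilde h(\tilde x)$, and $[\varphi]=\tilde f$. Reading the paths off the figures does not settle it.

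The missing idea is that $\tilde h$ itself, not just $\hat h$, is reducible: by Lemma~\ref{lem:tilde-f-lemma}, $\tilde h$ permutes the multicurve $\{\tilde x,\tilde y,\tilde h(\tilde x),\tilde h(\tilde y)\}$ in $\Sigma_{2g,4}$. The paper uses this from the start:
\begin{itemize}
\item it takes a representative $\tilde\eta_0$ of $\tilde h$ preserving $\alpha\cup\beta\cup\gamma\cup\delta$;
\item it makes $\tilde\eta_0$ an involution on $T_1\cup T_2$ away from collars;
\item it applies Nielsen realization only on $(\mathring S,p_1,p_2)$, so the isotopy dragging $p_3,p_4$ lies in $\mathring S$ automatically and $[\tilde\eta]=\tilde h$ holds by construction;
\item it splices $\hat\eta$ together across $W$.
\end{itemize}

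Your global route can be closed with the same input, but you have to supply the argument. Suppose $\hat\eta(\alpha)=\gamma$ and $\hat\eta(\beta)=\delta$. Let $\kappa^0$ be any isotopy supported in $\mathring S$ taking $p_3,p_4$ to $\hat\eta^{-1}(p_3),\hat\eta^{-1}(p_4)$. Then $k=[\hat\eta\kappa^0_1]^{-1}\tilde h$ lies in the kernel of the forgetful map $\Mod(\Sigma_{2g,4})\to\Mod(\Sigma_{2g,2})$ and fixes all four oriented curve classes. Cutting along the multicurve and applying the Birman exact sequence to $S$ shows that $k$ is a point-push supported in $\mathring S$. Composing it with $\kappa^0$ gives the required $\kappa$.

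Two smaller points also need repair.
\begin{itemize}
\item \emph{Disjointness in Step 2.} For an arbitrary Nielsen representative $\hat\eta$ and arbitrary $\alpha,\beta$, the curves $\alpha,\beta,\hat\eta(\alpha),\hat\eta(\beta)$ need not be pairwise disjoint or avoid $p_3,p_4$. In that case your equivariant annuli do not exist. Instead, take geodesics for an $\hat\eta$-invariant hyperbolic metric on $\Sigma_{2g}-\{p_1,p_2\}$. Then conjugate $\hat\eta$ by an isotopy of $\Sigma_{2g,2}$ carrying these geodesics onto the given disjoint curves $\alpha,\beta,\gamma,\delta$. This also respects the fact that the statement takes $\gamma,\delta$ as given.
\item \emph{Behaviour near $p_1,p_2$.} $\kappa$ cannot be the identity near $p_1,p_2$. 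A nontrivial smooth involution acts by rotation through $\pi$ near each isolated fixed point. So for $\hat\eta\circ\kappa_1$ to fix $O_1,O_2$ pointwise, $\kappa$ must include a local half-rotation there. This is harmless in $\Mod(\Sigma_{2g,4})$, and the paper builds it in via Step~0 of Lemma~\ref{lem:final-correction}.
\end{itemize}
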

\noindent \textbf{Isomorphisms near the equator $E$.} For the remainder of this section, fix the notation of Proposition \ref{prop:isotoping-hateta}. Let $N\cong E \times (-2, 2)$ be a tubular neighborhood of $E$ in $S^2$ disjoint from the set of singular values of $\pi_0$. Shrinking $N$ yields another tubular neighborhood $A \cong E \times (-1, 1)$ of $E$ in $S^2$.

The four marked points $p_1, p_2, p_3, p_4$ fixed by $\tilde\eta \in \Diff^+(\Sigma_{2g, 4})$ define sections $\tilde s_i$ of $M_{\tilde \eta} \times (-2, 2) \to S^1 \times (-2, 2)$ for $1 \leq i \leq 4$ by
\begin{equation}\label{eqn:constant-sections-equator}
	\tilde s_i: (t, s) \mapsto ((p_i,t), s) \in M_{\tilde\eta} \times (-2, 2),
\end{equation}
where we recall that the mapping torus $M_{\tilde\eta}$ is identified with
\[
  M_{\tilde\eta} = (\Sigma_{2g, 4} \times [0, 1])/ ((\tilde\eta(x), 0) \sim (x, 1)).
\]

There exists an isomorphism of $\Sigma_{2g}$-bundles over $S^1 \times (-2, 2)$
\begin{equation}\label{eqn:equator-isomorphism}
	\tilde{\mathcal G}: \pi_0^{-1}(N) \to M_{\tilde\eta} \times (-2, 2)
\end{equation}
that sends each section $\sigma_i$, $1 \leq i \leq 4$ of $\pi_0$ to the section $\tilde s_i$ of $M_{\tilde\eta} \times (-2, 2) \to S^1 \times (-2, 2)$.

The isotopy $\kappa_t: \Sigma_{2g, 2} \times [0, 1] \to \Sigma_{2g, 2}$ defines a map of $\Sigma_{2g}$-bundles
\begin{align*}
	\mathcal K: M_{\tilde\eta} \times (-2, 2) &\to M_{\hat\eta}, \\
	((x, t), s) &\mapsto (\kappa_t(x), t).
\end{align*}
Finally, define a map of $\Sigma_{2g}$-bundles $\hat{\mathcal G}$ to be the composition
\begin{equation}\label{eqn:hat-G}
	\hat{\mathcal G} := \mathcal K \circ \tilde{\mathcal G}: \pi_0^{-1}(N) \to M_{\hat\eta}
\end{equation}
Because $\kappa_t$ fixes the points $p_1, p_2$ for all $t \in [0, 1]$, the map $\hat{\mathcal G}$ sends each section $\sigma_1$ and $\sigma_2$ of $\pi_0$ to the section $\hat s_1$ and $\hat s_2$ of $M_{\hat \eta} \to S^1$ respectively, where $\hat s_i$ is the section defined by the point $p_i$ fixed by $\hat\eta \in \Diff^+(\Sigma_{2g, 2})$ for $i = 1, 2$.

The pullback $\mathrm{pr}_1^*\theta$ of $\theta\in \Omega^2(\Sigma_{2g})$ under the projection $\mathrm{pr}_1: \Sigma_{2g} \times \R \to \Sigma_{2g}$ is invariant under deck transformations of $\Sigma_{2g} \times \R \to M_{\hat \eta}$, and so induces a closed $2$-form $\hat\theta$ on $M_{\hat\eta}$ that restricts to the symplectic form $\theta$ on each fiber of $M_{\hat\eta} \to S^1$.

In order to use the form $\hat\theta$ in the Gompf--Thurston construction, we must determine its cohomology class in $H^2(M_{\hat\eta}; \R)$.
\begin{lem}\label{lem:theta-cohomology}
The cohomology class of the form $\hat\theta$ is
\[
	[\hat\theta] = \PD([\hat s_1(S^1)]) \in H^2(M_{\hat\eta}; \R).
\]
\end{lem}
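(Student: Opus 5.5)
We would prove the equality by pairing both sides against a basis of $H_2(M_{\hat\eta};\R)$ and appealing to Poincar\'e duality on the closed oriented $3$-manifold $M_{\hat\eta}$. Since $H^2(M_{\hat\eta};\R) \cong \mathrm{Hom}(H_2(M_{\hat\eta};\R),\R)$, it suffices to check
\[
	\int_{Z} \hat\theta = \PD([\hat s_1(S^1)])([Z]) = [Z]\cdot[\hat s_1(S^1)]
\]
for $Z$ ranging over a basis of $H_2$. Lemma \ref{lem:transfer} supplies one: the fiber class $[\Sigma_{2g}]$ together with the tori $[T_{\ell_1}],\dots,[T_{\ell_{2g}}]$, where the $\ell_k$ are simple closed curves whose classes span $H_1(\Sigma_{2g};\R)^{\langle\hat\eta\rangle}$. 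Lemma \ref{lem:transfer} is stated for order-$2$ representatives of $h$; $\hat\eta$ has order $2$ and represents $h$ after forgetting the marked points, so it applies.

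For the fiber class, $\int_{\Sigma_{2g}}\hat\theta = \int_{\Sigma_{2g}}\theta = 1$, because $\theta$ has unit area. On the other side, $\hat s_1(S^1)$ meets each fiber transversely in exactly the point $p_1$, with positive sign under the orientation conventions of the mapping torus. So $[\Sigma_{2g}]\cdot[\hat s_1(S^1)] = 1$.

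For the tori, we choose the $\ell_k$ to avoid $p_1$. This is possible because any homology class is represented by curves missing a given point; we may also take each $\ell_k$ to be a union of $\ell$ and $\hat\eta(\ell)$, or simply any representative curve. Then $T_{\ell_k} = \ell_k\times[0,1]\cup\hat\eta(\ell_k)\times[0,1]$ is disjoint from $\hat s_1(S^1) = \{p_1\}\times S^1$, because $\hat\eta$ fixes $p_1$ and so $p_1\notin\hat\eta(\ell_k)$. Hence the intersection number with the section is $0$.

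It remains to show $\int_{T_{\ell_k}}\hat\theta = 0$. We lift to the double cover $p:\Sigma_{2g}\times\R/\Z\to M_{\hat\eta}$ from the proof of Lemma \ref{lem:transfer}. There $p^*\hat\theta$ is $\mathrm{pr}_1^*\theta$ (on the appropriate parametrization, $\hat\eta^*\theta=\theta$ since $\hat\eta$ is a symplectomorphism), and $T_{\ell_k} = p(\ell_k\times\R/\Z)$. Since $\mathrm{pr}_1^*\theta$ restricted to $\ell_k\times\R/\Z$ is pulled back from the $1$-dimensional curve $\ell_k$, it vanishes identically, so the integral is $0$. Equivalently, on $M_{\hat\eta}$ itself, $\hat\theta$ restricted to $\ell_k\times[0,1]$ is the pullback of $\theta|_{\ell_k}=0$.

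The main point needing care is the identification in the first paragraph: $H_2$ is exactly spanned by these classes, which Lemma \ref{lem:transfer} gives over $\R$, and we must fix the correct orientation and sign convention so that $[\Sigma_{2g}]\cdot[\hat s_1]=+1$ rather than $-1$. Once those are settled, the two classes agree on a basis and hence are equal.
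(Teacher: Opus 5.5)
Your proposal is correct and follows essentially the same route as the paper: you evaluate both classes on the basis $[\Sigma_{2g}], [T_{\ell_1}], \dots, [T_{\ell_{2g}}]$ from Lemma \ref{lem:transfer}, with the $\ell_k$ chosen to avoid $p_1$, so that the tori miss the section and $\hat\theta$ vanishes on them because on a cover it is $\mathrm{pr}_1^*\theta$ restricted to $\ell_k \times (\text{interval})$. The only differences are cosmetic: you lift to the double cover rather than to $\Sigma_{2g} \times \R$, and you state explicitly that $p_1 \notin \hat\eta(\ell_k)$ because $\hat\eta$ fixes $p_1$, a point the paper leaves implicit.
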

\begin{proof}
In order to determine the class $[\hat\theta]$, we evaluate $[\hat\theta]$ on $H_2(M_{\hat\eta}; \R)$. Let $\ell_1, \dots, \ell_{2g} \subseteq \Sigma_{2g}$ be simple closed curves which together span $H_1(\Sigma_{2g}; \R)^{\langle \hat \eta \rangle}$. We may assume that each curve $\ell_i$ is disjoint from the marked point $p_1 \in \Sigma_{2g}$ which is fixed by $\hat\eta$ and defines the section $\hat s_1: S^1 \to M_{\hat\eta}$. For each $1 \leq i \leq 2g$, consider the torus $T_{\ell_i}$ found in Lemma \ref{lem:transfer}. We claim that $\hat \theta$ vanishes on $T_{\ell_i}$ for all $1 \leq i \leq 2g$. Indeed, $T_{\ell_i} \subseteq M_{\hat\eta}$ is the image of
\[
	\ell_i \times \R \subseteq \Sigma_{2g} \times \R
\]
under the cover $\Sigma_{2g} \times \R \to M_{\hat\eta}$. Because $\mathrm{pr}_1(\ell_i \times \R) = \ell_i \subseteq \Sigma_{2g}$ is $1$-dimensional,
\[
	(\mathrm{pr}_1^*\theta)|_{\ell_i \times \R} =0.
\]
Therefore $\hat\theta|_{T_{\ell_i}} = 0$ as well because $\mathrm{pr}_1^*\theta$ descends to $\hat\theta$ and $\ell_i \times \R$ descends to $T_{\ell_i}$ under $\Sigma_{2g} \times \R \to M_{\hat\eta}$. In other words,
\[
	\langle [\hat \theta], [T_{\ell_i}] \rangle = 0
\]
for all $1 \leq i \leq 2g$.

On the other hand,
\[
	\langle [\hat\theta], [\Sigma_{2g}] \rangle = \int_{\Sigma_{2g}} \hat\theta|_{\Sigma_{2g}} = \int_{\Sigma_{2g}} \theta = 1
\]
for any fiber $\Sigma_{2g}$ of $M_{\hat\eta}\to S^1$.

Now we analyze $\PD([\hat s_1(S^1)])$ evaluated on $H_2(M_{\hat\eta}; \R)$. Observe that $T_{\ell_i} \subseteq M_{\hat\eta}$ and $\hat s_1(S^1) \subseteq M_{\hat\eta}$ are disjoint for all $1 \leq i \leq 2g$ because $p_1 \in \Sigma_{2g}$ and $\ell_i \subseteq \Sigma_{2g}$ are disjoint. Therefore,
\[
	\langle \PD([\hat s_1(S^1)]), [T_{\ell_i}] \rangle = 0
\]
for all $1 \leq i \leq 2g$.

Similarly, $\Sigma_{2g}$ and $\hat s_1(S^1)$ intersect positively once for any fiber $\Sigma_{2g}$ of $M_{\hat \eta} \to S^1$ because $\hat s_1$ is a section. Therefore,
\[
	\langle \PD([\hat s_1(S^1)]), [\Sigma_{2g}] \rangle = 1.
\]

Finally, Lemma \ref{lem:transfer} shows that the classes $[T_{\ell_1}], \dots, [T_{\ell_{2g}}]$ and $[\Sigma_{2g}]$ form a basis of $H_2(M_{\hat\eta}; \R)$. The forms $[\hat\theta]$ and $\PD([\hat s_1(S^1)])$ agree on this basis of $H_2(M_{\hat\eta}; \R)$, and
\[
	[\hat\theta] = \PD([\hat s_1(S^1)]) \in H^2(M_{\hat \eta}; \R). \qedhere
\]
\end{proof}

Below, we first build a symplectic form on $X_0$ whose restriction to a neighborhood of $\pi_0^{-1}(E)$ interacts well with the map $\varphi \in \Symp(\Sigma_{2g}, \theta)$ which will be used in the partial conjugation construction to build $X_n$. In order for our construction to also interact nicely with the symplectic blowing down procedure, we \emph{standardize} our symplectic form near the sections. 
\begin{prop}[{Gompf--Thurston construction for $\pi_0: X_0 \to S^2$}]\label{prop:GT-X0}
There exists a symplectic form $\omega_0$ on $X_0$ such that the smooth loci of the irreducible components of the fibers of $\pi_0: X_0 \to S^2$ and the sections $\sigma_1^0, \sigma_2^0, \sigma_3^0, \sigma_4^0$ of $\pi_0: X_0\to S^2$ are all symplectic submanifolds of $(X_0, \omega_0)$. Moreover, $\omega_0$ satisfies the following properties.
\begin{enumerate}[(1)]
\item \emph{(Standard near the sections)} For each $1 \leq j \leq 4$, let $U_j$ be disjoint
  neighborhoods of $\sigma_j^0$. Then,
  there exists an open tubular neighborhood $N_j\subset U_j$ of $\sigma_j^0$ such that $\pi_0|_{N_j}: N_j \to S^2$ is isomorphic to the tautological line bundle $\pi_L: L \to \CP^1$ via an orientation-preserving diffeomorphism $\phi_j: N_j \to L$. Furthermore, for a small enough neighborhood $\sigma_j^0\subseteq N_j' \subseteq N_j$,
\[
	\omega_0|_{N_j'} = \phi_j^*(a_j \pi_{\C^2}^* \omega_{\C^2} + \pi^*_L \omega_{S^2})
\]
where $\pi_{\C^2}: L \to \C^2$ is the standard blow-down map, $\omega_{\C^2}$ is the standard symplectic form on $\C^2$, $\omega_{S^2}$ is the Fubini--Study form on $S^2$, and $a_j$ is some positive constant. \label{standard-sections}
\item \emph{(Standard over the twisting region)} The restriction of $\omega_0$ to $\pi_0^{-1}(A)- \bigcup_{j=1}^4 N_j$ takes the form
\[
	\omega_0|_{\pi_0^{-1}(A) - \bigcup_{j=1}^4 N_j} = \varepsilon \hat{\mathcal G}^*\hat\theta|_{\pi_0^{-1}(A) - \bigcup_{j=1}^4 N_j}+ \pi_0^* \omega_{A}
\]
for some $\varepsilon > 0$ and some symplectic form $\omega_{A}$ on $A \subseteq S^2$.  \label{standard-twisting}
\end{enumerate}
\end{prop}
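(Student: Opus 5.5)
The plan is to run the Gompf--Thurston construction \cite[Theorem 10.2.18]{gompf-stipsicz} with the cohomology class $[\nu] = [\nu_i] \in H^2(X_0;\R)$ from Lemma \ref{lem:nu-choice}\ref{lem:nu-i}, while prescribing the fiberwise-positive closed $2$-form $\nu$ explicitly in two places: over the annulus $A$, and near the four sections. Recall how the construction works. One builds a closed $2$-form $\nu$ on $X_0$ whose restriction to every fiber is positive on the smooth locus (Kähler-type near the Lefschetz critical points), and then sets $\omega_0 = \nu + K\pi_0^*\omega_{S^2}$ for $K \gg 0$. For this to work, the class must pair positively with every fiber component. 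This holds because $\langle[\nu],[F^0]\rangle = 1$ and $\langle[\nu],[F^0_j]\rangle>0$. With this setup, the fibers and their irreducible components are automatically symplectic. The sections are symplectic once $\nu$ is standard near them, as in step two below.

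The first step is to arrange the twisting region. By Lemma \ref{lem:tilde-eta-homology}, the restriction of $[\nu]$ to $\pi_0^{-1}(N)\simeq\pi_0^{-1}(E)$ is $c\,\PD([\sigma_1(E)])$, and evaluating on a fiber gives $c=1$. By Lemma \ref{lem:theta-cohomology}, $[\hat\theta]=\PD([\hat s_1(S^1)])$, and $\hat{\mathcal G}$ carries $\sigma_1$ to $\hat s_1$. Hence $[\nu|_{\pi_0^{-1}(N)}] = [\hat{\mathcal G}^*\hat\theta]$. Starting from any Gompf--Thurston form $\nu'$ in the class $[\nu]$, write $\nu' - \hat{\mathcal G}^*\hat\theta = d\lambda$ on $\pi_0^{-1}(N)$. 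I would then replace $\nu'$ by $\nu' - d(\rho\lambda)$, where $\rho$ is a cutoff pulled back from the base that equals $1$ over $A$ and $0$ outside $N$. This makes $\nu = \hat{\mathcal G}^*\hat\theta$ over $A$ and leaves the class unchanged. Fiberwise positivity may fail on $N - A$. To repair it, I would use the usual Gompf--Thurston trick: over $N$, interpolate the fiber-restricted forms convexly (both are fiberwise positive there). The fiberwise-closed but globally nonclosed error is absorbed by the term $K\pi_0^*\omega_{S^2}$ together with a small scaling $\varepsilon$ of the fiber part. This scaling is where the constant $\varepsilon$ in \ref{standard-twisting} comes from.

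The second step is to standardize near the sections. Each $\sigma_j^0$ has self-intersection $-1$, so a tubular neighborhood $N_j\subset U_j$ is fiber-isomorphic to $\pi_L\colon L\to\CP^1$ via some $\phi_j$. On $L$, the form $a_j\pi_{\C^2}^*\omega_{\C^2}$ is closed, positive on each fiber of $\pi_L$, vanishes on the zero section, and is exact on $N_j$. This is consistent with $\langle[\nu],[\sigma_j^0]\rangle = 0$. Since $\nu - \phi_j^*(a_j\pi_{\C^2}^*\omega_{\C^2})$ is then exact on $N_j$, the same cutoff argument, applied with a radial cutoff in the fiber direction, makes $\nu$ equal to the standard model on a smaller $N_j'$. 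Adding $K\pi_0^*\omega_{S^2}$, and rescaling the base form so that it matches $\pi_L^*\omega_{S^2}$ near the sections, yields \ref{standard-sections}.

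The main obstacle is making these two normalizations compatible on $\pi_0^{-1}(A)\cap N_j$. There the form must simultaneously equal $\varepsilon\hat{\mathcal G}^*\hat\theta$ (up to the base term) and the $L$-model. My first attempt would be to choose $\phi_j$ over $A$ using the trivialization $\hat{\mathcal G}$ (or $\tilde{\mathcal G}$ for $p_3,p_4$), together with a fiber chart on $O_j$ in which $\theta$ is standard. I expect the neighborhoods $O$ in Proposition \ref{prop:isotoping-hateta} can be taken so that $\theta|_{O}$ is the standard area form, and so that $\tilde\eta$ fixes $O$ pointwise. With such charts, $\hat\theta$ restricted near the section is a product form that agrees, after an exact fiber-preserving correction supported away from the sections, with the restriction of $a_j\pi_{\C^2}^*\omega_{\C^2}$. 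Property \ref{standard-twisting} is only required on the complement of $\bigcup_j N_j$, which leaves an annular collar inside $N_j - N_j'$ over $A$ for the interpolation. I would carry out this interpolation with a fiber-radial cutoff, using convexity of fiberwise positivity and then enlarging $K$ once more. The subtle point is that $\hat{\mathcal G}$ involves the isotopy $\kappa_t$, which moves $O_3,O_4$. I would handle this by property \ref{lem:eta-construction-nice-iso}, which keeps $\kappa_t(O)$ inside a region where $\theta$ is standard.
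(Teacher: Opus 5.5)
Your treatment of the annulus is essentially the paper's Step~1. The standardization near the sections, however, has a genuine gap.

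The cutoff trick over $N$ works because $\rho$ is pulled back from the base. Hence $d\rho\wedge\lambda$ restricts to zero on every fiber, and the fiber restriction of $\nu'-d(\rho\lambda)$ is an honest convex combination of two fiberwise-positive forms. So, contrary to what you write, positivity never fails over $N-A$. There is also no ``non-closed error'' to absorb, since the modified form is closed.

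Near $\sigma_j^0$ the cutoff $r$ necessarily varies along the fibers. Then
\[
\nu-d(r\mu)=(1-r)\,\nu+r\,\phi_j^*\bigl(a_j\pi_{\C^2}^*\omega_{\C^2}\bigr)-dr\wedge\mu
\]
carries the extra vertical term $dr|_F\wedge\mu|_F$, and convexity does not control it. For an ordinary cutoff on a collar of width $\delta$ one has $|dr|\sim\delta^{-1}$ and at best $|\mu|\lesssim\delta$ there. So this term is of the same order as the forms being interpolated. Already in a rotationally symmetric model it makes the fiber form negative when $a_j$ is large compared with the fiber area density of $\nu$.

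Enlarging $K$ cannot repair this, because $\pi_0^*\omega_{S^2}$ vanishes on vertical tangent spaces. If $\nu$ degenerates on a fiber, then $\nu+K\pi_0^*\omega_{S^2}$ is not symplectic for any $K$, and that fiber is not symplectic.

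This is exactly where the paper uses a variant of McDuff--Polterovich's Lemma~5.5.B:
\begin{itemize}
\item subtract $\varepsilon^2\pi_{\C^2}^*\omega_{\C^2}$, which keeps fiber positivity;
\item add back $\pi_{\C^2}^*\tau_k$, where $\tau_k$ is a K\"ahler form on $B(1)$ equal to $\varepsilon^2\omega_{\C^2}$ near $\partial B(1)$ and to $k^2\omega_{\C^2}$ on $B(\varepsilon/(2k))$;
\item cut off the primitive $\beta_j$ of $\lambda_j$ only inside $B(\varepsilon/(2k))$, where the $k^2$ term dominates the $O(k)$ cross term $d\rho_k\wedge\beta_j$.
\end{itemize}
The price is that the constant near the section comes out large rather than prescribed, which the statement allows. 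A logarithmic cutoff, after normalizing $\mu$ to vanish along the zero section, is another way out. Either way, some such argument is needed.

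Second, the ``main obstacle'' you isolate is not an obstacle. Property (2) is only asserted on $\pi_0^{-1}(A)-\bigcup_j N_j$. If you standardize over the annulus first and then support the section modification inside $N_j$, as the paper does, (2) holds automatically. No matching of the $L$-model with $\hat{\mathcal G}^*\hat\theta$ on $N_j\cap\pi_0^{-1}(A)$ is needed.

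The matching you sketch also relies on facts that are not available. Nothing in Proposition~\ref{prop:isotoping-hateta} makes $\theta$ standard on $O$ or on $\kappa_t(O)$; property (d) only says that $\kappa_t(O)$ misses $W$. Compatibility with the gluing map $\mathcal F$ is handled later, in the proof of Proposition~\ref{prop:GT-Xn}. There the $U_j$ are simply chosen so that $N_j$ over $A$ lies in the region $\mathfrak O_j$ where $\mathcal F$ is the identity.
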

\begin{proof}
Recall that a class $[\nu] \in H^2(X_0; \R)$ was chosen in Lemma \ref{lem:nu-choice}\ref{lem:nu-i} so that $[\nu] \in H^2(X_0; \R)$ pairs positively with the irreducible components of all fibers of $\pi_0: X_0 \to S^2$ and so that $\langle [\nu], [F] \rangle = 1$ for any smooth fiber $F$ of $\pi_0$. Given such a class $[\nu]$, the Gompf--Thurston construction (\cite[Proof of Theorem 10.2.18]{gompf-stipsicz} or \cite[Theorem 2.7]{gompf-symp}) yields a closed $2$-form $\zeta$ on $X_0$ with $[\zeta] = [\nu]$ such that
\begin{enumerate}[(a)]
\item $\zeta|_{S}$ is symplectic for $S$ the smooth locus of an irreducible component of any fiber of $\pi_0$, and
\item for any critical point $p \in X_0$ of $\pi_0$ and for some charts $U_p \subseteq \C^2$ and $V_{\pi_0(p)} \subseteq \C$ on which $\pi_0$ takes the form $(z, w) \mapsto z^2 + w^2$, the restriction $\zeta|_{U_p}$ is the standard symplectic form $dx_1 \wedge dy_1 + dx_2 \wedge dy_2$.
\end{enumerate}
Furthermore, recall that $[\nu] \in H^2(X_0; \R)$ was chosen in Lemma \ref{lem:nu-choice}\ref{lem:nu-i} so that $\langle [\nu], [\sigma_j^0] \rangle = 0$ for all $1 \leq j \leq 4$.

\medskip\noindent
\emph{Step 1: Standardizing over the annulus.} We will modify the form $\zeta$ over the tubular neighborhood $N\subseteq S^2$ of $E$ by the form $\hat{\mathcal G}^* \hat\theta$. Let $V_0 := N$. For each $i = 1, 2$, consider the open disks $V_i := D_i -(\bar A \cap D_i)$ in $S^2$. Then
\[
	S^2 = V_0 \cup V_1 \cup V_2
\]
is an open cover of $S^2$. Let $\{\rho_0, \rho_1, \rho_2\}$ be a partition of unity subordinate to this open cover. Consider closed $2$-forms $\xi_i$ on $V_i$ for each $i = 0, 1, 2$, defined by
\[
	\xi_0 := \hat{\mathcal G}^* \hat\theta, \qquad \xi_1 = \zeta|_{\pi_0^{-1}(V_1)}, \qquad \xi_2 = \zeta|_{\pi_0^{-1}(V_2)}.
\]
Then
\[
	[\xi_1] = [\nu|_{\pi_0^{-1}(V_1)}] \in H^2(\pi_0^{-1}(V_1); \R) \qquad \text{ and } \qquad [\xi_2] = [\nu|_{\pi_0^{-1}(V_2)}] \in H^2(\pi_0^{-1}(V_2); \R)
\]
by the Gompf--Thurston construction. On the other hand, $[\nu|_{\pi_0^{-1}(E)}] = a \PD([\sigma_1^0(E)]) \in H^2(\pi_0^{-1}(E); \R)$ for some $a \in \R$ by Lemma \ref{lem:tilde-eta-homology}, and $a = 1$ because $\langle [\nu|_{\pi_0^{-1}(E)}], [F] \rangle = 1$ for any fiber $F$ of $\pi_0^{-1}(E)$. By Lemma \ref{lem:theta-cohomology},
\[
	[(\hat{\mathcal G}^*\hat\theta)|_{\pi_0^{-1}(E)}] = \PD([\sigma_1^0(E)]) \in H^2(\pi_0^{-1}(E); \R)
\]
because $\hat{\mathcal G}$ sends the section $\sigma_1^0(E)$ to the section $\hat s_1(S^1)$ of $M_{\hat\eta}$. Finally because $H^2(\pi_0^{-1}(E); \R) \cong H^2(\pi_0^{-1}(N); \R)$,
\[
	[\xi_0] = [\nu|_{\pi_0^{-1}(V_0)}] \in H^2(\pi_0^{-1}(V_0); \R).
\]

For each $i = 0, 1, 2$, there exists a $1$-form $\beta_i$ on $\pi_0^{-1}(V_i)$ such that
\[
	\xi_i = \nu|_{\pi_0^{-1}(V_i)} + d\beta_i.
\]
Define the closed $2$-form $\xi$ on $X_0$ by
\[
	\xi := \nu + \sum_{i=0}^{2} d((\rho_i\circ \pi_0)\beta_i) = \sum_{i=0}^2 d(\rho_i\circ \pi_0) \wedge \beta_i + (\rho_i \circ \pi_0) \xi_i.
\]
We claim that $\xi$ restricts to a symplectic form on smooth loci $S$ of irreducible components of any fiber of $\pi_0$. To see this, note that $d(\rho_i \circ \pi_0)$ vanishes on $S$ for each $i = 0, 1, 2$, so
\[
	\xi|_S = \sum_{i=0}^2 (\rho_i \circ \pi_0)\xi_i |_{S}.
\]
If $S \subseteq \pi_0^{-1}(V_i)$ then $\xi_i|_S$ is symplectic for $i = 1, 2$ by the Gompf--Thurston construction. For $i = 0$, recall that the form $\hat\theta$ restricts to a symplectic form on each fiber of $M_{\tilde\eta} \to S^1$ by construction. Because $\hat{\mathcal G}$ maps a fiber of $\pi_0^{-1}(N) \to N$ diffeomorphically onto a fiber of $M_{\tilde\eta} \to S^1$, the pullback $\hat{\mathcal G}^*\hat\theta$ restricted to a fiber of $\pi_0^{-1}(N) \to N$ is still symplectic. Therefore, $\xi|_S$ is symplectic on $S$ because it is a positive linear combination of volume forms on $S$.

\medskip\noindent
\emph{Step 2: Standardizing near the sections.} Observe that $\pi_0: X_0 \to S^2$ is a submersion on each section $\sigma_j^0$. Therefore, for a small enough tubular neighborhood $N_j \subseteq U_j$ of $\sigma_j^0$, the restriction $\pi_0|_{N_j}: N_j \to S^2$ is isomorphic\footnote{In fact, $N_j \to S^2$ can be chosen to be isomorphic
  to the bundle $TF \to \sigma_j^0$, where $TF \subset TU_j$ is the subbundle \emph{parallel} to the fibers of $\pi_0$.} to a rank $2$ vector-bundle over $S^2$. Since $[\sigma_j^0]^2 = -1$, there exists a $\C$-line bundle isomorphism $\phi_j: N_j \to L$. Define
\[
	\lambda_j := (\phi_j^{-1})^*\xi|_{N_j} \in \Omega^2(L).
\]
Since $\phi_j$ is orientation-preserving on fibers, the form $\lambda_j$ tames the standard complex structure $i$ of $L$ in the fiber direction of $\pi_L: L \to \CP^1$. Let
\[
	L(1) := \{v \in L: \lvert \pi_{\C^2}(v) \rvert^2 \leq 1\}
\]
where $\lvert \cdot \rvert$ denotes the standard, K\"ahler metric compatible with $\omega_{\C^2}$. The following is a modification of a lemma of McDuff--Polterovich \cite[Lemma 5.5.B]{mcduff-polterovich}.
\begin{lem}[{cf. McDuff--Polterovich \cite[Lemma 5.5.B]{mcduff-polterovich}}]
There exists a $2$-form $\lambda_j' \in \Omega^2(L(1))$ satisfying the following properties:
\begin{enumerate}[(1)]
\item $\lambda_j'$ agrees with $\lambda_j$ near the boundary of $L(1)$.
\item $\lambda_j' = c_j \pi^*_{\C^2} \omega_{\C^2}$ near the zero section of $\pi_L: L\to \CP^1$ for some constant $c_j > 0$.
\item $\lambda_j'$ tames $i$ in the fiber direction of $\pi_L: L \to \CP^1$.
\end{enumerate}
\end{lem}
\begin{proof}
This is the same computation as \cite[Lemma 5.5.B]{mcduff-polterovich}; for completeness, we recall the proof. Let $B(r) \subseteq \C^2$ denote the open ball of radius $r$ in $\C^2$ with respect to the K\"ahler metric compatible with $\omega_{\C^2}$. By McDuff--Polterovich \cite[Proof of Lemma 5.5.B]{mcduff-polterovich}, there exists a K\"ahler form $\tau_k$ on $B(1)$ such that $\tau_k = \varepsilon^2 \omega_{\C^2}$ near the boundary of $B(1)$ anda $\tau_k = k^2 \omega_{\C^2}$ on $B(\varepsilon/(2k))$, for any $k > 1$ and $0 < \varepsilon < 1$.

Because $[\xi] = [\nu] \in H^2(X_0; \R)$ and because $\langle [\xi|_{N_j}], [\sigma_j^0] \rangle = 0$ by construction of $\nu$ in Lemma \ref{lem:nu-choice}\ref{lem:nu-i}, the form $\lambda_j \in \Omega^2(L)$ is exact and $\lambda_j = d\beta_j$ for some $\beta_j \in \Omega^1(L)$. Let $0 < \varepsilon < 1$ be such that $\lambda_j - \varepsilon^2 \pi_{\C^2}^* \omega_{\C^2}$ tames $i$ in the fiber direction of $\pi_L: L(1) \to \CP^1$. Consider a bump function $\rho: \C^2 \to \R_{\geq 0}$ supported in $B(1) \subseteq \C^2$ such that $\rho \equiv 1$ near $0 \in B(1)$ and $\rho\equiv 0$ near $\partial B(1)$. Define $\rho_k: L \to \R_{\geq 0}$ to be $\rho_k(z) = \rho((2k/\varepsilon) \pi_{\C^2}(z))$. Furthermore, define
\[
	\lambda_j' = \lambda_j + \pi_{\C^2}^* (\tau_k - \varepsilon^2 \omega_{\C^2}) - d(\rho_k \beta_j).
\]
For $k \gg 1$ large enough, the lemma follows.
\end{proof}

Since the $U_j$ are disjoint, it follows that we can modify $\xi$ only over $\bigsqcup_{j=1}^4N_j$ such that $\xi$ remains symplectic on the fibers and
\[
	\xi|_{N_j'} = \phi_j^*(c_j\pi_{\C^2}^* \omega_{\C^2})
\]
on a small enough neighborhood $\sigma_j^0 \subseteq N_j' \subseteq N_j$.

\medskip
\noindent
\emph{Step 3: Finishing the proof.} Let $\omega_{S^2}$ be the Fubini--Study form on $S^2$. For any $\varepsilon > 0$ small enough,
\[
	\omega_0 = \varepsilon\xi + \pi_0^* \omega_{S^2}
\]
is symplectic and the sections $\sigma_j^0$ are symplectic by compactness \cite[Proposition 10.2.20]{gompf-stipsicz}. Moreover, $\omega_0$ restricts to a symplectic form on the smooth locus $S$ of any fiber of $\pi_0$ because $\varepsilon\xi$ does and $\pi_0^*\omega_{S^2}|_S = 0$. Near each section $\sigma_j^0$,
\[
	\omega_0|_{N_j'} = \varepsilon \xi|_{N_j'} + \pi_0^* \omega_{S^2}|_{N_j'} = \varepsilon\phi_j^*(c_j \pi_{\C^2}^*\omega_{\C^2}) + \phi_j^*(\pi_L^*\omega_{S^2})
\]
where the last equality follows because $\pi_L \circ \phi_j = \pi_0$ restricted to $N_j'$. This proves \ref{standard-sections}, with $a_j = \varepsilon c_j$.

Finally, observe that $\rho_1|_A = \rho_2|_A \equiv 0$ and $\rho_0|_A \equiv 1$ because $A$ is disjoint from both $V_1$ and $V_2$. Therefore,
\[
	\xi|_{\pi_0^{-1}(A)} = \xi_0|_{\pi_0^{-1}(A)} = \hat{\mathcal G}^* \hat\theta|_{\pi_0^{-1}(A)},
\]
and so $\omega_0|_{\pi_0^{-1}(A) - \bigcup_{j=1}^4 N_j} = \varepsilon \hat{\mathcal G}^*\hat\theta|_{\pi_0^{-1}(A)- \bigcup_{j=1}^4 N_j} + \pi_0^*\omega_A$ where $\omega_A := \omega_{S^2}|_{A}$. This proves \ref{standard-twisting}.
\end{proof}

\begin{proof}[Proof of Proposition \ref{prop:GT-Xn}]
We will build an appropriate symplectic form $\omega_n$ on $X_n$ by a partial conjugation construction (Section \ref{sec:partial-conj}).

\medskip\noindent
\emph{Step 1: Explicitly constructing $\pi_n: X_n \to S^2$ and $\sigma_j^n: S^2 \to X_n$.} To find the gluing diffeomorphism used in the partial conjugation construction, consider the symplectomorphism $\varphi \in \Symp(\Sigma_{2g}, \theta)$ fixing the four
marked points $p_1, p_2, p_3, p_4$ with $[\varphi] = \tilde f \in \Mod(\Sigma_{2g, 4})$ found in Proposition \ref{prop:isotoping-hateta}. Recall
that there exist disk neighborhoods $O_j$ of each $p_j$ on which both $\varphi$ and $\tilde \eta$ act trivially.
In particular, for each $1\leq j \leq 4$ the set $O_j \times [0,1] \times (-2,2)$ induces a neighborhood
$\widetilde{O}_j$ of the section $\tilde s_i: E \times (-2,2) \to M_{\tilde\eta} \times (-2,2)$ defined in (\ref{eqn:constant-sections-equator}).

With the isotopy $\kappa_t$ found in Proposition \ref{prop:isotoping-hateta}, build a new isotopy
\[
	\kappa_t^{-1} \circ \varphi \circ \kappa_t: \Sigma_{2g, 4} \times [0, 1] \to \Sigma_{2g, 4},
\]
where we note that $\kappa_t^{-1} \circ \varphi \circ \kappa_t$ acts trivially on $O_1,O_2,O_3,O_4$ for all $t \in [0, 1]$ because $\kappa_t(O_j)$ is not contained in the support of $\varphi$ for all $t \in [0, 1]$ and for all $1 \leq  j \leq 4$. Moreover, compute at $t = 0, 1$ that
\begin{align*}
\kappa_0^{-1} \circ \varphi \circ \kappa_0 &= \Id_{\Sigma_{2g}}^{-1} \circ \varphi \circ \Id_{\Sigma_{2g}} = \varphi \\
\kappa_1^{-1} \circ \varphi \circ \kappa_1 &= (\hat\eta \circ \tilde\eta)^{-1} \circ \varphi \circ (\hat\eta \circ \tilde\eta) = \tilde\eta^{-1} \circ \varphi \circ \tilde \eta,
\end{align*}
where the last equality uses the fact that $\hat\eta$ and $\varphi$ commute. This isotopy induces a $\Sigma_{2g}$-bundle automorphism
\begin{align*}
	\tilde{\mathcal F} : M_{\tilde \eta} \times (-2, 2) &\to M_{\tilde \eta} \times (-2, 2) \\
	((x, t), s) &\mapsto ((\kappa_t^{-1} \circ \varphi \circ \kappa_t(x), t), s)
\end{align*}
that fixes pointwise each neighborhood $\widetilde O_j$ of each section $\tilde s_j$ of $M_{\tilde \eta} \times (-2,2)$.

Recall the isomorphism $\tilde{\mathcal G}: \pi_0^{-1}(N) \to M_{\tilde \eta} \times (-2, 2)$ fixed in (\ref{eqn:equator-isomorphism}). Define the bundle isomorphism $\mathcal F: \pi_0^{-1}(N) \to \pi_0^{-1}(N)$ by $\tilde{\mathcal F}$, using the identification $\tilde{\mathcal G}$
\[
	\mathcal F :=\tilde{\mathcal G}^{-1} \circ \tilde{\mathcal F} \circ \tilde{\mathcal G}: \pi_0^{-1}(N) \to \pi_0^{-1}(N).
\]
By construction, $\mathcal F$ fixes each section $\sigma_j^0: \pi_0^{-1}(N) \to \pi_0^{-1}(N)$ for $1 \leq j \leq 4$.

Let $U_1 := D_1 \cup A \subseteq S^2$ and $U_2 := D_2 \cup A \subseteq S^2$ be two open disks covering $S^2$ such that
\[
	U_1 \cap U_2 = A.
\]

By identifying any $x \in \pi_0^{-1}(A) \subseteq \pi_0^{-1}(U_1)$ with $\mathcal F^n(x) \in \pi_0^{-1}(A) \subseteq \pi_0^{-1}(U_2)$, form the Lefschetz fibration
\[
	\pi_n: X_n := \pi_0^{-1}(U_1) \cup_{\mathcal F^n: \pi_0^{-1}(A) \to \pi_0^{-1}(A)} \pi_0^{-1}(U_2) \to S^2
\]
where $\pi_n$ is defined to agree with $\pi_0$ on each $\pi_0^{-1}(U_1)$, $\pi_0^{-1}(U_2)$. Let $\sigma_j^n$ be the section of $\pi_n$ defined to agree with $\sigma_j^0$ on each $U_1$, $U_2$, for $1 \leq j \leq 4$. By the partial conjugation construction (Section~\ref{sec:partial-conj}), the Lefschetz fibration $\pi_n$ and sections $\sigma_1^n, \dots, \sigma_4^n$ have monodromy factorization given in (\ref{eqn:sections-monodromy}) as desired.

\medskip \noindent \emph{Step 2: A common neighborhood for $\sigma_j^n$:} Via $\tilde{\mathcal G}$ the neighborhoods
$\widetilde O_j$ induce neighborhoods $\mathfrak O_j$ of each section $\sigma_j^0$ over $\pi_0^{-1}(A)$ for each $1\leq j\leq 4$
on which $\mathcal F$ acts trivially. By \Cref{prop:GT-X0}\ref{standard-sections}, for each $1\leq j \leq 4$
there exists a tubular neighborhood $N_j|_{\pi^{-1}_0(A)} \subset \mathfrak O_j$ of
$\sigma_j^0$ over which the symplectic form $\omega_0$ is standard near $\sigma_j^0$. In particular, $N_j$ \emph{embeds} in  $X_n$ for all $n$ and $1\leq j \leq 4$; denote this natural embedding by $g_j^n:N_j \to X_n$. Furthermore, note that $\pi_n \circ g^n_j = \pi_0$, so $g^n_j$ is a map of bundles. Denote by $N_j^n$ the embedded copy of $N_j$ in $X_n$.

\medskip\noindent
\emph{Step 3: Constructing $\omega_n$ on $X_n$.} Consider the symplectic forms
\[
	\omega_0|_{\pi_0^{-1}(U_1)} \in \Omega^2(\pi_0^{-1}(U_1)), \qquad \omega_0|_{\pi_0^{-1}(U_2)} \in \Omega^2(\pi_0^{-1}(U_2))
\]
found in Proposition \ref{prop:GT-X0}. We claim that
\[
	\omega_n := \begin{cases}
	\omega_0|_{\pi_0^{-1}(U_1)}&\text{ on }\pi_0^{-1}(U_1) \subseteq X_n \\
	\omega_0|_{\pi_0^{-1}(U_2)}&\text{ on }\pi_0^{-1}(U_2) \subseteq X_n
	\end{cases}
\]
is a well-defined symplectic form on $X_n$. It suffices to check that $\omega_n$ is well-defined on $\pi_0^{-1}(A)$, i.e. that
\[
	\mathcal F^*\omega_0|_{\pi_0^{-1}(A)} = \omega_0|_{\pi_0^{-1}(A)}.
\]
This follows from \Cref{prop:isotoping-hateta}, \Cref{prop:GT-X0}\ref{standard-twisting}, and the definition of $\mathcal F$.

\medskip\noindent
\emph{Step 4: Restricting $\omega_n$ to the fibers of $\pi_n$.} Any fiber of $\pi_n$ is contained in $\pi_0^{-1}(U_1) \subseteq X_n$ or $\pi_0^{-1}(U_2) \subseteq X_n$. So for any $S$ the smooth locus of an irreducible component of any fiber of $\pi_n$,
\[
	\omega_n|_S = \omega_0|_S
\]
and so $S$ is a symplectic submanifold of $(X_n, \omega_n)$ by Proposition \ref{prop:GT-X0}. Furthermore, restricted to the components $F_1^n$, $F_2^n$, $F_3^n$, and $F_4^n$ of the reducible fibers of $\pi_n$,
\[
	\langle [\omega_n], [F_j^n] \rangle = \int_{F_j^n \subseteq X_n}  \omega_n|_{F_j^n} = \int_{F_j^0 \subseteq X_0} \omega_0|_{F_j^0} = \langle [\omega_0], [F_j^0] \rangle.
\]
This finishes the proof of \ref{GT-Xn-fibers} and \ref{GT-Xn-areas}.

\medskip \noindent
\emph{Step 5: Restricting $\omega_n$ near the sections $\sigma_j^n$.} Since $\mathcal F$ acts trivially on $N_j|_{\pi_0^{-1}(A)}$ it follows that
\[ g_j^n:(N_j,\omega_0) \to (N_j^n,\omega_n) \]
is a symplectomorphism. For each $1\leq j\leq 4$, let
$\phi_j:N_j \to L$ be the diffeomorphism found in \Cref{prop:GT-X0}\ref{standard-sections}, so that for $\sigma_j^0(S^2) \subseteq N_j' \subseteq N_j$ we have
  \[ \omega_0|_{N_j'} = \phi_j^*(a_j \pi_{\C^2}^* \omega_{\C^2} + \pi^*_L \omega_{S^2}).\]
  The map
$\phi_j^n:= \phi_j \circ (g^n_j)^{-1}$ is the desired isomorphism $N^n_j \to L$, with $V_j^n:= g_j^n(N_j')$, making
$\omega_n$ standard near $\sigma^n_j$. Note that under $\phi^n_j$, the section $\sigma_j^n$ corresponds to the zero section of $L$, which we denote by $S$. Thus $\sigma^n_j$ is symplectic and
  \[ \langle [\omega_n],[\sigma_j^n(S^2)] \rangle = \langle [\omega_{S^2}],[S] \rangle = \pi .\]
This finishes the proof of \ref{standard-alln}. 
\end{proof}

\subsection{On the cohomology classes $[\omega_n] \in H^2(X_n; \R)$}

Finally, we combine the results of the previous subsections and determine the cohomology classes $[\omega_n] \in H^2(X_n; \R)$.
\begin{lem}\label{lem:symplectic-form-cohomology}
For all $n \in \Z_{\geq 0}$, there exists a diffeomorphism
\[
	\Phi_n: X_n \to M \# 4 \overline{\CP^2}
\]
satisfying the following properties.
\begin{enumerate}[(a)]
\item On homology,
\[
	(\Phi_n)_*[\sigma_j^n] = E_j \qquad \text{ and } \qquad (\Phi_n)_*([F^n]) = (\Phi_0)_*([F^0]) \in H_2(M \# 4 \overline{\CP^2}; \Z)
\]
for all $1 \leq j \leq 4$, where $E_1, \dots, E_4 \in H_2(M \# 4 \overline{\CP^2}; \Z)$ denote the exceptional divisors coming from each summand $\overline{\CP^2}$ and $F^n$ denotes any regular fiber of $\pi_n: X_n \to S^2$.
\item The cohomology class $(\Phi_n^{-1})^*([\omega_n])$ is independent of $n$, i.e.
\[
	(\Phi_n^{-1})^*([\omega_n]) = (\Phi_0^{-1})^*([\omega_0]) \in H^2(M \# 4 \overline{\CP^2}; \R).
\]
\end{enumerate}
\end{lem}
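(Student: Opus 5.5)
We take $\Phi_n := \Phi_n^i$ from Lemma \ref{lem:nu-choice}\ref{lem:phi-n-i}, with $i$ fixed as in Remark \ref{capping}. Part (a) is then exactly the content of that lemma: it gives $(\Phi_n)_*[\sigma_j^n] = E_j$ and $(\Phi_n)_*[F^n] = (\Phi_0)_*[F^0]$. It also gives the stronger statement $(\Phi_n)_*[F_j^n] = (\Phi_0)_*[F_j^0]$ for the irreducible components $F_1^n,\dots,F_4^n$ of the two reducible fibers, which we will use in (b).

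For (b), we compare two real cohomology classes on $M \# 4\overline{\CP^2}$ by evaluating them on a basis of real homology. By Lemma \ref{lem:nu-choice}\ref{lem:homology-xn}, the classes $[F_1^n], [F_2^n], [\sigma_1^n], \dots, [\sigma_4^n]$ form an $\R$-basis of $H_2(X_n;\R)$. Pushing forward by $\Phi_n$ gives the fixed basis
\[
	(\Phi_0)_*[F_1^0],\ (\Phi_0)_*[F_2^0],\ E_1, E_2, E_3, E_4
\]
of $H_2(M\#4\overline{\CP^2};\R)$, independent of $n$ by (a). It therefore suffices to show that the following pairings are independent of $n$:
\[
	\langle (\Phi_n^{-1})^*[\omega_n], (\Phi_n)_*[F_j^n]\rangle = \langle [\omega_n],[F_j^n]\rangle \quad (j=1,2), \qquad \langle (\Phi_n^{-1})^*[\omega_n], E_j\rangle = \langle[\omega_n],[\sigma_j^n]\rangle \quad (1\le j\le 4).
\]
The first is independent of $n$ by Proposition \ref{prop:GT-Xn}\ref{GT-Xn-areas}. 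The second equals $\pi$ for every $n$ by Proposition \ref{prop:GT-Xn}\ref{standard-alln}. Two real cohomology classes that agree on a basis of $H_2(\,\cdot\,;\R)$ coincide, because $H^2(\,\cdot\,;\R)\cong \mathrm{Hom}(H_2(\,\cdot\,;\R),\R)$ by universal coefficients. This proves (b).

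The argument is mostly bookkeeping, and the main point to check is that it uses a single, consistent choice of $\Phi_n$ throughout. Lemma \ref{lem:nu-choice} allows replacing $\Phi_n^i$ by $\Psi^i\circ\Phi_n^i$ to fix signs. We must make sure that the resulting $\Phi_n$ simultaneously matches the component classes $[F_1^n],[F_2^n]$ with those for $n=0$ and sends $[\sigma_j^n]$ to $E_j$. Since $\Psi^i$ acts by the identity on the $E_j$, this is consistent.

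We also need $\omega_n$ to be the specific form built in Proposition \ref{prop:GT-Xn}, rather than an arbitrary compatible form. The areas $\langle[\omega_n],[F_j^n]\rangle$ and $\langle[\omega_n],[\sigma_j^n]\rangle$ are only controlled for that construction, and the labeling of the components $F_j^n$ must be the one of Lemma \ref{lem:nu-choice}. Given these two points, no further computation is needed.
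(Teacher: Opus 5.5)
Your proposal is correct and follows the paper's proof. You take $\Phi_n$ from Lemma \ref{lem:nu-choice} to get (a), and you prove (b) by evaluating both classes on the common basis $(\Phi_0)_*[F_1^0], (\Phi_0)_*[F_2^0], E_1,\dots,E_4$ from Lemma \ref{lem:nu-choice}\ref{lem:homology-xn}, using the $n$-independent areas from Proposition \ref{prop:GT-Xn}. The one cosmetic difference is that the paper rederives $(\Phi_n)_*[F^n]=(\Phi_0)_*[F^0]$ from $[F^n]=[F_1^n]+[F_2^n]$ rather than quoting it directly.
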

\begin{proof}
For each $n \in \Z_{\geq 0}$, let $\Phi_n$ be the diffeomorphism found in Lemma \ref{lem:nu-choice}. Let $F_1^n \cup F_2^n$ and $F_3^n \cup F_4^n$ denote the two reducible singular fibers of $\pi_n: X_n \to S^2$ with $F_1^n$, $F_2^n$, $F_3^n$, and $F_4^n$ denoting the irreducible components as defined in Lemma \ref{lem:nu-choice}. By construction of $\Phi_n$,
\[
	(\Phi_n)_*([F_j^n]) = (\Phi_0)_*([F_j^0]), \qquad (\Phi_n)_*([\sigma_{j}^n]) = (\Phi_0)_*([\sigma_{j}^0]) = E_j
\]
for all $1 \leq j \leq 4$. It also follows that
\[
	(\Phi_n)_*([F^n]) = (\Phi_n)_*([F^n_1] + [F^n_2]) = (\Phi_0)_*([F^0_1] + [F^0_2]) = (\Phi_0)_*([F^0])
\]
as desired.

By Proposition \ref{prop:GT-Xn},
\begin{align*}
\langle (\Phi_n^{-1})^*([\omega_n]), (\Phi_n)_*([F_j^n]) \rangle = \langle [\omega_n], [F_j^n] \rangle &= \langle [\omega_0], [F_j^0] \rangle = \langle (\Phi_0^{-1})^*([\omega_0]), (\Phi_0)_*([F_j^0]) \rangle \\
\langle (\Phi_n^{-1})^*([\omega_n]), (\Phi_n)_*([\sigma_{j}^n]) \rangle = \langle [\omega_n], [\sigma_{j}^n] \rangle &= \langle [\omega_0], [\sigma_{j}^0] \rangle = \langle (\Phi_0^{-1})^*([\omega_0]), (\Phi_0)_*([\sigma_{j}^0]) \rangle
\end{align*}
for each $1 \leq j \leq 4$. By Lemma \ref{lem:nu-choice}\ref{lem:homology-xn}, the cohomology classes $(\Phi_n^{-1})^*([\omega_n])$ and $(\Phi_0^{-1})^*([\omega_0])$ agree on $H_2(M \# 4 \overline{\CP^2}; \R)$, and so
\[
	(\Phi_n^{-1})^*([\omega_n]) = (\Phi_0^{-1})^*([\omega_0]) \in H^2(M \# 4 \overline{\CP^2}; \R). \qedhere
\]
\end{proof}

\begin{proof}[{Proofs of Theorems \ref{thm:symplectic-LP} and \ref{thm:symplectic-LF}}]
By Proposition \ref{prop:GT-Xn}, the smooth loci of the irreducible components of the fibers and the sections $\sigma_1^n$, $\sigma_2^n$, $\sigma_3^n$, and $\sigma_4^n$ of $\pi_n: X_n \to S^2$ are all symplectic submanifolds of $(X_n, \omega_n)$. Let $\Phi_n: X_n \to M \# 4 \overline{\CP^2}$ be the diffeomorphisms found in Lemma \ref{lem:symplectic-form-cohomology} so that
\[
	(\Phi_n^{-1})^*([\omega_n]) = (\Phi_n^{-1})^*([\omega_0]) \in H^2(M \# 4 \overline{\CP^2}; \R)
\]
and
\[
	(\Phi_n)_*([F^n]) = (\Phi_0)_*([F^0])
\]
for all $n \in\Z_{\geq 0}$. Because each $\sigma_1^n$, $\sigma_2^n$, $\sigma_3^n$, and $\sigma_4^n$ are symplectic submanifolds of self-intersection $-1$, the form $\omega_n$ induces a symplectic form (which we also denote by $\omega_n$) on the blowdown $M_n$ of $\sigma_1^n$, $\sigma_2^n$, $\sigma_3^n$, and $\sigma_4^n$ in $X_n$.
Denote also by $\pi_n:M_n - B_n \to S^2$ the induced Lefschetz pencil. Note that since each $\omega_n$ is standard (see \Cref{prop:GT-Xn}\ref{standard-alln}) near each section $\sigma_j^n$,
we can ensure that the smooth loci of the irreducible components of the fibers of $\pi_n$ are symplectic submanifolds~\cite[Section 5]{mcduff-polterovich}.

Let $F_n\subset M_n$ denote a regular fiber of $\pi_n$. Because $\Phi_n$ sends $[\sigma_j^n]$ to $E_j$, it induces a diffeomorphism $\Psi_n: M_n \to M$ such that
\[
	(\Psi_n^{-1})^* ([\omega_n]) = (\Psi_0^{-1})^*([\omega_0]) \in H^2(M; \R)
\]
and
\[
	(\Psi_n)_*([F_n]) = (\Psi_n)_*([F_m]) \in H_2(M; \Z)
\]
for all $n \in \Z_{\geq 0}$. Work of Lalonde--McDuff \cite[Theorem 1.1]{lalonde-mcduff} shows that cohomologous symplectic forms on ruled surfaces are diffeomorphic, and so there is an equality of forms
\[
	(\Psi_n^{-1})^*\omega_n = (\Psi_0^{-1})^*\omega_0
\]
for all $n \in \Z_{\geq 0}$ after possibly post-composing $\Psi_n$ with a diffeomorphism of $M$. Any diffeomorphism of $M$ that fixes the cohomology class of a symplectic form acts by the identity on $H_2(M; \Z)$ (cf. \cite[Theorem 3]{li-li-minimal-genus}), and so the equality
\[
	(\Psi_n)_*([F_n]) = (\Psi_n)_*([F_m]) \in H_2(M; \R)
\]
still holds. Letting
\[
	\omega = (\Psi_0^{-1})^*\omega_0
\]
concludes the proof of Theorem \ref{thm:symplectic-LP}.

Recall by construction in Proposition \ref{prop:GT-Xn} that the sections $\sigma_j^n$ and $\sigma_j^0$ have equal area in $(X_n, \omega_n)$ and $(X_0, \omega_0)$ for all $n \in \Z_{\geq 0}$, i.e.
\[
	\langle [\omega_n], [\sigma_j^n] \rangle = [\omega_0], [\sigma_j^0] \rangle.
\]

Symplectic blowups of ruled surfaces are determined up to isotopy by the areas of the exceptional divisors by work of McDuff \cite[Corollary 1.3]{mcduff1996symplectic}. Therefore,

\[
	(\Phi_n^{-1})^*\omega_n = (\Phi_0^{-1})^*\omega_0
\]
for all $n \in \Z_{\geq 0}$ as symplectic forms, after possibly post-composing $\Phi_n$ with a diffeomorphism (isotopic to the identity) of $M \# 4 \overline{\CP^2}$. Finally, letting
\[
	\omega = (\Phi_0^{-1})^*\omega_0
\]
concludes the proof of the symplectic portion of Theorem \ref{thm:symplectic-LF}.
\end{proof}

Finally, Theorems \ref{thm:infty-pencils-ruled} and \ref{thm:infty-fibrations} follow as immediate corollaries.

\begin{proof}[Proof of Theorem \ref{thm:infty-pencils-ruled}]
If $X$ is a ruled surface with $\chi(X) < 0$, it is difeomorphic to either $M^1 = \Sigma_g \times S^2$ or $M^2 = \Sigma_g \tilde\times S^2$ for $g \geq 2$. Let $i = 1$ or $i = 2$ such that $X \cong M^i$ and fix the notation of Theorem \ref{thm:symplectic-LP}. For all $n \in \Z_{\geq 0}$, 
\[
	\pi_{n,i} \circ (\Psi_n^i)^{-1}: M^i - \Psi_n^i(B_n^i) \to S^2
\]
are all Lefschetz pencils of genus $2g$ with four base points that are compatible with $\omega$, whose regular fibers are all homologous in $H_2(M^i; \Z)$.
\end{proof}

\begin{proof}[Proof of Theorem \ref{thm:infty-fibrations}]
If $X$ is a ruled surface with $\chi(X) < 0$, it is difeomorphic to either $M^1 = \Sigma_g \times S^2$ or $M^2 = \Sigma_g \tilde\times S^2$ for $g \geq 2$, and $X \# 4 \overline{\CP^2}$ is diffeomorphic to $(\Sigma_g \times S^2) \# 4 \overline{\CP^2}$ in either case. Fix the notation of Theorem \ref{thm:symplectic-LF}. For all $n \in \Z_{\geq 0}$,
\[
	\pi_{n} \circ \Phi_n^{-1}: (\Sigma_g \times S^2) \# 4 \overline{\CP^2} \to S^2
\]
are all Lefschetz fibrations of genus $2g$ that are compatible with $\omega$, whose regular fibers are all homologous in $H_2((\Sigma_g \times S^2) \# 4\overline{\CP^2}; \Z)$.
\end{proof}


\section{Infinitely many homeomorphic Lefschetz fibrations}\label{sec:homeo}

Our construction of infinitely many pairwise inequivalent Lefschetz fibrations can apply to settings outside of ruled surfaces. In this section we demonstrate this flexibility through an example of infinitely many Lefschetz fibrations of every genus $g \geq 3$ that are pairwise inequivalent but are pairwise \emph{homeomorphic}.

Consider the isotopy classes $c_1, \dots, c_{2g} \subseteq \Sigma_g^1$ of curves shown in Figure \ref{fig:hyperelliptic-chain}. Let $\delta$ denote the boundary of $\Sigma_g^1$.
\begin{figure}
  \includegraphics[width=0.4\textwidth]{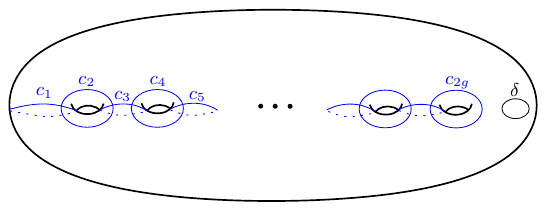}\qquad
  \includegraphics[width=0.4\textwidth]{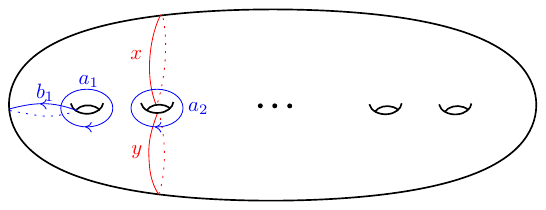}
  \caption{Left: A chain of curves in $\Sigma_g^1$; Right: Homologous curves $x,y \subseteq \Sigma_g$ and homology classes $a_1, a_2, b_1 \in H_1(\Sigma_g; \Z)$}.\label{fig:hyperelliptic-chain}
\end{figure}

\begin{lem}[{Chain Relation \cite[Proposition 4.12]{farb-margalit}}]\label{lem:chain}
For any $g \geq 2$,
\[
	(T_{c_{1}} \dots T_{c_{2g}})^{4g+2} = T_\delta \in \Mod(\Sigma_g^1).
\]
\end{lem}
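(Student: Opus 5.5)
This is the classical chain relation, cited from Farb--Margalit, so I will reconstruct the standard argument rather than invent a new one. I work in $\Mod(\Sigma_g^1)$ with the chain $c_1,\dots,c_{2g}$, where consecutive curves meet once and non-consecutive curves are disjoint. A regular neighborhood of $c_1\cup\dots\cup c_{2g}$ is a genus-$g$ surface with one boundary component, isotopic to $\delta$, so it may be identified with $\Sigma_g^1$ itself. The approach is to realize the product $T_{c_1}\cdots T_{c_{2g}}$ concretely via the braid-group/Birman--Hilden picture and identify its appropriate power as a boundary twist.

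\textbf{Step 1: the square of the half-twist.} Set $\Delta := (T_{c_1})(T_{c_2}T_{c_1})\cdots(T_{c_{2g}}\cdots T_{c_1})$. This element satisfies $\Delta T_{c_i}\Delta^{-1}=T_{c_{2g+1-i}}$, and $\Delta^2$ is central in $\langle T_{c_1},\dots,T_{c_{2g}}\rangle$. Using only the braid relations (disjoint twists commute, and $T_aT_bT_a=T_bT_aT_b$ for curves meeting once), I would establish the standard braid-group identities:
\[
\Delta^2=(T_{c_1}\cdots T_{c_{2g}})^{2g+1}, \qquad (T_{c_1}\cdots T_{c_{2g}})^{2g+1} = (T_{c_1}^2T_{c_2}\cdots T_{c_{2g}})^{2g}.
\]
These are the images of the corresponding identities in the Artin braid group $B_{2g+1}$, where $\Delta^2$ is the full twist and equals both $(\sigma_1\cdots\sigma_{2g})^{2g+1}$ and $(\sigma_1^2\sigma_2\cdots\sigma_{2g})^{2g}$. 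Since the map $B_{2g+1}\to\Mod(\Sigma_g^1)$ sending $\sigma_i\mapsto T_{c_i}$ is a homomorphism (only braid relations are needed), these identities transfer directly.

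\textbf{Step 2: identify $\Delta^2$ topologically.} The surface $\Sigma_g^1$ is the double branched cover of a disk branched over $2g+1$ points, and each $T_{c_i}$ lifts the half-twist $\sigma_i$. The full twist $\Delta^2\in B_{2g+1}$ is the Dehn twist about the boundary of the disk. Its lift to $\Sigma_g^1$ is not $T_\delta$ but a ``half twist'' about $\delta$: it is the hyperelliptic involution composed with a half rotation near the boundary. The cleanest way to handle this is via the Alexander method. I would check that $\Delta^2$ fixes every $c_i$ (it is central), so $\Delta^2$ restricted to the complement of $\delta$ acts on the chain as the hyperelliptic involution, which reverses each $c_i$. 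Consequently $\Delta^4$ preserves each $c_i$ with orientation. Then $\Delta^4$ agrees with the identity on $\Sigma_g$ once the boundary is capped, so $\Delta^4=T_\delta^k$ for some integer $k$.

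\textbf{Step 3: determine $k$.} This is the main obstacle. I would pin down $k$ using the lifted-braid picture: the full twist $\Delta^2$ on the disk rotates the boundary by $2\pi$, and its lift to the double cover rotates the boundary circle of $\Sigma_g^1$ by $\pi$. Hence $\Delta^4$ rotates the boundary by $2\pi$, giving $k=1$.

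An alternative check is to compute rotation or twisting numbers on an arc from $\delta$ crossing $c_{2g}$, or to compare the images under the capping map $\Mod(\Sigma_g^1)\to\Mod(\Sigma_{g,1})$ together with the Euler class.

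Combining the steps, $(T_{c_1}\cdots T_{c_{2g}})^{4g+2}=\Delta^4=T_\delta$, which is the chain relation.
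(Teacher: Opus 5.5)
The paper gives no proof of this lemma; it only cites Farb--Margalit. So the comparison is with the standard argument, and your strategy is that argument: view $T_{c_1}\cdots T_{c_{2g}}$ as the image of $\sigma_1\cdots\sigma_{2g}$ under the boundary-fixing lifting homomorphism $L\colon B_{2g+1}\to\Mod(\Sigma_g^1)$ for the double cover of the disk $D$ branched at $2g+1$ points, so that $(T_{c_1}\cdots T_{c_{2g}})^{2g+1}=L(T_{\partial D})$.

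There is, however, a false inference in Step 2. Knowing that $\Delta^4$ becomes trivial in $\Mod(\Sigma_g)$ after capping does not give $\Delta^4\in\langle T_\delta\rangle$. The kernel of $\Mod(\Sigma_g^1)\to\Mod(\Sigma_g)$ is $\pi_1(UT\Sigma_g)$, which contains all point-pushing maps. Only capping with a \emph{marked} disk, $\Mod(\Sigma_g^1)\to\Mod(\Sigma_{g,1})$, has kernel $\langle T_\delta\rangle$. This is repairable. The chain also fills $\Sigma_{g,1}$, since its complement is a once-marked disk. Moreover, $\Delta^4$ fixes each $c_i$ with orientation, and this needs no hyperelliptic involution: centrality gives $\Delta^2(c_i)=c_i$ as unoriented classes, and squaring removes any reversal. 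The Alexander method in $\Sigma_{g,1}$ then yields $\Delta^4=T_\delta^k$.

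Step 3 is where the real content lies, and as phrased it is only heuristic. Every map involved fixes $\delta$ pointwise, so ``rotates the boundary'' has to be made precise as a statement about twisting in a collar. Because $2g+1$ is odd, the preimage of a collar $A=S^1\times[0,1]$ of $\partial D$ is a single annulus $\tilde A$ double covering $A$. The mapping class $L(T_{\partial D})$ is represented by $(\phi,t)\mapsto(\phi+\pi\rho(t),t)$ on $\tilde A$, which is the identity at $\delta$, and by the deck involution $\iota$ on the rest of $\Sigma_g^1$. Its square is the identity off $\tilde A$ and a full twist on $\tilde A$, i.e.\ $T_\delta$. The sign is $k=+1$ because the covering is orientation preserving and $L$ sends the positive half-twists $\sigma_i$ to the right-handed $T_{c_i}$.

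Once this is written out, you get $(T_{c_1}\cdots T_{c_{2g}})^{4g+2}=L(T_{\partial D}^2)=T_\delta$ directly. This uses only that isotopies rel marked points and boundary lift, so that $L$ is a well-defined homomorphism. Step 2 then becomes unnecessary, as does the correct but unused identity involving $(T_{c_1}^2T_{c_2}\cdots T_{c_{2g}})^{2g}$.
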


Let $f = T_x T_y^{-1} \in \Mod(\Sigma_g)$ be a bounding pair of genus $1$, where $x, y \subseteq \Sigma_g$ are as shown in Figure \ref{fig:hyperelliptic-chain}. Conjugating the factorization given by Lemma \ref{lem:chain} by $f^n$ for any $n \in \Z_{\geq 0}$, shows that
\[
	(T_{f^n(c_{1})} \dots T_{f^n(c_{2g})})^{4g+2} = T_\delta \in \Mod(\Sigma_g^1).
\]
By combining the above with the factorization of Lemma \ref{lem:chain}, we define the genus-$g$ Lefschetz fibrations $\pi_n: Z_n \to S^2$.
\begin{defn}
Let $g \geq 3$. For any $n \in \Z_{\geq 0}$, let $\pi_n: Z_n \to S^2$ denote the Lefschetz fibration of genus $g$ with monodromy factorization
\[
	(T_{c_{1}} \dots T_{c_{2g}})^{2(4g+2)}  (T_{f^n(c_{1})} \dots T_{f^n(c_{2g})})^{4g+2} = 1 \in \Mod(\Sigma_g).
\]
\end{defn}

Note that $\pi_n: Z_n \to S^2$ has a section of self-intersection $-3$ for all $n \geq 0$ determined by the following lift of the monodromy factorization of $\pi_n$ to $\Mod(\Sigma_g^1)$ given by
\begin{equation}\label{eqn:monodromy-Zn-section}
	(T_{c_{1}} \dots T_{c_{2g}})^{2(4g+2)}  (T_{f^n(c_{1})} \dots T_{f^n(c_{2g})})^{4g+2} = T_\delta^3 \in \Mod(\Sigma_g^1).
\end{equation}

The following theorem is the main result of this section.
\begin{thm}\label{thm:homeo-not-iso}
For every $g \geq 3$, there exist genus-$g$ Lefschetz fibrations $\pi_n:Z_n \to S^2$ for every $n \in \Z_{\geq 0}$ such that that $\pi_n$ and $\pi_m$ are inequivalent if $n \neq m$ and such that $Z_n$ is homeomorphic to
\[
	(6g^2 - 2g + 1) \CP^2 \# (18g^2 + 10g + 1)\overline{\CP^2}
\]
for all $n \in \Z_{\geq 0}$.
\end{thm}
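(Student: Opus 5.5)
The plan is to prove inequivalence exactly as in Section~\ref{sec:johnson}, and to prove the homeomorphism statement from the classical invariants together with Freedman's classification.

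\medskip\noindent\textbf{Inequivalence.} Let $G_n := \langle T_{c_1}, \dots, T_{c_{2g}}, T_{f^n(c_1)}, \dots, T_{f^n(c_{2g})}\rangle$, $G_n^{\mathcal I} := G_n \cap \mathcal I_g$, and $A_n := \langle [T_{c_i}^{-1}, f^n] : 1 \le i \le 2g\rangle \le G_n^{\mathcal I}$. The proof of Lemma~\ref{lem:GnI-generators} uses only $G_n = \langle G_0, A_n\rangle$, so it applies verbatim here. The group $G_0$ is generated by twists about a chain, hence lies in $\SMod(\Sigma_g)$, so Corollary~\ref{cor:hyperelliptic-kernel} gives $\tau(G_0^{\mathcal I}) = 0$. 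The argument of Lemma~\ref{lem:GnI-divisibility} then gives $\tau(G_n^{\mathcal I}) \le n(\wedge^3 H)/H$.

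Next take $v := \tau([T_{c}^{-1}, f])$ for a chain curve $c$ meeting the support of the bounding pair $x, y$. As in Lemma~\ref{lem:primitive-img}, $nv \in \tau(G_n^{\mathcal I})$ for every $n$. We have $\tau(f) = a_1 \wedge b_1 \wedge a_2$ by \cite[p.~195--196]{farb-margalit}, so $v = T_c^{-1}\cdot(a_1\wedge b_1\wedge a_2) - a_1\wedge b_1\wedge a_2$. Choosing $c$ with $\hat i(c, a_2) = \pm 1$ and $c$ disjoint from $a_1, b_1$ gives $v = \pm a_1 \wedge b_1 \wedge [c]$. One then completes $a_1, b_1, \dots$ to a symplectic basis and applies Lemma~\ref{lem:nonzero-wedge} to see that $v$ is primitive. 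This is where $g \ge 3$ is needed: for small genus, wedges of the form $\alpha_1\wedge\beta_1\wedge\gamma$ with $\gamma$ in the last handle are among the excluded basis elements, so the term can die in the quotient. With primitivity in hand, the proof of Proposition~\ref{prop:non-conj} shows that $G_n$ and $G_m$ are non-conjugate for $n \neq m$, and Corollary~\ref{cor:noniso-criterion} finishes.

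\medskip\noindent\textbf{Homeomorphism type.}
\begin{itemize}
\item \emph{Euler characteristic.} There are $3\cdot 2g(4g+2)$ vanishing cycles, so $\chi(Z_n) = 4 - 4g + 6g(4g+2) = 24g^2 + 8g + 4$.
\item \emph{Signature.} $Z_n$ is obtained by gluing the Lefschetz fibration over $D^2$ with monodromy $(T_{c_1}\cdots T_{c_{2g}})^{2(4g+2)}$ to the fibration over $D^2$ with monodromy $(T_{f^n(c_1)}\cdots T_{f^n(c_{2g})})^{4g+2}$. The second piece is diffeomorphic, via $f^n$, to the one with unconjugated monodromy. Novikov additivity then gives $\sigma(Z_n) = \sigma(Z_0)$. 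Since $Z_0$ is hyperelliptic with only nonseparating vanishing cycles, Endo's formula gives $\sigma(Z_0) = -\frac{g+1}{2g+1}\cdot 6g(4g+2) = -12g^2 - 12g$.
\item \emph{Fundamental group.} The factorization \eqref{eqn:monodromy-Zn-section} shows that $\pi_n$ has a section. Hence $\pi_1(Z_n)$ is $\pi_1(\Sigma_g)$ modulo the normal closure of the vanishing cycles. The chain $c_1, \dots, c_{2g}$ alone kills $\pi_1(\Sigma_g)$, so $Z_n$ is simply connected.
\item \emph{Rank and parity.} From $\chi$ and $\sigma$ we get $b_2 = 24g^2 + 8g + 2$, $b_2^+ = 6g^2 - 2g + 1$ and $b_2^- = 18g^2 + 10g + 1$. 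The section of self-intersection $-3$ makes the intersection form odd, and it is indefinite.
\end{itemize}
Freedman's classification of simply connected closed topological $4$-manifolds (a smooth manifold has vanishing Kirby--Siebenmann invariant) then yields a homeomorphism to $(6g^2-2g+1)\CP^2 \# (18g^2+10g+1)\overline{\CP^2}$.

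\medskip\noindent\textbf{Main obstacle.} The delicate step is the explicit Johnson-homomorphism computation: choosing $c$ among the chain curves and orienting the homology classes so that $v$ is visibly one of the surviving basis vectors of Lemma~\ref{lem:nonzero-wedge}, or a primitive combination of them, rather than lying in the image of $H$. If a single chain curve gives a messier expression, I would instead try the sum of contributions from two adjacent chain curves, choosing a symplectic basis adapted to them.
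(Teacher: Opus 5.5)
Your proposal is correct and follows the paper's proof essentially step for step. For inequivalence, you use the commutator $[T_{c}^{-1}, f^n]$ with $c = c_4$ (the only chain curve meeting $x \cup y$), hyperellipticity of $\hat G_0$ to kill $\tau(\hat G_0^{\mathcal I})$, and primitivity of $w$ from Lemma~\ref{lem:nonzero-wedge}, which is exactly where $g \geq 3$ enters. For the homeomorphism type, you use the vanishing-cycle count, Endo's formula plus Novikov additivity, simple connectivity from the section and the chain, oddness from the $(-3)$-section, and Freedman, just as the paper does.

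The differences are cosmetic. In the paper's labeling $\tau(f) = a_1 \wedge b_1 \wedge b_2$. The paper also bounds $\tau(\hat G_n^{\mathcal I})$ by the span of the $\Mod(\Sigma_g)$-orbit of $nw$, but your coarser bound $n(\wedge^3 H)/H$, carried over from Lemma~\ref{lem:GnI-divisibility}, already suffices.
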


Similarly as in Section \ref{sec:johnson}, consider the following groups for any $n \in \Z_{\geq 0}$:
\begin{align*}
\hat G_n &:= \langle T_{c_i}, \, T_{f^n(c_i)} : 1 \leq i \leq 2g \rangle, \\
\hat G_n^{\mathcal I} &:= G_n \cap \mathcal I_g, \\
\hat A_n &:= \langle [T_{c_i}^{-1}, f^n] : 1 \leq i \leq 2g\rangle.
\end{align*}
We use the hat notation for groups in this section to distinguish them from the groups of Section \ref{sec:johnson}.

As in Section \ref{sec:johnson}, we first study the image $\tau(G_n^{\mathcal I})$ of $G_n^{\mathcal I}$ under the Johnson homomorphism. Let $a_1, a_2,b_1$ be as in \Cref{fig:hyperelliptic-chain} and define 
\[
	w := a_1 \wedge a_2 \wedge b_1 \in \left(\wedge^3 H\right)/H
\]
where $H := H_1(\Sigma_g; \Z)$.

\begin{lem}[{cf. Lemmas \ref{lem:GnI-generators}, \ref{lem:GnI-divisibility}, and \ref{lem:primitive-img}}]\label{lem:homeo-johnson}
For any $n \in \Z_{\geq 0}$, the image $\tau(\hat G_n^{\mathcal I})$ is contained in
\[
	\langle \Mod(\Sigma_g) \cdot (nw) \rangle \leq n\left(\wedge^3 H\right)/H,
\]
the group generated by the $\Mod(\Sigma_g)$-orbit of $nw \in \left(\wedge^3 H\right)/H$. Moreover, the class $nw$ is nonzero and is contained in $\tau(\hat G_n^{\mathcal I})$.
\end{lem}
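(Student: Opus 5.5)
We follow the template of Lemmas \ref{lem:GnI-generators}, \ref{lem:GnI-divisibility}, and \ref{lem:primitive-img}, with $\hat G_0$ in the role of $G_0$. First, the argument of Lemma \ref{lem:GnI-generators} applies verbatim. Each $T_{c_i}^{-1}T_{f^n(c_i)} = [T_{c_i}^{-1}, f^n]$ lies in $\hat G_n \cap \mathcal I_g$, because $f = T_xT_y^{-1}$ is a bounding pair map and hence lies in $\mathcal I_g$. Also $\hat G_n = \langle \hat G_0, \hat A_n\rangle$. Therefore
\[
	\hat G_n^{\mathcal I} = \langle \hat G_0^{\mathcal I},\, k \hat A_n k^{-1} : k \in \hat G_0\rangle .
\]
The curves $c_1, \dots, c_{2g}$ form a chain, so each of them is preserved by the hyperelliptic involution $\iota$. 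Hence $\hat G_0 \leq \SMod(\Sigma_g)$, and Corollary \ref{cor:hyperelliptic-kernel} gives $\tau(\hat G_0^{\mathcal I}) = 0$. By naturality (\ref{eqn:naturality}), $\tau(\hat G_n^{\mathcal I})$ is then generated by the classes $k\cdot \tau([T_{c_i}^{-1}, f^n])$ with $k \in \hat G_0$.

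Next we compute these generators. As in Lemma \ref{lem:primitive-img},
\[
	\tau([T_{c_i}^{-1}, f^n]) = n\bigl(T_{c_i}^{-1}\cdot\tau(f) - \tau(f)\bigr).
\]
By \cite[pp.\ 195--196]{farb-margalit}, $\tau(f) = \tau(T_xT_y^{-1})$ is $a_1\wedge b_1 \wedge c$, where $c$ is the homology class of $x$ (equivalently of $y$), possibly up to sign. From Figure \ref{fig:hyperelliptic-chain} we expect $c = a_2$, so $\tau(f) = \pm w$. The action of $T_{c_i}^{-1}$ on $H$ is a transvection $v \mapsto v - \hat i(v, c_i)\, c_i$. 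Thus $T_{c_i}^{-1}\cdot w - w$ is a sum of at most three terms, each a wedge of two of $a_1, a_2, b_1$ with $\pm c_i$.

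Every such generator lies in $\langle \Mod(\Sigma_g)\cdot(nw)\rangle$: it equals $n\, (T_{c_i}^{-1}\cdot w) - n w$, a difference of two elements of the orbit. Conjugating by $k \in \hat G_0$ keeps us inside this subgroup, since the subgroup is $\Mod(\Sigma_g)$-invariant. This proves the first inclusion, and the inclusion $\langle \Mod(\Sigma_g)\cdot(nw)\rangle \leq n(\wedge^3 H)/H$ is immediate.

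For the final claim we need a specific element. We pick a chain curve $c_i$ meeting the support of $f$ so that $T_{c_i}^{-1}\cdot w - w$ equals exactly $\pm w$ up to an element of the $\Mod(\Sigma_g)$-orbit, or simply equals $\pm w$. For example, a $c_i$ whose class is $\pm b_2$ (or something meeting $a_2$ once and disjoint from $a_1, b_1$) sends $a_2 \mapsto a_2 \pm b_2$. This gives $\tau([T_{c_i}^{-1}, f^n]) = \pm n\, a_1\wedge b_1\wedge b_2$. That is not literally $nw$, so we then compose with a further element: either the difference of two such commutators, or a conjugate by an element $k \in \hat G_0$ whose symplectic action carries $a_1\wedge b_1 \wedge b_2$ to $a_1 \wedge b_1\wedge a_2$. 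A suitable product of $T_{c_3}, T_{c_4}$ acts on $\langle a_2, b_2\rangle$ by $\mathrm{SL}_2(\Z)$ and can swap these classes up to sign, and naturality then gives $\pm nw \in \tau(\hat G_n^{\mathcal I})$.

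Nonvanishing of $nw$ for $n \geq 1$ follows from Lemma \ref{lem:nonzero-wedge}. Since $g \geq 3$, the triple $a_1\wedge b_1\wedge a_2$ is a basis element of the free abelian group $(\wedge^3 H)/H$ after reordering the symplectic basis so that the omitted triples involve $\gamma_{2g-1},\gamma_{2g}$. So $w$ is primitive and $nw \neq 0$. (For $n=0$ the statement $nw\in\tau$ is trivial.)

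The main obstacle is the bookkeeping with Figure \ref{fig:hyperelliptic-chain}: identifying exactly which chain curves intersect $a_1, a_2, b_1$, and finding an explicit element of $\hat G_n^{\mathcal I}$ whose Johnson image is exactly $nw$ rather than another orbit element. I would first try a single commutator $[T_{c_i}^{-1}, f^n]$ with $c_i$ meeting only one of the three classes. If that fails, I would conjugate by a product of chain twists.
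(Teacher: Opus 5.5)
Your argument for the inclusion is essentially the paper's: generators from Lemma~\ref{lem:GnI-generators}, $\tau(\hat G_0^{\mathcal I})=0$ by hyperellipticity, and naturality. Your nonvanishing argument is also fine, and you are right that "$nw\neq 0$" needs $n\geq 1$.

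One input is wrong, though this does no harm to the inclusion. The curves $x,y$ are homologous to $b_2$, not $a_2$, and the paper computes $\tau(f)=a_1\wedge b_1\wedge b_2$, so $\tau(f)\neq \pm w$. The inclusion still holds for three reasons:
\begin{itemize}
\item $a_1\wedge b_1\wedge b_2$ is the image of $-w$ under the symplectic automorphism $a_2\mapsto b_2$, $b_2\mapsto -a_2$ (fixing the rest of the basis);
\item $\Mod(\Sigma_g)\to \Sp(2g,\Z)$ is onto;
\item $\langle \Mod(\Sigma_g)\cdot (nw)\rangle$ is a subgroup.
\end{itemize}
Hence each $n\bigl(T_{c_i}^{-1}\cdot \tau(f)-\tau(f)\bigr)$ still lies in it.

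The genuine gap is the claim $nw\in\tau(\hat G_n^{\mathcal I})$. You never exhibit an element with this Johnson image. The mechanisms you sketch rest on the wrong guess for $\tau(f)$ and on incorrect facts about the chain:
\begin{itemize}
\item no chain curve has class $\pm b_2$;
\item $\langle T_{c_3},T_{c_4}\rangle$ does not act on $\langle a_2,b_2\rangle$ alone, because $[c_3]=\pm b_1\pm b_2$ pairs nontrivially with $a_1$, so $T_{c_3}$ also moves $a_1$.
\end{itemize}

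The missing observation is the paper's. Among the chain curves, only $c_4$ meets $x\cup y$. So $[T_{c_i}^{-1},f^n]=1$ for $i\neq 4$, and $\hat A_n$ is cyclic. Moreover $[c_4]=\pm a_2$ pairs nontrivially with $b_2$ but trivially with $a_1$ and $b_1$. Therefore
\[
\tau([T_{c_4}^{-1},f^n]) = n\bigl(a_1\wedge b_1\wedge T_{c_4}^{-1}(b_2) - a_1\wedge b_1\wedge b_2\bigr) = \pm n\, a_1\wedge b_1\wedge a_2 = \pm nw .
\]
This is exactly your ``first try'' (a single commutator with a curve meeting only one wedge factor), carried out with the correct $\tau(f)$. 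The sign is irrelevant because the image is a subgroup. Cyclicity of $\hat A_n$ also gives $\tau(\hat G_n^{\mathcal I})=\langle \hat G_0\cdot \tau([T_{c_4}^{-1},f^n])\rangle$ directly, so both halves of the lemma come from this one computation.
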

\begin{proof}
For any $i \neq 4$, note that $[T_{c_i}^{-1}, f^n] = 1$ because $c_i$ is disjoint from $x$ and $y$. Therefore,
\[
	\hat A_n = \langle [T_{c_4}^{-1}, f^n] \rangle.
\]
By \cite[p. 195-196]{farb-margalit},
\[
	\tau(f) = \tau(T_x T_y^{-1}) = a_1 \wedge b_1 \wedge b_2,
\]
and compute using naturality (\ref{eqn:naturality}) of $\tau$ that
\[
	\tau([T_{c_4}^{-1}, f^n]) = n(T_{c_4}^{-1} \cdot \tau(f) - \tau(f)) = n ( a_1 \wedge b_1 \wedge T_{c_4}^{-1}(b_2) - a_1 \wedge b_1 \wedge b_2) = n(a_1 \wedge a_2 \wedge b_1) = nw.
\]
Because $[T_{c_4}^{-1}, f^n]$ is contained in $\hat G_n^{\mathcal I}$, the class $nw$ is contained in $\tau(\hat G_n^{\mathcal I})$. The fact that $nw \neq 0$ follows from the existence of a $\Z$-basis of the free abelian group $\left( \wedge^3 H_1(\Sigma_g) \right)/H_1(\Sigma_g)$ containing $w$, shown in Lemma \ref{lem:nonzero-wedge}.

On the other hand, the proof of Lemma \ref{lem:GnI-generators} shows that there is an equality of subgroups of $\mathcal I_g$
\[
	\hat G_n^{\mathcal I} = \langle \hat G_0^{\mathcal I}, k [T_{c_4}^{-1}, f^n] k^{-1} : k \in \hat G_0 \rangle.
\]
Each generator of $\hat G_0$ is hyperelliptic, and hence $\hat G_0^{\mathcal I}$ is contained in the hyperelliptic mapping class group $\SMod(\Sigma_g)$. By Corollary \ref{cor:hyperelliptic-kernel}, $\hat G_0^{\mathcal I}$ is contained in $\ker(\tau)$, and by naturality (\ref{eqn:naturality}) of $\tau$,
\[
	\tau(\hat G_n^{\mathcal I}) = \langle \hat G_0 \cdot \tau([T_{c_4}^{-1}, f^n]) \rangle \leq \langle \Mod(\Sigma_g) \cdot (nw) \rangle. \qedhere
\]
\end{proof}

The following non-conjugacy result forms the main part of the proof of Proposition \ref{prop:non-iso-Zn}.
\begin{prop}[{cf. Proposition \ref{prop:non-conj}}]
If $n \neq m \in \Z_{\geq 0}$ then $\hat G_n$ and $\hat G_m$ are not conjugate as subgroups of $\Mod(\Sigma_{g})$.
\end{prop}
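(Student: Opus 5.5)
We will mimic the proof of Proposition \ref{prop:non-conj}, using Lemma \ref{lem:homeo-johnson} in place of Lemmas \ref{lem:GnI-divisibility} and \ref{lem:primitive-img}. Suppose $k \in \Mod(\Sigma_g)$ satisfies $k \hat G_n k^{-1} = \hat G_m$. Since $\mathcal I_g$ is normal in $\Mod(\Sigma_g)$, intersecting with $\mathcal I_g$ gives $k \hat G_n^{\mathcal I} k^{-1} = \hat G_m^{\mathcal I}$. Naturality (\ref{eqn:naturality}) of $\tau$ then yields
\[
	\tau(\hat G_m^{\mathcal I}) = k \cdot \tau(\hat G_n^{\mathcal I}).
\]
By Lemma \ref{lem:homeo-johnson}, the right-hand side is contained in $k \cdot \langle \Mod(\Sigma_g)\cdot (nw)\rangle = \langle \Mod(\Sigma_g)\cdot(nw)\rangle \leq n\left(\wedge^3 H\right)/H$. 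The subgroup $n(\wedge^3 H)/H$ is preserved by $k$, so this containment is immediate.

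The same lemma gives $mw \in \tau(\hat G_m^{\mathcal I})$, so $mw \in n\left(\wedge^3 H\right)/H$. By Lemma \ref{lem:nonzero-wedge}, $w = a_1\wedge a_2\wedge b_1$ is part of a $\Z$-basis of the free abelian group $\left(\wedge^3 H\right)/H$, after choosing a symplectic basis whose first elements are $a_1,b_1,a_2$ with the indexing arranged so that the triple is not one of the excluded ones; this uses $g\ge 3$. Hence $w$ is primitive. Writing $mw = nu$ and expanding $u$ in that basis, the $w$-coefficient shows $n \mid m$.

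The case $n = 0$ needs separate treatment. There $\tau(\hat G_0^{\mathcal I}) = 0$, while for $m \neq 0$ we have $mw \ne 0$ in $\tau(\hat G_m^{\mathcal I})$. So $\hat G_0$ is not conjugate to $\hat G_m$, which is exactly what the divisibility statement "$0 \mid m$ forces $m=0$" says. Exchanging the roles of $n$ and $m$ gives $m \mid n$, and since $n,m\ge 0$ we conclude $n = m$.

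The only step needing care is primitivity of $w$. One must check that $a_1, b_1, a_2$ extend to a symplectic basis $\alpha_1=a_1,\beta_1=b_1,\alpha_2=a_2,\dots$, which follows from the intersection pattern in Figure \ref{fig:hyperelliptic-chain}. One must also check that the index triple $(1,2,3)$ is not among the excluded triples $(i,2g-1,2g)$, $(1,2,2g-1)$, $(1,2,2g)$ of Lemma \ref{lem:nonzero-wedge}, which holds since $2g-1 \ge 5 > 3$ when $g \geq 3$. This is essentially the nonvanishing claim already asserted in Lemma \ref{lem:homeo-johnson}, strengthened from "nonzero" to "primitive", so I expect no real obstacle.
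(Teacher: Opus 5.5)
Your proposal is correct and follows the paper's proof essentially verbatim. You pass from conjugate $\hat G_n, \hat G_m$ to conjugate Torelli parts, use naturality of $\tau$ and Lemma \ref{lem:homeo-johnson} to get $mw \in n(\wedge^3 H)/H$, and deduce $n \mid m$ (and symmetrically $m \mid n$) from primitivity of $w$. Your explicit check that the triple $(1,2,3)$ avoids the excluded triples of Lemma \ref{lem:nonzero-wedge} exactly when $g \geq 3$ is a worthwhile detail the paper leaves implicit; for $g=2$ one indeed gets $w = 0$ in $(\wedge^3 H)/H$. Your separate treatment of $n=0$ is harmless but unnecessary, since the coefficient argument already covers it.
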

\begin{proof}
Suppose that there exists $k \in \Mod(\Sigma_g)$ so that $k \hat G_n k^{-1} = \hat G_m$ as subgroups of $\Mod(\Sigma_g)$. Because $\mathcal I_g$ is normal in $\Mod(\Sigma_g)$, this implies that $k \hat G_n^{\mathcal I} k^{-1} = \hat G_m^{\mathcal I}$ as subgroups of $\mathcal I_g$. By naturality (\ref{eqn:naturality}) of $\tau$ and Lemma \ref{lem:homeo-johnson},
\[
\tau(\hat G_m^{\mathcal I}) = k \cdot \tau(\hat G_n^{\mathcal I}) \leq \langle\Mod(\Sigma_g) \cdot (nw) \rangle
\]
By Lemma \ref{lem:homeo-johnson}, the class $mw$ is nonzero and is contained in $\tau(\hat G_m^{\mathcal I})$. Since $w$ is primitive (\Cref{lem:nonzero-wedge}), this implies that $n$ divides $m$. By symmetry, $m$ also divides $n$, i.e. $n = m$.
\end{proof}

Rephrasing Proposition \ref{prop:non-conj} in terms of the monodromy of $\pi_n: Z_n \to S^2$ yields the following proposition.
\begin{prop}\label{prop:non-iso-Zn}
For any $n \neq m \in \Z_{\geq 0}$, the Lefschetz fibrations $\pi_n: Z_n \to S^2$ and $\pi_m: Z_m \to S^2$ are inequivalent.
\end{prop}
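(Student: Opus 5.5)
The plan is to mirror the proof of Theorem \ref{thm:non-isomorphic}. By construction, $\hat G_n$ is the image of the monodromy representation $\rho_n$ of $\pi_n: Z_n \to S^2$. This holds because the monodromy factorization
\[
	(T_{c_{1}} \dots T_{c_{2g}})^{2(4g+2)}  (T_{f^n(c_{1})} \dots T_{f^n(c_{2g})})^{4g+2} = 1
\]
uses exactly the Dehn twists $T_{c_i}$ and $T_{f^n(c_i)}$ for $1 \leq i \leq 2g$. The monodromy image is generated by the Dehn twists about the vanishing cycles, since the chosen loops $\gamma_1, \dots, \gamma_r$ generate $\pi_1(S^2 - \{q_1, \dots, q_r\}, b)$. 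Taking inverses does not change the image, so the fact that $\rho_n$ is an antihomomorphism causes no trouble.

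The key input is the preceding proposition: for $n \neq m$, the subgroups $\hat G_n$ and $\hat G_m$ are not conjugate in $\Mod(\Sigma_g)$. Combined with the identification $\mathrm{im}(\rho_n) = \hat G_n$, Corollary \ref{cor:noniso-criterion} applies directly, because $g \geq 3 \geq 2$. It shows that $\pi_n$ and $\pi_m$ are inequivalent. Note also that the factorization above is a genuine positive factorization of the identity in $\Mod(\Sigma_g)$: it follows from the chain relation (Lemma \ref{lem:chain}) applied to both the original chain and its $f^n$-conjugate, after capping the boundary. So each $\pi_n$ is well defined, and the criterion is being applied to honest Lefschetz fibrations.

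The only step needing care, and essentially the only possible obstacle, is already handled by the preceding proposition. That argument uses Lemma \ref{lem:homeo-johnson} together with the primitivity of $w$. The one subtlety is the case $n = 0$ or $m = 0$ in the divisibility argument. If $n = 0$, then $\tau(\hat G_m^{\mathcal I})$ would lie in $\langle \Mod(\Sigma_g) \cdot 0 \rangle = 0$, which contradicts $mw \neq 0$ for $m \geq 1$. So "$n$ divides $m$" should be read in this sense, and it still forces $n = m$. With this checked, the proof is a two-line citation.
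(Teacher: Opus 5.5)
Your proposal is correct and follows the paper's proof: you identify $\hat G_n$ with the monodromy image $\mathrm{im}(\rho_n)$, invoke the preceding non-conjugacy proposition, and conclude with Corollary~\ref{cor:noniso-criterion}. Your remark about the case $n=0$ or $m=0$ is a worthwhile clarification, because the claim in Lemma~\ref{lem:homeo-johnson} that $nw \neq 0$ only holds for $n \geq 1$; the divisibility argument still goes through exactly as you describe.
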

\begin{proof}
If $n \neq m \in \Z_{\geq 0}$ then $\hat G_n$ and $\hat G_m$ are not conjugate as subgroups of $\Mod(\Sigma_g)$. By construction, $\hat G_n$ and $\hat G_m$ are the images $\mathrm{im}(\rho_n)$ and $\mathrm{im}(\rho_m)$ of the monodromy representations $\rho_n$ and $\rho_m$ of $\pi_n: Z_n \to S^2$ and $\pi_m: Z_n\to S^2$ respectively. By Corollary \ref{cor:noniso-criterion}, $\pi_n$ and $\pi_m$ are inequivalent Lefschetz fibrations.
\end{proof}

It remains to show that the manifolds $Z_n$ are pairwise homeomorphic. To do so, we compute the algebraic topology invariants of $Z_n$.
\begin{lem}\label{lem:Zn-algtop}
For any $n \in \Z_{\geq 0}$, the manifold $Z_n$ is simply-connected and
\[
	b_2^+(Z_n) = 6g^2 - 2g + 1, \qquad b_2^-(Z_n) = 18g^2 + 10g + 1.
\]
\end{lem}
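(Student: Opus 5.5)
Simple connectivity comes first. Equation (\ref{eqn:monodromy-Zn-section}) gives a section of $\pi_n$, so $\pi_1(Z_n)$ is $\pi_1(\Sigma_g)$ modulo the normal closure of the vanishing cycles. These include $c_1,\dots,c_{2g}$. The curves $c_1,\dots,c_{2g}$ form the standard chain, and their classes normally generate $\pi_1(\Sigma_g)$: the chain curves give a generating set of $\pi_1$ up to conjugation, as for the classical hyperelliptic fibration with monodromy $(T_{c_1}\cdots T_{c_{2g}})^{2(4g+2)}$, which is simply connected. Since the other vanishing cycles $f^n(c_i)$ only add relations, $\pi_1(Z_n)=1$ for every $n$.

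Next come the Euler characteristic and signature. There are $3(4g+2)\cdot 2g$ vanishing cycles, all nonseparating, so
\[
\chi(Z_n)=4-4g+12g(2g+1)=24g^2+8g+4 .
\]
For the signature I would use the partial conjugation description of Section \ref{sec:partial-conj}. $Z_n$ is obtained by cutting $Z_0$ along $\pi_0^{-1}(\partial D)$, where $D$ contains the last $(4g+2)\cdot 2g$ critical values, and regluing by a bundle automorphism. Novikov additivity then gives $\sigma(Z_n)=\sigma(Z_0)$, exactly as in the proof of Lemma \ref{lem:Xn-alg-top}. Equivalently, $Z_n$ is the fiber sum of $Z'$ (monodromy $(T_{c_1}\cdots T_{c_{2g}})^{2(4g+2)}$ over a disk) with a copy of the fibration with monodromy $(T_{c_1}\cdots T_{c_{2g}})^{4g+2}$, conjugated by $f^n$. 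Conjugation does not change the signature.

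The signature itself can be computed with Endo's local signature formula for hyperelliptic fibrations, since $Z_0$ has hyperelliptic monodromy $(T_{c_1}\cdots T_{c_{2g}})^{6(4g+2)}$, i.e.\ $(T_{c_1}\cdots T_{c_{2g}})^{4g+2}$ cubed. All vanishing cycles are nonseparating, so
\[
\sigma=-\frac{g+1}{2g+1}\cdot 12g(2g+1)=-12g(g+1).
\]
Then $b_2=\chi-2=24g^2+8g+2$ and $b_2^+-b_2^-=-12g^2-12g$. Solving gives
\[
b_2^+=6g^2-2g+1,\qquad b_2^-=18g^2+10g+1,
\]
which matches the statement and so confirms the arithmetic.

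The main care point is the signature, specifically making sure the gluing/Novikov argument is legitimate and quoting Endo (or Ozbagci's algorithm) correctly for $Z_0$. Everything else is bookkeeping. Simple connectivity needs only the standard fact that the chain curves kill $\pi_1(\Sigma_g)$ in the presence of a section.
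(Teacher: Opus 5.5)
Your proposal is correct and follows essentially the same route as the paper. The section reduces $\pi_1(Z_n)$ to $\pi_1(\Sigma_g)$ modulo the normal closure of the vanishing cycles, which the chain curves already kill (the paper makes this explicit via $\beta_1\sim c_1$, $\alpha_i\sim c_{2i}$, $\beta_i^{-1}\beta_{i+1}\sim c_{2i+1}$); $\chi$ comes from the $12g(2g+1)$ vanishing cycles, $\sigma(Z_0)$ from Endo's formula, and $\sigma(Z_n)=\sigma(Z_0)$ from Novikov additivity across the regluing. One small slip: the exponent in the monodromy of $Z_0$ is $3(4g+2)=6(2g+1)$, not $6(4g+2)$, but your vanishing-cycle count and the remaining arithmetic are right.
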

\begin{proof}
Because $\pi_n: Z_n \to S^2$ admits a section, there is an isomorphism
\[
	\pi_1(Z_n) \cong \pi_1(\Sigma_g)/N(c_1, \dots, c_{2g}, f^n(c_1), \dots, f^n(c_{2g}))
\]
where $N(c_1, \dots, f^n(c_{2g}))$ denotes the subgroup of $\pi_1(\Sigma_g)$ normally generated by the vanishing cycles of $\pi_n: Z_n \to S^2$. On the other hand, let $\alpha_i, \beta_i \in \pi_1(\Sigma_g)$ for $1 \leq i \leq g$ be the generators of $\pi_1(\Sigma_g)$ as shown below:
\begin{center}
\includegraphics[width=0.45\textwidth]{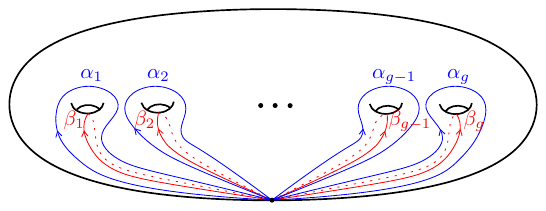}
\end{center}
The loop $\alpha_i \in \pi_1(\Sigma_g)$ is freely homotopic to $c_{2i}$ for all $1 \leq i \leq g$, the loop $\beta_1\in \pi_1(\Sigma_g)$ is freely homotopic to $c_{1}$, and the loop $\beta_{i}^{-1} \beta_{i+1} \in \pi_1(\Sigma_g)$ is freely homotopic to $c_{2i+1}$ for all $1 \leq i \leq g-1$. Therefore,
\[
	N(c_1, \dots, c_{2g}) = N(c_1, \dots, c_{2g}, f^n(c_1), \dots, f^n(c_{2g})) = \pi_1(\Sigma_{g}),
\]
and $Z_n$ is simply-connected for all $n \geq 0$.

There are $12g(2g+1)$-many vanishing cycles in $\pi_n: Z_n \to S^2$, and so
\[
	\chi(Z_n) = 4 - 4g + 12g(2g+1) = 24g^2 + 8g + 4.
\]
In other words, $b_2(Z_n) = 24g^2 + 8g + 2$.

In the case of $n = 0$, the Lefschetz fibration $\pi_0: Z_0 \to S^2$ is hyperelliptic with $12g(2g+1)$-many non-separating vanishing cycles. Therefore by \cite[Theorems 4.4(2), 4.8]{endo},
\[
	\sigma(Z_0) = -\left(\frac{g+1}{2g+1}\right) (12g(2g+1)) = -12g(g+1).
\]
To compute $\sigma(Z_n)$ for $n \geq 1$, consider the Lefschetz fibrations $Z' \to D^2$ and $Z'' \to D^2$ with monodromy factorizations
\[
	(T_{c_1} \dots T_{c_{2g}})^{2(4g+2)} \in \Mod(\Sigma_g), \qquad (T_{f^n(c_1)} \dots T_{f^n(c_{2g})})^{4g+2} \in \Mod(\Sigma_g)
\]
respectively. Then $Z_n$ is formed by gluing $Z'$ to $Z''$ along their boundaries by some diffeomorphism $\partial Z' \to \partial Z''$ (which varies with $n$) for any $n \geq 0$ (cf. Section \ref{sec:partial-conj}). By Novikov additivity,
\[
	\sigma(Z_n) = \sigma(Z') + \sigma(Z'') = \sigma(Z_0) = -12g(g+1)
\]
for all $n \geq 0$. Finally, compute for all $n \geq 0$ that
\[
	b_2^+(Z_n) = \frac{b_2(Z_n) + \sigma(Z_n)}{2} = 6g^2 - 2g + 1, \qquad b_2^-(Z_n) = \frac{b_2(Z_n) - \sigma(Z_n)}{2} = 18g^2 + 10g + 1. \qedhere
\]
\end{proof}

By Freedman's theorem \cite{freedman}, the algebraic topology invariants of $Z_n$ determines its homeomorphism type.
\begin{prop}\label{prop:homeo}
For any $n \in \Z_{\geq 0}$, the manifold $Z_n$ is homeomorphic to
\[
	(6g^2 - 2g + 1) \CP^2 \# (18g^2 + 10g + 1)\overline{\CP^2}.
\]
\end{prop}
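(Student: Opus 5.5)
The plan is to apply Freedman's classification of closed, simply-connected topological $4$-manifolds. By Lemma \ref{lem:Zn-algtop}, $Z_n$ is a closed, smooth, simply-connected $4$-manifold with $b_2^+(Z_n) = 6g^2-2g+1$ and $b_2^-(Z_n) = 18g^2+10g+1$. Such a smooth manifold has vanishing Kirby--Siebenmann invariant, so Freedman's theorem says its homeomorphism type is determined by the isomorphism class of its intersection form $Q_{Z_n}$. It therefore suffices to identify $Q_{Z_n}$ with the form $(6g^2-2g+1)\langle 1\rangle \oplus (18g^2+10g+1)\langle -1\rangle$ of the target manifold.

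The form $Q_{Z_n}$ is unimodular by Poincar\'e duality. It is indefinite because both $b_2^\pm(Z_n)$ are positive. By the classification of indefinite unimodular forms (Serre), $Q_{Z_n}$ is determined by its rank, signature and parity. The rank and signature are given by Lemma \ref{lem:Zn-algtop}. The only remaining step is to show that $Q_{Z_n}$ is \emph{odd}.

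For oddness, I will use the section of $\pi_n$ recorded in (\ref{eqn:monodromy-Zn-section}). Its monodromy lift to $\Mod(\Sigma_g^1)$ equals $T_\delta^3$, so the section has self-intersection $-3$, by the discussion in Section \ref{sec:background} (cf. \cite[Lemma 2.3]{smith}). A class in $H_2(Z_n;\Z)$ with odd square forces the form to be odd. Hence
\[
	Q_{Z_n} \cong (6g^2-2g+1)\langle 1\rangle \oplus (18g^2+10g+1)\langle -1\rangle,
\]
and this is exactly the intersection form of $(6g^2 - 2g + 1) \CP^2 \# (18g^2 + 10g + 1)\overline{\CP^2}$. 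Freedman's theorem then gives the homeomorphism for every $n$.

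I do not expect a real obstacle, since the content was already established in Lemma \ref{lem:Zn-algtop}. The one point to check carefully is parity. If there were any doubt about the section, a fallback is the Euler-characteristic-plus-signature congruence for even forms: an even unimodular form has signature divisible by $8$. That test fails here when $g(g+1)$ is even, so the section argument is the one to rely on; it works uniformly in $g$ and $n$.
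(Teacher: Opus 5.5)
Your proposal is correct and follows the paper's proof essentially verbatim: rank and signature come from Lemma \ref{lem:Zn-algtop}, oddness comes from the $(-3)$-section in (\ref{eqn:monodromy-Zn-section}), and the conclusion follows from the classification of indefinite unimodular forms and Freedman's theorem (your Kirby--Siebenmann remark just makes explicit why smoothness suffices). One small correction to your fallback aside: $g(g+1)$ is always even, so $\sigma(Z_n) = -12g(g+1)$ is always divisible by $8$. The signature test therefore never detects oddness for any $g$, which only reinforces that the section argument is the one actually needed.
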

\begin{proof}
By Lemma \ref{lem:Zn-algtop},
\[
	\sigma(Z_n) = -12g(g+1), \qquad b_2(Z_n) = 24g^2 + 8g + 2
\]
for all $n \geq 0$. In particular, the rank $b_2(Z_n)$ and signature $\sigma(Z_n)$ of the intersection form $Q_{Z_n}$ of $Z_n$ only depend on $g$ and are independent of $n$.

To determine the parity of $Q_{Z_n}$, recall that $\pi_n$ has a section of self-intersection $-3$ given by (\ref{eqn:monodromy-Zn-section}) for all $n \geq 0$. In other words, the intersection form $Q_{Z_n}$ is odd for all $n \geq 0$. Because $Q_{Z_n}$ is indefinite and unimodular, \cite[Theorem 1.2.21]{gompf-stipsicz} shows that
\[
	Q_{Z_n} \cong (6g^2 - 2g + 1) \langle 1 \rangle \oplus (18g^2 + 10g + 1)\langle -1 \rangle.
\]

Finally, Freedman's theorem \cite[Theorem 1.5]{freedman} says that the homeomorphism type of a simply-connected, smooth $4$-manifold is determined by its intersection form. Because $Z_n$ is simply-connected for all $n \geq 0$ by Lemma \ref{lem:Zn-algtop}, Freedman's theorem implies that $Z_n$ is homeomorphic to
\[
	(6g^2 - 2g + 1) \CP^2 \# (18g^2 + 10g + 1)\overline{\CP^2}
\]
for all $n \geq 0$.
\end{proof}

Combining Propositions \ref{prop:homeo} and \ref{prop:non-iso-Zn} concludes the proof of the main theorem.
\begin{proof}[{Proof of Theorem \ref{thm:homeo-not-iso}}]
For any $n \in \Z_{\geq 0}$, the manifold $Z_n$ is homeomorphic to $(6g^2 - 2g + 1) \CP^2 \# (18g^2 + 10g + 1)\overline{\CP^2}$ by Proposition \ref{prop:homeo}. If $n \neq m$ then the Lefschetz fibrations $\pi_n: Z_n \to S^2$ and $\pi_m: Z_m \to S^2$ are inequivalent by Proposition \ref{prop:non-iso-Zn}.
\end{proof}


\appendix \section{Calculations in $\Mod(\Sigma_{2g}^4)$}\label{appendix-a}

In this appendix we collect some routine calculations in $\Mod(\Sigma_{2g}^4)$, $\Mod(\Sigma_{2g,2})$, and $\Mod(\Sigma_{2g})$, including proofs of Theorems \ref{thm:eta-factorization} and \ref{thm:hamada-sections}. For $i = 1, 2$, consider the curves $B_{j, k}^i$ and $C_1^i, C_2^i$ as shown in Figure \ref{fig:MCK-lifts}. Let $c_1, \dots, c_{2g+1}$ and $d_1, \dots, d_{2g}$ denote isotopy classes of curves in $\Sigma_{2g}^4$ as shown in Figure \ref{fig:filling-curves}. Let $\alpha_1, \alpha_2, \alpha_3$ denote the isotopy classes of arcs in $\Sigma_{2g}^4$ as shown in Figure \ref{fig:filling-curves}.

\begin{figure}
\includegraphics[width=0.55\textwidth]{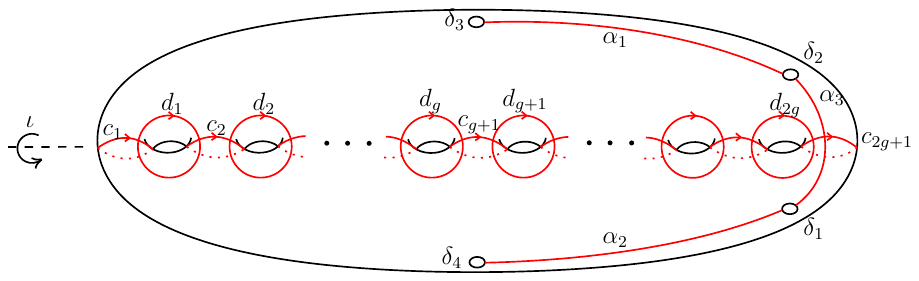}
\caption{Filling curves and arcs, and the hyperelliptic involution $\iota$.}\label{fig:filling-curves}
\end{figure}

The following two lemmas compute a composition of Dehn twists applied to the curves $c_j$ and $d_j$.
\begin{lem}\label{lem:eta-ci}
For $1 \leq j \leq 2g+1$ and $i = 1, 2$, there are equalities of isotopy classes of oriented curves
\[
	(T_{B_{0,1}^i} T_{B_{1,1}^i} \dots T_{B_{2g,1}^i} T_{C_1^i})(T_{B_{0,2}^i} T_{B_{1,2}^i} \dots T_{B_{2g,2}^i} T_{C_2^i})(c_j) = c_j, \qquad T_{B_{0,2}^i} T_{B_{1,2}^i} \dots T_{B_{2g,2}^i} T_{C_2^i}(c_j) = c_{2g+2-j}.
\]
\end{lem}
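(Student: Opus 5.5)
The plan is a direct curve-by-curve computation. Since the first product equals $\tilde h_i$ preceded by the analogous product for the first half, it suffices to prove the second equality for both halves. More precisely, we show that $\tilde h_i = T_{B_{0,2}^i}\cdots T_{C_2^i}$ sends $c_j \mapsto c_{2g+2-j}$, and that the first-half product $T_{B_{0,1}^i}\cdots T_{C_1^i}$ also sends $c_k \mapsto c_{2g+2-k}$. Composing the two then gives $c_j \mapsto c_{2g+2-j} \mapsto c_j$, which is the first equality. Orientations will be tracked throughout: each half should act like the rotation $\eta$ of Figure \ref{fig:mck-vanishing-cycles} on the chain $c_1,\dots,c_{2g+1}$, reversing the order of the chain, and the orientation conventions of Figure \ref{fig:filling-curves} should make the oriented classes match.

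To compute $\tilde h_i(c_j)$, apply the twists right to left: first $T_{C_2^i}$, then $T_{B_{2g,2}^i}$, and so on down to $T_{B_{0,2}^i}$. For each intermediate curve, use that a twist $T_a$ fixes $b$ if $i(a,b)=0$. When $i(a,b)=1$, use the standard identity $T_aT_b(a)=b$, or equivalently the braid relation, which lets $T_a T_b$ move a chain curve forward. Reading Figure \ref{fig:MCK-lifts}, $c_j$ should meet only one or two of the lifted curves $B_{k,2}^i$ (those with $k$ near $j$ and near $2g+1-j$) and possibly $C_2^i$. So each computation reduces to a short sequence of chain moves.

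A cleaner way to organize this, which I would try first, is to note that the lifted $B_{k,2}^i$ and the $c_j$ live in a subsurface whose boundary components are among $\delta_1,\dots,\delta_4$ or the curves $C^i$. Within that subsurface, the product $T_{B_0}\cdots T_{B_{2g}}$ behaves like a Coxeter-type chain element, which conjugates consecutive chain twists in a shifted way ($T_{B_k}T_{B_{k+1}}$ sends $B_k \mapsto B_{k+1}$). In each case I then check whether the curve $c_j$, once pushed, is disjoint from or meets $C_2^i$.

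The main obstacle is purely bookkeeping. The four boundary components $\delta_1,\dots,\delta_4$ are distributed differently for $i=1,2$ and for the two halves, so I must confirm that the image curve lands on $c_{2g+2-j}$ on the correct side of each $\delta_\ell$, and not on a curve differing from it by a boundary-parallel band. I would handle this by tracking the arcs $\alpha_1,\alpha_2,\alpha_3$ of Figure \ref{fig:filling-curves} alongside the curves, or equivalently by checking intersection numbers of the image with the $\delta$-separating arcs. I would also use the hyperelliptic involution $\iota$ of Figure \ref{fig:filling-curves}, which commutes with each Dehn twist about an $\iota$-invariant curve, to halve the number of cases: it suffices to treat $j\le g+1$.
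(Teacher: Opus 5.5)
Your overall architecture matches the paper's: show that each half of the factorization sends $c_j$ to $c_{2g+2-j}$, then compose, using that most lifted vanishing cycles are disjoint from a given $c_j$. The gap is your claim that every step reduces to disjointness plus intersection-one chain/braid moves. This fails for the middle curve $c_{g+1}$.

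It is disjoint from every $B_{\ell,k}^i$ with $\ell\neq 2g$. But it meets both $B_{2g,k}^i$ and $C_k^i$ in two points each; $C_k^i$ caps off to the separating curve $C$. In fact $B_{2g,k}^i$, $C_k^i$, $c_{g+1}$ are the three interior curves of a lantern. They lie in a four-holed sphere whose boundary curves $x_1^i,\dots,x_4^i$ are disjoint from $c_{g+1}$. Chain moves give no handle on $T_{B_{2g,2}^i}T_{C_2^i}(c_{g+1})$, and the answer here is not a shift along a chain but $c_{g+1}$ itself.

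The missing idea is the lantern relation $T_{B_{2g,2}^i}T_{C_2^i}T_{c_{g+1}}=T_{x_1^i}T_{x_2^i}T_{x_3^i}T_{x_4^i}$, which gives
\[
	T_{B_{2g,2}^i}T_{C_2^i}(c_{g+1}) = T_{B_{2g,2}^i}T_{C_2^i}T_{c_{g+1}}(c_{g+1}) = T_{x_1^i}T_{x_2^i}T_{x_3^i}T_{x_4^i}(c_{g+1}) = c_{g+1}.
\]
The same argument works for $B_{2g,1}^i, C_1^i$, so each half fixes $c_{g+1}$.

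For the pairs $\{c_j,c_{2g+2-j}\}$ with $1\le j\le g$, your plan is right in spirit, but the relevant curves are not the ones you guessed. Only $B_{2j-2,k}^i$ and $B_{2j-1,k}^i$ meet $c_j$ and $c_{2g+2-j}$, and no $C_k^i$ does. The paper's input is the pair of two-twist computations $T_{B_{2j-2,2}^i}T_{B_{2j-1,2}^i}(c_j)=c_{2g+2-j}$ and $T_{B_{2j-2,1}^i}T_{B_{2j-1,1}^i}(c_{2g+2-j})=c_j$.

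Your reduction to $j\le g+1$ via $\iota$ is also invalid. The symmetry lets you deduce the image of $\iota(c)$ from that of $c$. For $i=2$ this works directly, since $\iota$ preserves every $B_{\ell,k}^2$ and $C_k^2$. For $i=1$ it works with the halves exchanged, since $\iota(B_{\ell,1}^1)=B_{\ell,2}^1$ and $\iota(C_1^1)=C_2^1$.

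To trade the case $j$ for the case $2g+2-j$, you would need $\iota(c_j)=c_{2g+2-j}$ up to orientation. However, $\iota$ acts by $-\Id$ on $H_1(\Sigma_{2g})$, while $[c_j]\neq\pm[c_{2g+2-j}]$ for $j\neq g+1$. So $\iota$ only reverses orientations (and, for $i=1$, exchanges the two halves). The statements for $j>g+1$, that each half also carries $c_{2g+2-j}$ back to $c_j$, need their own direct verification.
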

\begin{proof}
Suppose $j \neq g+1$. The curves $B_{2j-2,k}^i$ and $B_{2j-1, k}^i$ for $k, i = 1, 2$ and $c_j$, $c_{2g+2-j}$ are shown below:
\begin{center}
\includegraphics[width=0.9\textwidth]{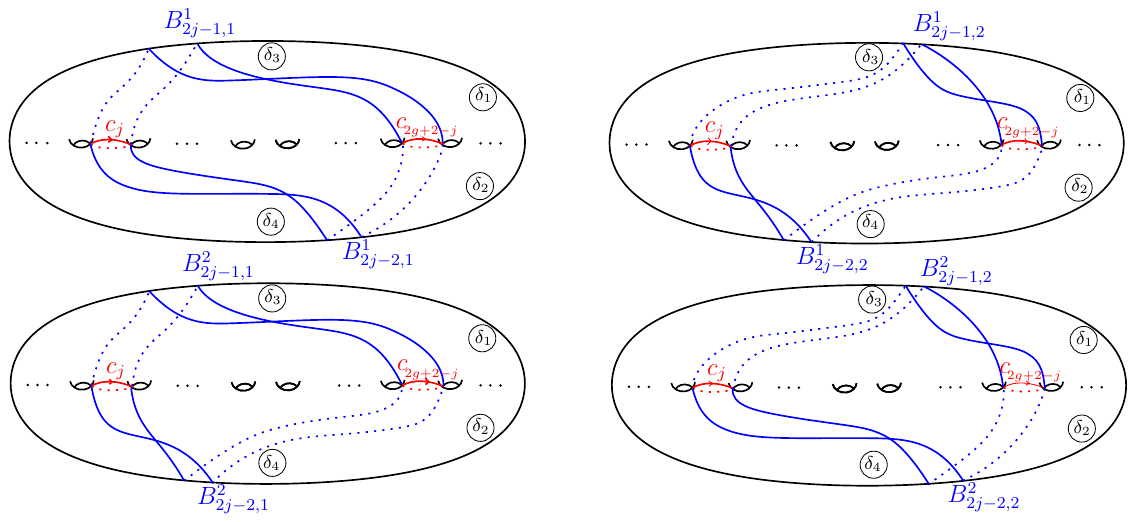}
\end{center}
Direct computations show that as isotopy classes of oriented curves,
\[
	T_{B_{2j-2, 2}^i} T_{B_{2j-1, 2}^i}(c_j) = c_{2g+2-j}, \qquad T_{B_{2j-2, 1}^i} T_{B_{2j-1, 1}^i}(c_{2g+2-j}) = c_{j}.
\]
Note that the curves $c_j$ and $c_{2g+2-j}$ are disjoint from the curves $B_{\ell, k}^i$ and $C_1^i, C_2^i$ if $\ell \neq 2j-2, 2j-1$ and compute as isotopy classes of oriented curves:
\begin{align*}
T_{B_{0,2}^i} T_{B_{1,2}^i} \dots T_{B_{2g,2}^i} T_{C_2^i}(c_j) &= c_{2g+2-j} \\
T_{B_{0,1}^i} T_{B_{1,1}^i} \dots T_{B_{2g,1}^i} T_{C_1^i}  T_{B_{0,2}^i} T_{B_{1,2}^i} \dots T_{B_{2g,2}^i}T_{C_2^i}(c_j) &= T_{B_{0,1}^i} T_{B_{1,1}^i} \dots T_{B_{2g,1}^i} T_{C_1^i} (c_{2g+2-j}) = c_j.
\end{align*}

The case $j = g+1$ follows from the lantern relation \cite[Proposition 5.1]{farb-margalit}. The curves $B_{2g, 2}^i$ and $C_2^i$ are shown below:
\begin{center}
\includegraphics[width=.9\textwidth]{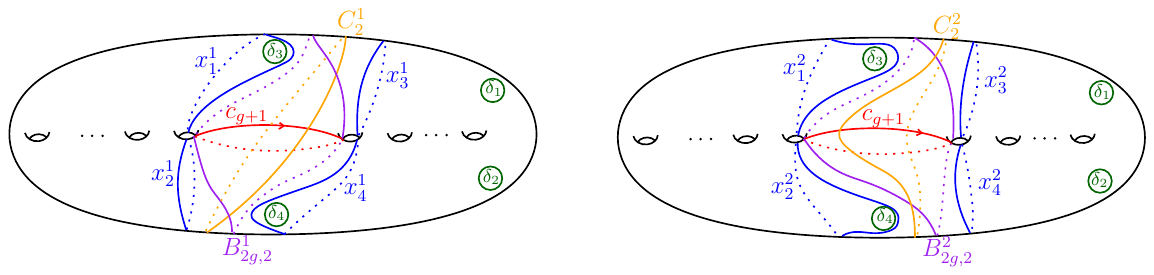}
\end{center}
For both $i = 1,2$, compute with the depicted curves $x_1^i, x_2^i, x_3^i, x_4^i$ that
\[
	T_{B_{2g, 2}^i} T_{C_2^i}(c_{g+1}) = T_{B_{2g, 2}^i} T_{C_2^i} T_{c_{g+1}}(c_{g+1}) = T_{x_1^i} T_{x_2^i} T_{x_3^i} T_{x_4^i}(c_{g+1}) = c_{g+1}
\]
where the second equality follows from the lantern relation $T_{B_{2g,2}^i}T_{C_2^i}T_{c_{g+1}^i} = T_{x_1^i} T_{x_2^i} T_{x_3^i} T_{x_4^i}$.

The curves $B_{2g,1}^i$ and $C_1^i$ are shown below:
\begin{center}
\includegraphics[width=.9\textwidth]{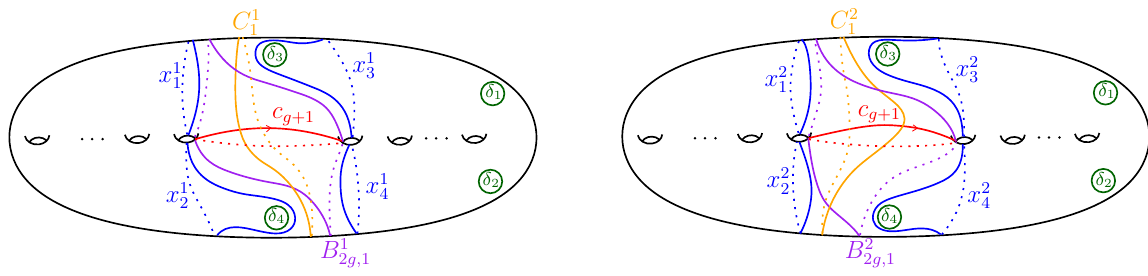}
\end{center}
For both $i = 1,2$, compute with the depicted curves $x_1^i, x_2^i, x_3^i, x_4^i$ that
\[
	T_{B_{2g, 1}^i} T_{C_1^i}(c_{g+1}) = T_{B_{2g, 1}^i} T_{C_1^i} T_{c_{g+1}}(c_{g+1}) = T_{x_1^i} T_{x_2^i} T_{x_3^i} T_{x_4^i}(c_{g+1}) = c_{g+1}
\]
where the second equality follows from the lantern relation $T_{B_{2g,1}^i}T_{C_1^i}T_{c_{g+1}} = T_{x_1^i} T_{x_2^i} T_{x_3^i} T_{x_4^i}$.

Finally, note that $c_{g+1}$ and $B_{\ell, k}^i$ are disjoint if $\ell \neq 2g$ and compute
\begin{align*}
T_{B_{0,2}^i} T_{B_{1,2}^i} \dots T_{B_{2g,2}^i} T_{C_2^i} (c_{g+1}) &= c_{g+1} \\
T_{B_{0,1}^i} T_{B_{1,1}^i} \dots T_{B_{2g,1}^i} T_{C_1^i} T_{B_{0,2}^i} T_{B_{1,2}^i} \dots T_{B_{2g,2}^i} T_{C_2^i} (c_{g+1}) &= T_{B_{0,1}^i} T_{B_{1,1}^i} \dots T_{B_{2g,1}^i} T_{C_1^i} (c_{g+1}) = c_{g+1}. \qedhere
\end{align*}
\end{proof}
\begin{lem}\label{lem:eta-di}
For $1 \leq j \leq 2g$ and $i = 1, 2$, there are equalities of isotopy classes of oriented curves 
\[
	(T_{B_{0,1}^i} T_{B_{1,1}^i} \dots T_{B_{2g,1}^i} T_{C_1^i})(T_{B_{0,2}^i} T_{B_{1,2}^i} \dots T_{B_{2g,2}^i} T_{C_2^i})(d_j) = d_j, \qquad T_{B_{0,2}^i} T_{B_{1,2}^i} \dots T_{B_{2g,2}^i}T_{C_2^i}(d_j) = \bar d_{2g+1-j}.
\]
\end{lem}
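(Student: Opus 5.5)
The plan is to mirror the proof of Lemma \ref{lem:eta-ci} for the non-$g+1$ curves. For each $1 \leq j \leq 2g$, first identify which vanishing cycles of $\tilde h_i = T_{B_{0,2}^i} \cdots T_{B_{2g,2}^i} T_{C_2^i}$ actually meet $d_j$. From Figures \ref{fig:MCK-lifts} and \ref{fig:filling-curves}, I expect $d_j$ to be a curve crossing the ``middle'' of the chain. It should intersect only a short consecutive block of the $B_{\ell,k}^i$, namely those indexed near $2j-1$ and $2j$, and possibly $C_k^i$ when $j$ is close to $g$. All other curves in the product are disjoint from $d_j$ and act trivially on it.

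The first key step is a direct local computation, carried out as in the displayed pictures of the proof of Lemma \ref{lem:eta-ci}. I will check that the relevant two (or three) twists in $\tilde h_i$ carry the oriented curve $d_j$ to $\bar d_{2g+1-j}$, meaning $d_{2g+1-j}$ with reversed orientation. The orientation reversal is consistent with the involution $\eta$ sending $b_i - b_{2g+1-i}$ to itself but with a sign on the symmetric part. The curves must be applied in the correct order: the rightmost twists act first, so one composes the twists that meet $d_j$ and checks that each intermediate curve avoids the remaining twists. This gives the second equality, $\tilde h_i(d_j) = \bar d_{2g+1-j}$.

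The second step applies the same analysis to the first half $T_{B_{0,1}^i} \cdots T_{C_1^i}$ acting on $\bar d_{2g+1-j}$. The mirror-image local computation should return $\bar d_{2g+1-j}$ to $d_j$, now with the correct orientation. Since reversing orientation commutes with applying a mapping class, it suffices to show that the first half maps $d_{2g+1-j}$ to $\bar d_j$. This yields the first equality.

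The main obstacle is the central indices, where $d_j$ may meet $C_k^i$ or $B_{2g,k}^i$ near the boundary components $\delta_1, \dots, \delta_4$. Here the local picture alone is ambiguous about which side of the boundaries the image curve passes. For these indices I would first try to use a lantern relation, exactly as in the case $j = g+1$ of Lemma \ref{lem:eta-ci}. That is, I would rewrite $T_{B_{2g,k}^i} T_{C_k^i}$ (times a twist about a curve disjoint from $d_j$) as a product of twists about boundary-parallel curves $x_1^i, \dots, x_4^i$, whose action on $d_j$ is transparent. If that fails, I would fall back on tracking $d_j$ via its intersections with the arcs $\alpha_1, \alpha_2, \alpha_3$.
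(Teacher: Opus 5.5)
Your plan is essentially the paper's proof: a direct local computation $T_{B_{2j-1,2}^i}T_{B_{2j,2}^i}(d_j)=\bar d_{2g+1-j}$ and $T_{B_{2j-1,1}^i}T_{B_{2j,1}^i}(\bar d_{2g+1-j})=d_j$, combined with the fact that $d_j$ and $d_{2g+1-j}$ are disjoint from every other $B_{\ell,k}^i$ and from $C_1^i, C_2^i$, so all remaining twists act trivially at each stage. The central-index obstacle you anticipate does not arise: no $d_j$ meets $C_k^i$, so the two-twist computation works uniformly in $j$ and no lantern relation is needed (that was specific to $c_{g+1}$ in Lemma \ref{lem:eta-ci}).
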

\begin{proof}
Consider the curves $B_{2j-1, k}^i$, $B_{2j, k}^i$ for $k, i = 1, 2$ and $d_j$, $\bar d_{2g+1-j}$ shown below:
\begin{center}
\includegraphics[width=0.9\textwidth]{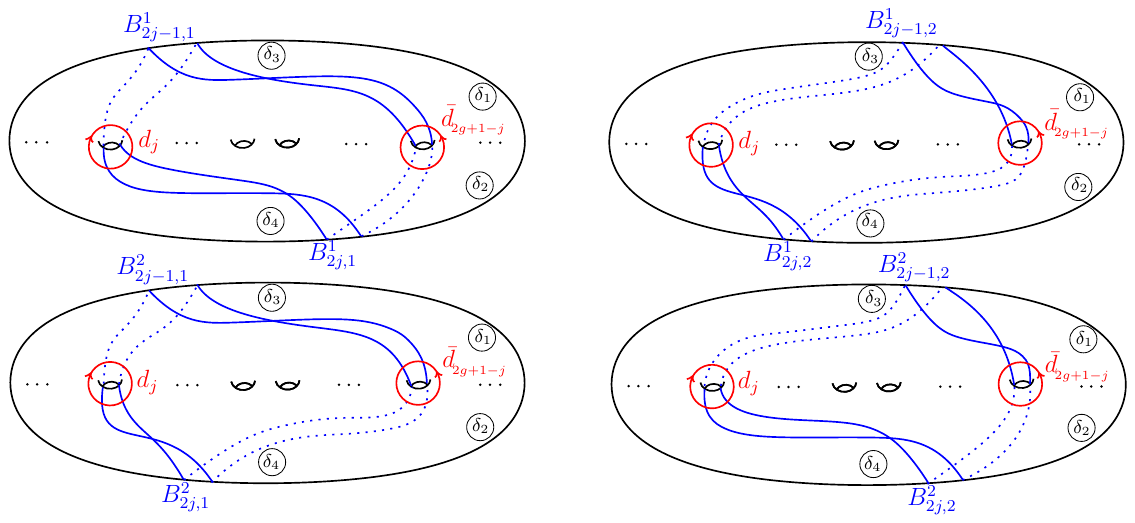}
\end{center}
Direct computations show that
\[
	T_{B_{2j-1, 2}^i} T_{B_{2j, 2}^i}(d_j) = \bar d_{2g+1-j}, \qquad T_{B_{2j-1, 1}^i} T_{B_{2j, 1}^i}(\bar d_{2g+1-j}) = d_{j}.
\]
Note that the curves $d_j$ and $d_{2g+1-j}$ are disjoint from the curves $B_{\ell, k}^i$ if $\ell \neq 2j-1, 2j$ and compute 
\begin{align*}
T_{B_{0,2}^i} T_{B_{1,2}^i} \dots T_{B_{2g,2}^i} T_{C_2^i}(d_j) &= \bar d_{2g+1-j} \\
T_{B_{0,1}^i} T_{B_{1,1}^i} \dots T_{B_{2g,1}^i} T_{C_1^i}  T_{B_{0,2}^i} T_{B_{1,2}^i} \dots T_{B_{2g,2}^i} T_{C_2^i}(d_j) &= T_{B_{0,1}^i} T_{B_{1,1}^i} \dots T_{B_{2g,1}^i} T_{C_1^i}(\bar d_{2g+1-j}) = d_{j}. \qedhere
\end{align*}
\end{proof}

We now compute $\tilde h_i(\alpha_1)$ and $\tilde h_i(\alpha_2)$.
\begin{lem}\label{lem:alpha-12}
For $i = 1, 2$ and $j = 1, 2$,
\[
	(T_{B_{0,1}^i} T_{B_{1,1}^i} \dots T_{B_{2g,1}^i} T_{C_1^i})(T_{B_{0,2}^i} T_{B_{1,2}^i} \dots T_{B_{2g,2}^i} T_{C_2^i})(\alpha_j) = T_{\delta_1} T_{\delta_2}T_{\delta_3} T_{\delta_4}(\alpha_j)
\]
as isotopy classes of arcs in $\Sigma_{2g}^4$.
\end{lem}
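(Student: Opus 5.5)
The plan follows the same pattern as Lemmas \ref{lem:eta-ci} and \ref{lem:eta-di}. There we tracked a closed curve through the product of Dehn twists one twist at a time; here we track the arcs $\alpha_1,\alpha_2$ instead. Write $W_{i,1} := T_{B_{0,1}^i} \dots T_{B_{2g,1}^i} T_{C_1^i}$ and $\tilde h_i = W_{i,2}$. The product acts on an arc from right to left, so we first compute $\tilde h_i(\alpha_j)$ and then apply $W_{i,1}$.

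The first step is to record, from Figures \ref{fig:MCK-lifts} and \ref{fig:filling-curves}, which of the curves $B_{\ell,k}^i$ and $C_k^i$ the arc $\alpha_j$ actually crosses. I expect $\alpha_j$ to cross only a short consecutive block of them, near the boundary components it joins, and to be disjoint from all the others. The disjoint twists act trivially and can be discarded. Applying the remaining twists in order should show that $\tilde h_i(\alpha_j)$ is an explicit arc $\alpha_j'$, roughly the mirror image of $\alpha_j$ under the involution, just as $c_j \mapsto c_{2g+2-j}$ in Lemma \ref{lem:eta-ci}. The picture should differ from that lemma only by twisting near the endpoints. We then apply $W_{i,1}$ to $\alpha_j'$ and check, again twist by twist, that it returns $\alpha_j$ twisted once around each boundary component at which it has an endpoint. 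This is exactly $T_{\delta_1}T_{\delta_2}T_{\delta_3}T_{\delta_4}(\alpha_j)$, because the boundary twists not at the endpoints of $\alpha_j$ act trivially on it.

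For the twists supported near a separating curve, in particular those by $C_k^i$ and $B_{2g,k}^i$, I would proceed as in the case $j = g+1$ of Lemma \ref{lem:eta-ci} and use a lantern relation. This rewrites the relevant product of twists as a product of twists about boundary-parallel curves together with curves disjoint from the arc. The boundary-parallel twists contribute the factors $T_{\delta_k}$, and the other twists fix the arc.

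The cases $i=1,2$ should be handled separately, since the curves in Figure \ref{fig:MCK-lifts} differ in how the boundary components are distributed. The argument itself is the same in both.

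The main obstacle will be the bookkeeping of endpoint twisting. Isotopy classes of arcs rel boundary detect fractional twisting at the boundary, so an error of a single $T_{\delta_k}$ would invalidate the equality. To manage this, I would first verify the statement in $\Mod(\Sigma_{2g,4})$ after capping the boundary components, where the arcs only need to match up to isotopy fixing the marked points. I would then determine the boundary twisting separately, by counting how many times the image arc winds around each $\delta_k$ as the twists are applied.
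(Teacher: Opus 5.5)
Your basic method, following the arc through the product one twist at a time, is the one the paper uses for $\alpha_1$. But the plan you import from Lemma~\ref{lem:eta-ci} does not fit this computation, and one of its steps fails.

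You propose to discard every twist whose curve is disjoint from $\alpha_j$. That is valid only for the twists applied before the first one that moves the arc; after that, what matters is disjointness from the current image. In Lemma~\ref{lem:eta-ci} the distinction was harmless, because the image $c_{2g+2-j}$ was again disjoint from all remaining curves. Here the image migrates. In the paper's computation:
\begin{itemize}
\item $T_{C_2^i}(\alpha_1)=\gamma_0^i$;
\item each pair $T_{B_{2g-2\ell-1,2}^i}T_{B_{2g-2\ell,2}^i}$ carries $\gamma_\ell^i$ to $\gamma_{\ell+1}^i$;
\item only the last twist gives $T_{B_{0,2}^i}(\gamma_g^i)=T_{\delta_1}T_{\delta_2}T_{\delta_3}T_{\delta_4}(\alpha_1)$.
\end{itemize}
So the arc is dragged along the whole chain, and discarding twists about curves that merely miss $\alpha_1$ would lose most of this.

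Consequently your predicted shape is also wrong. $\tilde h_i(\alpha_1)$ is not a ``mirror image'' that the first half then brings back. It is already the boundary-twisted arc, and that arc is disjoint from every $B_{\ell,1}^i$ and $C_1^i$, so $T_{B_{0,1}^i}\cdots T_{B_{2g,1}^i}T_{C_1^i}$ acts trivially on it. No lantern relation is involved: the boundary twisting appears only at the final step, after the arc has traversed the chain.

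For $\alpha_2$ you do not need a second computation, and if you did one for $i=1$ you would find the roles of the two halves reversed. The paper uses the hyperelliptic involution $\iota$ with $\iota(\alpha_1)=\alpha_2$, $\iota(\delta_1)=\delta_2$, $\iota(\delta_3)=\delta_4$. This $\iota$ conjugates $T_{\delta_1}T_{\delta_2}T_{\delta_3}T_{\delta_4}$ to itself.
\begin{itemize}
\item For $i=2$, $\iota$ preserves every curve $B_{\ell,k}^2$ and $C_k^2$. So it commutes with the product, and $j=2$ follows from $j=1$.
\item For $i=1$, $\iota$ exchanges $B_{\ell,1}^1\leftrightarrow B_{\ell,2}^1$ and $C_1^1\leftrightarrow C_2^1$, so it conjugates the product of the two halves to the product in the opposite order. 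The first half fixes $\alpha_1$, since $\alpha_1$, like $T_{\delta_1}\cdots T_{\delta_4}(\alpha_1)$, is disjoint from the first-half curves. Combining these, the second half fixes $\alpha_2$ and the first half sends it to $T_{\delta_1}T_{\delta_2}T_{\delta_3}T_{\delta_4}(\alpha_2)$.
\end{itemize}

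The structural fact your proposal misses is that in every case one half of the factorization fixes the arc and the other half alone produces all of the boundary twisting. Once the final arc is identified directly as $T_{\delta_1}\cdots T_{\delta_4}(\alpha_j)$, your separate capping-then-winding-number bookkeeping becomes unnecessary.
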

\begin{proof}
We first consider the arc $\alpha_1$. For each $i = 1, 2$ and any $0 \leq \ell \leq g$, consider the arc $\gamma_\ell^i$ depicted below:
\begin{center}
\includegraphics[width=0.9\textwidth]{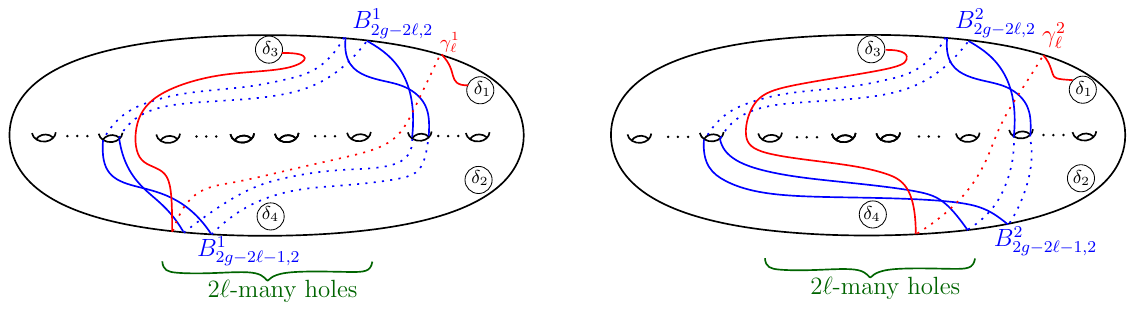}
\end{center}
Direct computations show that the following equalities of isotopy classes of arcs in $\Sigma_{2g}^4$ hold:
\begin{enumerate}[(a)]
\item $\gamma_0^i = T_{C_2^i}(\alpha_1)$, and
\item $T_{B_{2g-2\ell -1}^i} T_{B_{2g-2\ell}^i}(\gamma_\ell^i) = \gamma_{\ell+1}^i$ for all $0 \leq \ell\leq g-1$.
\end{enumerate}
In particular, $\gamma_g^i = T_{B_{1,2}^i} T_{B_{2,2}^i} \dots T_{B_{2g,2}^i} T_{C_2^i}(\alpha_1)$ by induction; the arc $\gamma_g^i$ and the curve $B_{0, 2}^i$ are depicted below.
\begin{center}
\includegraphics[width=0.9\textwidth]{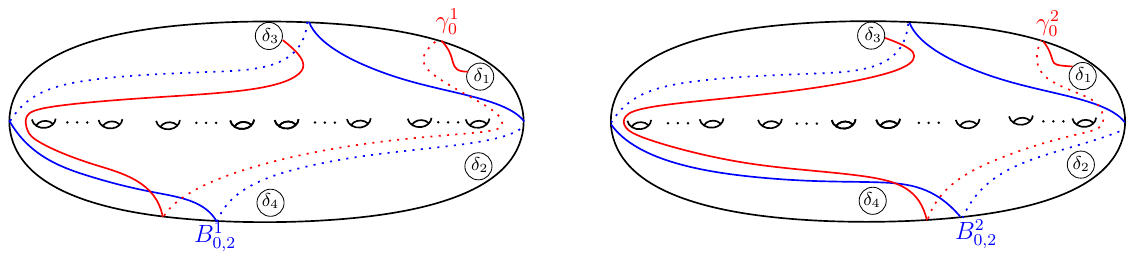}
\end{center}
Using the above, direct computation shows that
\[
	T_{B_{0,2}^i}(\gamma_g^i) = T_{B_{0,2}^i} T_{B_{1,2}^i} \dots T_{B_{2g,2}^i} T_{C_2^i}(\alpha_1) = T_{\delta_1} T_{\delta_2} T_{\delta_3} T_{\delta_4}(\alpha_1).
\]
Because $T_{\delta_1} T_{\delta_2} T_{\delta_3} T_{\delta_4}(\alpha_1)$ is disjoint from $B_{j, 1}^i$ and $C_1^i$ for all $0 \leq j \leq 2g$,
\[
	(T_{B_{0,1}^i} T_{B_{1,1}^i} \dots T_{B_{2g,1}^i} T_{C_1^i})(T_{B_{0,2}^i} T_{B_{1,2}^i} \dots T_{B_{2g,2}^i} T_{C_2^i})(\alpha_1) = T_{\delta_1} T_{\delta_2} T_{\delta_3} T_{\delta_4}(\alpha_1).
\]

For the arc $\alpha_2$, consider the hyperelliptic involution $\iota \in \Diff^+(\Sigma_{2g}, \delta_1\cup\delta_2\cup\delta_3\cup\delta_4)$ with $\iota(\delta_1) = \delta_2$ and $\iota(\delta_3) = \delta_4$ as shown in Figure~\ref{fig:filling-curves}. Then $\iota(\alpha_1) = \alpha_2$.
\begin{enumerate}[(a)]
	\item If $i = 1$ then for any $0 \leq j \leq 2g$, there are equalities of isotopy classes $\iota(B_{j,1}^1) = B_{j,2}^1$ and $\iota(C_1^1) = C_2^1$. Therefore,
	\begin{align*}
		T_{B_{0,1}^1} \dots T_{B_{2g,1}^1 T_{C_1^1}}T_{B_{0,2}^1} \dots T_{B_{2g,2}^1 T_{C_2^1}}(\alpha_2) &= \iota^2(T_{B_{0,1}^1} \dots T_{B_{2g,1}^1 T_{C_1^1}}T_{B_{0,2}^1} \dots T_{B_{2g,2}^1 T_{C_2^1}})(\iota\alpha_1) \\
		&= \iota(T_{B_{0,2}^1} \dots T_{B_{2g,2}^1 T_{C_2^1}}T_{B_{0,1}^1} \dots T_{B_{2g,1}^1 T_{C_1^1}})(\alpha_1) \\
		&= \iota(T_{B_{0,2}^1} \dots T_{B_{2g,2}^1 T_{C_2^1}})(\alpha_1) \\
		&= T_{\delta_1} T_{\delta_2} T_{\delta_3} T_{\delta_4}(\alpha_2).
	\end{align*}
	\item If $i = 2$ then for any $0 \leq j \leq 2g$, there are equalities of isotopy classes $\iota(B_{j,1}^2) = B_{j,1}^2$, $\iota(B_{j,2}^2) = B_{j,2}^2$, $\iota(C_1^2) = C_1^2$, and $\iota(C_2^2) = C_2^2$. Therefore,
	\begin{align*}
	T_{B_{0,1}^2} \dots T_{B_{2g,1}^2 T_{C_1^2}}T_{B_{0,2}^2} \dots T_{B_{2g,2}^2 T_{C_2^2}}(\alpha_2) &= \iota^2(T_{B_{0,1}^2} \dots T_{B_{2g,1}^2 T_{C_1^2}}T_{B_{0,2}^2} \dots T_{B_{2g,2}^2 T_{C_2^2}})(\iota\alpha_1) \\
	&= \iota(T_{B_{0,1}^2} \dots T_{B_{2g,1}^2 T_{C_1^2}}T_{B_{0,2}^2} \dots T_{B_{2g,2}^2 T_{C_2^2}}(\alpha_1)) \\
	&= T_{\delta_1} T_{\delta_2} T_{\delta_3} T_{\delta_4}(\alpha_2). \qedhere
	\end{align*}
\end{enumerate}
\end{proof}

\begin{lem}\label{lem:alpha-3}
For $i = 1, 2$,
\[
	(T_{B_{0,1}^i} T_{B_{1,1}^i} \dots T_{B_{2g,1}^i} T_{C_1^i})(T_{B_{0,2}^i} T_{B_{1,2}^i} \dots T_{B_{2g,2}^i} T_{C_2^i})(\alpha_3) = T_{\delta_1} T_{\delta_2} T_{\delta_3 } T_{\delta_4} (\alpha_3)
\]
as isotopy classes of arcs in $\Sigma_{2g}^4$.
\end{lem}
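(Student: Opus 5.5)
I would prove Lemma \ref{lem:alpha-3} the same way as the $\alpha_1$ case of Lemma \ref{lem:alpha-12}. The arc $\alpha_3$ will be tracked through the factorization one Dehn twist at a time, right to left. The arc is pushed through the second block $\tilde h_i = T_{B_{0,2}^i}\cdots T_{B_{2g,2}^i}T_{C_2^i}$ and then through the first block. The target is to reach the arc $T_{\delta_1}T_{\delta_2}T_{\delta_3}T_{\delta_4}(\alpha_3)$. This is $\alpha_3$ with its two ends spun once around the boundary components it meets, and since those twists are central it is the only candidate.

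\textbf{Steps.} First I would locate $\alpha_3$ relative to the curves of Figure \ref{fig:MCK-lifts}. By Figure \ref{fig:filling-curves}, $\alpha_3$ joins boundary components on the ``side'' region (presumably $\delta_3$ or $\delta_4$ to $\delta_1$ or $\delta_2$). I expect it to cross only a few of the curves $B_{j,k}^i$ and $C_k^i$, most likely $C_k^i$ and $B_{2g,k}^i$ near the middle and $B_{0,k}^i$ near the ends. All other twists in the word then act trivially on the intermediate arcs.

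Second, I would define intermediate arcs $\gamma'^i_\ell$, in analogy with the arcs $\gamma^i_\ell$ of Lemma \ref{lem:alpha-12}. These record the image of $\alpha_3$ after applying $T_{C_2^i}$, and then successive pairs $T_{B_{2g-2\ell-1,2}^i}T_{B_{2g-2\ell,2}^i}$. Each step is a local picture computation, and I would prove the general step by induction on $\ell$. This gives $\tilde h_i(\alpha_3)$ explicitly.

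Third, I would check how the first block acts on $\tilde h_i(\alpha_3)$. There are two possible outcomes. In the first, $\tilde h_i(\alpha_3)$ is already $T_{\delta_1}\cdots T_{\delta_4}(\alpha_3)$ and is disjoint from every $B_{j,1}^i$ and $C_1^i$, as happened for $\alpha_1$. In the second, $\tilde h_i(\alpha_3)$ is some intermediate arc, say one twisted around a single boundary component and reflected to the other side. In that case a symmetric induction through the first block completes the rotation.

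Finally, I would handle the two cases $i=1$ and $i=2$. If $\alpha_3$ is preserved by the involution $\iota$ of Figure \ref{fig:filling-curves}, or sent to a simpler arc, I would use the symmetries $\iota(B_{j,1}^1)=B_{j,2}^1$ and $\iota(B^2_{j,k})=B^2_{j,k}$. This reduces one case to the other, or the computation to a shorter one, exactly as in the $\alpha_2$ argument.

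\textbf{Main obstacle.} The hardest step is the middle region where $\alpha_3$ meets $C_k^i$ and $B_{2g,k}^i$. There the four-holed-sphere configuration makes the direct picture computation error-prone. If it becomes unwieldy, I would first try the lantern relation $T_{B_{2g,k}^i}T_{C_k^i}T_{c_{g+1}}=T_{x_1^i}T_{x_2^i}T_{x_3^i}T_{x_4^i}$ from the proof of Lemma \ref{lem:eta-ci}. This rewrites the two twists as twists about the curves $x_j^i$, and possibly about boundary-parallel ones, whose action on an arc is easy to read off. I would then verify the remaining pair-by-pair steps by the same local pictures used for $\gamma_\ell^i$.
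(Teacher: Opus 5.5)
Your plan is essentially the paper's proof: a direct picture computation that tracks $\alpha_3$ through the word using an inductive family of intermediate arcs $\gamma_\ell^i$, ending at the boundary-twisted arc. The actual shape is the reverse of your $\alpha_1$-modeled guess, i.e.\ it is your ``second outcome'':
\begin{itemize}
\item $\alpha_3$ misses $\delta_3,\delta_4$ and all of $B^i_{1,2},\dots,B^i_{2g,2},C^i_2$, so the second block acts only through $T_{B^i_{0,2}}$;
\item then $T_{C^i_1}T_{B^i_{0,2}}(\alpha_3)=\gamma_0^i$, the pairs $T_{B^i_{2g-2\ell-1,1}}T_{B^i_{2g-2\ell,1}}$ carry $\gamma_\ell^i$ to $\gamma_{\ell+1}^i$, and $T_{B^i_{0,1}}(\gamma_g^i)=T_{\delta_1}T_{\delta_2}(\alpha_3)$;
\item this is done uniformly in $i$, with no use of $\iota$ or the lantern relation.
\end{itemize}
The $\iota$-symmetries could not reduce the case $i=1$ to $i=2$ anyway, since they only relate curves with the same index $i$.
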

\begin{proof}
For each $i = 1, 2$ and any $0 \leq \ell \leq g-1$, consider the arc $\gamma_\ell^i$ in $\Sigma_{2g}^4$ depicted below:
\begin{center}
\includegraphics[width=.9\textwidth]{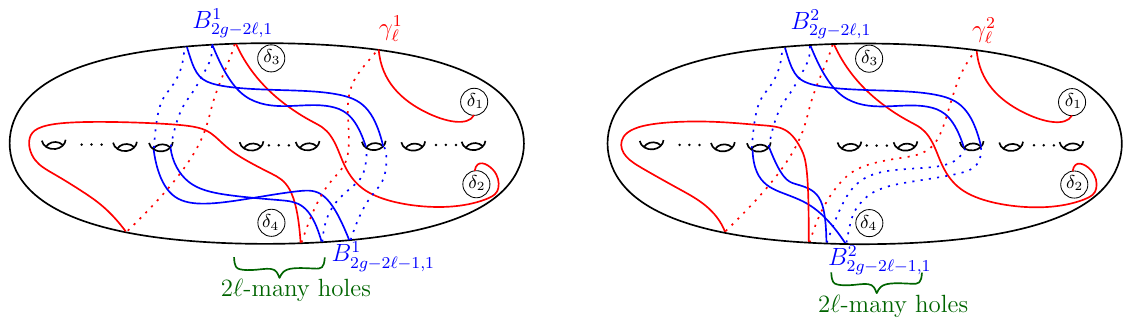}
\end{center}
Direct computation shows that the following equalities of isotopy classes of arcs hold: 
\begin{enumerate}[(a)]
\item $T_{C_1^i}T_{B_{0,2}^i}(\alpha_3) = \gamma_0^i$,
\item $T_{B_{2g-2\ell-1,1}^i} T_{B_{2g-2\ell,1}^i}(\gamma_\ell^i) = \gamma_{\ell+1}^i$ for every $0 \leq \ell \leq g-1$, and
\item $T_{B_{0,1}^i}(\gamma_{g}^i) = T_{\delta_1} T_{\delta_2}(\alpha_3)$.
\end{enumerate}
Combining these facts (and the fact that $\alpha_3$ is disjoint from the curves $B_{1,2}^i, \dots, B_{2g,2}^i, C_2^i$) together shows that
\[
	T_{B_{0, 1}^i} (T_{B_{1,1}^i} T_{B_{2,1}^i} \dots T_{B_{2g,1}^i})(T_{C_1^i}T_{B_{0,2}^i}) (T_{B_{1,2}^i} \dots T_{B_{2g,2}^i} T_{C_2^i})(\alpha_3) = T_{\delta_1} T_{\delta_2} (\alpha_3).
\]
Noting that $\alpha_3$ is disjoint from $\delta_3$ and $\delta_4$ concludes the proof.
\end{proof}

Below, we prove Theorem \ref{thm:hamada-sections}. Our proof is different from Hamada's original proof but instead is similar to Gurtas' proof of \cite[Theorem 2.0.1]{gurtas}. Note, however, that our version of Theorem \ref{thm:hamada-sections} does not recover the full computations of Hamada's work \cite{hamada}.
\begin{proof}[{Proof of Theorem \ref{thm:hamada-sections}}]
Consider the curves $c_1, \dots, c_{2g+1}$, $d_1, \dots, d_{2g}$ and arcs $\alpha_1, \alpha_2, \alpha_3$ shown in Figure \ref{fig:filling-curves}; these curves and arcs fill $\Sigma_{2g}^4$ and satisfy the conditions of the Alexander method \cite[Proposition 2.8]{farb-margalit}. Let
\[
	k_i := T_{\delta_1}^{-1} T_{\delta_2}^{-1} T_{\delta_3}^{-1} T_{\delta_4}^{-1}T_{B_{0,1}^i} T_{B_{1,1}^i} \dots T_{B_{2g,1}^i} T_{C_1^i}  T_{B_{0,2}^i} T_{B_{1,2}^i} \dots T_{B_{2g,2}^i} T_{C_2^i} \in \Mod(\Sigma_{2g}^4).
\]
By Lemmas \ref{lem:eta-ci} and \ref{lem:eta-di}, there are equalities of oriented isotopy classes of curves
\[
	k_i(c_j) = c_j, \qquad k_i(d_j) = d_j
\]
for all $j$. By Lemmas \ref{lem:alpha-12} and \ref{lem:alpha-3}, $k_i(\alpha_j) = \alpha_j$ for $j = 1, 2, 3$. These two facts combined imply that the mapping class $k_i$ fixes each vertex and each edge (with orientations) of the graph determined by $\bigcup_{j=1}^{2g+1} c_j \cup \bigcup_{j=1}^{2g} d_j \cup \bigcup_{j=1}^3 \alpha_j$. Therefore, the Alexander method \cite[Proposition 2.8]{farb-margalit} shows that $k_i = 1 \in \Mod(\Sigma_{2g}^4)$ for both $i = 1, 2$.
\end{proof}

Similarly using the Alexander method, we deduce Theorem \ref{thm:eta-factorization}.
\begin{proof}[{Proof of Theorem \ref{thm:eta-factorization}}]
Consider the curves $c_1, \dots, c_{2g+1}, d_1, \dots, d_{2g}$ as shown in Figure \ref{fig:filling-curves}, under the inclusion $\Sigma_{2g}^4 \hookrightarrow \Sigma_{2g}$ given by capping off the four boundary components with disks. Under this inclusion, the images of the curves $B_{j, 1}^i$ and $B_{j, 2}^i$ are both isotopic to the curve $B_j$ in $\Sigma_{2g}$ for all $0 \leq j \leq 2g$ and $i = 1, 2$ and the images of the curves $C_1^i$ and $C_2^i$ are both isotopic to the curve $C$ in $\Sigma_{2g}$ for $i = 1, 2$. By Lemmas \ref{lem:eta-ci} and \ref{lem:eta-di},
\[
	T_{B_0} \dots T_{B_{2g}} T_C(c_j) = c_{2g+2-j}, \qquad T_{B_0} \dots T_{B_{2g}} T_C(d_j) = \bar{d}_{2g+1-j}
\]
for all $j$ as oriented isotopy classes of curves. Therefore, the mapping class
\[
	[\eta] T_{B_0} \dots T_{B_{2g}} T_C \in \Mod(\Sigma_{2g})
\]
acts as the identity on the graph determined by $\bigcup_{j=1}^{2g+1} c_j \cup \bigcup_{j=1}^{2g}d_j$. Because the curves $c_1, \dots, c_{2g+1}$, $d_1, \dots, d_{2g}$ fill $\Sigma_{2g}$ and satisfy the the conditions of the Alexander method \cite[Proposition 2.8]{farb-margalit}, it applies to show that
\[
	[\eta] T_{B_0} \dots T_{B_{2g}} T_C = 1 \in \Mod(\Sigma_{2g}).
\]
Finally, note that $[\eta]$ has order $2$ because $\eta$ has order $2$ and is not isotopic to the identity.
\end{proof}
We also record a corollary of Theorem \ref{thm:hamada-sections} which studies the mapping class $\hat h \in \Mod(\Sigma_{2g, 2})$ first defined in (\ref{eqn:hat-h}).
\begin{cor}\label{cor:puncture-involution}
For $i = 1, 2$, recall the mapping classes
\[
	\tilde h_i = T_{B_{0,2}^i} T_{B_{1,2}^i} \dots T_{B_{2g,2}^i} T_{C_2^i} \in \Mod(\Sigma_{2g}^4)
\]
as defined in Lemma \ref{lem:bounding-pair-sections}. Let
\[
\hat h_1, \hat h_2 \in \Mod(\Sigma_{2g, 2})\]
denote the images of $\tilde h_1, \tilde h_2 \in \Mod(\Sigma_{2g}^4)$ respectively under the capping and forgetful homomorphisms
\[
	\Mod(\Sigma_{2g}^4) \to \Mod(\Sigma_{2g, 2})
\]
which caps each boundary component $\delta_1, \delta_2, \delta_3, \delta_4$ of $\Sigma_{2g}^4$ with a disk with a marked point $p_1, p_2, p_3, p_4$ respectively, and then forgets two marked points $p_3, p_4$. Then $\hat h_1 = \hat h_2 \in \Mod(\Sigma_{2g, 2})$ and has order $2$.
\end{cor}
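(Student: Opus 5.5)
The plan is to push the relation of Theorem \ref{thm:hamada-sections} down to $\Mod(\Sigma_{2g,2})$ and compare the two halves there. Write $\tilde g_i := T_{B_{0,1}^i}\cdots T_{B_{2g,1}^i}T_{C_1^i}$, so that Theorem \ref{thm:hamada-sections} reads $\tilde g_i\tilde h_i = T_{\delta_1}T_{\delta_2}T_{\delta_3}T_{\delta_4}$ in $\Mod(\Sigma_{2g}^4)$. The boundary twists die when we cap with marked disks, so the images $\hat g_i,\hat h_i \in \Mod(\Sigma_{2g,2})$ satisfy $\hat g_i\hat h_i = 1$. Hence the order-$2$ claim reduces to showing $\hat g_i = \hat h_i$ in $\Mod(\Sigma_{2g,2})$; then $\hat h_i^2 = \hat g_i\hat h_i = 1$.

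For both $\hat g_i=\hat h_i$ and $\hat h_1=\hat h_2$, I will use the Alexander method in $\Sigma_{2g,2}$, following the proof of Theorem \ref{thm:hamada-sections}. I take the images of the filling curves $c_1,\dots,c_{2g+1}$, $d_1,\dots,d_{2g}$ together with one arc joining $p_1$ to $p_2$, namely the image of $\alpha_1$ or $\alpha_2$ after capping. Lemmas \ref{lem:eta-ci} and \ref{lem:eta-di} give $\tilde h_i(c_j)=c_{2g+2-j}$ and $\tilde h_i(d_j)=\bar d_{2g+1-j}$, independently of $i$. Since $\tilde g_i\tilde h_i$ fixes every $c_j,d_j$, also $\tilde g_i(c_{2g+2-j})=c_j$. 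By the involutive symmetry of the index maps, $\tilde g_i$ acts on these curves exactly as $\tilde h_i$ does. So on the curve system, $\hat g_1,\hat g_2,\hat h_1,\hat h_2$ all agree, and the agreement with orientations is exactly what the lemmas provide.

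The remaining input is the action on the marked points and on the arc from $p_1$ to $p_2$. From the figures I expect each of these mapping classes to swap or fix $p_1,p_2$ consistently; the hyperelliptic involution $\iota$ relates the cases $i=1,2$. For the arc, I will use the intermediate computations in the proof of Lemma \ref{lem:alpha-12}. These compute $\tilde h_i(\alpha_1)$ as $T_{B_{0,2}^i}(\gamma_g^i)$, which becomes $T_{\delta_1}\cdots T_{\delta_4}(\alpha_1)$ after applying the full product. After capping and forgetting $p_3,p_4$, I will identify $\hat h_i(\alpha_1)$ with a fixed arc independent of $i$, presumably $\alpha_1$ or $\alpha_2$ up to reversal. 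Then $\hat g_i(\alpha_1)=\hat h_i^{-1}(\alpha_1)$ follows from $\hat g_i\hat h_i=1$.

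Two points need checking. First, the oriented curves and this arc must still satisfy the hypotheses of the Alexander method once $p_3,p_4$ are forgotten. Second, I must verify that $\hat h_i^{-1}(\alpha_1)=\hat h_i(\alpha_1)$; this is just the $\mathbb Z/2$-symmetry we are trying to prove, but checked on a single arc. The Alexander method then gives $\hat g_i=\hat h_i$ and $\hat h_1=\hat h_2$. Finally, $\hat h_i\neq 1$ because its image in $\Mod(\Sigma_{2g})$ is $[\eta]\neq 1$ by Theorem \ref{thm:eta-factorization}, so $\hat h_i$ has order exactly $2$.

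I expect the main obstacle to be the arc computation. The curve data is already supplied by the earlier lemmas, but the behavior of an arc between the two marked points must be read off from the pictures (the arcs $\gamma_\ell^i$ after forgetting $p_3,p_4$). If this is awkward, I would try first to realize $\hat h_i$ directly by an explicit order-$2$ diffeomorphism: the involution $\eta$ of Figure \ref{fig:mck-vanishing-cycles}, with the two marked points chosen as its fixed points. I would then check that it acts on the filling system as above.
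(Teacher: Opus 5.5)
Your reduction matches the paper's: capping kills $T_{\delta_1}T_{\delta_2}T_{\delta_3}T_{\delta_4}$, so $\hat g_i\hat h_i=1$, and it suffices to prove $\hat g_i=\hat h_i$ and $\hat h_1=\hat h_2$. Your remark that $\hat h_i\neq 1$, because it maps to $[\eta]$, even supplies a point the paper leaves implicit. Your use of Lemmas~\ref{lem:eta-ci} and~\ref{lem:eta-di} to show that $\tilde g_i$, $\tilde h_1$, $\tilde h_2$ act identically on the oriented curves $c_j,d_j$ is also correct. Note too that these images fix $p_1$ and $p_2$ individually, since they come from $\Mod(\Sigma^4_{2g})$, so there is no swapping to worry about.

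\textbf{The step that fails is your choice of arc.} The proof of Lemma~\ref{lem:alpha-3} uses that $\alpha_3$ is disjoint from $\delta_3$ and $\delta_4$. Since the curves $c_j,d_j$ and the arcs $\alpha_1,\alpha_2,\alpha_3$ fill $\Sigma_{2g}^4$, every boundary component must be met by some arc. Because $\alpha_2=\iota(\alpha_1)$ and $\iota(\delta_3)=\delta_4$, both $\alpha_1$ and $\alpha_2$ must therefore end on $\delta_3\cup\delta_4$. After $p_3,p_4$ are forgotten, they are not arcs between marked points, so the computation from Lemma~\ref{lem:alpha-12} that you planned to use does not help. The candidate arc from $p_1$ to $p_2$ is $\alpha_3$, and for it the paper only gives $\tilde h_i(\alpha_3)=T_{B^i_{0,2}}(\alpha_3)$. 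Checking $\hat h_i^2(\alpha_3)=\alpha_3$ and $\hat h_1(\alpha_3)=\hat h_2(\alpha_3)$ would be a new picture computation that none of the paper's lemmas supply.

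\textbf{The simplest repair is to drop the arc.} The curves $c_1,d_1,\dots,d_{2g},c_{2g+1}$ form a chain of odd length $4g+1$. The complement of such a chain in $\Sigma_{2g}$ is two disks, interchanged by the hyperelliptic involution preserving the chain. If, as drawn in Figure~\ref{fig:filling-curves}, $\iota$ is that involution, then $\iota(\delta_1)=\delta_2$ puts $p_1$ and $p_2$ in different disks. In that case the curves alone fill $\Sigma_{2g,2}$, each complementary region being a once-marked disk, and the Alexander method applies directly to $\hat h_i^{-1}\hat g_i$ and $\hat h_1^{-1}\hat h_2$. This filling statement is the one new input you must verify.

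By comparison, the paper avoids the Alexander method entirely. It reads off from Figure~\ref{fig:MCK-lifts} that once $\delta_3,\delta_4$ are capped by unmarked disks, the four lifts $B^1_{j,1},B^1_{j,2},B^2_{j,1},B^2_{j,2}$ of $B_j$ become isotopic in $\Sigma_{2g,2}$, as do the four lifts of $C$. Hence $\hat h_1$, $\hat h_2$ and $\hat g_1$ are literally the same product of Dehn twists. Your route trades that figure check for a filling check and reuses lemmas already proved; the paper's route is shorter.
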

\begin{proof}
Consider the inclusion of surfaces
\[
	\Sigma_{2g}^4 \hookrightarrow \Sigma_{2g, 2}
\]
given by capping each boundary component $\delta_1$, $\delta_2$ with a disk with a marked point $p_1$, $p_2$ respectively and each boundary component $\delta_3$, $\delta_4$ of $\Sigma_{2g}^4$ with a disk. We now take isotopy classes of curves in $\Sigma_{2g}^4$ and consider their images in $\Sigma_{2g, 2}$ under this inclusion. From Figure \ref{fig:MCK-lifts}, observe that there are equalities of isotopy classes of curves
\[
	B_{j, 1}^1 = B_{j, 2}^1 = B_{j, 1}^2 = B_{j, 2}^2 \subseteq \Sigma_{2g, 2}
\]
for all $0 \leq j \leq 2g$ and
\[
	C_1^1 = C_2^1 = C_1^2 = C_2^2 \subseteq \Sigma_{2g, 2}.
\]
In other words,
\[
	T_{B_{j,1}^1} = T_{B_{j,2}^1} = T_{B_{j,1}^2} = T_{B_{j,2}^2} \in \Mod(\Sigma_{2g, 2}), \qquad T_{C_1^1} = T_{C_2^1} = T_{C_1^2} = T_{C_2^2} \in \Mod(\Sigma_{2g, 2})
\]
for all $0 \leq j \leq 2g$. Therefore, there is an equality in $\Mod(\Sigma_{2g, 2})$
\[
	\hat h_1 = T_{B_{0,2}^1} T_{B_{1,2}^1} \dots T_{B_{2g,2}^1} T_{C_2^1} = T_{B_{0,2}^2} T_{B_{1,2}^2} \dots T_{B_{2g,2}^2} T_{C_2^2} = \hat h_2\in \Mod(\Sigma_{2g, 2}).
\]
The equalities of isotopy classes of curves above also show that
\[
	\hat h_1 = T_{B_{0, 1}^1} T_{B_{1,1}^1} \dots T_{B_{2g,1}^1} T_{C_1^1} \in \Mod(\Sigma_{2g, 2}).
\]
Now to see that $\hat h_1$ (and hence $\hat h_2$) has order $2$, compute using Theorem \ref{thm:hamada-sections} that 
\[
	\hat h_1^2 = \left(T_{B_{0,1}^1} T_{B_{1,1}^1} \dots T_{B_{2g,1}^1} T_{C_1^1}\right)\left(T_{B_{0, 2}^1} T_{B_{1,2}^1} \dots T_{B_{2g,2}^1} T_{C_2^1}\right) = 1 \in \Mod(\Sigma_{2g, 2}). \qedhere
\]
\end{proof}

Finally, the following lemma is crucial in the construction of the partial conjugations of the MCK Lefschetz fibration in Section \ref{sec:twisted-mck}.
\begin{lem}\label{lem:tilde-f-lemma}
For $i = 1, 2$, recall the mapping classes
\[
	\tilde h_i = T_{B_{0,2}^i} T_{B_{1,2}^i} \dots T_{B_{2g,2}^i} T_{C_2^i} \in \Mod(\Sigma_{2g}^4)
\]
as defined in Lemma \ref{lem:bounding-pair-sections}.
\begin{enumerate}[(a)]
\item \label{lem:tilde-f-lemma-disjoint} The curves $\tilde x$, $\tilde y$, $\tilde h_i(\tilde x)$, and $\tilde h_i(\tilde y)$ are pairwise disjoint in $\Sigma_{2g}^4$ and are as shown in Figure \ref{fig:bounding-pair}. In particular,
  \[ \tilde h_1(\tilde x) = \tilde h_2(\tilde x), \qquad \tilde h_1(\tilde y) = \tilde h_2(\tilde y). \]
\item \label{lem:tilde-f-lemma-double} There are equalities of isotopy classes of curves $\tilde h_i(\tilde x) = \tilde h_i^{-1}(\tilde x)$ and $\tilde h_i(\tilde y) = \tilde h_i^{-1} (\tilde y)$.
\end{enumerate}
\end{lem}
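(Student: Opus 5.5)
Part (a) is a direct curve computation, and part (b) follows from it together with Theorem \ref{thm:hamada-sections}. Both rest on the explicit pictures in Figures \ref{fig:bounding-pair} and \ref{fig:MCK-lifts}.

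\emph{Part (a).} Apply the factors of $\tilde h_i = T_{B_{0,2}^i} \cdots T_{B_{2g,2}^i} T_{C_2^i}$ to $\tilde x$ one at a time, starting from the rightmost twist $T_{C_2^i}$. The curve $\tilde x$ sits in the leftmost genus-one region; it is a bounding pair partner of $\tilde y$, separating off the first handle. Hence $\tilde x$ meets only a few of the curves $B_{\ell,2}^i$, namely those near the first handle, and is disjoint from $C_2^i$ and from the remaining $B_{\ell,2}^i$.

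We organize the computation exactly as in Lemmas \ref{lem:eta-ci} and \ref{lem:eta-di}. Consecutive pairs $T_{B_{2j-2,2}^i}T_{B_{2j-1,2}^i}$ transport curves in the $j$th handle region to the mirror region $2g+2-j$. So $\tilde h_i$ acts on $\tilde x$ and $\tilde y$ like the involution $\eta$, carrying them to curves in the last handle region; these are the curves drawn on the right of Figure \ref{fig:bounding-pair}. Equivalently, one may write $\tilde x$ and $\tilde y$ as band sums of the curves $c_j, d_j$ of Figure \ref{fig:filling-curves}. Lemmas \ref{lem:eta-ci} and \ref{lem:eta-di} give $\tilde h_i(c_j)=c_{2g+2-j}$ and $\tilde h_i(d_j)=\bar d_{2g+1-j}$, so one only has to track how the bands pass the boundary components $\delta_k$.

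Once the images are identified, disjointness from $\tilde x$ and $\tilde y$ is visible: the images lie in the handle at the opposite end, and $g\ge 2$ ensures the two ends are separated. The equality $\tilde h_1(\tilde x)=\tilde h_2(\tilde x)$, and likewise for $\tilde y$, comes from the computation producing the same pictured curve in both cases. The lifts $B^1_{\ell,2}$ and $B^2_{\ell,2}$ differ only in how they wind around the boundary components $\delta_3,\delta_4$. The curves $\tilde x,\tilde y$ and their images are drawn to avoid the regions where these differ, so the check must confirm that the $\delta$-positions relative to the image curves agree.

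\emph{Part (b).} By Theorem \ref{thm:hamada-sections},
\[
\tilde h_i^{-1} = T_{\delta_1}^{-1}T_{\delta_2}^{-1}T_{\delta_3}^{-1}T_{\delta_4}^{-1}\, T_{B_{0,1}^i}\cdots T_{B_{2g,1}^i}T_{C_1^i},
\]
and boundary twists act trivially on isotopy classes of closed curves. So it suffices to show that $T_{B_{0,1}^i}\cdots T_{B_{2g,1}^i}T_{C_1^i}(\tilde x)=\tilde h_i(\tilde x)$, and likewise for $\tilde y$. This is the same computation as in part (a) with the lifts $B_{\ell,1}^i, C_1^i$ in place of $B_{\ell,2}^i, C_2^i$. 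These lifts differ from the second family only near the boundary components, in a way the relevant curves do not see. The case $i=1$ can also be organized using the involution $\iota$ of Figure \ref{fig:filling-curves}, which swaps the two families and, I expect, preserves $\tilde x$ and $\tilde y$ up to isotopy.

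The main obstacle is the bookkeeping in part (a): following the curve through roughly $2g+2$ twists while tracking its position relative to the four boundary components. A mistake there would break both the disjointness claim and the independence of $i$. I would reduce it to the pairwise handle-swapping steps already verified in Lemmas \ref{lem:eta-ci} and \ref{lem:eta-di}.
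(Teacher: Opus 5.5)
Your plan is sound. The reduction of (b), via Theorem~\ref{thm:hamada-sections}, to $T_{B_{0,1}^i}\cdots T_{B_{2g,1}^i}T_{C_1^i}(\tilde x)=\tilde h_i(\tilde x)$ is correct, since boundary twists fix closed curves. As in the paper, the rest is a figure computation. The organization differs, though.

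The paper first observes that $C_2^i$ and $B_{j,2}^i$ for $j>3$ miss both $\tilde x$ and the pictured curve $\tilde z$ (the claimed $\tilde h_i(\tilde x)$). It then checks directly that $T_{B_{0,2}^i}T_{B_{1,2}^i}T_{B_{2,2}^i}T_{B_{3,2}^i}$ interchanges $\tilde x$ and $\tilde z$ for both $i$. This proves (a) and (b) for $\tilde x$ at once, without using Hamada's relation. You instead spend a second hand computation on the complementary family of lifts, which is comparable work.

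The real savings come for $\tilde y$, which you propose to handle by redoing everything. The paper gets it from the hyperelliptic involution $\iota$ of Figure~\ref{fig:bounding-pair}, using $\iota(\tilde x)=\tilde y$ and $\iota(\tilde z)=\tilde w$:
\begin{itemize}
\item For $i=2$, $\iota$ fixes every $B_{j,2}^2$ and $C_2^2$, so it commutes with $\tilde h_2$.
\item For $i=1$, $\iota$ conjugates $\tilde h_1$ to $\tilde g_1:=T_{B_{0,1}^1}\cdots T_{B_{2g,1}^1}T_{C_1^1}$. Hamada's relation then gives $\tilde h_1(\tilde y)=\iota\tilde g_1(\tilde x)=\iota\tilde g_1\tilde h_1(\tilde z)=\iota(\tilde z)=\tilde w$, and similarly $\tilde h_1(\tilde w)=\iota\tilde g_1\tilde h_1(\tilde x)=\tilde y$.
\end{itemize}

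One correction to your side remark: $\iota$ does not fix $\tilde x$ and $\tilde y$ individually, and it cannot. If it did, then after capping, $T_xT_y^{-1}$ would commute with a hyperelliptic involution. Corollary~\ref{cor:hyperelliptic-kernel} would then force $\tau(T_xT_y^{-1})=0$, contradicting $\tau(T_xT_y^{-1})=a_1\wedge b_1\wedge b_2\neq 0$. With the correct action, your reduced claim for $i=1$, namely $\tilde g_1(\tilde x)=\tilde h_1(\tilde x)$, is equivalent to $\tilde h_1(\tilde y)=\iota(\tilde h_1(\tilde x))$. That is exactly part (a) for $\tilde y$, which is the symmetry the paper exploits to halve the figure work.

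Separately, the band-sum shortcut in (a) is not a genuine reduction. The image of a band sum is the band sum along the image of the band arc. Lemmas~\ref{lem:eta-ci} and~\ref{lem:eta-di} do not determine that image, so this is not merely a matter of the positions of the $\delta_k$.
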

\begin{proof}
  Let $\tilde z$ and $\tilde w$ be the isotopy classes of curves depicted in \Cref{fig:bounding-pair} labelled $\tilde h_i(\tilde x)$ and $\tilde h_i(\tilde y)$ respectively. First, observe that $B_{j,2}^i$ and $C_2^i$ are disjoint from $\tilde x$ and $\tilde z$ for any $j>3$ and both $i=1,2$. Therefore,
  \[
  	\tilde h_i(\tilde x) = T_{B_{0,2}^i} T_{B_{1,2}^i} T_{B_{2,2}^i} T_{B_{3,2}^i}(\tilde x), \qquad \tilde h_i(\tilde z) = T_{B_{0,2}^i} T_{B_{1,2}^i} T_{B_{2,2}^i} T_{B_{3,2}^i}(\tilde z)
  \]
  for both $i = 1, 2$. A direct computation shows that
  \[
  	T_{B_{0,2}^i} T_{B_{1,2}^i} T_{B_{2,2}^i} T_{B_{3,2}^i}(\tilde x) = \tilde z, \qquad T_{B_{0,2}^i} T_{B_{1,2}^i} T_{B_{2,2}^i} T_{B_{3,2}^i}(\tilde z) = \tilde x
  \]
  for both $i = 1, 2$. This proves \ref{lem:tilde-f-lemma-disjoint} and \ref{lem:tilde-f-lemma-double} for $\tilde x$.

  To simplify the computations for $\tilde y$, note that the hyperelliptic involution $\iota$ (as shown on the right side of \Cref{fig:bounding-pair}) acts by
  \[
  	\iota(\tilde x) = \tilde y, \qquad \iota(\tilde z) = \tilde w.
  \]
  Letting $\tilde g_i := \iota^{-1} \tilde h_i \iota$, compute that
  \[
  	\tilde h_i(\tilde y) = \iota \tilde g_i (\tilde x), \qquad \tilde h_i(\tilde w) = \iota \tilde g_i(\tilde z).
	\]

	We now compute for $i = 1$ and $i = 2$ separately. For $i = 1$, observe in Figure \ref{fig:MCK-lifts} that
	\[
		\iota(B_{j,2}^1) = B_{j,1}^1, \qquad \iota(C_2^1) = C_1^1
	\]
	for all $0 \leq j \leq 2g$. Therefore,
	\[
			\tilde g_1 = T_{B_{0,1}^1} T_{B_{1,1}^1} \dots T_{B_{2g,1}^1} T_{C_1^1} \in \Mod(\Sigma_{2g}^4).
	\]
	Theorem \ref{thm:hamada-sections} shows that
	\[
		\tilde g_1 \tilde h_1(\tilde x) = T_{\delta_1} T_{\delta_2} T_{\delta_3} T_{\delta_4}(\tilde x) = \tilde x, \qquad \tilde g_1 \tilde h_1(\tilde z) = T_{\delta_1} T_{\delta_2} T_{\delta_3} T_{\delta_4}(\tilde z) = \tilde z.
	\]
	Now \ref{lem:tilde-f-lemma-disjoint} for $\tilde y$ follows because
	\[
		\tilde h_1(\tilde y) = \iota \tilde g_1(\tilde x) = \iota \tilde g_1(\tilde h_1^2(\tilde x)) = \iota \tilde g_1(\tilde h_1(\tilde z)) = \iota(\tilde z) = \tilde w,
	\]
	where the second and third equalities follow from \ref{lem:tilde-f-lemma-double} and \ref{lem:tilde-f-lemma-disjoint} for $\tilde x$ respectively. To see \ref{lem:tilde-f-lemma-double} for $\tilde y$, apply the same computation as above for $\tilde z$:
	\[
		\tilde h_1^2(\tilde y) = \tilde h_1(\tilde w) = \iota \tilde g_1(\tilde z) = \iota \tilde g_1(\tilde h_1^2(\tilde z)) = \iota \tilde g_1(\tilde h_1(\tilde x)) = \iota(\tilde x) = \tilde y,
	\]
	and hence $\tilde h_1
	^{-1}(\tilde y) = \tilde h_1(\tilde y)$.

	For $i = 2$, observe in Figure \ref{fig:MCK-lifts} that
	\[
		\iota(B_{j,2}^2) = B_{j,2}^2, \qquad \iota(C_2^2) = C_2^2
	\]
	for all $0 \leq j \leq 2g$, and so
	\[
		\tilde g_2 = \tilde h_2 \in \Mod(\Sigma_{2g}^4).
	\]
	Now \ref{lem:tilde-f-lemma-disjoint} for $\tilde y$ follows because
	\[
		\tilde h_2(\tilde y) = \tilde h_2(\iota(\tilde x)) = \iota \tilde g_2(\tilde x) = \iota \tilde h_2(\tilde x) = \iota(\tilde z) = \tilde w,
	\]
	where the second equality follows from the definition of $\tilde g_2$, the third follows from the fact that $\tilde g_2 = \tilde h_2$, and the fourth follows from \ref{lem:tilde-f-lemma-disjoint} for $\tilde x$. To see \ref{lem:tilde-f-lemma-double} for $\tilde y$, apply the same computation as above for $\tilde z$:
	\[
		\tilde h_2^2(\tilde y) = \tilde h_2(\tilde w) = \tilde h_2(\iota(\tilde z)) = \iota \tilde g_2(\tilde z) = \iota \tilde h_2(\tilde z) = \iota (\tilde x) = \tilde y,
	\]
	and hence $\tilde h_2^{-1}(\tilde y ) = \tilde h_2(\tilde y)$. 
\end{proof}


\section{Proof of Proposition \ref{prop:isotoping-hateta}} \label{appendix-b}

The goal of this appendix is to prove Proposition \ref{prop:isotoping-hateta}. Throughout this section
we follow the notation of Remark~\ref{capping}. By Lemma \ref{lem:tilde-f-lemma}, there are equalities of isotopy classes of curves in $\Sigma_{2g, 4}$ 
\[
	\tilde h^2(\tilde x) = \tilde x, \qquad \tilde h^2(\tilde y) = \tilde y.
\]
Therefore there exists a representative $\tilde\eta_0 \in \Diff^+(\Sigma_{2g, 4})$ of $\tilde h \in \Mod(\Sigma_{2g, 4})$ that preserves the set $\{\alpha, \beta, \gamma, \delta\}$ of curves in $\Sigma_{2g, 4}$ (cf. \cite[Section 13.2.2]{farb-margalit}).

Let $T_1, S, T_2$ denote the closures of the three connected components of
\[
	\Sigma_{2g, 4} - \left(\alpha \sqcup \beta \sqcup \tilde\eta_0(\alpha) \sqcup \tilde\eta_0(\beta)\right)
\]
as shown in Figure \ref{fig:surface-decomp}, where we regard $S \cong \Sigma_{2g-2, 4}^4$ as a surface with four marked points and four boundary components. Observe that $\tilde\eta_0$ permutes the subsets $T_1$ and $T_2$ and preserves $S$. Moreover, $\tilde\eta_0^2$ preserves each subset $T_1$, $S$, and $T_2$.

\begin{figure}
\includegraphics[width=0.5\textwidth]{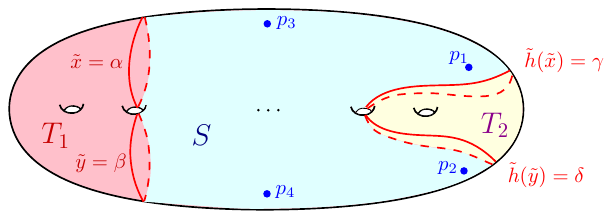}
\includegraphics[width=0.4\textwidth]{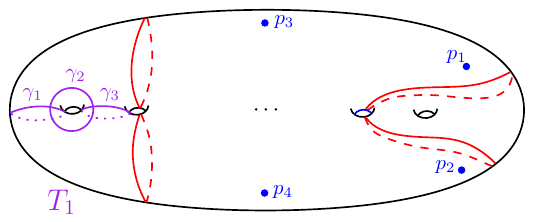}
\caption{Left: Curves $\alpha$, $\beta$, $\gamma$, $\delta$ in $\Sigma_{2g, 4}$ decomposing the surface as a union of compact subsurfaces $T_1$, $S$, $T_2$; Right: Curves $\gamma_1$, $\gamma_2$, $\gamma_3$ that fill $T^\circ_1$ and are fixed up to isotopy by $\tilde h$.} \label{fig:surface-decomp}
\end{figure}

In the following lemma, we modify $\tilde\eta_0$ to a new diffeomorphism $\tilde\eta_1 \in \Diff^+(\Sigma_{2g, 4})$ by an isotopy supported compactly in $\mathring T_1$ such that $\tilde\eta_1$ has order $2$ away from some collar neighborhoods in $T_1 \cup T_2$. The isotopies and diffeomorphisms of the proof of Lemma \ref{lem:modifying-t1-t2} are depicted in Figure \ref{fig:t1-cartoon}.
\begin{figure}
\includegraphics[width=0.5\textwidth]{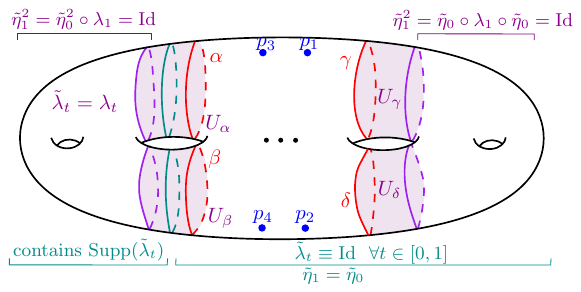}
\caption{A schematic of the constructed diffeomorphism $\tilde\eta_1$ and the support of the isotopy $\tilde\lambda_t$ in the proof of Lemma \ref{lem:modifying-t1-t2}}\label{fig:t1-cartoon}
\end{figure}

\begin{lem}[{Modification in $T_1 \cup T_2$}]\label{lem:modifying-t1-t2}
There exist collar neighborhoods $U_\alpha \subseteq T_1$ and $U_\beta \subseteq T_1$ of $\alpha$ and $\beta$ respectively and a diffeomorphism $\tilde\eta_1 \in \Diff^+(\Sigma_{2g, 4})$ with $[\tilde \eta_1] = \tilde h \in \Mod(\Sigma_{2g, 4})$ such that
\[
	\tilde\eta_1^2|_{T_1 - (U_\alpha \sqcup U_\beta)} = \Id|_{T_1 - (U_\alpha \sqcup U_\beta)}
\]
and such that
\[
	\tilde\eta_1|_{S \cup T_2} = \tilde\eta_0|_{S \cup T_2}.
\]
In particular, $\tilde\eta_1$ permutes the subsets $T_1$, $S$, and $T_2$, and $\tilde\eta_1^2$ preserves each subset
\[
	T_1 - (U_\alpha \sqcup U_\beta), \quad U_\alpha, \quad U_\beta, \quad S, \quad T_2 - (U_\gamma \sqcup U_\delta), \quad U_\gamma, \quad U_\delta
\]
where $U_\gamma := \tilde\eta_1(U_\alpha)$ and $U_\delta := \tilde\eta_1(U_\beta)$.
\end{lem}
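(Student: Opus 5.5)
The approach is to look at the first-return map $\tilde\eta_0^2|_{T_1}$. We then change $\tilde\eta_0$ only on $T_1$, after post-composing with a suitable correction, so that its square becomes the identity away from two thin collars. Since $\tilde\eta_0$ swaps $T_1$ and $T_2$ and preserves $S$, the map $\psi := \tilde\eta_0^2|_{T_1}$ is a diffeomorphism of $T_1$ preserving $\partial T_1 = \alpha \sqcup \beta$ componentwise. Here $T_1$ is a genus-one subsurface with two boundary components; the figure shows no marked points in $T_1$, since $O$ avoids $W$.

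\emph{Step 1: identify the mapping class of $\psi$.} We compute $[\psi]$ in $\Mod(T_1)$, with boundary allowed to rotate. By Corollary \ref{cor:puncture-involution} together with Lemma \ref{lem:tilde-f-lemma}, $\tilde h^2$ fixes the curves $\gamma_1,\gamma_2,\gamma_3$ of Figure \ref{fig:surface-decomp} up to isotopy. Concretely, Theorem \ref{thm:hamada-sections} gives $\tilde h_i$ composed with the other half equal to the boundary multitwist, and by the $\iota$-symmetry argument in the proof of Lemma \ref{lem:tilde-f-lemma} this identifies $\tilde h^2$ on these curves. These curves fill the interior $\mathring T_1$. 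By the Alexander method \cite[Proposition 2.8]{farb-margalit} applied on $T_1$, $\psi$ is isotopic to a product of powers of Dehn twists about curves parallel to $\alpha$ and $\beta$. We may first isotope $\tilde\eta_0$, preserving $\{\alpha,\beta,\gamma,\delta\}$, so that $\psi$ fixes $\partial T_1$ pointwise. Then $\psi$ is isotopic, rel boundary up to boundary twists, to a diffeomorphism supported in small collars $U_\alpha$, $U_\beta$. Let $\tilde\lambda_t$ be an isotopy of $T_1$, compactly supported in $\mathring T_1$, from $\Id$ to $\lambda := \psi' \circ \psi^{-1}$, where $\psi'$ is the collar-supported representative.

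\emph{Step 2: define $\tilde\eta_1$.} On $S\cup T_2$ we set $\tilde\eta_1 := \tilde\eta_0$. On $T_1$ we set $\tilde\eta_1 := \tilde\eta_0 \circ \lambda$, which maps $T_1 \to T_2$. Since $\lambda$ is compactly supported in $\mathring T_1$, this is a diffeomorphism isotopic to $\tilde\eta_0$ via $\tilde\eta_0\circ\tilde\lambda_t$, so $[\tilde\eta_1]=\tilde h$.

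Now compute the square on $T_1$:
\[ \tilde\eta_1^2|_{T_1} = \tilde\eta_0|_{T_2}\circ\tilde\eta_0\circ\lambda = \psi\circ\lambda. \]
To make this equal to $\psi'$, we put the correction on the other side, i.e. define $\lambda := \psi^{-1}\circ\psi'$, which is still compactly supported after the boundary normalization. Then $\tilde\eta_1^2|_{T_1}=\psi'$, and this is the identity off $U_\alpha\sqcup U_\beta$.

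The remaining invariance claims follow formally. $\tilde\eta_1^2$ preserves $U_\alpha$ and $U_\beta$, being supported there and preserving each collar. On $T_2$ we have $\tilde\eta_1^2 = \tilde\eta_1\circ\psi'\circ\tilde\eta_1^{-1}$, so it preserves $U_\gamma := \tilde\eta_1(U_\alpha)$, $U_\delta := \tilde\eta_1(U_\beta)$ and their complement, and is the identity off them.

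\emph{Main obstacle.} The hard part is Step 1: showing that $\tilde h^2$ restricted to $T_1$ is trivial modulo boundary-parallel twists. This requires that $\gamma_1,\gamma_2,\gamma_3$, with consistent orientations, are fixed by $\tilde h^2$ and fill $T_1$. I would verify this through the relation $\tilde h_i^{\iota}\,\tilde h_i = T_{\delta_1}\cdots T_{\delta_4}$ in the manner of Lemma \ref{lem:tilde-f-lemma}, using direct curve computations with the $B_{j,2}^i$ for $j\le 3$.

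A secondary technical point is to arrange beforehand that $\psi$ fixes $\partial T_1$ pointwise and has no boundary rotation. This can be done by a preliminary isotopy of $\tilde\eta_0$ near $\alpha\cup\beta$, so that the residual twist powers are absorbed into the collars.
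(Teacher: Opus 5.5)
Your overall strategy is the paper's: show that $\tilde\eta_0^2$ fixes the filling curves $\gamma_1,\gamma_2,\gamma_3$ of $T_1$, apply the Alexander method, and precompose $\tilde\eta_0$ on $T_1$ with a correction compactly supported in $\mathring T_1$. Your formula $\tilde\eta_1^2|_{T_1}=\psi\circ\lambda$ and the closing invariance claims are correct. Two steps need repair.

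\textbf{The fact that $\tilde h^2$ fixes the $\gamma_i$ (your ``main obstacle'').} Your plan to verify this through $\tilde h_i^{\iota}\tilde h_i=T_{\delta_1}\cdots T_{\delta_4}$ fails for $i=2$. There $\iota$ fixes every $B^2_{j,2}$ and $C^2_2$, so $\iota^{-1}\tilde h_2\iota=\tilde h_2$ rather than the other half of Hamada's factorization. Theorem~\ref{thm:hamada-sections} gives your relation only for $i=1$. No such computation is needed. The paper reads the fact off Lemmas~\ref{lem:eta-ci} and~\ref{lem:eta-di}: $\tilde h(c_j)=c_{2g+2-j}$ and $\tilde h(d_j)=\bar d_{2g+1-j}$ for all $j$. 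Hence $\tilde h^2$ fixes $c_1,c_2,d_1$ as oriented curves, and \cite[Lemma 3.16]{farb-margalit} upgrades isotopy in the surface to isotopy in $T_1$. Your first citation also works once that lemma is added. Since $T_1$ contains no marked points, $\hat h^2=1$ in $\Mod(\Sigma_{2g,2})$ together with \cite[Lemma 3.16]{farb-margalit} already gives the isotopies in $T_1$; this is exactly how the paper argues for $S$ in Lemma~\ref{lem:nielsen-S}. Lemma~\ref{lem:tilde-f-lemma} plays no role here.

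\textbf{The boundary normalization conflicts with the statement.} Since $\alpha,\gamma\subseteq S$, the restriction $\psi|_\alpha=\tilde\eta_0|_\gamma\circ\tilde\eta_0|_\alpha$ depends only on $\tilde\eta_0|_S$. So any preliminary isotopy making $\psi$ fix $\partial T_1$ pointwise must change $\tilde\eta_0$ on $S$. Your $\tilde\eta_1$ then agrees on $S\cup T_2$ only with the modified map, not with the given $\tilde\eta_0$ as the lemma requires. Re-choosing $\tilde\eta_0$ would be harmless downstream. Note also that $\lambda=\psi^{-1}\circ\psi'$ is compactly supported in $\mathring T_1$ only if $\psi'=\psi$ on a neighborhood of $\partial T_1$, not merely on $\partial T_1$.

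The paper avoids the normalization altogether by working in $\Mod(\mathring T_1)$, where boundary twists are already trivial. It takes an isotopy $\lambda_t$ of $\mathring T_1$ from $\Id$ to $\tilde\eta_0^{-2}|_{\mathring T_1}$. Isotopy extension then gives $\tilde\lambda_t$, compactly supported in $\mathring T_1$ and agreeing with $\lambda_t$ on $T_1-(U_\alpha\sqcup U_\beta)$. Setting $\tilde\eta_1=\tilde\eta_0\circ\tilde\lambda_1$, one gets $\tilde\eta_1=\tilde\eta_0$ on $S\cup T_2$ automatically. The collars absorb both the boundary twists and whatever $\tilde\eta_0^2$ does on $\alpha\cup\beta$.
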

\begin{proof}
Consider the curves $\gamma_1, \gamma_2, \gamma_3$ in $T_1$ as shown in Figure \ref{fig:surface-decomp}. By Lemmas \ref{lem:eta-ci} (for the isotopy classes $[\gamma_1] = c_1$ and $[\gamma_2] = c_2$) and \ref{lem:eta-di} (for the isotopy class $[\gamma_2] = d_1$), the curve $\tilde\eta_0^2(\gamma_i)$ is isotopic to $\gamma_i$ in $\Sigma_{2g, 4}$ for each $i = 1, 2, 3$. By \cite[Lemma 3.16]{farb-margalit}, the curve $\tilde\eta_0^2(\gamma_i)$ is then also isotopic to $\gamma_i$ in $T_1$ for each $i = 1, 2, 3$.

Consider the interior $\mathring{T_1}$ of $T_1$. The curves $\gamma_1, \gamma_2, \gamma_3 \subseteq \mathring T_1$ together fill $\mathring T_1$. By the Alexander method \cite[Proposition 2.8]{farb-margalit},
\[
	[\tilde\eta_0^2|_{\mathring T_1}] = 1 \in \Mod(\mathring T_1).
\]
Let $\lambda_t: \mathring T_1 \times [0, 1] \to \mathring T_1$ denote an isotopy with $\lambda_0 = \Id_{\mathring T_1}$ and $\lambda_1 = \tilde\eta_0^{-2}|_{\mathring T_1}$. Fix any collar neighborhoods $U_\alpha$, $U_\beta$ of $\alpha$ and $\beta$ in $T_1$. By the isotopy extension theorem \cite[Theorem 8.1.4]{hirsch}, there exists an isotopy $\tilde\lambda _t: \Sigma_{2g, 4} \times [0, 1] \to \Sigma_{2g, 4}$ with $\tilde\lambda_0 = \Id_{\Sigma_{2g, 4}}$ such that
\[
	\tilde\lambda_t|_{T_1 - (U_\alpha \sqcup U_\beta)} = \lambda_t|_{T_1 - (U_\alpha \sqcup U_\beta)}
\]
and such that $\tilde\lambda_t$ is compactly supported in $\mathring T_1$. Finally, let
\[
	\tilde \eta_1 := \tilde\eta_0 \circ \tilde\lambda_1.
\]
Because $\tilde\lambda_t$ is supported in $\mathring T_1$,
\[
	\tilde\eta_1|_{S \cup T_2} = \tilde\eta_0 \circ \tilde\lambda_1|_{S \cup T_2} = \tilde\eta_0|_{S \cup T_2}.
\]
Finally,
\[
	(\tilde\eta_0 \circ \tilde\lambda_1)^2|_{T_1 - (U_\alpha \sqcup U_\beta)} = (\tilde\eta_0 \circ \tilde\lambda_1)|_{T_2}  \circ (\tilde\eta_0 \circ \lambda_1)|_{T_1 - (U_\alpha \sqcup U_\beta)} = \tilde\eta_0|_{T_2} \circ \tilde\eta_0^{-1}|_{T_1 - (U_\alpha \sqcup U_\beta)} = \Id_{T_1 - (U_\alpha \sqcup U_\beta)}
\]
as desired.
\end{proof}

In the following lemma, we view the subsurface $S \subseteq \Sigma_{2g,4}$ as the compact subsurface with four boundary components (coming from $\alpha, \beta, \gamma, \delta$) containing the four marked points $p_1, p_2, p_3, p_4$. Let $\mathring S$ denote the interior of $S$; in particular, $\mathring S$ has four punctures (coming from the four boundary components of $S$). We write $\Diff^+(\mathring S, p_1, p_2)$ or $\Mod(\mathring S, p_1, p_2)$ below to denote the group of diffeomorphisms or mapping classes respectively of $\mathring S$ that fix each point $p_1, p_2$.
\begin{lem}[{Modification in $S$}]\label{lem:nielsen-S}
There exists a diffeomorphism $\psi \in \Diff^+(\mathring S, p_1, p_2)$ of order $2$ such that
\[
	[\psi] = [\tilde\eta_1|_{\mathring S}] \in \Mod(\mathring S, p_1, p_2).
\]
\end{lem}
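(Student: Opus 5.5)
Since $\hat h\in\Mod(\Sigma_{2g,2})$ has order $2$ (Corollary \ref{cor:puncture-involution}) and $\mathring S$ is a hyperbolic surface of finite type, the natural route is Nielsen realization (Kerckhoff's theorem, or its elementary version for finite cyclic groups): a finite-order mapping class of a finite-type hyperbolic surface is represented by a finite-order diffeomorphism, indeed an isometry of some complete hyperbolic metric. So the plan reduces to one claim,
\[
	[\tilde\eta_1|_{\mathring S}]^2 = 1 \in \Mod(\mathring S, p_1, p_2),
\]
after which Nielsen realization applied to the group $\Mod(\mathring S,p_1,p_2)$ (treating $p_1,p_2$ as additional punctures, so the realizing isometry fixes them) produces the required order-$2$ $\psi$. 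Note that $\tilde\eta_1$ preserves $S$ (Lemma \ref{lem:modifying-t1-t2}), fixes $p_1,p_2$ (it fixes all four marked points as a representative of $\tilde h\in\Mod(\Sigma_{2g,4})$), and permutes the punctures of $\mathring S$ (swapping the ends $\alpha\leftrightarrow\gamma$, $\beta\leftrightarrow\delta$), so $[\tilde\eta_1|_{\mathring S}]$ is a well-defined mapping class, allowed to permute punctures but fixing $p_1,p_2$. I would forget $p_3,p_4$ at the start, since only $p_1,p_2$ are required to be fixed.

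To prove the claim I would use the Alexander method on $\mathring S$. In $\Sigma_{2g,2}$, the map $\tilde\eta_1^2$ represents $\hat h^2 = 1$. Then I would choose a filling collection of curves and arcs in $\mathring S$ (for instance the curves $c_j, d_j$ of Figure \ref{fig:filling-curves} lying in $S$, together with arcs joining $p_1,p_2$ and the punctures) and check that $\tilde\eta_1^2$ fixes each of them up to isotopy in $\mathring S$. For closed curves this follows from Lemmas \ref{lem:eta-ci} and \ref{lem:eta-di}: $\tilde h^2$ fixes $c_j,d_j$ in $\Sigma_{2g,4}$, hence in $\Sigma_{2g,2}$, and by \cite[Lemma 3.16]{farb-margalit} (as in the proof of Lemma \ref{lem:modifying-t1-t2}) the isotopy can be taken inside $S$, since $\alpha,\beta,\gamma,\delta$ are preserved. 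For arcs I would argue similarly using Lemmas \ref{lem:alpha-12} and \ref{lem:alpha-3}, where the boundary twists become trivial after capping.

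An alternative argument is injectivity. The inclusion $\mathring S\hookrightarrow\Sigma_{2g,2}$ induces a map whose kernel, on the stabilizer of the multicurve $\alpha\cup\beta\cup\gamma\cup\delta$, is generated by twists about the reducing curves (\cite[Theorem 3.18]{farb-margalit}); these twists are trivial in $\Mod(\mathring S)$ because the curves are peripheral there. Granting this, $\hat h^2=1$ would give $[\tilde\eta_1|_{\mathring S}]^2=1$ directly, provided one also checks that $\tilde\eta_1^2$ is trivial on $T_1,T_2$ up to isotopy (Lemma \ref{lem:modifying-t1-t2}), so that no contribution from those pieces survives.

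I expect the main obstacle to be exactly this bookkeeping: ensuring that the restricted square is trivial rather than a product of twists about the cut curves or a map permuting ends nontrivially. Passing to the interior $\mathring S$ disposes of the twists; the end permutation is squared away because $\tilde\eta_1^2$ preserves each of $U_\alpha,U_\beta,U_\gamma,U_\delta$. The remaining genuine check is that the filling system used in the Alexander method lies in $S$ and is fixed there, which is where the figure-based computations of Appendix \ref{appendix-a} enter.
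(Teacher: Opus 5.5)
Your proposal is correct and follows essentially the same route as the paper. You use $\hat h^2 = 1$ in $\Mod(\Sigma_{2g,2})$ (Corollary \ref{cor:puncture-involution}), upgrade isotopies of curves in $\mathring S$ from $\Sigma_{2g,2}$ to $\mathring S$ via \cite[Lemma 3.16]{farb-margalit}, conclude $[\tilde\eta_1^2|_{\mathring S}] = 1 \in \Mod(\mathring S, p_1, p_2)$ by the Alexander method, and then apply Nielsen realization; your detour through Lemmas \ref{lem:eta-ci}--\ref{lem:alpha-3} and through arcs is unnecessary, since triviality in $\Mod(\Sigma_{2g,2})$ already fixes every curve of an arbitrary filling set of $\mathring S$, which is exactly how the paper argues.
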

\begin{proof}
Recall that $\tilde\eta_1 \in \Diff^+(\Sigma_{2g, 4})$ preserves the subsurface $S \subseteq \Sigma_{2g}$ and fixes each marked point $p_1, p_2, p_3, p_4$ of $\Sigma_{2g, 4}$. We claim that the restriction $\tilde\eta_1^2|_{\mathring S} \in \Diff^+(\mathring S, p_1, p_2)$ satisfies
\[
	[\tilde\eta_1^2|_{\mathring S}] = 1 \in \Mod(\mathring S, p_1, p_2).
\]
Because $[\tilde\eta_1^2] = 1 \in \Mod(\Sigma_{2g, 2})$ by Corollary \ref{cor:puncture-involution}, the curve $\tilde\eta_1^2(\gamma_0)$ is isotopic to $\gamma_0$ in $\Sigma_{2g, 2}$ for any curve $\gamma_0 \subseteq \mathring S$ that is not isotopic to $\alpha$, $\beta$, $\gamma$, or $\delta$ in $\Sigma_{2g, 2}$. By \cite[Lemma 3.16]{farb-margalit}, $\tilde\eta_1^2(\gamma_0)$ and $\gamma_0$ are also isotopic through an isotopy supported in $\mathring S$ fixing $p_1, p_2$ for all time $t$. By applying the Alexander method \cite[Proposition 2.8]{farb-margalit} to a set of filling curves of $\mathring S$ and its image under $\tilde\eta_1^2$, we conclude that
\[
	[\tilde\eta_1^2|_{\mathring S}] = 1 \in \Mod(\mathring S, p_1, p_2)
\]

By the Nielsen realization theorem \cite[Theorem 7.1]{farb-margalit}, there exists a diffeomorphism $\psi \in \Diff^+(\mathring S, p_1, p_2)$ of order $2$ such that
\[
	[\psi] = [\tilde\eta_1|_{\mathring S}] \in \Mod(\mathring S, p_1, p_2). \qedhere
\]
\end{proof}

Using the previous two lemmas, we now build an order-$2$ representative $\hat\eta$ of $\hat h \in \Mod(\Sigma_{2g, 2})$ with a controlled isotopy to a representative of $\tilde h \in \Mod(\Sigma_{2g, 4})$. See Figure \ref{fig:tubular-W-V} for a schematic of the resulting diffeomorphism $\hat \eta \in \Diff^+(\Sigma_{2g, 2})$ and isotopy $\kappa_t: \Sigma_{2g, 2} \times [0, 1] \to \Sigma_{2g, 2}$.
\begin{lem}[{Correction near $\alpha \cup \beta \cup \gamma \cup \delta$}]\label{lem:final-correction}
There exist
\begin{itemize}
\item a diffeomorphism $\hat \eta \in \Diff^+(\Sigma_{2g, 2})$ of order $2$ with $[\hat\eta] = \hat h \in \Mod(\Sigma_{2g, 2})$,
\item an isotopy $\kappa_t: \Sigma_{2g, 2} \times [0, 1] \to \Sigma_{2g, 2}$ with $\kappa_0 = \Id_{\Sigma_{2g, 2}}$,
\item disjoint tubular neighborhoods $W_\alpha$, $W_\beta$, $W_\gamma$, and $W_\delta$ of $\alpha$, $\beta$, $\gamma$, and $\delta$ in $\Sigma_{2g, 2}$, and
\item disjoint neighborhoods $O_i$ of $p_i$ for $i=1,2,3,4$
\end{itemize}
such that
\begin{enumerate}[(a)]
\item the diffeomorphism $\hat\eta \circ \kappa_1$ fixes pointwise $O_1,O_2,O_3,O_4$ and
  \[
    [\hat\eta \circ \kappa_1] = \tilde h \in \Mod(\Sigma_{2g, 4}),
  \]
\item the diffeomorphism $\hat\eta$ permutes the sets $W_\alpha$, $W_\beta$, $W_\gamma$, and $W_\delta$ by
\[
	\hat\eta(W_\alpha) = W_\gamma, \qquad \hat\eta(W_\beta) = W_\delta,
\]
and
\item for all $i=1,2,3,4$ and all $t \in [0,1]$,
  \[ \kappa_t(O_i) \cap (W_\alpha \sqcup W_\beta \sqcup W_\gamma \sqcup W_\delta) = \emptyset.\]
\end{enumerate}
\end{lem}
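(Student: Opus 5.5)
The plan is to glue together the two modifications already built: the diffeomorphism $\tilde\eta_1$ of Lemma \ref{lem:modifying-t1-t2}, which is an involution on $T_1\cup T_2$ away from collars, and the order-$2$ map $\psi$ of Lemma \ref{lem:nielsen-S}, which realizes $[\tilde\eta_1|_{\mathring S}]$. We then absorb all discrepancies into thin collars around $\alpha,\beta,\gamma,\delta$, and into an isotopy $\kappa_t$ that forgets $p_3,p_4$.

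\emph{Step 1: define $\hat\eta$ piecewise.} Since $[\psi]=[\tilde\eta_1|_{\mathring S}]$ in $\Mod(\mathring S,p_1,p_2)$, the two maps agree up to isotopy near each puncture end of $\mathring S$. The punctures of $\mathring S$ correspond to $\alpha,\beta,\gamma,\delta$, and $\psi$ permutes the ends as $\alpha\leftrightarrow\gamma$, $\beta\leftrightarrow\delta$. Using this, I would first straighten $\psi$ near the ends. Choose $\psi$-invariant end neighborhoods in $\mathring S$; by averaging a metric and using the linearization near an order-$2$ periodic end, we may take these to be annuli on which $\psi$ swaps standard collars isometrically. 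Next, on $T_1\cup T_2$ replace $\tilde\eta_1$ by the map equal to $\tilde\eta_1$ on $T_1$ and to $\tilde\eta_1^{-1}$ on $T_2$. Away from $U_\alpha\sqcup U_\beta$ this is literally an involution, by Lemma \ref{lem:modifying-t1-t2}, and on $U_\alpha\sqcup U_\beta$ we reparametrize so that it is the inverse of its image collar. Then define $\hat\eta$ as this involution on $T_1\cup T_2$ and as $\psi$ on $S$, and interpolate in the collar annuli $W_\alpha,\dots,W_\delta$ of $\alpha,\beta,\gamma,\delta$. On $W_\alpha$ we pick any diffeomorphism $W_\alpha\to W_\gamma$ matching the boundary data, and on $W_\gamma$ we use its inverse. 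This forces $\hat\eta^2=\Id$ and $\hat\eta(W_\alpha)=W_\gamma$, $\hat\eta(W_\beta)=W_\delta$, which gives (b).

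\emph{Step 2: check the mapping class and build $\kappa_t$.} The map $\hat\eta$ differs from $\tilde\eta_1$ by an isotopy in $\Sigma_{2g,2}$: it is isotopic on each of $T_1$, $S$, and $T_2$, and on the collars the maps agree up to twisting. The possible Dehn twist discrepancies about $\alpha,\beta,\gamma,\delta$ have to be fixed by the choice of interpolation on $W_\alpha,W_\beta$. I would determine the correct twist parameter on $W_\alpha,W_\beta$ so that $[\hat\eta]=\hat h$ in $\Mod(\Sigma_{2g,2})$, checking this against the relation $\hat h^2=1$ from Corollary \ref{cor:puncture-involution}. The isotopy from $\tilde\eta_1$ to $\hat\eta$ is supported in $S\cup W$ together with the collars, and on $S$ it fixes $p_1,p_2$ throughout. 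We let $\kappa_t$ be $\hat\eta^{-1}$ composed with this isotopy, reversed so that $\kappa_0=\Id$ and $\hat\eta\circ\kappa_1=\tilde\eta_1$ after a final local adjustment making it the identity near all four $p_i$. Since $[\tilde\eta_1]=\tilde h$, this gives (a).

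\emph{Step 3: the main obstacle, condition (c).} The isotopy in $\mathring S$ between $\psi$ and $\tilde\eta_1|_{\mathring S}$ fixes $p_1,p_2$, but it may drag $p_3,p_4$ around $S$. Because $\psi$ fixes only $p_1,p_2$ in general, $p_3,p_4$ must move, and we need their disk neighborhoods $O_3,O_4$ to avoid $W$ for all $t$. The remedy is to choose the isotopy inside $\mathring S$ minus an end neighborhood $N$, and to shrink the $W$'s so that they lie in $N\cup T_1\cup T_2$. This requires the isotopy to be compactly supported away from the ends, which can be arranged after Step 1 has made the two maps agree on the end annuli: an isotopy of a punctured surface rel a neighborhood of its ends exists once they agree there and represent the same class, with any boundary twist already corrected in Step 2. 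The isotopy is then supported in a compact subset $K\subset\mathring S$; choosing $O_i$ small and $W\cap K=\emptyset$ yields (c). I expect this bookkeeping of boundary twists versus the support of the isotopy to be the delicate point.
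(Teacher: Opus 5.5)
Your Steps 1 and 2 are fine and use essentially the paper's mechanism. The map that is $\tilde\eta_1$ on $T_1$ and $\tilde\eta_1^{-1}$ on $T_2$ is already an involution on all of $T_1\cup T_2$, so no reparametrization of $U_\alpha\sqcup U_\beta$ is needed. Putting some $\phi$ on $W_\alpha$ and $\phi^{-1}$ on $W_\gamma$ is exactly the paper's use of $\tilde\eta_2^{-1}$ on $\bar W_\alpha\sqcup\bar W_\beta$. The twist correction also closes up, but you should say why. Write $[\hat\eta]=\hat h\,T_\alpha^aT_\beta^bT_\gamma^cT_\delta^d$ in $\Mod(\Sigma_{2g,2})$. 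Then $\hat\eta^2=\Id$, $\hat h^2=1$, $\hat h(\alpha)=\gamma$ and $\hat h(\beta)=\delta$ give $T_\alpha^{a+c}T_\beta^{b+d}T_\gamma^{a+c}T_\delta^{b+d}=1$. Independence of the four twists then forces $c=-a$ and $d=-b$. Precomposing $\phi$ on $W_\alpha$ with $k$ twists changes $[\hat\eta]$ by $T_\alpha^{k}T_\gamma^{-k}$, and similarly on $W_\beta$, which absorbs the error.

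The gap is in (c), which is the real content of the lemma. The $\kappa_t$ of Step 2 is an arbitrary isotopy from $\Id$ to $\hat\eta^{-1}\tilde\eta_1$, so nothing controls where $O_3$ and $O_4$ travel. Your Step 3 repair does not work as written, for four reasons.

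\begin{enumerate}
\item Step 1 only normalizes $\psi$ near the ends; it never makes $\tilde\eta_1|_{\mathring S}$ agree with $\psi$ there.
\item The twist correction of Step 2 concerns $[\hat\eta]\in\Mod(\Sigma_{2g,2})$, not boundary twists of $\tilde\eta_1^{-1}\psi$ relative to the ends.
\item The isotopy you actually use for $\kappa_t$ must move points of $W$, because $\hat\eta\neq\tilde\eta_1$ on the interpolation region. So it cannot be supported in $\mathring S-N$ with $W\subset N\cup T_1\cup T_2$.
\item Shrinking $W$ after $\hat\eta$ and its twist parameters were built on it is circular.
\end{enumerate}

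The missing idea is to reverse the order of choices and to control only trajectories, not supports.
\begin{itemize}
\item First make $\tilde\eta_1$ fix small disks $O_i$ pointwise, and fix the isotopy $\lambda_t$ of the \emph{open} surface $\mathring S$ from Lemma \ref{lem:nielsen-S}. No compact support is needed: $\bigcup_t\lambda_t^{-1}(\bar O_i)$ is compact in $\mathring S$, so it misses a neighborhood of the ends.
\item Only then choose $W$ so that $W\cap\mathring S$ and $\psi(W\cap\mathring S)$ miss this set.
\item Take a compactly supported isotopy $\tilde\lambda_t$ of $\mathring S$ agreeing with $\lambda_t$ outside a smaller $V\subset W$.
\item Build $\hat\eta$ from $\tilde\eta_2=\tilde\eta_1\circ\tilde\lambda_1$ by swapping in $\tilde\eta_2^{-1}$ on $\bar W_\alpha\sqcup\bar W_\beta$.
\end{itemize}
Setting $\kappa_t=\tilde\lambda_t^{-1}$ gives (c) by the choice of $W$. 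For (a), note that $\hat\eta$ differs from $\tilde\eta_2$ only on $\bar W_\alpha\sqcup\bar W_\beta$, where $\tilde\eta_2=\hat\eta\circ\tilde\eta_2^{2}$. The map $\tilde\eta_2^{2}|_{\bar W}$ is isotopic to the identity rel $\partial\bar W$, by the same independence of twists. That isotopy never meets $\tilde\lambda_1^{-1}(p_3)$ or $\tilde\lambda_1^{-1}(p_4)$, so the class in $\Mod(\Sigma_{2g,4})$ is unchanged. Anchoring $\hat\eta$ to $\tilde\eta_2$, rather than to an arbitrary interpolation, is what makes this class check automatic once $\kappa_t$ is no longer your Step 2 isotopy.
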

\begin{proof}
  In this proof we construct $\hat\eta$ by appropriately splicing together the order-$2$ pieces found in Lemmas \ref{lem:modifying-t1-t2} and \ref{lem:nielsen-S}.

  \medskip \noindent \emph{Step 0: Finding appropriate neighborhoods of $p_i$.}
  Since $\tilde\eta_1 \in \Diff^+(\Sigma_{2g,4})$, there exist (apply ~\cite[Theorem 8.3.1]{hirsch} to small enough neighborhoods of each marked point)
  \begin{enumerate}
  \item an isotopy $g_t:\Sigma_{2g,4} \to \Sigma_{2g,4}$ with $g_0=\Id_{\Sigma_{2g,4}}$ and compact support contained in $\mathring S$, and
  \item neighborhoods $O_1,O_2,O_3,O_4$ of $p_1,p_2,p_3,p_4$ such that $\overline{O_i} \subseteq \mathring S$, $\overline{O_i}$ is compact for all $i = 1, 2, 3, 4$, and $\overline{O_i} \cap \overline{O_j} = \emptyset$ for all $i \neq j$
  \end{enumerate}
  such that $\tilde\eta_1 \circ g_1$ fixes pointwise each $O_i$. In particular
  \[ [\tilde\eta_1 \circ g_1|_{\mathring S}] = [\tilde\eta_1] \in \Mod(\mathring S,p_1,p_2).\]
  Thus, replacing $\tilde\eta_1$ with $\tilde\eta_1 \circ g_1$ we can assume in Lemma~\ref{lem:nielsen-S} that
  $\tilde\eta_1$ acts trivially on each $O_i$ for $i=1,2,3,4$.

  Throughout, let $\lambda_t: (\mathring S, p_1, p_2) \times [0, 1] \to (\mathring S, p_1, p_2)$ be the isotopy with $\lambda_0 = \Id_{\mathring S}$ and $(\tilde\eta_1|_{\mathring S}) \circ \lambda_1 = \psi$ found in Lemma \ref{lem:nielsen-S}.

\medskip\noindent
\emph{Step 1: Fixing two tubular neighborhoods $V \subseteq W$ of $\alpha\cup \beta\cup \gamma\cup \delta$.} Recall the collar neighborhoods $U_\alpha$, $U_\beta$ of $\alpha$, $\beta$ in $T_1$ and $U_\gamma$, $U_\delta$ of $\gamma$, $\delta$ in $T_2$ found in Lemma \ref{lem:modifying-t1-t2}. Let $W_\alpha$ and $W_\beta$ be tubular neighborhoods of $\alpha$ and $\beta$ in $\Sigma_{2g, 4}$ respectively such that
\[
	\bar U_\alpha \subseteq W_\alpha, \qquad \bar U_\beta \subseteq W_\beta
\]
and such that the disks $\lambda_t^{-1}(O_i) \in S$ are disjoint from
\[
	(W_\alpha \cap \mathring S), \qquad (W_\beta \cap \mathring S), \qquad \psi(W_\alpha \cap \mathring S), \qquad \psi(W_\beta \cap \mathring S)
\]
for all $t \in [0, 1]$ and for all $i=1,2,3,4$. (Such choices of $W_\alpha$ and $W_\beta$ exist by compactness of the paths $\lambda_t^{-1}(\overline{O_i})$.) Then let
\[
	W_\gamma := \psi(W_\alpha \cap \mathring S) \cup \tilde\eta_1(W_\alpha \cap T_1), \qquad W_\delta := \psi(W_\beta \cap \mathring S) \cup \tilde\eta_1(W_\beta \cap T_1).
\]
By construction of $\psi$ and $\tilde\eta_1$, the open annuli $W_\gamma$ and $W_\delta$ are tubular neighborhoods of $\gamma$ and $\delta$ respectively in $\Sigma_{2g, 4}$ such that
\[
	\bar U_\gamma \subseteq W_\gamma, \qquad \bar U_\delta \subseteq W_\delta.
\]
Finally, let $W$ be the tubular neighborhood of $\alpha \cup \beta \cup \gamma \cup \delta$ in $\Sigma_{2g, 4}$ defined by
\[
	W := W_\alpha \cup W_\beta \cup W_\gamma \cup W_\delta.
\]

To specify $V \subseteq W$, choose collar neighborhoods $V_\alpha$, $V_\beta$ of $\alpha$ and $\beta$ in $S$ such that
\[
	\bar V_\alpha \subseteq W_\alpha, \qquad \bar V_\beta \subseteq W_\beta
\]
and let
\[
	V := (U_\alpha \cup V_\alpha) \cup (U_\beta \cup V_\beta) \cup (U_\gamma \cup \psi(V_\alpha \cap \mathring S)) \cup (U_\delta \cup \psi(V_\beta \cap \mathring S)) \subseteq W.
\]
Recall that $\tilde\eta_1(U_\alpha) = U_\gamma$ and $\tilde\eta_1(U_\beta) = U_\delta$ and that $\tilde\eta_1^2|_{T_1 \cup T_2}$ acts as the identity outside of $U_\alpha \sqcup U_\beta \sqcup U_\gamma \sqcup U_\delta$ by construction in Lemma \ref{lem:modifying-t1-t2}. Therefore, $\tilde\eta_1$ preserves both $W \cap (T_1 \cup T_2)$ and $V \cap (T_1 \cup T_2)$. On the other hand, $\psi$ has order $2$ and so $\psi$ preserves both $W \cap \mathring S$ and $V \cap \mathring S$. See Figure \ref{fig:tubular-W-V}.

\begin{figure}
\includegraphics[width=\textwidth]{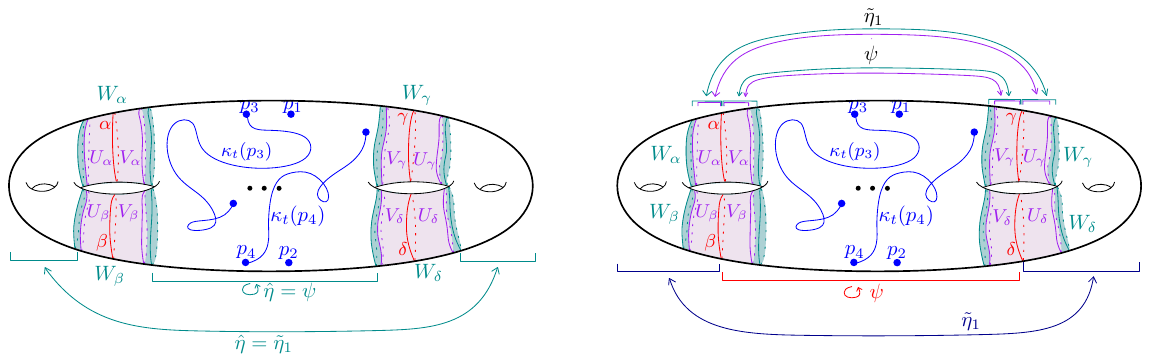}
\caption{Left: A schematic for the order-$2$ diffeomorphism $\hat\eta \in \Diff^+(\Sigma_{2g, 2})$ and isotopy $\kappa_t: \Sigma_{2g, 2} \times [0, 1] \to \Sigma_{2g, 2}$ in the conclusion of Lemma \ref{lem:final-correction}. Away from some tubular neighborhoods $W_\alpha, W_\beta, W_\gamma, W_\delta$ of $\alpha, \beta, \gamma, \delta$, the diffeomorphism $\hat\eta$ restricts to $\psi$ on $S$ and to $\tilde\eta_1$ on $T_1 \sqcup T_2$. Right: Choice of tubular neighborhoods $W$ and $V$ and the action of $\psi$ and $\tilde\eta_1$.}\label{fig:tubular-W-V}
\end{figure}

\medskip\noindent
\emph{Step 2: Isotoping $\tilde\eta_1$ in $S$ to have order $2$ away from $V$.} By the isotopy extension theorem \cite[Theorem 8.1.4]{hirsch}, there exists an isotopy $\tilde \lambda_t: \Sigma_{2g, 2} \times [0, 1] \to \Sigma_{2g, 2}$ with $\tilde \lambda_0 = \Id_{\Sigma_{2g, 2}}$ such that
\[
	\tilde \lambda_t|_{\mathring S - (S \cap V)} = \lambda_t|_{\mathring S - (S \cap V)}
\]
and such that $\tilde \lambda_t$ has compact support contained in $\mathring S \subseteq \Sigma_{2g, 2}$. In particular, the disks $\tilde \lambda_t^{-1}(O_i) = \lambda_t^{-1}(O_i)$ are contained in $S - (S \cap W)$ for all $t \in [0, 1]$ and for all $i=1,2,3,4$.

Let
\[
	\tilde \eta_2 := \tilde\eta_1 \circ \tilde\lambda_1 \in \Diff^+(\Sigma_{2g, 2}).
\]
We claim that $\tilde\eta_2$ preserves the two sets $V$, $W$ and that $\tilde\eta_2^2$ has support contained in $V$. To see that $\tilde\eta_2^2$ has support contained in $V$, we consider $T_1 \cup T_2$ and $S$ separately. In $T_1 \cup T_2$,
\[
	\tilde\eta_2^2|_{(T_1 \cup T_2) - V} = \tilde\eta_1^2|_{(T_1 \cup T_2) - V} = \Id_{(T_1 \cup T_2) - V}
\]
where first equality follows because $\tilde\lambda_1$ has compact support in $\mathring S$ and the second equality follows by Lemma \ref{lem:modifying-t1-t2} because $(T_1 \cup T_2)\cap V = U_\alpha \sqcup U_\beta \sqcup U_\gamma \sqcup U_\delta$. In $S$,
\[
	\tilde\eta_2^2|_{S-V} = (\tilde\eta_1 \circ \tilde\lambda_1)^2|_{S - V} = \psi^2|_{S - V} = \Id_{S - V},
\]
where the second equality follows because $\lambda_1|_{S - V} = \tilde\lambda_1|_{S - V}$ and the third equality follows because $\psi \in \Diff^+(\mathring S)$ has order $2$ by construction in Lemma \ref{lem:nielsen-S}.

To see that $\tilde\eta_2$ preserves $V$ and $W$, first recall that $W \cap (T_1 \cup T_2)$ and $V \cap (T_1 \cup T_2)$ are preserved by $\tilde\eta_1$, and that $\tilde\eta_1|_{T_1 \cup T_2} = \tilde\eta_2|_{T_1 \cup T_2}$. On the other hand, note that $\tilde\eta_2|_{S - V} = \psi|_{S - V}$, and that $W \cap \mathring S$ and $V \cap \mathring S$ are both preserved by $\psi$ by construction of $W$ and $V$.

\medskip\noindent
\emph{Step 3: Cutting and pasting in $W$.} By \cite[Proposition 2.4 and Lemma 3.17]{farb-margalit}, the restriction $\tilde \eta_2^2|_{\bar W}$ is topologically isotopic (rel $\partial \bar W$) to $\Id_{\bar W}$, because
\[
	[\tilde\eta_2^2] = [(\tilde\eta_1 \circ \tilde\lambda_1)^2] = [\tilde\eta_1^2] = \hat h^2 = 1 \in \Mod(\Sigma_{2g, 2}),
\]
where the last equality follows by Corollary \ref{cor:puncture-involution}. Therefore, there exists a topological isotopy $\mu_t: (\bar W_\alpha \sqcup \bar W_\beta) \times [0, 1] \to \bar W_\alpha \sqcup \bar W_\beta$ rel $\partial (\bar W_\alpha \sqcup \bar W_\beta)$ with $\mu_0 = \Id_{\bar W_\alpha \sqcup \bar W_\beta}$ such that
\[
	\tilde \eta_2^{-1}|_{\bar W_\alpha \sqcup \bar W_\beta} \circ \mu_1 = \tilde \eta_2|_{\bar W_\alpha \sqcup \bar W_\beta}: \bar W_\alpha \sqcup \bar W_\beta \to \bar W_\gamma \sqcup \bar W_\delta.
\]

With the isotopy $\mu_t$ in hand, consider the map
\[
	\hat\eta := \begin{cases}
	\tilde\eta_2 & \text{ on } \Sigma_{2g, 2} - (\bar W_\alpha \sqcup \bar W_\beta) \\
	\tilde\eta_2^{-1} & \text{ on } \bar W_\alpha \sqcup \bar W_\beta.
	\end{cases}
\]
The map $\hat\eta$ is smooth because $\tilde\eta_2^{-1}|_{\Sigma_{2g, 2} - V} = \tilde\eta_2|_{\Sigma_{2g, 2} - V}$ by construction in Step 2, since $\Sigma_{2g, 2} - V$ contains an open neighborhood of $\partial (\bar W_\alpha \sqcup \bar W_\beta)$. See Figure \ref{fig:tubular-W-V}.

The extension $\tilde \mu_t: \Sigma_{2g, 2} \times [0, 1] \to \Sigma_{2g, 2}$ of $\mu_t$ by the identity outside of $\bar W_\gamma \sqcup \bar W_\delta$ is a topological isotopy of $\Sigma_{2g, 2}$ with $\tilde\mu_0 = \Id_{\Sigma_{2g, 2}}$. Therefore,
\[
	[\hat \eta] = [\hat\eta \circ \tilde \mu_1] = [\tilde\eta_2] = \hat h \in \Mod(\Sigma_{2g, 2}).
\]

Furthermore, we claim that $\hat \eta$ has order $2$. Because $\tilde \eta_2^2$ has support contained in $V \subseteq W$, it suffices to check that $\hat \eta^2|_W = \Id_W$, which is true by construction. Moreover, $\hat\eta$ permutes the components of $W$ because $\tilde\eta_2$ does, and
\[
	\hat\eta(W_\alpha) = \tilde\eta_2(W_\alpha) = W_\gamma, \qquad \hat\eta(W_\beta) = \tilde\eta_2(W_\beta) = W_\delta.
\]
We have thus shown that $\hat\eta$ satisfies all the desired properties listed in the statement of the lemma.

\medskip\noindent
\emph{Step 4: Finding the isotopy $\kappa_t$.} Consider $\hat \eta \circ \tilde \lambda_1^{-1} \in \Diff^+(\Sigma_{2g, 2})$. Observe that $\hat \eta \circ \tilde \lambda_1^{-1}$ fixes pointwise the neighborhoods $O_i$ for $i=1,2,3,4$: For any $x \in O_i$,
\[
	(\hat\eta \circ \tilde \lambda_1^{-1})(x) = (\tilde\eta_2 \circ \tilde\lambda_1^{-1})(x) = (\tilde\eta_1 \circ \tilde \lambda_1)(\tilde \lambda_1^{-1}(x)) = \tilde\eta_1(x) = x,
\]
where the first equality holds because $\tilde \lambda_1^{-1}(O_i)$ is not contained in $\bar W$ by construction of $W$, and the last equality holds because $\tilde\eta_1$ fixes pointwise $O_i$ by Step 0. This shows that $\hat \eta \circ \tilde \lambda_1^{-1}$ is an element of $\Diff^+(\Sigma_{2g, 4})$ fixing each $O_i$ pointwise for $i=1,2,3,4$.

The topological isotopy $\tilde \lambda_1 \circ \tilde\mu_t \circ \tilde \lambda_1^{-1}: \Sigma_{2g, 2} \times [0, 1] \to \Sigma_{2g, 2}$ fixes the two points $p_3, p_4$ for all $t \in [0, 1]$ because $\tilde\mu_t$ is the identity outside $\bar W_\gamma \sqcup \bar W_\delta$. Therefore,
\[
	[\hat\eta \circ \tilde \lambda_1^{-1}] = [(\hat\eta \circ \tilde \lambda_1^{-1}) \circ (\tilde \lambda_1 \circ \tilde\mu_1 \circ \tilde \lambda_1^{-1})] = [(\hat\eta \circ \tilde\mu_1) \circ \tilde \lambda_1^{-1}] = [(\tilde\eta_1 \circ \tilde \lambda_1) \circ \tilde \lambda_1^{-1}] = [\tilde\eta_1] = \tilde h
\]
as mapping classes in $\Mod(\Sigma_{2g, 4})$. Finally, letting
\[
	\kappa_t := \tilde \lambda_t^{-1} : \Sigma_{2g, 2} \times [0, 1] \to \Sigma_{2g, 2}
\]
concludes the proof.
\end{proof}

We now collect the lemmas above to prove the main proposition of this appendix. 
\begin{proof}[Proof of Proposition \ref{prop:isotoping-hateta}]
  Fix the notation of the statement of Lemma \ref{lem:final-correction}. Let $W = W_\alpha \sqcup W_\beta\sqcup W_\gamma \sqcup W_\delta$ and $
  O = O_1\sqcup O_2 \sqcup O_3 \sqcup O_4$.
  The diffeomorphisms $\hat \eta$ and $\tilde\eta$ and the isotopy $\kappa_t$ found in Lemma \ref{lem:final-correction} satisfy the condition that $[\hat\eta] = \hat h \in \Mod(\Sigma_{2g, 2})$ and conditions \ref{lem:eta-construction-four-points} and \ref{lem:eta-construction-nice-iso}. It therefore suffices to construct the desired symplectic form $\theta$ and symplectomorphism $\varphi \in \Diff^+(\Sigma_{2g, 4})$ with $[\varphi] = \tilde f \in \Mod(\Sigma_{2g, 4})$ satisfying \ref{lem:eta-construction-symplectic} and \ref{lem:eta-construction-commutes}.

Let $\theta_\alpha$ be a symplectic form on $W_\alpha \subseteq \Sigma_{2g}$ and let $\varphi_\alpha\in \Symp(W_\alpha, \theta_\alpha)$ be a compactly supported, right-handed Dehn twist about $\alpha$. (For example, we can take $\theta_\alpha = d\theta \wedge dt$, where $\theta$, $t$ are the two coordinates of $W_\alpha \cong S^1 \times (0, 1)$, and $\varphi_\alpha(\theta, t) = (\theta - \rho(t), t)$ where $\rho: (0, 1) \to [0, 2\pi]$ is a smooth, increasing function with $\rho \equiv 0$ near $t = 0$ and $\rho \equiv 2\pi$ near $t = 2\pi$.) Similarly, let $\theta_\beta$ be a symplectic form on $W_\beta \subseteq \Sigma_{2g}$ and let $\varphi_\beta\in \Symp(W_\beta, \theta_\beta)$ be a compactly supported, right-handed Dehn twist about $\beta$.

Furthermore, we define similar symplectic forms and Dehn twists in $W_\delta$ and $W_\gamma$ by
\begin{align*}
	\varphi_\gamma := \hat\eta \circ \varphi_\alpha \circ \hat\eta, \qquad \theta_\gamma := \hat\eta^* \theta_\alpha, \\
	\varphi_\delta := \hat\eta \circ \varphi_\beta \circ \hat\eta, \qquad \theta_\delta := \hat\eta^*\theta_\beta.
\end{align*}
Here, recall from Lemma \ref{lem:final-correction} that $\hat\eta(W_\alpha) = W_\gamma$ and $\hat\eta(W_\beta) = W_\delta$ by construction.

With the above symplectic forms in hand, let $\theta$ be a symplectic form on $\Sigma_{2g}$ such that
\[
	\hat\eta^* \theta = \theta, \qquad \theta|_{ W_\alpha'} = \theta_\alpha, \quad \theta|_{ W_\beta'} = \theta_\beta, \quad \theta|_{ W_\gamma'} = \theta_\gamma, \quad \theta|_{ W_\delta'} = \theta_\delta.
      \]
Here $ W_\alpha', W_\beta',  W_\gamma', W_\delta'$ are suitable open subsets of $W_\alpha,W_\beta,W_\gamma,W_\delta$ respectively, each containing the support of $\varphi_\alpha,\varphi_\beta,\varphi_\gamma$ and $\varphi_\delta$ respectively. One way to find such a form $\theta$ is to first extend the forms $\theta_\alpha$, $\theta_\beta$, $\theta_\gamma$, and $\theta_\delta$ to all of $\Sigma_{2g}$ using a partition of unity argument, and then averaging this form under the action of $\langle \hat\eta \rangle \cong \Z/2\Z$. Now let
\[
	\varphi := \varphi_\alpha \circ \varphi_\beta^{-1} \circ \varphi_{\gamma} \circ \varphi_{\delta}^{-1}.
\]
Then $\varphi$ is supported in $W$ because $\varphi_\alpha$, $\varphi_\beta$, $\varphi_\gamma$, and $\varphi_\delta$ are, and hence $\varphi$ satisfies \ref{lem:eta-construction-symplectic}. Because as elements of $\Mod(\Sigma_{2g, 4})$,
\[
	[\varphi_\alpha] = T_x, \quad [\varphi_\beta] = T_y, \quad [\varphi_\gamma] = T_{\tilde h(x)}, \quad [\varphi_\delta] = T_{\tilde h(y)},
\]
there is also an equality of mapping classes $[\varphi] = \tilde f \in \Mod(\Sigma_{2g, 4})$ as desired.

To see that $\varphi$ preserves the form $\theta$, it suffices to check this on $W$, which contains the support of $\varphi$. On $W_\alpha \sqcup W_\beta$, $\varphi$ preserves $\theta$ because
\[
	(\varphi^*\theta)|_{W_\alpha} = \varphi_\alpha^*\theta_\alpha = \theta_\alpha, \qquad (\varphi^*\theta)|_{W_\beta} = (\varphi_\beta^{-1})^*\theta_\beta = \theta_\beta.
\]
On $W_\gamma \sqcup W_\delta$, $\varphi$ preserves $\theta$ because
\begin{align*}
	(\varphi^*\theta)|_{W_\gamma} &= \varphi_\gamma^*\theta_\gamma = (\hat\eta \varphi_\alpha \hat\eta)^*(\hat\eta^*\theta_\alpha) = \hat\eta^* \varphi_\alpha^*\theta_\alpha = \hat\eta^* \theta_\alpha = \theta_\gamma, \\
	(\varphi^*\theta)|_{W_\delta} &= (\varphi_\delta^{-1})^*\theta_\delta = (\hat\eta \varphi_\beta^{-1} \hat\eta)^*(\hat\eta^*\theta_\beta) = \hat\eta^* (\varphi_\beta^{-1})^*\theta_\beta = \hat\eta^* \theta_\beta = \theta_\delta.
\end{align*}

Finally, to see that $\hat\eta$ and $\varphi$ commute, compute
\begin{align*}
	\hat\eta \circ \varphi &= \hat\eta \circ (\varphi_\alpha \circ \varphi_\beta^{-1} \circ \varphi_\gamma \circ \varphi_\delta^{-1})= \hat\eta \circ (\varphi_\alpha \circ \varphi_\beta^{-1} \circ \varphi_\gamma \circ \varphi_\delta^{-1}) \circ \hat\eta^2= (\varphi_\gamma \circ \varphi_\delta^{-1} \circ \varphi_\alpha \circ \varphi_\beta^{-1}) \circ \hat\eta = \varphi \circ \hat\eta.
\end{align*}
This concludes the proof of \ref{lem:eta-construction-commutes}.
\end{proof}


\bibliographystyle{alphaurl}
\bibliography{lefschetz_ruled}

\end{document}